\newtheorem{Thm}{Theorem}
\newtheorem{Def}{Definition}
\newtheorem{Prop}{Proposition}
\newtheorem{Lemma}{Lemma}
\newtheorem{Rem}{Remark}
\newtheorem{Exp}{Example}
\newenvironment{proof}[1][Proof]{\textbf{#1.} }{\hfill $\square$}
\newcommand{\vertiii}[1]{{\left\vert\kern-0.25ex\left\vert\kern-0.25ex\left\vert #1 
    \right\vert\kern-0.25ex\right\vert\kern-0.25ex\right\vert}}
\newcommand{\mart}{{\rm M}^\sharp}
\newcommand{\cad}{c\`adl\`ag }
\newcommand{\what}{\widehat}
\newcommand{\dd}[0]{\,\mathrm{d}}
\def \eps{\varepsilon}
\def \bN{\mathbb{N}}
\def \bR{\mathbb{R}}
\def \bQ{\mathbb{Q}}
\def \fd{\mathfrak{d}}
\def \bM{\mathbb{M}}
\def \bL{\mathbb{L}}
\def \bE{\mathbb{E}}
\def \bF{\mathcal{F}}
\def \bF{\mathbb{F}}
\def \bH{\mathbb{H}}
\def \bD{\mathbb{D}}
\def \bS{\mathbb{S}}
\def \tOm{\widetilde{\Omega}}
\def \cE{\mathcal{E}}
\def \cP{\mathcal{P}}
\def \cA{\mathcal{A}}
\def \cB{\mathcal{B}}
\def \cF{\mathcal{F}}
\def \cH{\mathcal{H}} 
\def \cM{\mathcal{M}}
\def \tP{\widetilde{\mathcal{P}}}
\def \tpi{\widetilde{\pi}}
\def \bP{\mathbb{P}}
\def \1{\mathbf{1}}
\newcommand*{\trace}{\operatorname{Tr}}
\begin{document}

\title{Backward stochastic Volterra integral equations with jumps in a general filtration.}

\author{Alexandre Popier\thanks{
e-mail: {\tt  Alexandre.Popier@univ-lemans.fr}.}}
\affil{\small Laboratoire Manceau de Math\'ematiques, Le Mans Universit\'e, Avenue O. Messiaen, 72085 Le Mans cedex 9, France.  }

\date{\today}

\maketitle

\begin{abstract}
In this paper, we study backward stochastic Volterra integral equations introduced in \cite{lin:02,yong:06} and extend the existence, uniqueness or comparison results for general filtration as in \cite{papa:poss:sapl:18} (not only Brownian-Poisson setting). We also consider $L^p$-data and explore the time regularity of the solution in the It\^o setting, which is also new in this jump setting.
\end{abstract}

\vspace{0.5cm}
\noindent \textbf{2010 Mathematics Subject Classification.} 45D99, 60H20, 60H99.

\smallskip
\noindent \textbf{Keywords.} Backward Volterra integral equation, general filtration, $L^p$-solution, jumps, time regularity.

\section*{Introduction}

The aim of this paper is to extend or to adapt some results concerning backward stochastic Volterra integral equations (BSVIEs in short). To our best knowledge, \cite{lin:02,yong:06,yong:08} were the first papers dealing with BSVIEs and the authors considered the following class of BSVIEs:
\begin{equation} \label{eq:brownian_BSVIE}
Y(t) =\Phi(t) + \int_t^{T} f(t,s, Y(s), Z(t,s) ,Z(s,t)) \dd s - \int_t^{T} Z(t,s) \dd W_s.
\end{equation}
$W$ is a $k$-dimensional Brownian motion, $f$ is called the {\it generator} or the {\it driver} of the BSVIE and $\Phi$ is the {\it free term} (or sometimes the terminal condition). 
They proved existence and uniqueness of the solution $(Y,Z)$ (M-solution in \cite{yong:08}) under the natural Lipschitz continuity regularity of $f$ and square integrability condition on the data. 

Let us focus on two particular cases. If $f$ and $\Phi$ don't depend on $t$, we obtain a backward stochastic differential equation (BSDE for short):
\begin{equation*} \label{eq:brownian_BSDE}
Y(t) =\xi + \int_t^{T} f(s, Y(s), Z(s) ) \dd s - \int_t^{T} Z(s) \dd W_s.
\end{equation*}
Since the seminal paper \cite{pard:peng:90}, it has been intensively studied (see among many others \cite{delo:13,elka:peng:quen:97,pard:rasc:14,situ:05}). Expanding the paper \cite{elka:huan:97}, Papapantoleon et al. \cite{papa:poss:sapl:18} studied BSDEs of the form:
\begin{eqnarray}  \nonumber
Y(t) &= & \xi + \int_t^{T} f(s, Y(s), Z(s),U(s) ) \dd B_s \\ \label{eq:poss_BSDE}
& -&  \int_t^{T} Z(s) \dd X^\circ_s - \int_t^T \int_{\bR^m} U(s,x) \tpi^\natural(\dd x,\dd s) - \int_t^T \dd M(s).
\end{eqnarray}
The underlying filtration $\bF$ satisfies only the usual hypotheses (completeness and right-continuity). The exact definition of the processes $B$, $X^\circ$, $\tpi^\natural$ is given in Section \ref{sect:setting_not}. Roughly speaking, $X^\circ$ is a square-integrable martingale, $\tpi^\natural$ is integer-valued random measure, such that each component of $\langle X^\circ\rangle$ ia absolutely continuous w.r.t. $B$ and the disintegration property given $B$ holds for the compensator $\nu^\natural$ of $\tpi^\natural$. The martingale $M$ appears naturally in the martingale representation since no additional assumption on the filtration $\bF$ is assumed. Their setting contains the particular case where $X^\circ = W $ and $\tpi^\natural$ is a Poisson random measure (Example \ref{exp:brownian_poisson_setting}), but also many others (see the introduction of \cite{papa:poss:sapl:18}). 

The second particular case of \eqref{eq:brownian_BSVIE} is called Type-I BSVIE:
\begin{equation} \label{eq:type_1brownian_BSVIE}
Y(t) =\Phi(t) + \int_t^{T} f(t,s, Y(s), Z(t,s)) \dd s - \int_t^{T} Z(t,s) \dd W_s.
\end{equation}
The extension to $L^p$-solution ($1<p<2$) for the Type-I BSVIE \eqref{eq:type_1brownian_BSVIE} has been done in \cite{wang:12}. In the four papers \cite{lin:02,wang:12,yong:06,yong:08}, the filtration is generated by the Brownian motion $W$. In \cite{wang:zhan:07}, the authors introduced the jump component $\tpi$. In the filtration generated by $W$ and the Poisson random measure $\tpi$, they consider:
\begin{eqnarray*}
Y(t) & = & \Phi(t) + \int_t^{T} f(t,s, Y(s), Z(t,s) ,U(t,s)) \dd s \\ 
&- &\int_t^{T} Z(t,s) \dd W_s  - \int_t^{T}  \int_{\bR^m} U(t,s,x) \tpi(\dd s,\dd x)
\end{eqnarray*}
and prove existence and uniqueness of the solution in the $L^2$-setting. The result has been extended in \cite{hu:okse:19,over:roed:18,ren:10} (see also \cite{lu:11} for the L\'evy case). 

BSVIEs have been also studied in the Hilbert case \cite{anh:yong:06}, with additive perturbation in the Brownian setting \cite{djor:jank:13,djor:jank:15}, in the quadratic case \cite{wang:sun:yong:19}, as a probabilistic representation (nonlinear Feynman-Kac formula) for PDEs \cite{wang:yong:19}. Their use for optimal control problem is well known since the seminal paper \cite{yong:06}; see for example the recent paper \cite{lin:yong:20}. Let us also mention the survey \cite{yong:13}.

\bigskip
Combining all these papers, here we want to deal with a BSVIE of the following type\footnote{In the whole paper, $\dd M(t,s)$ is the integration w.r.t. the second time parameter $s$ where $t$ is fixed. In particular $\int_u^v \dd M(t,s) = M(t,v) - M(t,u)$ is the increment of $M(t,\cdot)$ between the time $u$ and $v$.}:
\begin{eqnarray} \nonumber
Y(t) &= &\Phi(t) + \int_t^{T} f(t,s, Y(s), Z(t,s) ,Z(s,t),U(t,s),U(s,t)) \dd B_s  \\ \label{eq:general_BSVIE}
&-& \int_t^{T} Z(t,s) \dd X^\circ_s -  \int_t^{T}  \int_{\bR^m} U(t,s,x) \tpi^\natural(\dd x,\dd s) -\int_t^{T} \dd M(t,s).
\end{eqnarray}
The unknown processes are the quadruplet $(Y,Z,U,M)$ valued in $\bR^{d+(d\times k)+d+d}$ such that $Y(\cdot)$ is $\bF$-adapted, and for (almost) all $t\in [0,T]$, $(Z(t,\cdot),U(t,\cdot))$ are such that the stochastic integrals are well-defined and $M(t,\cdot)$ is a martingale. This BSVIE is called {\it of Type-II}. We also consider the Type-I BSVIE:
\begin{eqnarray} \nonumber 
Y(t) &= &\Phi(t) + \int_t^{T} f(t,s, Y(s), Z(t,s) ,U(t,s)) \dd B_s \\  \label{eq:general_BSVIE_type_I}
&-& \int_t^T Z(t,s) \dd X^\circ_s - \int_t^T \int_{\bR^m} U(t,s,x) \tpi^\natural(\dd x,\dd s) -\int_t^T \dd M(t,s).
\end{eqnarray}
The BSDE \eqref{eq:poss_BSDE} becomes a particular case of the preceding BSVIE.

%

\subsubsection*{Main contributions}

Let us outline the main contributions of our paper compared to the existing literature. First we prove {\it existence and uniqueness of the solution of the Type-I BSVIE} \eqref{eq:general_BSVIE_type_I} in the $L^2$-setting (Theorem \ref{thm:type_I_general_BSVIE} and Proposition \ref{prop:type_I_BSVIE_without_Y}). This first result generalizes the preceding results (of course only some of them) since we only assume that the filtration is complete and right-continuous. This is the reason of the presence of the c\`adl\`ag process $B$ and of the additional martingale term $M$ in \eqref{eq:general_BSVIE_type_I}. Our proof is based on a fixed point argument in the suitable space.

In \cite{papa:poss:sapl:18}, the authors consider two different types of BSDEs: the equation \eqref{eq:poss_BSDE} and the following one:
\begin{eqnarray*}  \nonumber
Y(t) &= & \xi + \int_t^{T} f(s, Y(s-), Z(s),U(s) ) \dd B_s \\ 
& -&  \int_t^{T} Z(s) \dd X^\circ_s - \int_t^T \int_{\bR^m} U(s,x) \tpi^\natural(\dd x,\dd s) - \int_t^T \dd M(s).
\end{eqnarray*}
The only difference concerns the dependence of $f$ w.r.t. $Y$. Since $B$ is assumed to be random and c\`adl\`ag, both cases are not equivalent. However the resolution's method in the second case is not adapted for BSVIEs. This case is left for further research. 

A real issue for BSVIE concerns the comparison principle. In the BSDE theory, comparison principle holds under quite general conditions (see e.g. \cite{krus:popi:14,krus:popi:17, papa:poss:sapl:18,pard:rasc:14}). Roughly speaking, the comparison result is proved by a linearization procedure and by an explicit form for the solution of a linear BSDE. However in the setting of \cite{papa:poss:sapl:18}, the comparison is more delicate to handle because of the jumps of the process $B$. For BSVIE, these arguments fail and comparison is a difficult problem. The paper \cite{wang:yong:15} is the most relevant paper on this topic. It provides comparison results and gives several counter-examples where comparison principle fails. Of course all their counter-examples are still valid in our case; thereby we do not have intrinsically better results. 
Our second contribution is to extend the {\it comparison results for the Type-I BSVIE} \eqref{eq:general_BSVIE_type_I} (Propositions \ref{prop:comp_BSVIE_1} and \ref{prop:comp_BSVIE_2}). Somehow we show that the additional martingale terms do not destroy the comparison result. Note that we have to take into account that the driver $f(s,Y(s),Z(s),U(s))$ is a priori optional (compared to $f(s,Y(s-),Z(s),U(s))$). Thus the linearization procedure should be handled carefully.  
\bigskip

In the BSDE theory, many papers deal with $L^p$-solution (instead of the square integrability condition on the data); see in particular \cite{bria:dely:hu:03,krus:popi:14,krus:popi:17,pard:rasc:14} which deal with $L^p$-solution for BSDE. To our best knowledge, such an extension does not exist for the general BSDE \eqref{eq:poss_BSDE}. This is the reason why we consider in Section \ref{sect:Ito_setting_BSVIE} the It\^o setting where $B_t = t$. This denomination comes from \cite{ait:jaco:15,ait:jaco:18}. Thus $X^\circ$ is the Brownian motion $W$ and $\tpi^\natural = \tpi$ is a Poisson random measure with intensity measure $\mu$. The Type-I BSVIE \eqref{eq:general_BSVIE_type_I} becomes
\begin{eqnarray} \nonumber
Y(t) &= &\Phi(t) + \int_t^{T} f(t,s, Y(s), Z(t,s) ,U(t,s)) \dd s - \int_t^{T} Z(t,s) \dd W_s \\ \label{eq:second_special_BSVIE}
&& \qquad - \int_t^{T}  \int_{\bR^m}  U(t,s,x) \tpi(\dd s,\dd x) -\int_t^{T} \dd M(t,s). 
\end{eqnarray}
For this BSVIE, we provide {\it existence and uniqueness of M-solutions in $L^p$-space} of \eqref{eq:second_special_BSVIE} (Theorem \ref{thm:Hp_M_solution}). To the best of our knowledge, there is no existence and uniqueness result for BSVIEs with $L^p$ coefficients in a general filtration. 

Another contribution is the study of the regularity of the map $t\mapsto Y(t)$. For the solution of the BSDE \eqref{eq:poss_BSDE}, from the c\`adl\`ag regularity of all martingales, $Y$ inherits the same time regularity. For BSVIE, we only require that the paths of $Y$ are in $\bL^2(0,T)$ (or in $\bL^p(0,T)$). Essentially because we assume that $\Phi$ and $t\mapsto f(t,\ldots)$ are also only in $\bL^2(0,T)$. In \cite{yong:08}, it is proved that under weak regularity conditions on the data, then the solution of  \eqref{eq:brownian_BSVIE} $t\mapsto Y(t)$ is continuous from $[0,T]$ to $\bL^2(\Omega)$. Let us stress that the Malliavin calculus is used to control the $Z(s,t)$ term in the generator. Similarly we show that the paths $t \in[0,T] \mapsto Y(t) \in \bL^p(\Omega)$ of the solution of the BSVIE \eqref{eq:second_special_BSVIE} are c\`adl\`ag if roughly speaking $\Phi$ and $t\mapsto f(t,\ldots)$ satisfy the same property. However this first property does not give a.s. continuity of the paths of $Y$ in general. Getting an almost sure continuity is a more challenging issue and is proved in \cite{wang:zhan:07} for the BSVIE \eqref{eq:brownian_BSVIE} when $f$ does not depend on $Z(s,t)$, assuming a H\"older continuity property of $t\mapsto f(t,\ldots)$ for a constant $\Phi(t) = \xi$. To understand the difficulty, let us recall that if $f$ does not depend on $y$, the solution $Y$ of the BSVIE \eqref{eq:second_special_BSVIE} is obtained by the formula: $Y(t) = \lambda(t,t)$ where $\lambda(t,\cdot)$ is the solution of the related BSDE parametrized by $t$. In the Brownian setting, a.s. $s\mapsto \lambda(t,s)$ is continuous. Using the Kolmogorov continuity criterion, the authors show that $(t,s) \mapsto \lambda(t,s)$ is bi-continuous, which leads to a continuous version of $Y$. To our best knowledge, there is not equivalent result to the Kolmogorov criterion for c\`adl\`ag paths. Hence we assume that the free term $\Phi$ and the generator $f$ are H\"older continuous. Thus we sketch the arguments of \cite{wang:zhan:07} to obtain that {\it a.s. the paths of $Y$ are c\`adl\`ag} (Theorem \ref{thm:as_regularity_p_2}) if we know that the data $\Phi$ and $f$ are H\"older continuous w.r.t. $t$, meaning that the jumps only come from the martingale parts in the BSVIE. Relaxing the regularity of the data is still an open question.

\bigskip

Up to now, we only provide results for Type-I BSVIEs \eqref{eq:general_BSVIE_type_I} or \eqref{eq:second_special_BSVIE}. In the last part of the paper, we expand some results to the Type-II BSVIEs \eqref{eq:general_BSVIE} or in the It\^o setting: 
\begin{eqnarray} \nonumber
Y(t) &= &\Phi(t) + \int_t^{T} f(t,s, Y(s), Z(t,s) ,Z(s,t),U(t,s),U(s,t)) \dd s  \\ \label{eq:Ito_general_BSVIE}
&-& \int_t^{T} Z(t,s) \dd W_s -  \int_t^{T}  \int_{\bR^m} U(t,s,x) \tpi(\dd s,\dd x) -\int_t^{T} \dd M(t,s).
\end{eqnarray}
As explained in the introduction of \cite{yong:08}, for Type-II BSVIEs, the notion of M-solution is crucial to ensure uniqueness of the solution. To define the terms $Z$ and $U$ on the set $\Delta(R,T)=\{ (t,s) \in [R,T]^2, \ R \leq s \leq t \leq T\}$, the martingale representation is used in \cite{yong:08}: for almost every $t \in [R,T]$, 
\begin{equation} \label{eq:brownian_def_mart_Delta_S_T}
Y(t) = \bE \left[ Y(t)  \bigg| \cF_R \right]  + \int_R^t Z(t,s) \dd W_s. 
\end{equation}
The existence is justified since $Y \in \bL^2(\Omega \times [R,T])$. However for the BSVIE \eqref{eq:general_BSVIE} we only except
$$\bE \left[  \int_S^T e^{\beta A_t}|Y(t)|^2 \alpha_t^2 \dd B_t \right] < +\infty.$$
And $A = \int \alpha_s^2 \dd B_s$ and $B$ are random processes. To get around this issue, we could define
$$\mathcal Y(t) = \int_S^t Y(s) \alpha_s^2 \dd B_s.$$
The Cauchy-Schwarz inequality yields to
\begin{eqnarray*}
|\mathcal Y(t)|^2 & \leq & \frac{1}{\delta} e^{-\delta A_t}  \int_t^T e^{\delta A_s}|Y(s)|^2 \alpha_s^2 \dd B_s.
\end{eqnarray*}
Hence for almost all $t \in [R,T]$, $\mathcal Y(t)$ is in $\bL^2(\Omega)$, thus we could define
\begin{equation} \label{eq:def_mart_Delta_S_T}
\mathcal Y(t)  = \bE \left[  \mathcal Y(t)  \bigg| \cF_R \right] +  \int_R^t Z(t,s) \dd X^\circ_s + \int_R^t \int_{\bR^m} U(t,s,x) \dd \tpi (\dd s ,\dd x)+ \int_R^t \dd M(t,s).
\end{equation}
Integrating the relation \eqref{eq:brownian_def_mart_Delta_S_T} between $R$ and $t$ and using Fubini's theorem leads to
\begin{eqnarray*}
\mathcal Y(t) = \int_R^t Y(r) \dd r & = & \bE \left[  \int_R^t Y(r) \dd r  \bigg| \cF_R \right]  + \int_R^t \int_S^r Z(r,u) \dd W_u \dd r \\
& = &  \bE \left[  \int_R^t Y(r) \dd r  \bigg| \cF_R \right]  + \int_R^t \left( \int_u^t Z(r,u)\dd r \right) \dd W_u \\
& = &  \bE \left[  \mathcal Y(t)  \bigg| \cF_R \right]  + \int_R^t  Z^\sharp(t,u) \dd W_u .
\end{eqnarray*}
Moreover 
\begin{eqnarray*}
\bE \int_R^T \left| \int_R^t Z^\sharp(t,s) \dd W_s \right|^2 \dd t & = & \bE \int_R^T \int_R^t |Z^\sharp(t,s)|^2 \dd s \dd t \\
& \leq & (T-R)^2   \bE  \int_R^T \int_R^r  |Z(r,s)|^2  \dd s \dd r .
\end{eqnarray*}
In other words a M-solution (in the sense of \cite{yong:08}) provides a ``integrated martingale''-solution.

Coming back to \eqref{eq:def_mart_Delta_S_T}, let us try to control the martingale part $\mart$ on $\Delta(R,T)$. We can easily obtain: for any $0 < \delta<  \beta$
$$\bE \int_S^T e^{\delta A_t} \left| \int_S^t \dd \mart(t,r) \right|^2 \dd A_t
\leq 2 \frac{e^{(\beta-\delta)\mathfrak f}}{\delta(\beta-\delta)} \bE \left[  \int_S^T  e^{\beta A_u}  |Y(u)|^2 \dd A_u \right].$$
But this weak norm on $\mart$ is not sufficient to control $Z$ and $U$ in the generator of the BSVIE \eqref{eq:general_BSVIE}. Since the process $A$ is supposed to be only predictable, we cannot claim that 
$$\bE \int_S^T e^{\delta A_t} \left| \int_S^t \dd \mart(t,r) \right|^2 \dd A_t = \bE \int_S^T e^{\delta A_t}  \int_S^t \dd \trace \langle \mart(t,\cdot)\rangle_r  \dd A_t.$$
It leads to a major issue.  Moreover the trick used in \cite{papa:poss:sapl:18}: 
\begin{eqnarray*}
\int_S^t e^{\delta A_r} \dd \trace \langle \mart(t,\cdot)\rangle_r & \leq & \delta \int_S^t e^{\delta A_u} \int_u^t \dd \trace \langle \mart(t,\cdot)\rangle_r \dd A_u \\
&+& e^{\delta A_S} \left( \trace \langle \mart(\cdot,t)\rangle_t - \trace \langle \mart(\cdot,t)\rangle_S \right) .
\end{eqnarray*}
is useless here since $e^{\delta A_u}$ is $\cF_u$-measurable and not $\cF_S$-measurable. {\it Due to this reason, we aren't able to address an existence and uniqueness result for the BSVIE \eqref{eq:general_BSVIE} in this general setting, but only for deterministic processes $B$ and $A$} (Theorem \ref{thm:type_II_general_BSVIE}). 

\bigskip
For Type-I BSVIEs in the It\^o case, $\mathfrak S^p$-solutions were considered with $p>1$. For Type-II BSVIEs, we were not able to provide the same extension. The reason can be understood just by considering the term $Z$. We prove that $Z \in  \bL^p(0,T;\bH^p(0,T))$, that is
$$\bE \int_0^T \left[ \int_0^T |Z(t,r)  |^2 \dd r \right]^{\frac{p}{2}} \dd t < + \infty.$$
 Indeed since $Z$ is integrated w.r.t. the Brownian motion $W$, the previous norm is natural.  But it is symmetric w.r.t. $(t,s)$ only for $p=2$. The two time variables $t$ and $r$ don't play the same role and the integrability property is not the same w.r.t. $t$ or w.r.t. $r$, except if $p=2$. Thereby in the BSVIE \eqref{eq:general_BSVIE}, we can use both $Z(t,s)$ and $Z(s,t)$ if $p=2$ (Proposition \ref{thm:H2_M_solution}). Let us also mention that in the case where the generator depends on the stochastic integrand w.r.t.\ a Poisson random measure, the case when $p < 2$ has to be handled carefully. Indeed in this case Burkholder-Davis-Gundy inequality with $p/2<1$ does not apply and the $L^{p/2}$-norm of the predictable projection cannot be controlled by the $L^{p/2}$-norm of the quadratic variation (see \cite{leng:lepi:prat:80} and the discussion in \cite{krus:popi:17}). The extension to $p\neq 2$ seems difficult to prove and is left for further research. We also point out that $p\geq 2$ implies that $ \bL^p(S,T;\bH^p(R,T)) \subset  \bL^2(S,T;\bH^2(R,T))$ and $ \bL^p(S,T;\bL^p_\pi(R,T)) \subset \bL^2(S,T;\bL^2_\pi(R,T))$:
$$\bE \int_S^T \left(  \int_R^T |Z(t,r)|^2 \dd r \right) \dd t \leq C \left( \bE  \int_S^T \left[ \left( \int_R^T |Z(t,r)|^2 \dd r \right)^{\frac{p}{2}} \right] \dd t \right)^{\frac{2}{p}}$$
and
$$\bE\int_S^T \|U(t,\cdot)|^2_{\bL^2_\pi(R,T)} \ \dd t  \leq C\left[  \bE \int_S^T \left( \|U(t,\cdot)|^2_{\bL^2_\pi(R,T)}\right)^{\frac{p}{2}} \dd t\right]^{\frac{2}{p}}.$$
For $1<p<2$, this property fails. In the proof of Theorem \ref{thm:Hp_M_solution} we point out the generated troubles if $p\neq 2$. 

Finally we prove a duality principle for the BSVIE \eqref{eq:Ito_general_BSVIE} provided we know that the solution $X$ of the forward SVIE is itself c\`adl\`ag (see \cite{prot:85}). Note the importance of the time regularity here. This result is the first step for comparison principle for this kind of BSVIEs.

\subsubsection*{Decomposition of the paper}

The paper is decomposed as follows. In the first section, we give the mathematical setting and recall some results concerning the existence and uniqueness of the solution of a BSDE \eqref{eq:poss_BSDE}. In the second part, we prove existence, uniqueness and comparison principle of the adapted solution of the Type-I BSVIE \eqref{eq:general_BSVIE_type_I}. The proof is essentially based on a fixed point argument as in \cite{papa:poss:sapl:18,yong:06}.  The third section is devoted to some additional properties for the Type-I BSVIE \eqref{eq:second_special_BSVIE}: $L^p$-solution ($p>1$) and time regularity.
In the fourth section, we prove that if the process $B$ is deterministic, the Type-II BSVIE \eqref{eq:general_BSVIE} has a unique M-solution, again by some fixed point argument. The existence and uniqueness of the M-solution for the It\^o case \eqref{eq:Ito_general_BSVIE} is deduced. However we provide a different proof, adapted from the ideas of \cite{yong:08}, and we point out the issues to extend this result to $L^p$-solution ($p\neq 2$). The appendix contains some proofs and auxiliary results.

\subsubsection*{Remaining open questions}

We address here some open problems. First of all, the existence and uniqueness of a M-solution for the BSVIE \eqref{eq:general_BSVIE} is not proved yet, except for a deterministic characteristic $B$. The second question concerns the comparison principle, at least for Type-I BSVIE \eqref{eq:general_BSVIE_type_I}, when $B$ is not continuous. At least the $L^p$-theory for the BSDE \eqref{eq:poss_BSDE} and thus for the BSVIEs \eqref{eq:general_BSVIE} or \eqref{eq:general_BSVIE_type_I} is a natural question.

\section{Setting and notations}  \label{sect:setting_not}

On $\bR^d$, $|.|$ denotes the Euclidean norm and $\bR^{d\times k}$ is identified with the space of real matrices with $d$ rows and $k$ columns. If $z \in  \bR^{d\times k}$, we have $|z|^2 = \trace(zz^*)$. For any metric space $G$, $\cB(G)$ is the Borel $\sigma$-field. 

Our probabilistic setting is the same as the one of Papapantoleon et al. \cite{papa:poss:sapl:18}. We recall the main notations but the details can be found in this paper, especially in Section 2, and are left to the reader. Along this paper, we consider a filtered probability space $(\Omega,\cF,\bP,\bF = (\cF_t)_{t\geq 0})$ such that it is a complete stochastic basis in the sense of Jacod and Shiryaev \cite{jaco:shir:03}. Without loss of generality we suppose that all semimartingales are c\`adl\`ag\footnote{French acronym for right continuous with left limit}, that is they have a.s. right continuous paths with left limits. 
The space $\cH^2(\bR^p)$ denotes the set of $\bR^p$-valued, square-integrable $\bF$-martingales and $\cH^{2,d}(\bR^p)$ is the subspace of $\cH^2(\bR^d)$ consisting of purely discontinuous square-integrable martingales. $\cP$ is the predictable $\sigma$-field on $\Omega \times [0,T]$ and $\tP=\cP \otimes \mathcal{B}(\bR^n)$. On $\tOm = \Omega \times [0,T] \times \bR^n$, a function that is $\tP$-measurable, is called predictable. 

The required notions on stochastic integrals are recalled in \cite[Section 2.2]{papa:poss:sapl:18}. In particular for $X \in \cH^2(\bR^p)$, if
$$\langle X \rangle  = \int_{(0,\cdot]} \dfrac{\dd\langle X \rangle_s }{\dd B_s} \dd B_s $$
for $B$ predictable non-decreasing and c\`adl\`ag,  they defined the stochastic integral of $Z$ w.r.t $X$, denoted $Z\cdot X$ or $\int_0 Z_s \dd X_s$, on the space $\bH^2(X)$ of the predictable processes $Z : \Omega \times \bR_+ \to \bR^{d\times k}$ such that 
$$\bE \int_0^\infty \trace \left( Z_t \dfrac{\dd\langle X \rangle_t }{\dd B_t} Z_t^{\top} \right) \dd B_t < +\infty.$$
Moreover $\pi^{X}$ is the $\bF$-optional integer-valued random measure on $\bR_+\times \bR^m$ defined by 
$$\pi^X(\omega;\dd t,\dd x) = \sum_{s> 0} \mathbf 1_{\Delta X_s(\omega)\neq 0} \delta_{(s,\Delta X_s(\omega))}(\dd t,\dd x).$$
And the stochastic integral of $U$ w.r.t. $\pi^X$, denoted $U\star \pi^X$, the compensator $\nu^X$ of $\pi^X$, the compensated integer-valued random measure $\tpi^X$ and the stochastic integral of $U$ w.r.t. $\tpi^X$, denoted $U\star \tpi^X$, are also defined on the space $\bH^2(X)$ of $\bF$-predictable processes $U :  \Omega \times \bR_+ \times \bR^n \to \bR^{d}$ such that 
$$\bE \left[\int_0^\infty \dd \trace \left[ \langle U\star \tpi^X\rangle_t\right] \right]< +\infty.$$
Recall that here we summarize all definitions; all details can be found in \cite{jaco:shir:03,papa:poss:sapl:18} for the interested reader.

{\bf In the rest of the paper, we fix:}
\begin{itemize}
\item An $\bR^{k+m}$-valued, $\cF \otimes \cB(\bR_+)$-measurable process $\overline X = (X^\circ,X^\natural)$ such that 
$$\overline X^T \in \cH^2(\bR^k) \times \cH^{2,d}(\bR^m)$$
with 
$$M_{\pi^{(X^\natural)^T}} \left[ \Delta ((X^\circ)^T) \big| \tP\right] = 0.$$ 

Here $X^T$ is the process stopped at time $T$: $X^T_t = X_{t\wedge T}$, $t\geq 0$.
$M_\pi\left[ \cdot  \big| \tP\right]$ is the conditional $\bF$-predictable projection on $\pi$ (see \cite[Definition 2.1]{papa:poss:sapl:18}). To simplify the notations, $\pi^\natural = \pi^{(X^\natural)^T}$, $\tpi^\natural =  \tpi^{(X^\natural)^T}$. 
Recall that $\nu^\natural$ is the compensator of $\pi^\natural$, that is the $\bF$-predictable random measure on $\bR_+ \times \bR^m$ for which $\bE [ U\star \mu^\natural] = \bE [U \star \nu^\natural]$ for every non-negative $\bF$-predictable function $U$. 
\item A non-decreasing predictable and c\`adl\`ag $B$ such that each component of $\langle X^\circ \rangle$ is absolutely continuous with respect to $B$ and the disintegration property given $B$ holds for the compensator $\nu^\natural$, that is there exists a transition kernel $K$ such that 
\begin{equation} \label{eq:decomp_kernel_C}
\nu^\natural (\omega;\dd t,\dd x) = K_t(\omega;\dd x) \dd B_t
\end{equation}
(see \cite[Lemma 2.9]{papa:poss:sapl:18}). This property is called Assumption ($\mathbb C$) in \cite{papa:poss:sapl:18}. \item $b$ is $\bF$-predictable process defined in \cite[Remark 2.11]{papa:poss:sapl:18} by:
$$b= \left( \dfrac{\dd \langle X^\circ \rangle}{\dd B} \right)^{\frac{1}{2}}.$$ 
\end{itemize}
The process $B$ is the first component of the characteristics of the semimartingale $\overline{X}$ (see \cite[Definition II.2.6]{jaco:shir:03}). 
 \begin{Exp} \label{exp:brownian_poisson_setting}
In Section \ref{sect:Ito_setting_BSVIE}, $X^\circ$ is the $k$-dimensional Brownian motion $W$, $B_t = t$, $b=1$ and $\tpi^\natural$ is the compensated Poisson random measure $\tpi$, with the compensator $\nu(\dd t,\dd x)=\dd t \mu(\dd x)$, where $\mu$ is $\sigma$-finite on $\bR^m$ such that
$$\int_{\bR^m} (1\wedge |x|^2) \mu(\dd x) <+\infty.$$
In this particular case, $K_t(\omega;\dd x)=\mu(\dd x)$. 
These spaces are classically used for BSDEs with Poisson jumps (see among others \cite{delo:13,krus:popi:14}). 
 
\end{Exp}
\begin{Exp}
The preceding example can be generalized to the case where the compensator of $\pi$ is random and equivalent to the measure $ \dd t \otimes \mu (\omega, \dd x)$ with a bounded density for example (see the introduction of \cite{bech:06}). 
\end{Exp}

\begin{Exp}
In \cite[Section 3.3]{papa:poss:sapl:18}, the authors cite the counterexample of \cite{conf:fuhr:jaco:16}. They also provide two other examples just after their remark 3.19. 
\end{Exp}

The notion of orthogonal decomposition plays a central role here. Inspired by \cite{jaco:shir:03} and defined in \cite[Definition 2.2]{papa:poss:sapl:18}, if $Y \in \cH^2(\bR^d)$, then the decomposition
$$Y = Y_0 + Z \cdot X^\circ + U \star \tpi^\natural + N$$
is called {\it orthogonal decomposition of $Y$} w.r.t. $\overline{X} = (X^\circ,X^\natural)$ if 
\begin{enumerate}
\item $Z \in \bH^2(X^\circ)$ and $U \in \bH^2(\pi^\natural)$,
\item the martingales $Z\cdot X^\circ$ and $U\star \tpi^\natural$ are orthogonal,
\item $N \in \cH^2(\bR^d)$ with $\langle N,X^\circ\rangle = 0$ and $M_{\pi^\natural}\left[ \Delta N \big| \tP\right]=0$.
\end{enumerate}
The statements \cite[Propositions 2.5 and 2.6, Corollary 2.7]{papa:poss:sapl:18} give us the existence and the uniqueness of such orthogonal decomposition. 

\subsection{Setting and known results for the BSDE \eqref{eq:poss_BSDE}} \label{ssect:poss_BSDE}

We consider the BSDE \eqref{eq:poss_BSDE}
\begin{equation*}
Y_t = \xi + \int_t^T f(s,Y_s,Z_s,U_s) \dd B_s -\int_t^T Z_s \dd X^\circ_s - \int_t^T \int_{\bR^m} U_s(x) \tpi^\natural(\dd s,\dd x) - \int_t^T \dd M_s.
\end{equation*}
For some $\beta \in \bR$ and some c\`adl\`ag increasing and measurable process $A$, let us define the spaces used to obtain the solution of the BSDE \eqref{eq:poss_BSDE}. For ease of notation, the dependence on $A$ is suppressed. 
\begin{eqnarray*}
\bL^2_{\beta,\cF_T} & = & \left\{ \xi , \ \bR^d\mbox{-valued,} \ \cF_T\mbox{-measurable,} \ \|\xi\|^2_{\bL^2_\beta} = \bE \left[ e^{\beta A_T} |\xi|^2 \right]< +\infty\right\}, \\
\cH^2_\beta(R,S) & = & \left\{ M \in \cH^2,\ \|M\|^2_{\cH^2_\beta(R,S)} =  \bE \left[ \int_R^S e^{\beta A_t} \dd\trace(\langle M \rangle_t)  \right]< +\infty\right\}, \\
\bH^2_\beta(R,S) & = & \left\{ \phi \  \bR^d\mbox{-valued, }\bF\mbox{-optional semimartingale with c\`adl\`ag paths and } \right.\\
&&\quad  \ \left. \|\phi\|^2_{\bH^2_\beta(R,S)} =  \bE \left[ \int_R^S e^{\beta A_t} |\phi(t)|^2 \dd B_t  \right]< +\infty\right\}, \\
\bH^{2,\circ}_\beta(R,S) & = & \left\{ Z \in \bH^2(X^\circ),\ \|Z\|^2_{\bH^{2,\circ}_\beta(R,S)} =  \bE \left[ \int_R^S e^{\beta A_t} \dd\trace(\langle Z\cdot X^\circ \rangle_t)  \right]< +\infty\right\}, \\
\bH^{2,\natural}_\beta(R,S) & = & \left\{ U \in \bH^2(X^\natural),\ \|U\|^2_{\bH^{2,\natural}_\beta(R,S)} =  \bE \left[ \int_R^S e^{\beta A_t} \dd\trace(\langle U\star \tpi^\natural \rangle_t)  \right]< +\infty\right\}, \\
\cH^{2,\perp}_\beta(R,S) & = & \left\{ M \in \cH^2(\overline{X}^\perp),\ \|M\|^2_{\cH^{2,\perp}_\beta(R,S)} =  \bE \left[ \int_R^S e^{\beta A_t} \dd\trace(\langle M \rangle_t)  \right]< +\infty\right\}, \\
\end{eqnarray*}
Let us introduce some additional notations. For $U$ a $\bF$-predictable function, we define
$$\widehat K_t(U_t(\omega;\cdot))(\omega)=\int_{\bR^m} U_t(\omega;x)K_t(\omega,\dd x),$$
where $K$ is the kernel defined by \eqref{eq:decomp_kernel_C}. Hence
$$\widehat U^\natural_t(\omega) = \int_{\bR^m} U_t(\omega,x)\nu^\natural(w;\{t\}\times \dd x) = \widehat K_t(U_t(\omega;\cdot))(\omega)\Delta B_t(\omega).$$
And 
$$\zeta^\natural_t = \int_{\bR^m} \nu^\natural(w;\{t\}\times \dd x) .$$
The justification of the definition of the norms on the preceding spaces is given by \cite[Lemma 2.12]{papa:poss:sapl:18}.
\begin{Lemma} \label{lem:equiv_norm_poss}
Let $(Z,U) \in  \bH^{2,\circ}_\beta(R,S) \times \bH^{2,\natural}_\beta(R,S)$. Then
$$ \|Z\|^2_{\bH^{2,\circ}_\beta(R,S)} = \bE \left[ \int_R^S e^{\beta A_t} \left\| b_t Z_t\right\|^2 \dd B_t  \right],$$
and
$$ \|U\|^2_{\bH^{2,\natural}_\beta(R,S)} =  \bE \left[ \int_R^S e^{\beta A_t}  \vertiii{ U_t(\cdot) }^2_t \dd B_t  \right],$$
where for every $(t,\omega) \in \bR_+\times \Omega$
$$\left( \vertiii{ U_t(\omega;\cdot) }^2_t(\omega)\right)^2=\widehat K^{\overline{X}}_t(|U_t(\omega;\cdot) - \widehat U_t^\natural(\omega)|^2)(\omega) + (1-\zeta_t^\natural) \Delta B_t(\omega)|\widehat K^{\overline{X}}_t(U_t(\omega;\cdot))(\omega)|^2 \geq 0.$$
Furthermore
$$\| Z\cdot X^\circ + U\star \tpi^\natural\|^2_{\cH^2_\beta} =  \|Z\|^2_{\bH^{2,\circ}_\beta} + \|U\|^2_{\bH^{2,\natural}_\beta}.$$
\end{Lemma}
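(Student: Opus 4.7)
The statement is essentially a recap of \cite[Lemma 2.12]{papa:poss:sapl:18}, so my plan is to follow that source, but I will try to lay out how the three claims come out of the structural assumptions made in this section.

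For the first identity, I would start from the very definition
$$\|Z\|^2_{\bH^{2,\circ}_\beta(R,S)} = \bE \left[ \int_R^S e^{\beta A_t} \dd\trace(\langle Z\cdot X^\circ \rangle_t)  \right]$$
and use the associativity of the stochastic integral to write $\dd\trace\langle Z\cdot X^\circ\rangle_t = \trace(Z_t \dd\langle X^\circ\rangle_t Z_t^{\top})$. By the standing assumption that each component of $\langle X^\circ\rangle$ is absolutely continuous w.r.t. $B$, together with the definition $b_t = (\dd\langle X^\circ\rangle/\dd B)^{1/2}$, this equals $\trace(b_t Z_t (b_t Z_t)^{\top}) \dd B_t = \|b_t Z_t\|^2 \dd B_t$, proving (i).

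For the second identity I would invoke the standard formula for the predictable quadratic variation of a stochastic integral with respect to a compensated integer-valued random measure (see Jacod–Shiryaev II.1.33): for $U\in \bH^2(\pi^\natural)$,
$$\dd\trace\langle U\star \tpi^\natural\rangle_t = \widehat K_t\bigl(|U_t(\cdot) - \widehat U^\natural_t|^2\bigr) \dd B_t + (1-\zeta^\natural_t)\bigl|\widehat K_t(U_t(\cdot))\bigr|^2 \Delta B_t \dd B_t ,$$
after using the disintegration $\nu^\natural(\dd t,\dd x) = K_t(\dd x)\dd B_t$ from Assumption $(\mathbb C)$ and the definitions of $\widehat U^\natural_t$ and $\zeta^\natural_t$. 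The right-hand side is exactly $(\vertiii{U_t(\cdot)}^2_t)^2 \dd B_t$, and integrating against $e^{\beta A_t}$ on $[R,S]$ yields (ii). The non-trivial bookkeeping concerns the atomic part of $B$: the term $(1-\zeta^\natural_t)\Delta B_t |\widehat K_t(U_t)|^2$ appears precisely because on the atoms of $B$ the difference between the raw integral $|U|^2\star\nu^\natural$ and the compensator of the square is carried by $\widehat U^\natural$; this is the delicate step to reproduce carefully.

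Finally, for the orthogonality identity, I would note that by the standing assumption $M_{\pi^\natural}\!\left[\Delta (X^\circ)^T \big| \tP\right] = 0$, the purely continuous (w.r.t.\ $X^\circ$) and purely discontinuous (w.r.t.\ $\tpi^\natural$) stochastic integrals are strongly orthogonal martingales, i.e.\ $\langle Z\cdot X^\circ, U\star\tpi^\natural\rangle \equiv 0$. Therefore $\trace\langle Z\cdot X^\circ + U\star\tpi^\natural\rangle = \trace\langle Z\cdot X^\circ\rangle + \trace\langle U\star\tpi^\natural\rangle$, and integrating against $e^{\beta A_t}\dd t$ gives the Pythagorean decomposition (iii). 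The main obstacle is step (ii); once the predictable projection formula for $\langle U\star\tpi^\natural\rangle$ is correctly written in the presence of atoms of $B$, the rest is bookkeeping.
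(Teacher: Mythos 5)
Your proposal is correct and follows exactly the route of the cited source: the paper itself gives no proof of this lemma but simply recalls it as \cite[Lemma 2.12]{papa:poss:sapl:18}, and your three steps (the definition of $b$ via $\dd\langle X^\circ\rangle/\dd B$, the Jacod--Shiryaev formula for $\langle U\star\tpi^\natural\rangle$ combined with the disintegration $\nu^\natural(\dd t,\dd x)=K_t(\dd x)\dd B_t$, and strong orthogonality from $M_{\pi^\natural}[\Delta(X^\circ)^T\mid\tP]=0$) are precisely the ingredients of that proof. Your identification of the atomic correction term $(1-\zeta^\natural_t)\Delta B_t|\widehat K_t(U_t)|^2$ as the delicate point is also accurate.
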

Finally we define 
\begin{equation} \label{eq:def_space_mathfrak_H}
\mathfrak H = \left\{ U:[0,T]\times \Omega\times \bR^m \to \bR^d, \ U_t(\omega;\cdot) \in \mathfrak H_{t,\omega} \ \mbox{for } \dd B\otimes\dd \bP- \mbox{a.e. } (t,\omega)\right\}
\end{equation}
where
 $$ \mathfrak H_{t,\omega} =  \left\{ u: \bR^m \to \bR^d, \ \vertiii{u(\cdot)}_t < +\infty \right\}.$$

\begin{Rem} 
In the setting of the example \ref{exp:brownian_poisson_setting}, $\widehat U^\natural \equiv 0$ and 
\begin{eqnarray*}
\vertiii{u(\cdot)} &=& \int_{\bR^m} |u(x)|^2 \mu(\dd x) = \|u\|^2_{\bL^2_\mu(\bR^m)} ,\\
 \|U\|^2_{\bH^{2,\natural}_\beta(R,S)} &= & \bE \left[ \int_R^S e^{\beta A_t}  \int_{\bR^m} |U_t(x)|^2 \mu(\dd x) \dd t  \right],\\
 \|Z\|^2_{\bH^{2,\circ}_\beta(R,S)} &=& \bE \left[ \int_R^S e^{\beta A_t} \left\|  Z_t\right\|^2 \dd t  \right].
 \end{eqnarray*}
 Then we can define the generator on $\bL^2_\mu(\bR^m)$ instead of $\mathfrak H$ in Condition {\rm \ref{H2}}.
\end{Rem}
 
We define the product space 
$$\bD^2_\beta(0,T)= \bH^2_\beta(0,T) \times \bH^{2,\circ}_\beta(0,T) \times \bH^{2,\natural}_\beta(0,T) \times \cH^{2,\perp}_\beta(0,T)$$ 
and assuming that $A = \alpha^2 \cdot B$ for a measurable process $\alpha : \Omega \times \bR_+ \to \bR_+$, we search a solution $(Y,Z,U,M)$ of the BSDE \eqref{eq:poss_BSDE} such that 
$$(\alpha Y, Z, U, M) \in  \bD^2_\beta(0,T).$$

Now let us describe the conditions on the parameters $(\xi,f)$ for the BSDE \eqref{eq:poss_BSDE}.
\begin{enumerate}[label=\textbf{(F\arabic*)}]
\item \label{F1} The terminal condition $\xi$ belongs to $\bL^2_{\delta,\cF_T}$ for some $\delta > 0$. 
\item \label{F2} The generator $f : \Omega \times \bR_+ \times \bR^d \times \bR^{d\times m} \times \mathfrak H \to \bR^d $ is such that for any $(y,z,u) \in \bR^d \times \bR^{d\times m} \times \mathfrak H $, the map $(\omega,t)\mapsto f(\omega,t,y,z,u)$ is $\cF_t \otimes \cB([0,t])$-measurable. Moreover there exist
$$\varpi:(\Omega\times \bR_+, \cP) \to \bR_+, \qquad \vartheta = (\theta^\circ,\theta^\natural) : (\Omega\times \bR_+, \cP) \to (\bR_+)^2$$
such that for $\dd B\otimes \dd \bP$-a.e. $(t,\omega)$
\begin{eqnarray*}
&& |f(\omega,t,y,z,u_t(\omega;\cdot)) - f(\omega,t,y',z',u'_t(\omega;\cdot)) |^2 \\
&&\quad \leq \varpi_t(\omega)|y-y'|^2 + \theta^\circ_t(\omega)\|c_t(\omega)(z-z')\|^2 + \theta^\natural_t(\omega)(\vertiii{u_t(\omega;\cdot)-u'_t(\omega;\cdot)}_t(\omega))^2
\end{eqnarray*}
\item \label{F3} Let $\alpha^2 = \max(\sqrt{\varpi},\theta^\circ,\theta^\natural)>0$ and define the increasing, $\bF$-predictable and c\`adl\`ag process
\begin{equation}\label{eq:def_A}
A_t = \int_0^t \alpha_s^2 \dd B_s.
\end{equation}
There exists $\mathfrak f > 0$ such that 
$$\Delta A_t(\omega) \leq \mathfrak f, \ \mbox{for }\dd B\otimes \dd \bP-\ \mbox{a.e. } (t,\omega).$$ 
\item \label{F4} For the same $\delta$ as in \ref{F1}, 
$$\bE \left[ \int_0^T e^{\delta A_t} \frac{|f(t,0,0,\mathbf 0)|^2}{\alpha_t^2} \dd B_t \right] < +\infty.$$
$\mathbf 0$ denotes the null application from $\bR^m$ to $\bR$. 
\end{enumerate}
Let us recall the existence and uniqueness result of \cite{papa:poss:sapl:18}. 
\begin{Thm}{\cite[Theorem 3.5, Corollary 3.6]{papa:poss:sapl:18}} \label{thm:exis_sol_poss}
Assume that the paramater $(\xi,f)$ verifies all conditions {\rm \ref{F1}} to {\rm \ref{F4}}, and suppose that 
\begin{equation}\label{eq:cond_exist_beta_poss}
\kappa^\mathfrak f(\delta)=\frac{9}{\delta} + \dfrac{\mathfrak f^2(2+9\delta)}{\sqrt{\delta^2 \mathfrak f^2+4}-2} \exp \left( \dfrac{\delta \mathfrak f + 2 -\sqrt{\delta^2 \mathfrak f^2+4} }{2} \right) < \dfrac{1}{2}.
\end{equation}
Then the BSDE \eqref{eq:poss_BSDE} has a unique solution $(Y,Z,U,M)$ such that $(\alpha Y, Z, U,M) \in \bD_\delta^2(0,T)$. In particular if $\delta$ is sufficient large and $18 e \mathfrak f < 1$, then the BSDE has a unique solution in $\bD_\delta^2(0,T)$. 
\end{Thm}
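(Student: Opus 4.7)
The plan is to prove Theorem \ref{thm:exis_sol_poss} by a contraction argument (Picard iteration) in the product space $\bD^2_\delta(0,T)$, equipped with the weighted norm induced by the process $A$ from \eqref{eq:def_A}. Define the map $\Psi : \bD^2_\delta(0,T)\to \bD^2_\delta(0,T)$ by setting $\Psi(y,z,u,m)=(Y,Z,U,M)$, where $(Y,Z,U,M)$ is the unique solution of the frozen BSDE
\begin{equation*}
Y_t = \xi + \int_t^T f(s,y_s,z_s,u_s)\dd B_s - \int_t^T Z_s \dd X^\circ_s - \int_t^T\int_{\bR^m} U_s(x) \tpi^\natural(\dd s,\dd x) - \int_t^T \dd M_s.
\end{equation*}
To define $(Z,U,M)$, I would apply the orthogonal decomposition recalled after Example \ref{exp:brownian_poisson_setting} to the square-integrable martingale $N_t := \bE\bigl[\xi + \int_0^T f(s,y_s,z_s,u_s)\dd B_s \,\big|\, \cF_t\bigr]$, whose $L^2$-integrability follows from \ref{F1}, \ref{F2}, \ref{F4} together with Cauchy--Schwarz applied to the $\dd B$-integral, using the weights $e^{\delta A}$ and the definition of $\alpha$.

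The core step is to derive an \emph{a priori} estimate and a contraction estimate on $\Psi$ by applying Itô's formula to $e^{\delta A_t}|Y_t|^2$. Because $A$ is only c\`adl\`ag, the formula produces the usual predictable drift $\delta e^{\delta A_{t-}}|Y_{t-}|^2\dd A_t$ \emph{plus} two kinds of jump corrections: the purely predictable correction $e^{\delta A_{t-}}(e^{\delta \Delta A_t}-1-\delta \Delta A_t)|Y_{t-}|^2$ and the cross/quadratic-jump terms mixing $\Delta Y$ with $\Delta A$. Integrating from $t$ to $T$, taking conditional expectation, and invoking \ref{F2} together with Lemma \ref{lem:equiv_norm_poss} to match the stochastic-integral energies $\|Z\|^2_{\bH^{2,\circ}_\delta}$ and $\|U\|^2_{\bH^{2,\natural}_\delta}$ with $\|b(z-z')\|^2$ and $\vertiii{u-u'}^2$, I can bound the generator increment by $\alpha_s^2\bigl(|y_s-y'_s|^2+\|b_s(z_s-z'_s)\|^2+\vertiii{u_s-u'_s}_s^2\bigr)$. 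A Young-type inequality $2ab\le \eta a^2 + \eta^{-1}b^2$ then reabsorbs the cross term with $\dd B$ against the exponential drift $\delta\dd A$, producing a linear inequality of the form
\begin{equation*}
\|(Y-Y',Z-Z',U-U',M-M')\|^2_{\bD^2_\delta} \le \kappa^{\mathfrak f}(\delta)\,\|(y-y',z-z',u-u',m-m')\|^2_{\bD^2_\delta}.
\end{equation*}

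The hard part is tracking the jump contributions precisely enough to get the stated constant $\kappa^{\mathfrak f}(\delta)$. The elementary bound $e^x - 1 - x \le \tfrac{x^2}{2}e^{x^+}$ applied with $x=\delta \Delta A_t\le \delta \mathfrak f$ (via \ref{F3}) converts the jump correction into a factor controlled by $\mathfrak f^2 e^{\delta \mathfrak f}$; optimizing the Young parameter $\eta$ against the jump size $\mathfrak f$ produces the closed-form expression $\mathfrak f^2(2+9\delta)/(\sqrt{\delta^2\mathfrak f^2+4}-2)\exp\bigl((\delta\mathfrak f + 2 - \sqrt{\delta^2\mathfrak f^2+4})/2\bigr)$ appearing in \eqref{eq:cond_exist_beta_poss}, while the term $9/\delta$ comes from the standard Lipschitz-in-$y$ quadratic absorption. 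Under \eqref{eq:cond_exist_beta_poss}, $\kappa^{\mathfrak f}(\delta)<1/2$, so $\Psi$ is a strict contraction and the Banach fixed point theorem yields a unique solution in $\bD^2_\delta(0,T)$.

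For the corollary, observe that $\kappa^{\mathfrak f}(\delta)\to 18 e \mathfrak f$ as $\delta\to\infty$ (the first summand $9/\delta$ vanishes and the exponential factor tends to $1/e$ via $\sqrt{\delta^2\mathfrak f^2+4}\sim \delta \mathfrak f$); hence if $18 e \mathfrak f < 1$, then for all sufficiently large $\delta$ the contraction criterion is met, giving existence and uniqueness in $\bD^2_\delta(0,T)$. The main technical obstacle throughout is the careful bookkeeping of jump terms of the predictable process $B$ (hence of $A$) when applying Itô's formula to an exponentially weighted square: every estimate must separate the continuous drift from a sum over predictable jump times, and it is precisely that separation that forces the appearance of $\mathfrak f$ and the somewhat unwieldy form of $\kappa^{\mathfrak f}(\delta)$.
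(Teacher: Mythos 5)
First, a point of comparison: the paper does not prove this statement. Theorem \ref{thm:exis_sol_poss} is quoted from \cite{papa:poss:sapl:18} (their Theorem 3.5 and Corollary 3.6), so there is no internal proof to check you against; the relevant benchmark is the proof in that reference, whose machinery the present paper reuses (e.g.\ in Lemma \ref{lem:param_BSDE->BSVIE}). Measured against that, your architecture is the right one: freeze the generator, build $(Z,U,M)$ by the orthogonal decomposition of the closed square-integrable martingale, and run a Banach fixed point in $\bD^2_\delta(0,T)$ with contraction constant $\kappa^{\mathfrak f}(\delta)$.

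Where you genuinely diverge is in the central estimate. You propose It\^o's formula on $e^{\delta A_t}|Y_t|^2$ followed by bookkeeping of jump corrections. The cited proof deliberately avoids this route: it starts from the representation $Y_t=\bE[\xi+\int_t^T f\,\dd B_s\,|\,\cF_t]$, applies Cauchy--Schwarz against the weight $e^{\delta A}$, and controls $\int_t^T e^{\delta A_s}\,\dd\trace\langle M\rangle_s$ through the summation-by-parts inequality
$\int_t^T e^{\delta A_s}\,\dd\trace\langle M\rangle_s \le \delta\int_t^T e^{\delta A_u}\int_u^T \dd\trace\langle M\rangle_s\,\dd A_u + e^{\delta A_t}\left(\trace\langle M\rangle_T-\trace\langle M\rangle_t\right)$,
exploiting that $e^{\delta A_u}$ is $\cF_u$-measurable so the inner bracket increment can be replaced by a conditional second moment of $Y$. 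Your route must instead control the cross terms $\Delta A_s\,\Delta Y_s$ at the predictable jump times of $B$, and the identification of the weighted square of a martingale increment with the corresponding weighted bracket is precisely the measurability obstruction the paper highlights in its introduction when explaining why the Type-II case fails for random $B$. You do not explain how those terms are absorbed, and the claim that optimizing a Young parameter against $\mathfrak f$ ``produces'' the closed-form $\kappa^{\mathfrak f}(\delta)$ is asserted rather than derived, so the quantitative heart of the theorem is missing. Finally, your asymptotics for the corollary are off: the exponent in the exponential factor tends to $1$, so that factor tends to $e$ (not $1/e$), and $\kappa^{\mathfrak f}(\delta)\to 9e\mathfrak f$ (not $18e\mathfrak f$); the condition $\kappa^{\mathfrak f}(\delta)<\tfrac12$ for large $\delta$ is then equivalent to $18e\mathfrak f<1$, which is how the stated corollary actually arises.
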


Let us define for $\delta < \gamma \leq \beta$ the quantity
$$\Pi^{\mathfrak f}(\gamma,\delta) =   \frac{11}{\delta} + 9 \frac{e^{(\gamma-\delta)\mathfrak f}}{(\gamma-\delta)}.$$ 
The next technical lemma is equivalent to \cite[Lemma 3.4]{papa:poss:sapl:18}. Its proof is postponed in the Appendix. 
\begin{Lemma} \label{lem:infimum}
The infimum of $ \Pi^{\mathfrak f}(\gamma,\delta) $ over all $\delta < \gamma \leq \beta$ is given by $M^{\mathfrak f}(\beta)= \Pi^{\mathfrak f}(\beta,\delta^*(\beta)),$
where $\delta^*(\beta)$ is the unique solution on $(0,\beta)$ of the equation:
$$11(\beta-x)^2 - 9 e^{(\beta-x)\mathfrak f} x^2 (\mathfrak f(\beta-x)-1)=0.$$
Moreover $\displaystyle \lim_{\beta\to +\infty} M^{\mathfrak f}(\beta) = 9e\mathfrak f.$
\end{Lemma}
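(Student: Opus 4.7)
The plan is to reduce the two-variable optimization by first minimizing over $\gamma$ for fixed $\delta$, then over $\delta$. The key monotonicity comes from $h(u):=e^{u\mathfrak f}/u$, which by $h'(u)=e^{u\mathfrak f}(\mathfrak f u-1)/u^2$ is strictly decreasing on $(0,1/\mathfrak f)$ and strictly increasing on $(1/\mathfrak f,+\infty)$, with unique global minimum $h(1/\mathfrak f)=e\mathfrak f$. Writing $u=\gamma-\delta\in(0,\beta-\delta]$, the partial infimum in $\gamma$ is thus $11/\delta+9e\mathfrak f$ (interior, attained at $\gamma=\delta+1/\mathfrak f$) when $\delta\leq\beta-1/\mathfrak f$, and equals $\Pi^{\mathfrak f}(\beta,\delta)=11/\delta+9h(\beta-\delta)$ (boundary, attained at $\gamma=\beta$) when $\delta>\beta-1/\mathfrak f$.

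Next I would minimize in $\delta$. On the first range the partial infimum $11/\delta+9e\mathfrak f$ is strictly decreasing, and by continuity it matches the second-range expression at the transition $\delta=\beta-1/\mathfrak f$. On the second range, set $G(\delta):=\Pi^{\mathfrak f}(\beta,\delta)$; a direct computation gives
$$G'(\delta)=-\frac{11}{\delta^{2}}+9\,\frac{e^{(\beta-\delta)\mathfrak f}\bigl(1-\mathfrak f(\beta-\delta)\bigr)}{(\beta-\delta)^{2}},$$
with $G'\bigl((\beta-1/\mathfrak f)^{+}\bigr)=-11/(\beta-1/\mathfrak f)^{2}<0$ (the second term vanishes there) and $G'(\beta^{-})=+\infty$. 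Hence $G$ attains an interior minimum at some $\delta^{*}(\beta)\in(\beta-1/\mathfrak f,\beta)$ with $G(\delta^{*})<G(\beta-1/\mathfrak f)$, so the global infimum is $M^{\mathfrak f}(\beta)=\Pi^{\mathfrak f}(\beta,\delta^{*}(\beta))$. To obtain uniqueness of $\delta^{*}(\beta)$ and the implicit equation, I would recast $G'(\delta)=0$ as $F(\delta)=H(\delta)$, with $F(\delta):=(11/9)\bigl((\beta-\delta)/\delta\bigr)^{2}$ strictly decreasing from $+\infty$ to $0$ on $(0,\beta)$, and $H(\delta):=e^{(\beta-\delta)\mathfrak f}(1-\mathfrak f(\beta-\delta))$. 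Substituting $v=\beta-\delta$ yields $dH/dv=-\mathfrak f^{2}ve^{v\mathfrak f}<0$, so $H$ is strictly increasing in $\delta$ on $(\beta-1/\mathfrak f,\beta)$ from $0$ to $1$; the curves $F$ and $H$ therefore cross exactly once, and clearing denominators produces the characterizing equation.

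For the limit, since $\delta^{*}(\beta)>\beta-1/\mathfrak f$ we have $\delta^{*}(\beta)\to+\infty$, hence $11/\delta^{*}(\beta)\to 0$. Strict monotonicity of $F$ gives $H(\delta^{*}(\beta))=F(\delta^{*}(\beta))<F(\beta-1/\mathfrak f)=11/\bigl(9\mathfrak f^{2}(\beta-1/\mathfrak f)^{2}\bigr)\to 0$; combined with the strict monotonicity of $H$ on $(\beta-1/\mathfrak f,\beta)$ and $H(\beta-1/\mathfrak f)=0$, this forces $\beta-\delta^{*}(\beta)\to 1/\mathfrak f$. Consequently $M^{\mathfrak f}(\beta)=11/\delta^{*}(\beta)+9h(\beta-\delta^{*}(\beta))\to 9h(1/\mathfrak f)=9e\mathfrak f$. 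The only real obstacle is this final asymptotic control of $\delta^{*}(\beta)$, but the strict monotonicity of $F$ and $H$ on either side of the transition point $\beta-1/\mathfrak f$ makes it essentially automatic; everything else is classical one-variable calculus.
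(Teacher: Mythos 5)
Your proof is correct and, although it covers the same ground as the paper's argument, it organizes the two-variable minimization in the opposite order and is more complete on two points. The paper first fixes $\gamma$, locates a minimizer $\delta^*(\gamma)$ from the blow-up of $\Pi^{\mathfrak f}(\gamma,\cdot)$ at both endpoints, then studies $\partial_\gamma \Pi^{\mathfrak f}$ and observes that along the critical curve $\gamma=\delta+1/\mathfrak f$ the value $11/\delta+9e\mathfrak f$ has no interior critical point, so the infimum must sit on the boundary $\gamma=\beta$; you instead minimize in $\gamma$ first through the elementary function $h(u)=e^{u\mathfrak f}/u$, which makes the reduction to $\gamma=\beta$ transparent and absorbs the case $\beta\mathfrak f\le 1$ without a separate discussion (there your ``first range'' is empty and $G'(0^{+})=-\infty$ replaces the evaluation at $(\beta-1/\mathfrak f)^{+}$; you should say this explicitly, since as written that evaluation is meaningless when $\beta\le 1/\mathfrak f$). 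More substantively, your monotone-crossing argument with $F$ strictly decreasing and $H$ strictly increasing actually \emph{proves} the uniqueness of $\delta^*(\beta)$, which the paper asserts but never establishes, and it simultaneously yields $\beta-\delta^*(\beta)\to 1/\mathfrak f$, which the paper instead extracts from the implicit equation. Finally, note that clearing denominators in $F(\delta)=H(\delta)$ gives $11(\beta-x)^2-9e^{(\beta-x)\mathfrak f}x^2\bigl(1-\mathfrak f(\beta-x)\bigr)=0$, i.e.\ the last factor is $1-\mathfrak f(\beta-x)$ rather than $\mathfrak f(\beta-x)-1$ as displayed in the lemma: the displayed equation carries a sign slip (it is inconsistent with the paper's own derivative computation and with $\delta^*(\beta)>\beta-1/\mathfrak f$), and your version is the correct one; this does not affect the limit, since either normalization forces $\mathfrak f(\beta-\delta^*(\beta))\to 1$ and hence $M^{\mathfrak f}(\beta)\to 9e\mathfrak f$.
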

For $M^{\mathfrak f}(\beta) < 1/2$, we will also consider the quantities
\begin{equation} \label{eq:op_constants_beta_f}
\Sigma^\mathfrak f(\beta) = \dfrac{2 M^{\mathfrak f}(\beta)}{1-2M^{\mathfrak f}(\beta)},\qquad \widetilde \Sigma^\mathfrak f(\beta) = \Sigma^\mathfrak f(\beta) \dfrac{e^{(\beta-\delta^*(\beta))\mathfrak f}}{\beta-\delta^*(\beta)}.
\end{equation}

\subsection{The It\^o setting and related BSDEs} \label{ssect:Ito_setting}

In Section \ref{sect:Ito_setting_BSVIE}, we assume that the processes are It\^o semimartingales in the sense of \cite[Definition 1.16]{ait:jaco:15}. Hence the process $B$ is now deterministic and equal to $B_t=t$.  The semimartingale $\bar X$ can be represented by the Grigelionis form\footnote{In general only on a very good filtered extension of the original probability space. But using the remarks of \cite[section 1.4.3]{ait:jaco:15}, we can assume that this form of $\bar X$ holds on the original filtered probability space.}. Up to some modifications in the generator\footnote{In general we should take into account a possible degeneracy of the coefficients. This possibility leads to a non Lipschitz continuous new version of the generator $f$. We ignore this trouble here.}, the BSDE \eqref{eq:poss_BSDE} takes the next form:
\begin{eqnarray}  \nonumber
Y(t) &= & \xi + \int_t^{T} f(s, Y(s), Z(s),U(s,\cdot) ) \dd s \\ \label{eq:Ito_BSDE}
& -&  \int_t^{T} Z(s)  \dd W_s - \int_t^T \int_{\bR^m} U(s,x)  \tpi(\dd s,\dd x) - \int_t^T \dd M(s),
\end{eqnarray}
where
\begin{itemize}
\item $W$ is a Brownian motion
\item $\tpi$ is a Poisson random measure on $[0,T]\times \bR^m$, with intensity $\dd t \otimes \mu(\dd x)$.
\end{itemize}
In this setting, the BSVIE \eqref{eq:general_BSVIE} becomes the BSVIE \eqref{eq:Ito_general_BSVIE}:
\begin{eqnarray*} \nonumber
Y(t) &= &\Phi(t) + \int_t^{T} f(t,s, Y(s), Z(t,s) ,Z(s,t),U(t,s),U(s,t)) \dd s  \\ 
&-& \int_t^{T} Z(t,s) \dd W_s -  \int_t^{T}  \int_{\bR^m} U(t,s,x) \tpi(\dd s,\dd x) -\int_t^{T} \dd M(t,s).
\end{eqnarray*}
Let us first recall some standard notations.
\begin{itemize}
\item $\bS^p(0,T)$ is the space of all $\bF$-adapted \cad processes $X$ such that
$\bE \left(  \sup_{t\in [0,T]} |X_t|^p \right) < +\infty.$
\item $\bH^p(0,T)$ is the subspace of all predictable processes $X$ such that
$\bE \left[ \left( \int_0^T |X_t|^2 \dd t\right)^{p/2} \right] < +\infty.$
In this setting, $\bH^{2,\circ}_\beta(0,T)$ (with $\beta=0$) is equal to $\bH^2(0,T)$. 
\item $\cH^p(0,T)$ is the space of all martingales such that
$\bE \left[ \left( \langle M \rangle_T \right)^{p/2}\right] < +\infty.$
The space $\cH^{2,\perp}_\beta(0,T)$, with $\beta=0$, becomes now $\cH^{2,\perp}(0,T)$ and we define in a similar way $\cH^{p,\perp}(0,T)$ as a subspace of $\cH^p(0,T)$. 
\item $\bL^p_\pi(0,T) = \bL^p_{\pi}(\Omega\times [0,T] \times \bR^m)$ is the set of processes $\psi$ such that
$$\bE \left[ \left(  \int_0^T \int_{\bR^m} |\psi_s(x)|^2 \pi(\dd s,\dd x) \right)^{p/2} \right] < +\infty .$$
For $p=2$, it corresponds to $\bH^{2,\natural}_\beta(0,T)$ with $\beta=0$ (see Lemma \ref{lem:equiv_norm_poss}). 
\item $\bL^p_\mu=\bL^p(\bR^m,\mu;\bR^d)$ is the set of measurable functions $\psi : \bR^m \to \bR^d$ such that
$\| \psi \|^p_{\bL^p_\mu} = \int_{\bR^m} |\psi(x)|^p \mu(\dd x)  < +\infty .$
\item $\bD^p(0,T) = \bS^p(0,T) \times \bH^p(0,T) \times \bL^p_\pi(0,T) \times \cH^{p,\perp}(0,T)$.
\end{itemize}
For the BSDE \eqref{eq:poss_BSDE}, the $L^p$-theory has not been developed yet. But in the It\^o case the next result is proved in \cite{krus:popi:17}. Let us reinforce the condition \ref{H3}:
\begin{enumerate}[label=\textbf{(H\arabic**)}]
\setcounter{enumi}{2}
\item \label{H3star} 
There exists a constant $K$ such that a.s. for any $s\in [0,T]$ and $t \in [0,s]$,  
$$K^2 \geq \max(\sqrt{\varpi_t(\omega)},\theta^\circ_{t,s}(\omega),\theta^\natural_{t,s}(\omega)).$$ 
\end{enumerate}
\begin{Prop}\label{prop:BSDE_Lp_sol_exist}
Assume that for any $(y,z,\psi)$, $f(\cdot,y,z,\psi)$ is progressively measurable and that {\rm \ref{H2}} and {\rm \ref{H3star}} hold. 
If 
$$ \bE \left(|\xi|^p +  \int_0^T |f(t,0,0,\mathbf{0})|^p dt \right) < +\infty,$$
there exists a unique solution $(Y,Z,U,M)$ in $\bD^p(0,T)$ to the BSDE \eqref{eq:Ito_BSDE}. Moreover for some constant $C=C_{p,K,T}$
\begin{eqnarray*}
&& \bE \left[\sup_{t\in[0,T]}   |Y_t|^p +  \left( \int_0^T  |Z_t|^2 \dd t \right)^{p/2} +  \left(  \int_0^T  \int_{\bR^m} |U_s(x)|^2 \pi(\dd s, \dd x) \right)^{p/2} + \left(  \langle M \rangle_T \right)^{p/2}\right] \\
&& \qquad \qquad \leq C \bE \left[|\xi|^p  + \left( \int_0^T |f(r,0,0,\mathbf{0})| \dd r \right)^p \right].
\end{eqnarray*}
\end{Prop}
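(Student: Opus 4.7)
The plan is to follow the now-standard strategy for $L^p$-solutions of Lipschitz BSDEs with jumps, as carried out in \cite{krus:popi:17}: establish a priori $L^p$-estimates via It\^o's formula applied to $|Y|^p$, and then deduce both uniqueness and existence by a contraction argument in the product space $\bD^p(0,T)$ equipped with a suitable exponentially weighted norm.

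First, I would treat the trivial driver case. If $f=g(s)$ depends only on $s$ with $\bE\int_0^T |g(s)|^p \dd s<+\infty$, then setting $N_t=\bE\bigl[\xi+\int_0^T g(s)\dd s\mid \cF_t\bigr]$ and applying the predictable representation in the filtration generated by $W$ and $\tpi$ (plus the orthogonal martingale part $M$), one obtains $Y_t = N_t - \int_0^t g(s)\dd s$ together with $(Z,U,M)$ via martingale representation. Doob's inequality together with the Burkholder-Davis-Gundy inequality applied to $N$ then yields the desired $\bD^p$-norm bound in terms of $\bE\bigl[|\xi|^p+(\int_0^T|g|\dd s)^p\bigr]$.

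Next, for the general Lipschitz case, I would derive a priori estimates. Given any solution $(Y,Z,U,M)\in\bD^p$, apply It\^o's formula (in the Meyer form for $x\mapsto |x|^p$, or first to $(|Y|^2+\eps)^{p/2}$ and let $\eps\downarrow 0$) to $e^{\beta t}|Y_t|^p$. Using \ref{H2}--\ref{H3star} and Young's inequality on the $p\,|Y|^{p-1}\hat Y \cdot f(s,Y,Z,U)\dd s$ term, for $\beta$ large enough one obtains, after conditioning and applying BDG to the martingale terms,
\begin{equation*}
\bE\Bigl[\sup_{t\in[0,T]} e^{\beta t}|Y_t|^p + \Bigl(\int_0^T |Z_s|^2\dd s\Bigr)^{p/2} + \Bigl(\int_0^T\!\!\int_{\bR^m}|U_s(x)|^2 \pi(\dd s,\dd x)\Bigr)^{p/2} + \langle M\rangle_T^{p/2}\Bigr]
\end{equation*}
is bounded by $C\bE\bigl[|\xi|^p+(\int_0^T|f(r,0,0,\mathbf 0)|\dd r)^p\bigr]$. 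This estimate immediately yields uniqueness by applying it to the difference of two solutions.

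Finally, existence is obtained by a Picard iteration. Define $\Psi:\bD^p(0,T)\to\bD^p(0,T)$ by $(y,z,u,m)\mapsto(Y,Z,U,M)$ where the latter solves the BSDE with driver $f(s,y_s,z_s,u_s(\cdot))$; this is well defined by the trivial-driver step because $s\mapsto f(s,y_s,z_s,u_s)$ lies in $\bL^p(\Omega\times[0,T])$ by \ref{H2}, \ref{H3star} and the assumed integrability on $f(\cdot,0,0,\mathbf 0)$. Applying the same a priori estimate to the difference of two iterates shows that on $\bD^p$ endowed with the norm weighted by $e^{\beta t}$, $\Psi$ is a strict contraction for $\beta=\beta(p,K)$ large enough; the Banach fixed point gives the solution, and the stated bound then follows from the a priori estimate. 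The main technical obstacle is the case $1<p<2$: the BDG inequality with exponent $p/2<1$ cannot be applied to the purely discontinuous martingale directly in terms of $\pi$-integrals, and one must control the $\bL^p_\pi$-norm of $U$ from the quadratic variation $\langle U\star\tpi\rangle_T$ via the Lenglart-L\'epingle-Pratelli inequalities, as emphasized in the discussion preceding the statement and carried out in \cite{krus:popi:17}.
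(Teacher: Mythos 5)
The paper does not actually prove this proposition: it is stated as a known result and attributed directly to \cite{krus:popi:17}. Your sketch reproduces precisely the strategy of that reference (martingale representation for the driver-free case, a priori $L^p$-estimates via It\^o's formula on $e^{\beta t}|Y_t|^p$ with the Meyer/regularization device for $p<2$, a weighted contraction, and the Lenglart--L\'epingle--Pratelli substitute for BDG on the jump part when $p/2<1$), so it is correct and follows essentially the same route the paper relies on.
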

Condition \ref{H3star} is certainly too strong for this result and could be relaxed. Thereby our results concerning BSVIEs in the It\^o setting could be extended to non Lipschitz-continuous w.r.t. $y$ driver $f$ as in \cite{wang:zhan:07} (using a concave function $\rho$) or if $K$ becomes a function of $(\omega,s)$ with a suitable integrability condition (see \cite{yong:08} Condition (3.13)). 
Nonetheless such extensions would increase the length of the paper and are left for further research.

\section{Existence, uniqueness and comparison principle for the Type-I BSVIE \eqref{eq:general_BSVIE_type_I}.} \label{sect:general_Type_I_BSVIE}

%
%
%

Concerning BSVIE, we take the same notations as in \cite{yong:08}, we only adapt them to our setting and thus we skip some details (see \cite[Section 2.1]{yong:08} for interesting readers). For $0\leq R\leq S\leq T$ we denote
\begin{eqnarray*}
\Delta[R,S] & = & \{ (t,s) \in [R,S]^2 , \ R \leq s \leq t \leq S\},\\
\Delta^c[R,S] & = & \{ (t,s) \in [R,S]^2 , \ R \leq t < s  \leq S\}= [R,S]^2 \setminus \Delta[R,S] .
\end{eqnarray*}
This notation should not be confused with the jump of a process. 

We fix some $\beta \in \bR_+$, some $0\leq \delta \leq \beta$ and some c\`adl\`ag increasing and measurable process $A$. Again to simplify the notations, the dependance on $A$ is suppressed. 
First
\begin{eqnarray*}
\bL^2_{\beta,\cF_T}(0,T) &= &\Bigg\{ \phi : (0,T)\times \Omega \to \bR^d , \ \cB([0,T])\otimes\cF_T-\mbox{measurable with } \\
&&\qquad \qquad \left. \bE \left[ \int_0^T e^{\beta A_t} \left(  e^{\beta A_T} |\phi(t)|^2 \right) \dd B_t\right]< +\infty \right\}.
\end{eqnarray*}
The above space is for the free term $\Phi(\cdot)$ (for which the $\bF$-adaptiveness is not required). When $\bF$-adaptiveness is required, that is for $Y(\cdot)$, we define
\begin{eqnarray*}
\bL^2_{\beta,\bF}(0,T) &= &\Bigg\{ \phi : (0,T)\times \Omega \to \bR^d , \ \cB([0,T])\otimes\cF_T-\mbox{measurable and } \bF-\mbox{adapted with } \\
&&\qquad \qquad \left. \bE \left[ \int_0^T e^{\beta A_t} \left(  |\phi(t)|^2 \right) \dd B_t\right] < +\infty \right\}.
\end{eqnarray*}

To control the martingale terms in the Type-I BSVIE, we need other spaces. 
We define 
$$\cH_{\delta\leq \beta}^2(\Delta^c(R,T))$$
the set of processes $M(\cdot,\cdot)$ such that for $t\in [R,T]$, $M(t,\cdot)=\{M(t,s), \ s \geq t\}$ belongs to $\cH^2(\bR^d)$ and 
$$ \bE \left[ \int_R^T e^{(\beta-\delta) A_t} \int_t^T e^{\delta A_s} \dd  \trace \langle M(t,\cdot)\rangle_s    \dd A_t \right] <+\infty.$$
In the particular case where $M(t,\cdot) = \int_S^\cdot Z(t,s) \dd X^\circ_s$, then $M \in \cH_{\gamma,\delta}^2(\Delta^c(R,T))$
 is equivalent to $Z \in \bH^{2,\circ}_{\delta\leq \beta}(\Delta^c(R,T))$ if $Z(t,\cdot)$ belongs to $\bH^{2,\circ}_\delta(t,T)$ and from Lemma \ref{lem:equiv_norm_poss}
$$  \bE \left[ \int_R^T e^{(\beta-\delta) A_t} \int_t^T e^{\delta A_s} \dd  \trace \langle M(t,\cdot)\rangle_s    \dd A_t \right] = \bE \left[ \int_R^T e^{(\beta-\delta) A_t} \int_t^T e^{\delta A_s} \left\| b_s Z(t,s)\right\|^2 \dd B_s \dd A_t  \right].$$
Note that we strongly use that we work on $\Delta^c(R,T)$ and that $A_t$ is $\cF_t$-measurable to obtain this equality. For Type-II BSVIEs this point is an issue. Similarly for $U(t,\cdot) \in \bH^{2,\natural}_\delta(t,T)$, the martingale $M$ defined by $M(t,\cdot) = U(t,\cdot) \star \tpi^\natural$ is in $\cH_{\delta \leq \beta}^2(\Delta^c(R,T))$ if
\begin{eqnarray*}
&& \bE \left[ \int_R^T e^{(\beta-\delta) A_t} \int_t^T e^{\delta A_s} \dd  \trace \langle M(t,\cdot)\rangle_s    \dd A_t \right] \\
&& = \bE \left[ \int_R^T e^{(\beta-\delta) A_t} \int_t^T e^{\delta A_s} \vertiii{U(t,\cdot)}_s^2 \dd B_s \dd A_t  \right] < +\infty.
\end{eqnarray*}
In this case $U \in \bH^{2,\natural}_{\delta\leq \beta}(\Delta^c(R,T))$. To lighten the notations, we define the martingale for $t \leq u \leq T$
\begin{equation} \label{eq:def_mart_sharp}
\mart(t,u) = \int_t^u Z(t,s) \dd X^\circ_s + \int_t^u \int_{\bR^m} U(t,s,x) \tpi^\natural(\dd s,\dd x) + M(t,u), 
\end{equation}
such that the BSVIE \eqref{eq:general_BSVIE_type_I} becomes
\begin{eqnarray}\label{eq:BSVIE_gene_filtr_type_I}
Y(t) &= &\Phi(t) + \int_t^{T} f(t,s, Y(s), Z(t,s) ,U(t,s)) \dd B_s - \left(\mart(t,T) - \mart(t,t)\right). 
\end{eqnarray}
Using \cite[Corollary 2.7]{papa:poss:sapl:18}, $\mart$ belongs to $\cH_{\delta \leq \beta}^2(\Delta^c(R,T))$ if and only if the triplet $(Z,U,M)$ belongs to $\bH^{2,\circ}_{\delta\leq \beta}(\Delta^c(R,T)) \times \bH^{2,\circ}_{\delta\leq \beta}(\Delta^c(R,T)) \times \cH^{2,\perp}_{\delta\leq \beta}(\Delta^c(R,T))$:
\begin{eqnarray} \nonumber
&& \bE \left[  \int_0^Te^{(\beta-\delta) A_t } \left(  \int_t^T e^{\delta A_r} \dd \trace [\langle Z(t,\cdot)\cdot X^\circ\rangle_r ] \right) \dd A_t \right. \\ \nonumber
&&\qquad + \int_0^T e^{(\beta-\delta) A_t }\left( \int_t^T e^{\delta A_r} \dd \trace [\langle U(t,\cdot)\star \tpi^\natural \rangle_r ]  \right) \dd A_t\\ \nonumber
&&\qquad + \left. \int_0^T  e^{(\beta-\delta) A_t } \left( \int_t^T e^{\delta A_r} \dd \trace [\langle M(t,\cdot) \rangle_r ] \right) \dd A_t  \right] < +\infty.
\end{eqnarray}
Finally we define
\begin{eqnarray*}
\mathfrak{S}^2_{\delta\leq \beta}(\Delta^c(0,T))&=& \bL^2_{\beta,\bF}(0,T) \times\bH^{2,\circ}_{\delta\leq \beta}(\Delta^c(0,T)) \times \bH^{2,\circ}_{\delta\leq \beta}(\Delta^c(0,T)) \times \cH^{2,\perp}_{\delta\leq \beta}(\Delta^c(0,T))
\end{eqnarray*}
with the naturally induced norm. If $\delta=\delta(\beta)$ is a known function of $\beta$, we denote $\mathfrak{S}^2_{\delta\leq \beta}(\Delta^c(0,T))$  by $\mathfrak{S}^2_{\beta}(\Delta^c(0,T))$.

The notion of solution of a Type-I BSVIE is the following.
\begin{Def}[Adapted solution] \label{def:BSVI_adapted_sol}
A quadruple $(Y,Z,U,M)$ is called an adapted solution of the Type-I BSVIE \eqref{eq:general_BSVIE_type_I} if $(Y,Z,U,M)$ belongs to $\mathfrak{S}^2_{\delta\leq \beta}(\Delta^c(0,T))$ for some $\delta \leq \beta$ and if the equation is satisfied a.s. for almost all $t\in [0,T]$. 
\end{Def}
Note that if $(Y,Z,U,M)$ is a solution of the BSDE \eqref{eq:poss_BSDE} in $\bD_\beta^2(0,T)$, then $Y \in  \bL^2_{\beta,\bF}(0,T)$ and  
\begin{eqnarray*}
&&  \bE \left[ \int_0^T e^{(\beta-\delta) A_t} \int_t^T e^{\delta A_s} \left\| b_s Z(s)\right\|^2 \dd B_s \dd A_t  \right] \\
&& \quad = \bE \left[ \int_0^T \int_0^s e^{(\beta-\delta) A_t} e^{\delta A_s} \left\| b_s Z(s)\right\|^2  \dd A_t \dd B_s \right]  \\
&& \quad \leq \dfrac{e^{(\beta-\delta)\mathfrak f}}{\beta-\delta} \bE \left[ \int_0^T e^{\beta A_s} \left\| b_s Z(s)\right\|^2 \dd B_s \right] < +\infty.
\end{eqnarray*}
In other words $(Z,U,M)$ is in $\bH^{2,\circ}_{\delta\leq \beta}(\Delta^c(0,T)) \times \bH^{2,\circ}_{\delta\leq \beta}(\Delta^c(0,T)) \times \cH^{2,\perp}_{\delta\leq \beta}(\Delta^c(0,T))$, and thus $(Y,Z,U,M)$ is an adapted solution of the  BSVIE \eqref{eq:general_BSVIE_type_I} where $\Phi(t) = \xi$ and $f$ does not depend on $t$.

For the Type-II BSVIE \eqref{eq:general_BSVIE}, as it pointed out in \cite{yong:08}, uniqueness of an adapted solution fails. Roughly speaking, there is an additional freedom on $\Delta(0,T)$. To avoid this problem, the next definition is formulated in \cite{yong:08}.
\begin{Def}[M-solution]\label{def:BSVI_M_sol}
Let $S \in [0,T)$. A quadruple $(Y,Z,U,M)$ is called an adapted M-solution of \eqref{eq:general_BSVIE} on $[S,T]$ if \eqref{eq:general_BSVIE} holds in the usual It\^o sense for almost all $t \in [S,T]$ and, in addition, the following holds: for a.e. $t \in [S,T]$
\begin{equation} \label{eq:M_sol_def}
Y(t) = \bE \left[ Y(t) |\cF_S\right] + \int_S^t Z(t,s) \dd X^\circ_s + \int_S^t \int_{\bR^m} U(t,s,x)\tpi^\natural(\dd s,\dd x) + \int_S^t \dd M(t,s). 
\end{equation}
\end{Def}
Note that we keep the notion of M-solution of \cite{yong:08}, where the letter M stands for ``a martingale representation'' for $Y(t)$ to determine $Z(\cdot,\cdot)$, $U(\cdot,\cdot)$ and $M(\cdot,\cdot)$ on $\Delta[S,T]$. It should not be confused with the orthogonal martingale part $M$. As in \cite{yong:08}, any M-solution on $[S,T]$ is also a M-solution on $[\bar S,T]$ with $\bar S \in (S,T)$.

\subsection{Assumptions and main result}

Let us precise the assumptions on the free term $\Phi$ and on the generator $f$ of the BSVIE \eqref{eq:general_BSVIE_type_I}. 
\begin{enumerate}[label=\textbf{(H\arabic*)}]
\item \label{H1} $\Phi \in \bL^2_{\beta,\cF_T}(0,T)$.

\item \label{H2} The driver $f$ is defined on $\Omega \times \Delta^c(0,T) \times \bR^{d} \times \bR^{d\times m} \times \mathfrak H \to \bR^d$ and we assume that for any fixed $(t,y,z,u)$ the process $f(t,\cdot,y,z,u)$ is progressively measurable. Moreover there exist
$$\varpi:(\Omega\times \Delta^c(0,T), \cP) \to \bR_+, \qquad \vartheta = (\theta^\circ,\theta^\natural) : (\Omega\times \Delta^c(0,T), \cP) \to (\bR_+)^2$$
such that for $\dd B \otimes \dd B \otimes \dd \bP$-a.e. $(t,s,\omega)$
\begin{eqnarray*}
&& |f(\omega,t,s,y,z,u_s(\omega;\cdot)) - f(\omega,t,s,y',z',u'_s(\omega;\cdot)) |^2 \\
&&\quad \leq \varpi(\omega,t,s)|y-y'|^2 + \theta^\circ(\omega,t,s)\|c_s(\omega)(z-z')\|^2 \\
&&\qquad + \theta^\natural(\omega,t,s)(\vertiii{u_s(\omega;\cdot)-u_s(\omega;\cdot)}_s(\omega))^2.
\end{eqnarray*}
To simplify the notation in the sequel: $f^0(t,s) = f(t,s,0,0,\mathbf 0).$
\item \label{H3} The hypothesis \ref{F3} holds. Namely we assume that there exists $\alpha(\omega,s)>0$ defined on $\Omega \times [0,T]$ such that $\alpha^2_s(\omega) \geq \max(\sqrt{\varpi}(\omega,t,s),\theta^\circ(\omega,t,s),\theta^\natural(\omega,t,s))$ for $(\omega,t,s) \in \Omega \times \Delta^c(0,T)$. We define the process $A$ by \eqref{eq:def_A}.
For any fixed $t \in [0,T]$, $(\omega,r) \in \Omega \times [0,T] \mapsto A_{r}(\omega)$ is increasing, $\bF$-predictable and c\`adl\`ag. 
There exists $\mathfrak f > 0$ such that for any $t$
$$\Delta A_{r}(\omega) \leq \mathfrak f, \ \mbox{for }\dd B\otimes \dd \bP-\ \mbox{a.e. } (r,\omega).$$

\item \label{H4} For the same $\beta$ as in \ref{H1}, $\delta$ denotes the constant $\delta^*(\beta)$ of the above lemma \ref{lem:infimum}. We assume that
$$
\bE \left[ \int_0^T e^{(\beta-\delta) A_t} \left( \int_t^T e^{\delta A_s} \dfrac{|f(t,s,0,0,\mathbf 0)|^2}{\alpha_s^2} \dd B_s \right) \dd B_t \right] < +\infty.
$$
\item \label{H5} Let us define the following set
\begin{equation} \label{eq:subset_times_square_int}
\mathfrak T^{\Phi,f}_{\delta} = \left\{ t \in [0,T]: \bE  \left[ e^{\delta A_T} |\Phi(t)|^2 + \int_t^T e^{\delta A_s} \dfrac{|f(t,s,0,0,\mathbf 0)|^2}{\alpha_s^2} \dd B_s \right] < +\infty \right\}.
\end{equation}
We assume that the set $\mathfrak T^{\Phi,f}_{\delta}$ is equal to $[0,T]$. 
\end{enumerate}
Now we come back to the BSVIE \eqref{eq:general_BSVIE_type_I} and state our main result on this BSVIE.
\begin{Thm} \label{thm:type_I_general_BSVIE}
Assume that the conditions {\rm \ref{H1}} to {\rm \ref{H5}} hold. We also suppose that the constants $\kappa^\mathfrak f(\delta)$, $M^{\mathfrak f}(\beta)$ and $\widetilde \Sigma^\mathfrak f(\beta)$ defined by \eqref{eq:cond_exist_beta_poss}, in Lemma \ref{lem:infimum} and  by \eqref{eq:op_constants_beta_f} verify
\begin{equation} \label{eq:cond_beta_Type_I_BSVIE}
\kappa^\mathfrak f(\delta) < \dfrac{1}{2},\qquad M^{\mathfrak f}(\beta) < \dfrac{1}{2}, \qquad \widetilde \Sigma^\mathfrak f(\beta) < 1,
\end{equation}
where $\delta=\delta^*(\beta)$ is defined in Lemma \ref{lem:infimum}. Then the BSVIE \eqref{eq:general_BSVIE_type_I} has a unique adapted solution $(Y,Z,U,M)$ in $ \mathfrak{S}^2_{\beta}(\Delta^c(0,T))$. Moreover there exists a constant $\mathfrak C^\mathfrak f(\beta)$ such that 
\begin{eqnarray} \nonumber
&& \bE \left[ \int_0^T e^{ \beta A_t} |Y(t)|^2 \dd A_t +  \int_0^Te^{(\beta-\delta)A_t } \left(  \int_t^T e^{\delta A_r} \dd \trace [\langle Z(t,\cdot)\cdot X^\circ\rangle_r ] \right) \dd A_t \right. \\ \nonumber
&&\qquad + \int_0^T e^{(\beta-\delta)A_t }\left( \int_t^T e^{\delta A_r} \dd \trace [\langle U(t,\cdot)\star \tpi^\natural \rangle_r ]  \right) \dd A_t\\ \nonumber
&&\qquad + \left. \int_0^T  e^{(\beta-\delta)A_t } \left( \int_t^T e^{\delta A_r} \dd \trace [\langle M(t,\cdot) \rangle_r ] \right) \dd A_t  \right]\\ \label{eq:a_priori_estim_general_BSVIE_type_I}
&& \quad  \leq  \mathfrak C^\mathfrak f(\beta) \bE \left[ \int_0^T e^{\beta A_t} |\Phi(t)|^2  \dd A_t +  \int_0^T  e^{(\beta-\delta) A_t}  \int_t^T e^{\delta A_s}\dfrac{ |f^0(t,s)|^2}{\alpha_s^2} \dd B_s  \dd A_t \right] .
\end{eqnarray}
 Let $(\bar \Phi,\bar h)$ be a couple of data each satisfying the above assumptions {\rm \ref{H1}} to {\rm \ref{H5}}. Let $(\bar Y,\bar Z,\bar U,\bar M)$ a solution of the BSVIE \eqref{eq:general_BSVIE_type_I} with data $(\bar \Phi, \bar f)$. Define
$$(\fd Y,\fd  Z, \fd  U, \fd  M) = (Y-\bar Y, Z-\bar Z,U-\bar U,M-\bar M).$$
Then using \eqref{eq:def_mart_sharp} we have the following stability result:
\begin{eqnarray*} \nonumber
&& \bE \left[ \int_0^T e^{ \beta A_t} |\fd  Y(t)|^2 \dd A_t +  \int_0^T e^{(\beta-\delta) A_t}  \int_t^T e^{\delta A_s} \dd  \trace \langle \fd  \mart(t,\cdot)\rangle_s    \dd A_t \right] \\  \nonumber
&& \quad  \leq \mathfrak C^\mathfrak f(\beta) \bE \left[ \int_0^T e^{\beta A_t} |\fd  \Phi(t)|^2  \dd A_t\right] \\
&&\qquad +\mathfrak C^\mathfrak f(\beta) \bE \left[ \int_0^T  e^{(\beta-\delta) A_t}  \int_t^T e^{\delta A_s}\dfrac{ |\fd  f(t,s)|^2}{\alpha_s^2} \dd B_s  \dd A_t \right] 
\end{eqnarray*}
with
$$\fd  f(t,r)= f(t,r,Y(r),Z(t,r),U(t,r))-\bar f(t,r,Y(r),Z(t,r),U(t,r)).$$
\end{Thm}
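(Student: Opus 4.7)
I would read \eqref{eq:general_BSVIE_type_I} as a family, indexed by a frozen time $t\in[0,T]$, of BSDEs on $[t,T]$ of the form \eqref{eq:poss_BSDE} with terminal condition $\Phi(t)$ and generator $(s,y,z,u)\mapsto f(t,s,y,z,u)$. The assumptions \ref{H1}--\ref{H5} are tuned precisely so that, for each such $t$, hypotheses \ref{F1}--\ref{F4} of Theorem \ref{thm:exis_sol_poss} hold with the same triple $(\beta,\delta,\mathfrak f)$; combined with $\kappa^{\mathfrak f}(\delta)<1/2$ this delivers, for each $t$, a unique solution $(\mathcal Y^{(t)},\mathcal Z^{(t)},\mathcal U^{(t)},\mathcal M^{(t)})$ in the BSDE solution space on $[t,T]$. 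The BSVIE is then closed by coupling these parametric families through the outer $Y(s)$-slot of $f$ via a Banach fixed point.

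\textbf{The contraction map.} On the Banach space $\bL^2_{\beta,\bF}(0,T)$ equipped with the norm $\|y\|^2=\bE\int_0^T e^{\beta A_t}|y(t)|^2\dd A_t$, define $\Theta$ by $\Theta(y)(t):=\mathcal Y^{(t)}(t)$, and correspondingly $Z(t,s):=\mathcal Z^{(t)}(s)$, $U(t,s,\cdot):=\mathcal U^{(t)}(s,\cdot)$, $M(t,s):=\mathcal M^{(t)}(s)$, where
\[
\mathcal Y(s)=\Phi(t)+\int_s^T f(t,r,y(r),\mathcal Z(r),\mathcal U(r))\dd B_r-\int_s^T\!\!\mathcal Z(r)\dd X^\circ_r-\int_s^T\!\!\int_{\bR^m}\!\!\mathcal U(r,x)\tpi^\natural(\dd r,\dd x)-\int_s^T\!\!\dd\mathcal M(r).
\]
Joint $(t,s,\omega)$-measurability is obtained by performing the Picard scheme underlying Theorem \ref{thm:exis_sol_poss} simultaneously in $t$, preserving measurability iterate by iterate, and the whole object lies in $\mathfrak{S}^2_\beta(\Delta^c(0,T))$ after integration in $t$ with weight $e^{(\beta-\delta)A_t}\dd A_t$.

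\textbf{Contraction estimate and consequences.} Applying the BSDE a priori bound extracted from the proof of Theorem \ref{thm:exis_sol_poss} to the difference of the two $t$-indexed BSDEs driven by $y$ and $y'$ (with identical terminal $\Phi(t)$), the Lipschitz condition \ref{H2} together with $\varpi\leq\alpha^4$ yield
\[
\bE\!\left[e^{\delta A_t}|\Theta(y)(t)-\Theta(y')(t)|^2\right]\leq \Sigma^{\mathfrak f}(\beta)\,\bE\!\int_t^T e^{\delta A_s}|y(s)-y'(s)|^2\dd A_s.
\]
Multiplying by $e^{(\beta-\delta)A_t}$, integrating against $\dd A_t$, exchanging orders by Fubini (legitimate on $\Delta^c(0,T)$, since $A_t$ is $\cF_s$-measurable whenever $t\leq s$), and using $\Delta A\leq\mathfrak f$ to estimate $\int_0^s e^{(\beta-\delta)A_t}\dd A_t\leq \frac{e^{(\beta-\delta)\mathfrak f}}{\beta-\delta}e^{(\beta-\delta)A_s}$, I arrive at $\|\Theta(y)-\Theta(y')\|^2\leq\widetilde\Sigma^{\mathfrak f}(\beta)\|y-y'\|^2$, a strict contraction by \eqref{eq:cond_beta_Type_I_BSVIE}. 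The unique fixed point is the unique solution in $\mathfrak{S}^2_\beta(\Delta^c(0,T))$; the a priori bound \eqref{eq:a_priori_estim_general_BSVIE_type_I} follows from the BSDE $\bD^2_\delta$-estimate on each slice $[t,T]$ re-integrated in $t$ via the same Fubini trick, and the stability inequality is derived identically by treating $\fd\Phi$ and $\fd f$ as inhomogeneities and absorbing the $Y-\bar Y$ contribution through the gap $1-\widetilde\Sigma^{\mathfrak f}(\beta)>0$.

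\textbf{Main obstacle.} The chief technical difficulty I anticipate is ensuring the joint $(t,s,\omega)$-measurability of the parametric family $\{(\mathcal Y^{(t)},\mathcal Z^{(t)},\mathcal U^{(t)},\mathcal M^{(t)})\}_{t}$, which is what makes $(Z,U,M)$ an honest element of $\bH^{2,\circ}_{\delta\leq\beta}(\Delta^c(0,T))\times\bH^{2,\natural}_{\delta\leq\beta}(\Delta^c(0,T))\times\cH^{2,\perp}_{\delta\leq\beta}(\Delta^c(0,T))$ rather than just a $t$-sectional object. Addressing it requires executing the Picard iteration of Theorem \ref{thm:exis_sol_poss} \emph{uniformly} in $t$ and transferring the Cauchy property through the Fubini-compatible outer norm, rather than invoking the theorem pointwise in $t$ as a black box.
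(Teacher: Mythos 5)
Your proposal follows essentially the same route as the paper: the paper likewise freezes the outer $Y$-slot, solves the resulting $y$-free Type-I BSVIE by viewing it as the parametrized BSDE \eqref{eq:parametrized_BSDE} and invoking Theorem \ref{thm:exis_sol_poss} (this is Lemma \ref{lem:param_BSDE->BSVIE} and Proposition \ref{prop:type_I_BSVIE_without_Y}), and then runs a Banach fixed point whose contraction constant is exactly $\widetilde\Sigma^{\mathfrak f}(\beta)$, obtained via the same Fubini exchange on $\Delta^c(0,T)$ and the bound $\int_0^s e^{(\beta-\delta)A_t}\dd A_t\leq \frac{e^{(\beta-\delta)\mathfrak f}}{\beta-\delta}e^{(\beta-\delta)A_s}$; the a priori and stability estimates are then derived by the same absorption argument using $\widetilde\Sigma^{\mathfrak f}(\beta)<1$. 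The only cosmetic caveat is that your intermediate bound should keep the random weight $e^{(\beta-\delta)A_t}$ inside the expectation (i.e., work directly with the $t$-integrated norm, as the paper's Lemma \ref{lem:stab_BSVIE_h_poss} does) rather than multiplying an already-taken expectation by it.
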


Let us comment the condition \eqref{eq:cond_beta_Type_I_BSVIE}. The first part $\kappa^\mathfrak f(\delta)<1/2$ comes from (\cite[Theorem 3.5]{papa:poss:sapl:18} (see Theorem \ref{thm:exis_sol_poss}). The second condition is sufficient to have existence and uniqueness of the solution of the Type-I BSVIE if $f$ does not depend on $Y$ (see the BSVIE \eqref{eq:special_BSVIE} and Proposition \ref{prop:type_I_BSVIE_without_Y} below). The last part $\widetilde \Sigma^\mathfrak f(\beta) < 1$ ensures existence and uniqueness thanks to a fixed point argument. 

\begin{Rem}[Large values of $\beta$] \label{rem:large_values_beta}
Note that for $\beta$ large, $\kappa^\mathfrak f(\delta)<1/2$ and $M^{\mathfrak f}(\beta) < 1/2$ if and only if $18 e\mathfrak f < 1$ (as in \cite{papa:poss:sapl:18}). And since
$$\lim_{\beta \to + \infty}  \widetilde \Sigma^\mathfrak f(\beta)  = \dfrac{18(e\mathfrak f)^2}{1-18e\mathfrak f} ,$$
 $\widetilde \Sigma^\mathfrak f(\beta)  < 1$ if and only if $18e\mathfrak f < 3 (\sqrt{11}-3) < 1.$
In other words, if $18e\mathfrak f < 3 (\sqrt{11}-3) < 1$ (and $3 (\sqrt{11}-3) \approx 0.95$), then \eqref{eq:cond_beta_Type_I_BSVIE} holds for large values of $\beta$. 
\end{Rem}

\begin{Rem}[On Condition \ref{H5}]
If the assumptions {\rm \ref{H1}} and {\rm \ref{H4}} hold, then 
$$
\bE \int_0^T  \left( e^{\delta A_T} |\Phi(t)|^2 \right) \dd B_t < +\infty, \ \mbox{and }
\bE \int_0^T  \left( \int_t^T e^{\delta A_s} \dfrac{|f(t,s,0,0,\mathbf 0)|^2}{\alpha_s^2} \dd B_s \right) \dd B_t < +\infty.
$$
For a random process $B$, we cannot switch the time integral and the expectation and a precise description of the set $\mathfrak T^{\Phi,f}_\delta$ is not easy.
But if $B$ is deterministic, we deduce that $\dd B$-almost every $t \in [0,T]$ belongs to $\mathfrak T^{\Phi,f}_\delta$. In the Brownian-Poisson setting (Example \ref{exp:brownian_poisson_setting}), $\dd B$ is the Lebesgue measure. If $B$ is piecewise-constant as in the second example after \cite[Remark 3.19]{papa:poss:sapl:18} with deterministic jump times, we have a similar $\dd B$-a.e. property. In other words {\rm \ref{H5}} is too strong in a lot of cases, but it makes the presentation of our results correct and easier in the general setting. 
\end{Rem}

\subsection{Preliminary results}

First we consider for any $R$ and $S$ in $[0,T)$ a driver $h:\Omega \times [S,T]\times [R,T] \times \bR^{d\times m}\times \mathfrak H \to \bR^d$ such that \ref{H2} holds and:
\begin{equation}\label{eq:int_cond_h_0}
\bE \left[ \int_S^T e^{(\beta-\delta) A_t} \left( \int_R^T e^{\delta A_s} \dfrac{|h(t,s,0,\mathbf 0)|^2}{\alpha_s^2} \dd B_s \right) \dd B_t \right] < +\infty.
\end{equation}
Let us recall that the constant $\delta=\delta^*(\beta)$ is defined in Lemma \ref{lem:infimum}. For ease of notation, $h^0(t,s) = h(t,s,0,\mathbf 0)$. Like \ref{H5}, we also assume that 
\begin{equation} \label{eq:subset_times_square_int_h}
\left\{ t \in [S,T]: \bE  \left[ e^{\delta A_T} |\Phi(t)|^2 + \int_R^T e^{\delta A_s} \dfrac{|h(t,s,0,\mathbf 0)|^2}{\alpha_s^2} \dd B_s \right] < +\infty \right\} = [S,T].
\end{equation}
Then let us define the BSDE on $[R,T]$ parameterized by $t \in [S,T] $: for $r \in [R,T]$
\begin{eqnarray} \nonumber
\lambda(t,r) &= &\Phi(t) + \int_r^{T} h(t,s, z(t,s) ,u(t,s)) \dd B_s - \int_r^{T} z(t,s) \dd X^\circ_s  \\ \label{eq:parametrized_BSDE}
&& \qquad - \int_r^{T}  \int_{\bR^m} u(t,s,x) \tpi^\natural(\dd s,\dd x) -\int_r^{T} \dd m(t,s).
\end{eqnarray}
From Theorem \ref{thm:exis_sol_poss}, if \eqref{eq:cond_exist_beta_poss} holds, that is $\kappa^\mathfrak f(\delta) < 1/2$, the previous BSDE has a unique solution 
$$(\lambda(t,\cdot),z(t,\cdot),u(t,\cdot),m(t,\cdot)).$$ 
Let us fix $R=S$ and define $Y(t) = \lambda(t,t)$, $t \in [S,T]$, $Z(t,s) = z(t,s)$, $U(t,s,e) = u(t,s,e)$, $M(t,s) = m(t,s)$ for $(t,s) \in \Delta^c[S,T]$. Equation \eqref{eq:parametrized_BSDE} becomes
\begin{eqnarray} \nonumber
Y(t) &= &\Phi(t) + \int_t^{T} h(t,s, Z(t,s) ,U(t,s)) \dd B_s - \int_t^{T} Z(t,s) \dd X^\circ_s  \\ \label{eq:special_BSVIE}
&& \qquad - \int_t^{T}  \int_{\bR^m} U(t,s,x) \tpi^\natural(\dd s,\dd x) -\int_t^{T} \dd M(t,s),
\end{eqnarray}
which a special case of \eqref{eq:general_BSVIE_type_I} where $f$ does not depend on $y$. Using \eqref{eq:def_mart_sharp}, the BSVIE becomes for any $t\in [S,T]$
\begin{eqnarray} \nonumber 
Y(t) &= &\Phi(t) + \int_t^{T} h(t,s, Z(s,t),U(t,s)) \dd B_s - \int_t^T \dd \mart(t,s).
\end{eqnarray}
Let us prove the next result. 
\begin{Lemma} \label{lem:param_BSDE->BSVIE}
Assume that {\rm \ref{H1}} holds for $\Phi$, that $h$ satisfies {\rm \ref{H2}} and {\rm \ref{H3}}, together with the conditions \eqref{eq:int_cond_h_0} and \eqref{eq:subset_times_square_int_h}. If \eqref{eq:cond_exist_beta_poss} holds and if the constant $M^{\mathfrak f}(\beta)$ defined in Lemma \ref{lem:infimum} verifies: $M^{\mathfrak f}(\beta) <1/2$, then the solution $(Y,Z,U,M)$ of the BSVIE \eqref{eq:special_BSVIE} admits the upper bound:
\begin{eqnarray} \nonumber
&& \bE \left[ \int_S^T e^{ \beta A_t} |Y(t)|^2 \dd A_t +  \int_S^T e^{(\beta-\delta) A_t}  \int_t^T e^{\delta A_s} \dd  \trace \langle \mart(t,\cdot)\rangle_s    \dd A_t \right] \\  \nonumber
&& \quad  \leq\dfrac{\delta}{2}  \Sigma^\mathfrak f( \beta) \bE \left[ \int_S^T e^{\beta A_t} |\Phi(t)|^2  \dd A_t\right] \\ \label{eq:H2-estimate}
&&\qquad + \Sigma^\mathfrak f(\beta)  \bE \left[ \int_S^T  e^{(\beta-\delta) A_t}  \int_t^T e^{\delta A_s}\dfrac{ |h^0(t,s)|^2}{\alpha_s^2} \dd B_s  \dd A_t \right] ,
\end{eqnarray}
where 
$ \Sigma^\mathfrak f(\beta) $ is defined by \eqref{eq:op_constants_beta_f}.
\end{Lemma}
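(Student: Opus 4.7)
The plan is to leverage the parametric BSDE representation behind the construction of the solution. For each fixed $t \in [S,T]$, the system \eqref{eq:parametrized_BSDE} on $[t,T]$ is a BSDE with terminal condition $\Phi(t) \in \bL^2_{\delta,\cF_T}$ (by \ref{H1} together with \eqref{eq:subset_times_square_int_h}), driver $h(t,\cdot,\cdot,\cdot)$ that does not depend on the $y$-variable, and with Lipschitz and integrability hypotheses \ref{F1}--\ref{F4} inherited from \ref{H2}, \ref{H3} and \eqref{eq:int_cond_h_0}. Since $\kappa^{\mathfrak f}(\delta) < 1/2$, Theorem \ref{thm:exis_sol_poss} applies and produces the unique solution $(\lambda(t,\cdot),z(t,\cdot),u(t,\cdot),m(t,\cdot)) \in \bD^2_\delta(t,T)$, and by construction $Y(t)=\lambda(t,t)$, $Z(t,\cdot)=z(t,\cdot)$, $U(t,\cdot)=u(t,\cdot)$, $M(t,\cdot)=m(t,\cdot)$ solves \eqref{eq:special_BSVIE}.

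The first step is to obtain an $\cF_t$-conditional, pointwise-in-$t$ a priori estimate at the initial time $r=t$ of the parametric BSDE. Applying It\^o's formula to $e^{\delta A_r}|\lambda(t,r)|^2$ on $[t,T]$, using the Lipschitz property \ref{H2} of $h$ with respect to $(z,u)$ (together with the fact that $h$ is independent of $y$), Young's inequality to split the driver cross-terms, and the bound $\Delta A \le \mathfrak f$ to control jumps, I expect to reach an inequality of the form
\[
e^{\delta A_t}|Y(t)|^2 + \bigl(1-\kappa^{\mathfrak f}(\delta)\bigr)\bE^{\cF_t}\!\int_t^T e^{\delta A_s} \dd\trace\langle\mart(t,\cdot)\rangle_s \le \bE^{\cF_t}\!\left[e^{\delta A_T}|\Phi(t)|^2\right] + C\,\bE^{\cF_t}\!\int_t^T e^{\delta A_s}\frac{|h^0(t,s)|^2}{\alpha_s^2}\dd B_s,
\]
which is the parameter-dependent, $y$-free analogue of the estimate underlying \cite[Lemma 3.4]{papa:poss:sapl:18}.

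Next I multiply this conditional estimate by $e^{(\beta-\delta)A_t}$ and integrate $\dd A_t$ over $[S,T]$, then take expectation. Since $A$ is $\bF$-predictable, $A_t$ is $\cF_t$-measurable, so the left-hand side reproduces exactly the two norms appearing on the left of \eqref{eq:H2-estimate}, and the $h^0$ term on the right already has the correct shape. To convert the terminal contribution $\bE\int_S^T e^{(\beta-\delta)A_t}e^{\delta A_T}|\Phi(t)|^2\dd A_t$ into the target form $\tfrac{\delta}{2}\bE\int_S^T e^{\beta A_t}|\Phi(t)|^2\dd A_t$, I would rely on the Fubini / integration-by-parts device of \cite[Lemma 3.4]{papa:poss:sapl:18}: decomposing $e^{\delta A_T}$ via the continuous and jump parts of $\delta\int_t^T e^{\delta A_{s-}}\dd A_s$, controlling the jump contributions by $e^{(\beta-\delta)\mathfrak f}/(\beta-\delta)$ thanks to $\Delta A\le\mathfrak f$, and then exchanging the orders of the $\dd A_t$ and $\dd A_s$ integrations. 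The resulting constants are captured by $\Pi^{\mathfrak f}(\beta,\delta)$.

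Finally I specialize to $\delta=\delta^*(\beta)$, the minimizer furnished by Lemma \ref{lem:infimum}, so that $\Pi^{\mathfrak f}(\beta,\delta^*(\beta))=M^{\mathfrak f}(\beta)$. Under the standing hypothesis $M^{\mathfrak f}(\beta)<1/2$, the leftover martingale contributions on the right can be absorbed into the left-hand side, and rearranging yields the explicit prefactor $\Sigma^{\mathfrak f}(\beta)=2M^{\mathfrak f}(\beta)/(1-2M^{\mathfrak f}(\beta))$ of \eqref{eq:op_constants_beta_f}, giving \eqref{eq:H2-estimate}. The main obstacle in this scheme is precisely the Fubini/integration-by-parts manipulation of the third step: that is where the random, c\`adl\`ag nature of $A$ and $B$ interacts with the mismatch between the $e^{\beta A_t}$ weight on the $\Phi$ term and the $e^{\delta A_T}$ factor naturally carried by the BSDE estimate, and it is exactly the computation that motivates the optimization underlying $M^{\mathfrak f}(\beta)$ and the appearance of the jump constant $\mathfrak f$.
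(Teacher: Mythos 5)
There is a genuine gap in your third step. The It\^o-formula estimate on the parametrized BSDE necessarily attaches the terminal weight $e^{\delta A_T}$ to $|\Phi(t)|^2$, so after multiplying by $e^{(\beta-\delta)A_t}$, integrating $\dd A_t$ and taking expectations you are left with $\bE\int_S^T e^{(\beta-\delta)A_t}e^{\delta A_T}|\Phi(t)|^2\,\dd A_t$ on the right. Since $A$ is nondecreasing, $e^{(\beta-\delta)A_t}e^{\delta A_T}=e^{\beta A_t}e^{\delta(A_T-A_t)}\geq e^{\beta A_t}$ and $A_T-A_t$ is unbounded, so this quantity \emph{dominates} the target term $\bE\int_S^T e^{\beta A_t}|\Phi(t)|^2\,\dd A_t$ and cannot be bounded by a constant multiple of it. The Fubini/integration-by-parts device of \cite[Lemma 3.8]{papa:poss:sapl:18} does not help here: it serves to control the weighted bracket $\int_t^T e^{\delta A_s}\,\dd\trace\langle\mart(t,\cdot)\rangle_s$, not to strip a terminal weight off the free term. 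As described, your scheme therefore proves only a weaker inequality (with $e^{\delta A_T}|\Phi(t)|^2$ on the right), not \eqref{eq:H2-estimate}, whose precise form is needed later for the fixed-point and stability arguments.

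The paper's proof avoids It\^o's formula entirely. Setting $H(t)=\int_t^T h(t,s,Z(t,s),U(t,s))\,\dd B_s$, it exploits two identities: $Y(t)=\bE[\Phi(t)+H(t)\mid\cF_t]$, so that conditional Jensen attaches only the weight $e^{\gamma A_t}$ to $|\Phi(t)|^2$; and the fact that $\bE[\int_u^T\dd\trace\langle\mart(t,\cdot)\rangle_s\mid\cF_u]$ is the conditional second moment of the martingale increment, hence expressible through $\Phi$, $Y$ and $H$. Cauchy--Schwarz on $H$ produces the $|h^0|^2/\alpha^2$ term plus a multiple of the weighted bracket, and the \cite[Lemma 3.8]{papa:poss:sapl:18} trick applied to $\int_t^Te^{\delta A_s}\,\dd\trace\langle\mart(t,\cdot)\rangle_s$ yields the constant $\Pi^{\mathfrak f}(\gamma,\delta)=11/\delta+9e^{(\gamma-\delta)\mathfrak f}/(\gamma-\delta)$; taking $\gamma=\beta$, $\delta=\delta^*(\beta)$ and absorbing the bracket term into the left-hand side (possible exactly because $2M^{\mathfrak f}(\beta)<1$) gives $\Sigma^{\mathfrak f}(\beta)$. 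Your final absorption step is right, but the conditional-expectation representation of $Y$ --- rather than an a priori BSDE estimate --- is the missing ingredient that makes the $\Phi$-weight come out as $e^{\beta A_t}$.
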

\begin{proof}
Note that for any $t \in [0,T]$
$$Y(t) = \Phi(t) + H(t) - \int_t^T \dd \mart(t,s),$$
where for $t\in [S,T]$, $H(t) = \int_t^T  h(t,s, Z(t,s) ,U(t,s)) \dd B_s.$
Therefore
$$\bE \left[ \int_t^T \dd \trace\langle \mart(t,\cdot)\rangle_s \bigg| \cF_t \right] = \bE \left[ |\Phi(t) - Y(t) - H(t)|^2 \bigg| \cF_t \right] $$
For any $\delta >0$, by the Cauchy-Schwarz inequality and using \cite[Lemma B.1]{papa:poss:sapl:18}:
\begin{eqnarray*}
|H(t)|^2 & \leq & \int_t^T e^{-\delta A_s} \dd A_s  \int_t^T e^{\delta A_s}\dfrac{ |h(t,s, Z(t,s) ,U(t,s))|^2}{\alpha_s^2} \dd B_s \\
& \leq & \frac{1}{\delta} e^{-\delta A_t}  \int_t^T e^{\delta A_s}\dfrac{ |h(t,s, Z(t,s) ,U(t,s))|^2}{\alpha_s^2} \dd B_s.
\end{eqnarray*}
Arguing as in the proof of  \cite[Lemma 3.8]{papa:poss:sapl:18}, we deduce that for any $\gamma > 0$ and $\delta > 0$
\begin{equation*} 
\int_S^T e^{\gamma A_t}|H(t)|^2 \dd A_t \leq \int_S^T  \frac{1}{\delta} e^{(\gamma-\delta) A_t}  \int_t^T e^{\delta A_s}\dfrac{ |h(t,s, Z(t,s) ,U(t,s))|^2}{\alpha_s^2} \dd B_s  \dd A_t .
\end{equation*}
From our assumption \ref{H2} on $h$, we have:
$$ |h(t,r,Z(t,r) ,U(t,r))|^2  \leq 2|h^0(t,r)|^2  + 2\theta^\circ_r |c_r Z(t,r)|^2 + 2 \theta^\natural_r \vertiii{ U(t,r) }^2_r,$$
thus from the definition of $\alpha$ (hypothesis \ref{H3})
\begin{eqnarray*}
&& e^{\delta A_r} \dfrac{|h(t,r,Z(t,r) ,U(t,r))|^2}{\alpha_r^2}  \leq  2e^{\delta A_r} \dfrac{|h^0(t,r)|^2}{\alpha_r^2}  + 2e^{\delta A_r}\dfrac{\theta^\circ_r}{\alpha_r^2}   |c_r Z(t,r)|^2 \\
&& \qquad  +  2e^{\delta A_r}\dfrac{\theta^\natural_r}{\alpha_r^2}   \vertiii{ U(t,r) }^2_r \\
&&\quad  \leq  2e^{\delta A_r} \dfrac{|h^0(t,r)|^2}{\alpha_r^2}  + 2e^{\delta A_r}  |c_r Z(t,r)|^2  +  2e^{\delta A_r}  \vertiii{ U(t,r) }^2_r.
\end{eqnarray*}
Thereby for any $t\in [S,T]$
\begin{eqnarray*}
&&  \int_t^T e^{\delta A_s}\dfrac{ |h(t,s, Z(t,s) ,U(t,s))|^2}{\alpha_s^2} \dd B_s  \leq  2   \int_t^T e^{\delta A_r} \dfrac{|h^0(t,r)|^2}{\alpha_r^2} \dd B_s \\
&& \qquad + 2   \int_t^T e^{\delta A_r}  |c_r Z(t,r)|^2 \dd B_r +  2   \int_t^T e^{\delta A_r}  \vertiii{ U(t,r) }^2_r \dd B_r.
\end{eqnarray*}
Hence we deduce that 
\begin{eqnarray*}
\bE \left[\int_S^T e^{\gamma A_t}|H(t)|^2 \dd A_t \right] & \leq &\bE \left[ \int_S^T  \frac{1}{\delta} e^{(\gamma - \delta) A_t}  \int_t^T e^{\delta A_s}\dfrac{ |h(t,s, Z(t,s) ,U(t,s))|^2}{\alpha_s^2} \dd B_s  \dd A_t \right] \\
&\leq & 2 \bE \left[ \int_S^T  \frac{1}{\delta} e^{(\gamma-\delta) A_t}  \int_t^T e^{\delta A_s}\dfrac{ |h^0(t,s)|^2}{\alpha_s^2} \dd B_s  \dd A_t \right] \\
&+ & 2 \bE \left[ \int_S^T  \frac{1}{\delta} e^{(\gamma-\delta) A_t}  \int_t^T e^{\delta A_s} \dd  \trace [\langle Z(t,\cdot). X^\circ\rangle_s]   \dd A_t \right] \\
& + & 2 \bE \left[ \int_S^T  \frac{1}{\delta} e^{(\gamma- \delta) A_t}  \int_t^T e^{\delta A_s} \dd \trace [\langle U(t,\cdot)\star \tpi^\natural \rangle_s]   \dd A_t \right].
\end{eqnarray*}
This leads to the next estimate on $H$:
\begin{eqnarray} \nonumber
\bE \left[\int_S^T e^{\gamma A_t}|H(t)|^2 \dd A_t \right] 
& \leq & \frac{2}{\delta}  \bE \left[ \int_S^T  e^{(\gamma-\delta) A_t}  \int_t^T e^{\delta A_s}\dfrac{ |h^0(t,s)|^2}{\alpha_s^2} \dd B_s  \dd A_t \right] \\ \label{eq:estim_H_first}
&+ & \frac{2}{\delta}  \bE \left[ \int_S^T   e^{(\gamma - \delta) A_t}  \int_t^T e^{\delta A_s} \dd  \trace \langle \mart(t,\cdot)\rangle_s    \dd A_t \right] .
\end{eqnarray}


Now remark that
\begin{equation} \label{eq:expr_Y_1}
Y(t) = \bE \left[\Phi(t) + \int_t^{T} h(t,s, Z(t,s) ,U(t,s)) \dd B_s  \bigg| \cF_t\right]=  \bE \left[\Phi(t) + H(t)  \bigg| \cF_t\right].
\end{equation}
 Thus for $\gamma > 0$ and $\delta > 0$
\begin{eqnarray} \nonumber
\bE \left[ \int_S^T e^{\gamma A_t} |Y(t)|^2 \dd A_t \right] &\leq & 2 \bE \left[ \int_S^T \bE \left[ e^{\gamma A_t} |\Phi(t)|^2 + e^{\gamma A_t} |H(t)|^2  \bigg| \cF_t\right] \dd A_t\right] \\ \label{eq:estim_Y_1}
& = &  2 \bE \left[ \int_S^T \left( e^{\gamma A_t} |\Phi(t)|^2 + e^{\gamma A_t} |H(t)|^2  \right) \dd A_t\right].
\end{eqnarray}
Following the proof of \cite[Lemma 3.8]{papa:poss:sapl:18}, we obtain
\begin{eqnarray}\nonumber 
&&
\int_t^Te^{\delta A_s} \dd \trace \langle \mart(t,\cdot)\rangle_s  
\\ \label{eq:estim_mart_1}
&&\quad
 \leq  \delta \int_t^T e^{\delta A_u}\int_u^T \dd \trace \langle \mart(t,\cdot)\rangle_s \dd A_u + e^{\delta A_t} \left( \trace \langle \mart(t,\cdot)\rangle_T -\trace  \langle \mart(t,\cdot)\rangle_t \right). 
\end{eqnarray}
Then 
\begin{eqnarray*}
&& \bE \left[  \int_S^T e^{(\gamma-\delta)A_t} \int_t^T e^{\delta A_u}\int_u^T \dd \trace \langle \mart(t,\cdot)\rangle_s \dd A_u \dd A_t \right] \\
&& \quad =  \bE \left[  \int_S^T e^{(\gamma-\delta)A_t} \int_t^T e^{\delta A_u} \bE \left[\int_u^T \dd \trace \langle \mart(t,\cdot)\rangle_s \bigg| \cF_u\right] \dd A_u \dd A_t \right] \\
&&\quad =  \bE \left[  \int_S^T e^{(\gamma-\delta)A_t}   \int_t^T e^{\delta A_u} \bE \left[ |\Phi(u) - Y(u) - H(u)|^2 \bigg| \cF_u\right] \dd A_u \dd A_t \right] \\
&&\quad \leq 3  \bE \left[  \int_S^T e^{(\gamma-\delta)A_t}   \int_t^T e^{\delta A_u} \bE \left[ |\Phi(u)|^2 +| Y(u)|^2 + |H(u)|^2 \bigg| \cF_u\right] \dd A_u \dd A_t \right] \\
&&\quad  \leq 9  \bE \left[  \int_S^T e^{(\gamma-\delta)A_t}  \int_t^T e^{\delta A_u}  |\Phi(u)|^2 \dd A_u \dd A_t \right] \\
&&\qquad + 9  \bE \left[  \int_S^T e^{(\gamma-\delta)A_t} \int_t^T e^{\delta A_u}  |H(u)|^2 \dd A_u \dd A_t \right] .
\end{eqnarray*}
Note that we use several times that $A_t$ is $\cF_u$-measurable and \cite[Corollary D.1]{papa:poss:sapl:18}, together with \eqref{eq:expr_Y_1} for the last inequality. For $\gamma > \delta$,
$$ \bE \left[  \int_S^T e^{(\gamma-\delta)A_t}  \int_t^T e^{\delta A_u}  |\Phi(u)|^2 \dd A_u \dd A_t \right] \leq \frac{e^{(\gamma-\delta)\mathfrak f}}{(\gamma-\delta)} \bE \left[  \int_S^T  e^{\gamma A_u}  |\Phi(u)|^2 \dd A_u \right].$$
The same holds for $H(u)$, thus
\begin{eqnarray*}
&& \bE \left[  \int_S^T e^{(\gamma-\delta)A_t}  \int_t^T e^{\delta A_u}\int_u^T \dd \trace \langle \mart(t,\cdot)\rangle_s \dd A_u \dd A_t \right] \\
&& \quad  \leq 9 \frac{e^{(\gamma-\delta)\mathfrak f}}{(\gamma-\delta)}  \bE \left[  \int_S^T e^{\gamma A_u}  |\Phi(u)|^2 \dd A_u \right] + 9  \frac{e^{(\gamma-\delta)\mathfrak f}}{(\gamma-\delta)} \bE \left[  \int_S^T  e^{\gamma A_u}  |H(u)|^2 \dd A_u \right] .
\end{eqnarray*}
For the second term in \eqref{eq:estim_mart_1}, we have
\begin{eqnarray*}
&& \bE \left[  \int_S^T e^{(\gamma-\delta)A_t} e^{\delta A_t} \left( \trace \langle \mart(t,\cdot)\rangle_T -\trace  \langle \mart(t,\cdot)\rangle_t \right) \dd A_t  \right] \\
&&\quad \leq \bE \left[  \int_S^T e^{\gamma A_t} \left( \trace \langle \mart(t,\cdot)\rangle_T -\trace  \langle \mart(t,\cdot)\rangle_t \right) \dd A_t  \right] \\
&&\quad \leq 9  \bE \left[  \int_S^T e^{\gamma A_t}  \left( |\Phi(t)|^2 + |H(t)|^2 \right) \dd A_t  \right].
\end{eqnarray*}
Therefore we have for $\delta < \gamma \leq \beta$
\begin{eqnarray} \nonumber 
&& \bE \left[  \int_S^T e^{(\gamma-\delta)A_t}  \int_t^Te^{\delta A_s} \dd \trace \langle \mart(t,\cdot)\rangle_s  \dd A_t \right]\\ \label{eq:estim_mart_2}
&&\quad \leq \left(9 + 9 \delta \frac{e^{(\gamma-\delta)\mathfrak f}}{(\gamma-\delta)}\right)  \bE \left[  \int_S^T e^{\gamma A_t} \left( |\Phi(t)|^2 + |H(t)|^2 \right) \dd A_t  \right].
\end{eqnarray}
Combining \eqref{eq:estim_H_first}, \eqref{eq:estim_Y_1} and the preceding estimate, we have for any $\delta < \gamma \leq \beta$: 
\begin{eqnarray*}
&& \bE \left[ \int_S^T e^{\gamma A_t} |Y(t)|^2 \dd A_t \right] +\bE \left[ \int_S^T e^{(\gamma-\delta) A_t}  \int_t^T e^{\delta A_s} \dd  \trace \langle \mart(t,\cdot)\rangle_s    \dd A_t \right]  \\
& &\quad \leq \left( 2 + 9 + 9\delta \frac{e^{(\gamma-\delta)\mathfrak f}}{(\gamma-\delta)} \right) \bE \left[ \int_S^T e^{\gamma A_t} |\Phi(t)|^2  \dd A_t\right] \\
&&\qquad +\left( 2 + 9 + 9 \delta \frac{e^{(\gamma-\delta)\mathfrak f}}{(\gamma-\delta)} \right) \bE \left[ \int_S^T e^{\gamma A_t} |H(t)|^2  \dd A_t\right] \\
& &\quad \leq \left( 11 + 9\delta \frac{e^{(\gamma-\delta)\mathfrak f}}{(\gamma-\delta)} \right) \bE \left[ \int_S^T e^{\gamma A_t} |\Phi(t)|^2  \dd A_t\right] \\
&&\qquad +2 \left(  \frac{11}{\delta} + 9 \frac{e^{(\gamma-\delta)\mathfrak f}}{(\gamma-\delta)} \right)
 \bE \left[ \int_S^T  e^{(\gamma-\delta) A_t}  \int_t^T e^{\delta A_s}\dfrac{ |h^0(t,s)|^2}{\alpha_s^2} \dd B_s  \dd A_t \right] \\
 && \qquad + 2 \left(  \frac{11}{\delta} + 9 \frac{e^{(\gamma-\delta)\mathfrak f}}{(\gamma-\delta)} \right) \bE \left[ \int_S^T   e^{(\gamma - \delta) A_t}  \int_t^T e^{\delta A_s} \dd  \trace \langle \mart(t,\cdot)\rangle_s    \dd A_t \right].
\end{eqnarray*}
From Lemma \ref{lem:infimum}, the values $\gamma=\beta$ and $\delta = \delta^*(\beta)$ lead to the infimum $M^{\mathfrak f}(\beta)$ of 
$$\Pi^{\mathfrak f}(\gamma,\delta) =   \frac{11}{\delta} + 9 \frac{e^{(\gamma-\delta)\mathfrak f}}{(\gamma-\delta)} .$$
If $\mathfrak f$ and $\beta$ are such that $M^{\mathfrak f}(\beta) < 1/2$, then the conclusion of the lemma follows. 
\end{proof}

Moreover we have a stability result for this BSVIE \eqref{eq:special_BSVIE}.
\begin{Lemma} \label{lem:stab_BSVIE_h_poss}
 Let $(\bar \Phi,\bar h)$ be a couple of data each satisfying the above assumptions {\rm \ref{H1}}, {\rm \ref{H2}} and {\rm \ref{H3}}. Let $(\bar Y,\bar Z,\bar U,\bar M)$ a solution of the BSVIE \eqref{eq:special_BSVIE} with data $(\bar \Phi, \bar h)$. Define
$$(\fd Y,\fd Z, \fd U, \fd M) = (Y-\bar Y, Z-\bar Z,U-\bar U,M-\bar M).$$
Under the conditions of Lemma \ref{lem:param_BSDE->BSVIE} on $\mathfrak f$ and $\beta$, with the same $\delta \in (0,\beta)$, we have the following stability result:
\begin{eqnarray} \nonumber
&& \bE \left[ \int_S^T e^{ \beta A_t} |\fd Y(t)|^2 \dd A_t +  \int_S^T e^{(\beta-\delta) A_t}  \int_t^T e^{\delta A_s} \dd  \trace \langle \fd \mart(t,\cdot)\rangle_s    \dd A_t \right] \\  \nonumber
&& \quad  \leq \dfrac{\delta}{2} \Sigma^\mathfrak f( \beta) \bE \left[ \int_S^T e^{\beta A_t} |\fd \Phi(t)|^2  \dd A_t\right] \\ \label{eq:stability_H2-estimate}
&&\qquad +\Sigma^\mathfrak f(\beta)  \bE \left[ \int_S^T  e^{(\beta-\delta) A_t}  \int_t^T e^{\delta A_s}\dfrac{ |\fd h(t,s)|^2}{\alpha_s^2} \dd B_s  \dd A_t \right] 
\end{eqnarray}
where 
$$\fd h(t,r) = h(t,r,Z(t,r),U(t,r))-\bar h(t,r,Z(t,r),U(t,r)).$$
\end{Lemma}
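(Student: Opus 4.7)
The plan is to reduce the claim to the a priori estimate of Lemma \ref{lem:param_BSDE->BSVIE} by a standard linearization. First I would subtract the BSVIE \eqref{eq:special_BSVIE} satisfied by $(\bar Y,\bar Z,\bar U,\bar M)$ from the one satisfied by $(Y,Z,U,M)$: for each $t\in[S,T]$,
\[
\fd Y(t) = \fd\Phi(t) + \int_t^T \bigl[h(t,s,Z(t,s),U(t,s)) - \bar h(t,s,\bar Z(t,s),\bar U(t,s))\bigr]\dd B_s - \int_t^T \dd \fd\mart(t,s).
\]
The task is to recognize this as a BSVIE of the form \eqref{eq:special_BSVIE} for the unknown $(\fd Y,\fd Z,\fd U,\fd M)$ with a driver whose ``free'' part matches exactly $\fd h(t,s)$ as stated.

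To this end I would introduce the linearized driver
\[
\tilde h(t,s,z,u) := h(t,s,Z(t,s),U(t,s)) - \bar h(t,s,Z(t,s)-z,U(t,s)-u),
\]
so that by construction $\tilde h(t,s,\fd Z(t,s),\fd U(t,s)) = h(t,s,Z(t,s),U(t,s))-\bar h(t,s,\bar Z(t,s),\bar U(t,s))$ and $\tilde h^0(t,s) = \tilde h(t,s,0,\mathbf 0) = \fd h(t,s)$. Since the variables $(z,u)$ enter $\tilde h$ only through the $\bar h$-term, the Lipschitz property \ref{H2} transfers verbatim from $\bar h$ to $\tilde h$ with the same coefficients $(\theta^\circ,\theta^\natural)$; in particular \ref{H3} holds for $\tilde h$ with a common $\alpha$ (obtained, if needed, by maximizing the coefficients of $h$ and $\bar h$).

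Consequently, $(\fd Y,\fd Z,\fd U,\fd M)$ is an adapted solution of a BSVIE of the form \eqref{eq:special_BSVIE} with free term $\fd\Phi$ and driver $\tilde h$. Applying Lemma \ref{lem:param_BSDE->BSVIE} to this BSVIE yields exactly \eqref{eq:stability_H2-estimate}, since the right-hand side of that lemma is stated in terms of $\tilde h^0 = \fd h$ and since the assumptions on $\kappa^{\mathfrak f}(\delta)$ and $M^{\mathfrak f}(\beta)$ are inherited from the hypotheses of the present lemma. The step I expect to require the most care is the identification of the correct linearization: the asymmetric choice above (with $(Z,U)$ rather than $(\bar Z,\bar U)$ placed in the first arguments of $h$) is precisely what produces $\tilde h^0 = \fd h$ as written in the statement; other natural-looking linearizations give an equivalent but differently expressed residual. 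Integrability of $\fd h/\alpha$ is not an obstacle: if \eqref{eq:int_cond_h_0} were to fail for $\tilde h^0$, the right-hand side of \eqref{eq:stability_H2-estimate} would be infinite and the inequality vacuous, so one may freely apply the previous lemma under the same conditions on $\mathfrak f$ and $\beta$.
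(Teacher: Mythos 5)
Your proof is correct and, at bottom, rests on the same estimates as the paper's, but it is organized differently. The paper does not recast the difference as a new BSVIE: it writes $\fd Y(t)=\fd\Phi(t)+\fd H(t)-\int_t^T\dd\fd\mart(t,r)$ and then re-runs, line by line, the chain of inequalities from the proof of Lemma \ref{lem:param_BSDE->BSVIE} (the bound on $\bE\int e^{\gamma A_t}|\fd H(t)|^2\dd A_t$, the conditional-expectation identity for $\trace\langle\fd\mart(t,\cdot)\rangle$, the optimization over $\gamma,\delta$ via Lemma \ref{lem:infimum}), using the Lipschitz property of $\bar h$ to get $\alpha_r^{-2}|h(t,r,Z,U)-\bar h(t,r,\bar Z,\bar U)|^2\le 2|c_r\fd Z|^2+2\vertiii{\fd U}^2_r+2|\fd h(t,r)|^2/\alpha_r^2$ --- which is exactly your statement that $\tilde h$ is Lipschitz with the coefficients of $\bar h$ and that $\tilde h^0=\fd h$. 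Your reduction via the driver $\tilde h(t,s,z,u)=h(t,s,Z(t,s),U(t,s))-\bar h(t,s,Z(t,s)-z,U(t,s)-u)$ buys you the right not to repeat that computation, and your identification of the asymmetric linearization is precisely the one that makes the residual come out as $\fd h$ as written. The one point to be careful about: Lemma \ref{lem:param_BSDE->BSVIE} is phrased for \emph{the} solution produced by the parametrized BSDE under the integrability hypotheses \eqref{eq:int_cond_h_0} and \eqref{eq:subset_times_square_int_h}, whereas you need to apply its conclusion to the \emph{given} quadruple $(\fd Y,\fd Z,\fd U,\fd M)$, whose uniqueness for the $\tilde h$-equation is only a consequence of the stability estimate you are proving. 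This is not circular --- an inspection of the proof of Lemma \ref{lem:param_BSDE->BSVIE} shows it is a genuine a priori estimate using only the equation, \ref{H2}--\ref{H3} and conditional-expectation identities, valid for any adapted solution in $\mathfrak{S}^2$ --- but you should say so explicitly rather than cite the lemma's statement as a black box; your remark that the inequality is vacuous when the right-hand side is infinite then disposes of the integrability issue.
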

\begin{proof}
We sketch the proof of \cite[Proposition 3.13]{papa:poss:sapl:18}. Note that 
\begin{eqnarray*}
\fd Y(t) &= &\fd \Phi(t) + \int_t^T \left( h(t,r,Z(t,r),U(t,r))-\bar h(t,r,\bar Z(t,r),\bar U(t,r))\right) \dd B_r - \int_t ^T \dd \fd \mart(t,r) \\
& = & \fd \Phi(t) +\fd H(t) - \int_t ^T \dd \fd \mart(t,r).
\end{eqnarray*}
We denote 
$$\fd h(t,r) = h(t,r,Z(t,r),U(t,r))-\bar h(t,r,Z(t,r),U(t,r)),$$
and from the assumptions on $\bar h$ we have:
\begin{eqnarray*}
&&\dfrac{1}{\alpha_r^2} |h(t,r,Z(t,r),U(t,r))-\bar h(t,r,\bar Z(t,r),\bar U(t,r)) |^2 \\
&& \quad \leq 2 \dfrac{\theta^\circ_r}{\alpha_r^2} |c_r \fd Z(t,r)|^2 + 2\dfrac{ \theta^\natural_r}{\alpha_r^2} \vertiii{ \fd U(t,r) }^2_r + 2\dfrac{ |\fd h(t,r)|^2}{\alpha_r^2} \\
&&\quad  \leq 2  |c_r \fd Z(t,r)|^2 + 2 \vertiii{ \fd U(t,r) }^2_r + 2\dfrac{ |\fd h(t,r)|^2}{\alpha_r^2}.
\end{eqnarray*}
As for the preceding lemma, we have:
\begin{eqnarray*} \nonumber
\bE \left[\int_S^T e^{\gamma A_t}|\fd H(t)|^2 \dd A_t \right] 
& \leq & \frac{2}{\delta}  \bE \left[ \int_S^T  e^{(\gamma-\delta) A_t}  \int_t^T e^{\delta A_s}\dfrac{ |\fd h(t,s)|^2}{\alpha_s^2} \dd B_s  \dd A_t \right] \\ \label{eq:estim_H_1}
&+ & \frac{2}{\delta}  \bE \left[ \int_S^T   e^{(\gamma - \delta) A_t}  \int_t^T e^{\delta A_s} \dd  \trace \langle \fd \mart(t,\cdot)\rangle_s    \dd A_t \right] .
\end{eqnarray*}
Thereby for any $\delta < \gamma \leq \beta$: 
\begin{eqnarray*}
&& \bE \left[ \int_S^T e^{\gamma A_t} |\fd Y(t)|^2 \dd A_t \right] +\bE \left[ \int_S^T e^{(\gamma-\delta) A_t}  \int_t^T e^{\delta A_s} \dd  \trace \langle \fd \mart(t,\cdot)\rangle_s    \dd A_t \right]  \\
& &\quad \leq \left( 11 + 9\delta \frac{e^{(\gamma-\delta)\mathfrak f}}{(\gamma-\delta)} \right) \bE \left[ \int_S^T e^{\gamma A_t} |\fd \Phi(t)|^2  \dd A_t\right] \\
&&\qquad +\left( 11 + 9 \delta \frac{e^{(\gamma-\delta)\mathfrak f}}{(\gamma-\delta)} \right) \bE \left[ \int_S^T e^{\gamma A_t} |\fd H(t)|^2  \dd A_t\right] \\
& &\quad \leq \left( 11 + 9\delta \frac{e^{(\gamma-\delta)\mathfrak f}}{(\gamma-\delta)} \right) \bE \left[ \int_S^T e^{\gamma A_t} |\fd \Phi(t)|^2  \dd A_t\right] \\
&&\qquad +2 \left(  \frac{11}{\delta} + 9 \frac{e^{(\gamma-\delta)\mathfrak f}}{(\gamma-\delta)} \right)
 \bE \left[ \int_S^T  e^{(\gamma-\delta) A_t}  \int_t^T e^{\delta A_s}\dfrac{ |\fd h(t,s)|^2}{\alpha_s^2} \dd B_s  \dd A_t \right] \\
 && \qquad + 2 \left(  \frac{11}{\delta} + 9 \frac{e^{(\gamma-\delta)\mathfrak f}}{(\gamma-\delta)} \right) \bE \left[ \int_S^T   e^{(\gamma - \delta) A_t}  \int_t^T e^{\delta A_s} \dd  \trace \langle \fd \mart(t,\cdot)\rangle_s    \dd A_t \right].
\end{eqnarray*}
The conclusion of the lemma follows as for the preceding lemma.
\end{proof}

Note that this stability result leads to uniqueness of the solution of the BSVIE \eqref{eq:special_BSVIE} in the space $\mathfrak{S}^2_{\delta^*(\beta)\leq \beta}(\Delta^c(0,T))$. We then deduce:
\begin{Prop} \label{prop:type_I_BSVIE_without_Y}
Under the conditions {\rm \ref{H1}} to {\rm \ref{H5}}, if the driver $f$ does not depend on $y$, and if the constants $\kappa^\mathfrak f(\delta)$ and $M^{\mathfrak f}(\beta)$ defined by \eqref{eq:cond_exist_beta_poss} and in Lemma \ref{lem:infimum} verify
\begin{equation} \label{eq:cond_beta_Type_I_BSVIE_special_case}
\kappa^\mathfrak f(\delta) < \dfrac{1}{2},\qquad M^{\mathfrak f}(\beta) < \dfrac{1}{2}, 
\end{equation}
then the BSVIE \eqref{eq:general_BSVIE_type_I} has a unique adapted solution $(Y,Z,U,M)$ in $ \mathfrak{S}^2_{\beta}(\Delta^c(0,T))$.
\end{Prop}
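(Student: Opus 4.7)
The plan is to reduce the BSVIE to a parameter-indexed family of BSDEs of the form \eqref{eq:poss_BSDE} and then invoke the technical tools already developed above (Theorem \ref{thm:exis_sol_poss} together with Lemmas \ref{lem:param_BSDE->BSVIE} and \ref{lem:stab_BSVIE_h_poss}). Since the generator does not depend on $y$, the BSVIE \eqref{eq:general_BSVIE_type_I} coincides exactly with the special form \eqref{eq:special_BSVIE} with $h = f$, so the argument is essentially an assembling of the preliminary results.

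First I would fix $t\in [0,T]$ and look at the BSDE \eqref{eq:parametrized_BSDE} parametrized by $t$, with terminal datum $\Phi(t)$ and generator $(\omega,r,z,u)\mapsto f(\omega,t,r,z,u)$. Assumption \ref{H5} ensures that for \emph{every} $t\in [0,T]$ (not just a.e.) both $\Phi(t)$ and $f(t,\cdot,0,\mathbf 0)$ satisfy the integrability requirement \ref{F1}--\ref{F4} needed to apply Theorem \ref{thm:exis_sol_poss}, and the bound $\kappa^{\mathfrak f}(\delta)<1/2$ in \eqref{eq:cond_beta_Type_I_BSVIE_special_case} is precisely the hypothesis \eqref{eq:cond_exist_beta_poss} of that theorem. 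Consequently, for each $t$ there is a unique solution $(\lambda(t,\cdot),z(t,\cdot),u(t,\cdot),m(t,\cdot))\in \bD^2_\delta(t,T)$. I then set $Y(t):=\lambda(t,t)$, $Z(t,s):=z(t,s)$, $U(t,s,\cdot):=u(t,s,\cdot)$ and $M(t,s):=m(t,s)$ on $\Delta^c[0,T]$, yielding a candidate quadruple that satisfies \eqref{eq:general_BSVIE_type_I} pointwise in $t$.

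To check that $(Y,Z,U,M)\in \mathfrak S^2_\beta(\Delta^c(0,T))$, I would appeal to Lemma \ref{lem:param_BSDE->BSVIE}: the second hypothesis in \eqref{eq:cond_beta_Type_I_BSVIE_special_case}, namely $M^{\mathfrak f}(\beta)<1/2$, is exactly the condition needed for the a priori estimate \eqref{eq:H2-estimate} to apply. The right-hand side of that estimate is finite by \ref{H1} and \ref{H4}, and therefore controls $\|Y\|_{\bL^2_{\beta,\bF}(0,T)}^2$ and the $\cH^2_{\delta\leq\beta}(\Delta^c(0,T))$-norm of $\mart$, which by the orthogonal decomposition and Lemma \ref{lem:equiv_norm_poss} gives the bounds on $Z$, $U$ and $M$ in the appropriate spaces.

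For uniqueness, suppose $(\bar Y,\bar Z,\bar U,\bar M)\in \mathfrak S^2_\beta(\Delta^c(0,T))$ is another adapted solution with the same data $(\Phi,f)$. I would apply the stability Lemma \ref{lem:stab_BSVIE_h_poss} with $\bar\Phi=\Phi$ and $\bar h=h=f$: then $\fd\Phi\equiv 0$ and $\fd h(t,r)=f(t,r,Z(t,r),U(t,r))-f(t,r,Z(t,r),U(t,r))\equiv 0$, so the right-hand side of \eqref{eq:stability_H2-estimate} vanishes and all components of $(\fd Y,\fd Z,\fd U,\fd M)$ are zero in the relevant norms. No serious obstacle is expected here since the $y$-independence of the driver removes any fixed-point issue that would otherwise require the third bound $\widetilde\Sigma^{\mathfrak f}(\beta)<1$ of \eqref{eq:cond_beta_Type_I_BSVIE}; the only subtle point is the measurability in $t$ of the map $t\mapsto (\lambda(t,\cdot),z(t,\cdot),u(t,\cdot),m(t,\cdot))$, which follows from a standard selection/contraction argument applied jointly in the parameter, and would be addressed by noting that the contraction producing each BSDE solution is performed with constants independent of $t$.
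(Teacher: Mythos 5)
Your proposal is correct and follows essentially the same route as the paper: solve the parametrized BSDE \eqref{eq:parametrized_BSDE} via Theorem \ref{thm:exis_sol_poss} under $\kappa^{\mathfrak f}(\delta)<1/2$, set $Y(t)=\lambda(t,t)$ to obtain a solution of \eqref{eq:special_BSVIE}, use Lemma \ref{lem:param_BSDE->BSVIE} with $M^{\mathfrak f}(\beta)<1/2$ for the a priori bound placing the quadruple in $\mathfrak S^2_\beta(\Delta^c(0,T))$, and deduce uniqueness from the stability estimate of Lemma \ref{lem:stab_BSVIE_h_poss}. This is exactly how the paper deduces the proposition from its preliminary lemmas.
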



\subsection{Proof of Theorem \ref{thm:type_I_general_BSVIE} } 

Since we only consider a Type-I BSVIE, our arguments are close to the ones used in \cite{yong:06}. Fix $(y,\zeta,\nu,m) \in  \mathfrak{S}^2_{\beta}(\Delta^c(0,T))$ and consider the BSVIE on $[S,T]$
\begin{eqnarray} \nonumber
Y(t) &= &\Phi(t) + \int_t^{T} f(t,s, y(s),Z(t,s) , U(t,s)) \dd B_s - \int_t^{T} Z(t,s) \dd X^\circ_s  \\ \label{eq:fix_point_BSVIE}
&& \qquad - \int_t^{T}  \int_{\bR^m} U(t,s,x) \tpi^\natural(\dd s,\dd x) -\int_t^{T} \dd M(t,s),
\end{eqnarray}
We want to apply Lemma \ref{lem:param_BSDE->BSVIE}. If 
$$h(t,s,z,\psi)= f(t,s, y(s),z, \psi),$$
we need to check that 
$$  \bE \left[ \int_0^T  e^{(\beta-\delta) A_t}  \int_t^T e^{\delta A_s}\dfrac{ |h(t,s,0,\mathbf 0)|^2}{\alpha_s^2} \dd B_s  \dd A_t \right]  <+\infty.$$
The Lipschitz property \ref{H2} of $f$ leads to
$$ |h(t,s,0,\mathbf 0)|^2  \leq 2|f^0(t,s)|^2 + 2\varpi_s |y(s)|^2 .$$
Using \ref{H3} 
\begin{eqnarray*}
e^{\delta A_r} \dfrac{|h(t,r,0,\mathbf 0)|^2}{\alpha_r^2} & \leq & 2e^{\delta A_r} \dfrac{|f^0(t,r)|^2}{\alpha_r^2}  + 2 e^{\delta A_r}\dfrac{\varpi_r}{\alpha_r^2} |y(r)|^2 \\
& \leq & 2e^{\delta A_r} \dfrac{|f^0(t,r)|^2}{\alpha_r^2}  + 2 e^{\delta A_r} \alpha_r^2 |y(r)|^2. 
\end{eqnarray*}
We obtain for a.e. $t\in [S,T]$
\begin{eqnarray*}
&& \int_t^T e^{\delta A_r} \dfrac{|h(t,r,0,\mathbf 0)|^2}{\alpha_r^2} \dd B_r  \leq 2   \int_t^T e^{\delta A_r} \dfrac{|f^0(t,r)|^2}{\alpha_r^2} \dd B_r + 2  \int_t^T e^{\delta A_r} \alpha_r^2 |y(r)|^2  \dd B_r . \\
 \end{eqnarray*}
Note that from \ref{H5}, the assumption \eqref{eq:subset_times_square_int_h} holds. 
Thus the BSVIE \eqref{eq:fix_point_BSVIE} has a unique adapted solution $(Y,Z,U,M) \in \mathfrak{S}^2_{\beta}(\Delta^c(0,T))$ and from \eqref{eq:H2-estimate}
\begin{eqnarray*} \nonumber
&& \bE \left[ \int_0^T e^{ \beta A_t} |Y(t)|^2 \dd A_t +  \int_0^T e^{(\beta-\delta) A_t}  \int_t^T e^{\delta A_s} \dd  \trace \langle \mart(t,\cdot)\rangle_s    \dd A_t \right] \\  \nonumber
&& \quad  \leq \dfrac{\delta}{2} \Sigma^\mathfrak f( \beta) \bE \left[ \int_0^T e^{\beta A_t} |\Phi(t)|^2  \dd A_t\right] \\ 
&&\qquad + \Sigma^\mathfrak f(\beta)  \bE \left[ \int_0^T  e^{(\beta-\delta) A_t}  \int_t^T e^{\delta A_s}\dfrac{ |h^0(t,s)|^2}{\alpha_s^2} \dd B_s  \dd A_t \right].
\end{eqnarray*}
Using our estimate on $h$, we deduce that 
\begin{eqnarray*} \nonumber
&& \bE \left[ \int_0^T e^{ \beta A_t} |Y(t)|^2 \dd A_t +  \int_0^T e^{(\beta-\delta) A_t}  \int_t^T e^{\delta A_s} \dd  \trace \langle \mart(t,\cdot)\rangle_s    \dd A_t \right] \\ 
&&\quad \leq \dfrac{\delta}{2} \Sigma^\mathfrak f( \beta) \bE \left[ \int_0^T e^{\beta A_t} |\Phi(t)|^2  \dd A_t\right]  +2  \Sigma^\mathfrak f(\beta)  \bE \left[ \int_0^T  e^{(\beta-\delta) A_t}  \int_t^T e^{\delta A_s}\dfrac{ |f^0(t,s)|^2}{\alpha_s^2} \dd B_s  \dd A_t \right] \\
&&\qquad +2  \Sigma^\mathfrak f(\beta)  \bE \left[ \int_0^T  e^{(\beta-\delta) A_t}  \int_t^T e^{\delta A_s} |y(s)|^2  \dd A_s  \dd A_t \right] \\
&&\quad \leq \dfrac{\delta}{2} \Sigma^\mathfrak f( \beta) \bE \left[ \int_0^T e^{\beta A_t} |\Phi(t)|^2  \dd A_t\right]  +2  \Sigma^\mathfrak f(\beta)  \bE \left[ \int_0^T  e^{(\beta-\delta) A_t}  \int_t^T e^{\delta A_s}\dfrac{ |f^0(t,s)|^2}{\alpha_s^2} \dd B_s  \dd A_t \right] \\
&&\qquad +2  \Sigma^\mathfrak f(\beta) \dfrac{e^{(\beta-\delta)\mathfrak f}}{\beta-\delta} \bE \left[ \int_0^T   e^{\beta A_s} |y(s)|^2  \dd A_s \right].
\end{eqnarray*}
In other words we have defined a map $\Theta$ by $\Theta(y,\zeta,\nu,m)=(Y,Z,U,M)$, from $\mathfrak{S}^2_{\beta}(\Delta^c(0,T))$ to $\mathfrak{S}^2_{\beta}(\Delta^c(0,T))$.

Now let us consider $(y,\zeta,\nu,m)$ and $(\bar y,\bar \zeta,\bar \nu, \bar m)$ in $\mathfrak{S}^2_{\beta}(\Delta^c(0,T))$ and define $(Y,Z,U,M)$ and $(\bar Y ,\bar Z, \bar U, \bar M)$ as the solutions of the BSVIE \eqref{eq:fix_point_BSVIE} and 
$$(\fd Y,\fd Z, \fd U, \fd M) = (Y-\bar Y, Z-\bar Z,U-\bar U,M-\bar M).$$
Then from Lemma \ref{lem:stab_BSVIE_h_poss}, we know that:
\begin{eqnarray*} \nonumber
&& \bE \left[ \int_0^T e^{ \beta A_t} |\fd Y(t)|^2 \dd A_t +  \int_0^T e^{(\beta-\delta) A_t}  \int_t^T e^{\delta A_s} \dd  \trace \langle \fd \mart(t,\cdot)\rangle_s    \dd A_t \right] \\  \nonumber
&& \quad  \leq \Sigma^\mathfrak f(\beta)  \bE \left[ \int_0^T  e^{(\beta-\delta) A_t}  \int_t^T e^{\delta A_s}\dfrac{ |\fd h(t,s)|^2}{\alpha_s^2} \dd B_s  \dd A_t \right] ,
\end{eqnarray*}
where 
$$\fd h(t,r) = h(t,r,y(r),Z(t,r),U(t,r))-h(t,r,\bar y(r), Z(t,r),U(t,r)).$$
Again \ref{H2} leads to:
$$|\fd h(t,r)|^2 \leq \varpi_r(y(r)-\bar y(r))^2.$$
Thereby
\begin{eqnarray*} \nonumber
&& \bE \left[ \int_0^T e^{ \beta A_t} |\fd Y(t)|^2 \dd A_t +  \int_0^T e^{(\beta-\delta) A_t}  \int_t^T e^{\delta A_s} \dd  \trace \langle \fd \mart(t,\cdot)\rangle_s    \dd A_t \right] \\  \nonumber
&& \quad  \leq \Sigma^\mathfrak f(\beta) \dfrac{e^{(\beta-\delta)\mathfrak f}}{\beta-\delta}   \bE \left[ \int_0^T   e^{\beta A_s}(y(s)-\bar y(s))^2 \dd A_s \right] . 
\end{eqnarray*}
Since we suppose that 
$$ \widetilde\Sigma^\mathfrak f(\beta) =  \Sigma^\mathfrak f(\beta) \dfrac{e^{(\beta-\delta)\mathfrak f}}{\beta-\delta}  <1,$$
this map $\Theta$ is a contraction and thus it admits a unique fixed point $(Y,Z,U,M) \in \mathfrak{S}^2_{\beta}(\Delta^c(0,T))$ which is the unique adapted solution of \eqref{eq:general_BSVIE_type_I} on $[0,T]$. 

To prove the upper bound \eqref{eq:a_priori_estim_general_BSVIE_type_I}, let apply Lemma \ref{lem:param_BSDE->BSVIE} to the driver $h(t,s,z,u)=f(t,s,Y(s),z,u).$
Note that with \ref{H2}:
$$ |h(t,s,0,\mathbf 0)|^2  \leq \left(1+ \dfrac{1}{\eps}\right) |f^0(t,s)|^2 + (1+\eps) \varpi_s |Y(s)|^2 $$
for any $\eps > 0$. Thus
\begin{eqnarray*} \nonumber
&& \bE \left[ \int_0^T e^{ \beta A_t} |Y(t)|^2 \dd A_t +  \int_0^T e^{(\beta-\delta) A_t}  \int_t^T e^{\delta A_s} \dd  \trace \langle \mart(t,\cdot)\rangle_s    \dd A_t \right] \\  \nonumber
&& \quad  \leq \dfrac{\delta}{2} \Sigma^\mathfrak f( \beta) \bE \left[ \int_0^T e^{\beta A_t} |\Phi(t)|^2  \dd A_t\right] \\ 
&&\qquad +  \Sigma^\mathfrak f(\beta)  \bE \left[ \int_0^T  e^{(\beta-\delta) A_t}  \int_t^T e^{\delta A_s}\dfrac{ |h^0(t,s)|^2}{\alpha_s^2} \dd B_s  \dd A_t \right] \\
&& \quad  \leq \dfrac{\delta}{2} \Sigma^\mathfrak f( \beta) \bE \left[ \int_0^T e^{\beta A_t} |\Phi(t)|^2  \dd A_t\right] \\ 
&&\qquad +\left(1+ \dfrac{1}{\eps}\right) \Sigma^\mathfrak f(\beta)  \bE \left[ \int_0^T  e^{(\beta-\delta) A_t}  \int_t^T e^{\delta A_s}\dfrac{ |f^0(t,s)|^2}{\alpha_s^2} \dd B_s  \dd A_t \right] \\
&&\qquad + (1+\eps )\Sigma^\mathfrak f(\beta)  \bE \left[ \int_0^T  e^{(\beta-\delta) A_t}  \int_t^T e^{\delta A_s} |Y(s)|^2 \dd A_s  \dd A_t \right] \\
&& \quad= \dfrac{\delta}{2} \Sigma^\mathfrak f( \beta) \bE \left[ \int_0^T e^{\beta A_t} |\Phi(t)|^2  \dd A_t\right] + (1+\eps)  \widetilde \Sigma^\mathfrak f(\beta)   \bE \left[ \int_0^T   e^{\delta A_s} |Y(s)|^2\dd A_s \right]\\ 
&&\qquad +\left(1+ \dfrac{1}{\eps}\right)  \Sigma^\mathfrak f(\beta)  \bE \left[ \int_0^T  e^{(\beta-\delta) A_t}  \int_t^T e^{\delta A_s}\dfrac{ |f^0(t,s)|^2}{\alpha_s^2} \dd B_s  \dd A_t \right]  .
\end{eqnarray*}
Since $\Sigma^\mathfrak f(\beta)  \dfrac{e^{(\beta-\delta)\mathfrak f}}{\beta-\delta} <1$, choosing $\eps$ sufficiently small yields to the wanted result. 

To finish the proof, let us prove the stability estimate. Using again Lemma \ref{lem:stab_BSVIE_h_poss} with 
$$h(t,s,z,u)=f(t,s,Y(s),z,u),\quad \bar h(t,s,z,u)=\bar f(t,s,\bar Y(s),z,u),$$
yields to:
\begin{eqnarray*} \nonumber
&& \bE \left[ \int_0^T e^{ \beta A_t} |\fd Y(t)|^2 \dd A_t +  \int_0^T e^{(\beta-\delta) A_t}  \int_t^T e^{\delta A_s} \dd  \trace \langle \fd \mart(t,\cdot)\rangle_s    \dd A_t \right] \\  \nonumber
&& \quad  \leq \dfrac{\delta}{2} \Sigma^\mathfrak f( \beta) \bE \left[ \int_0^T e^{\beta A_t} |\fd \Phi(t)|^2  \dd A_t\right] \\
&&\qquad +\Sigma^\mathfrak f(\beta)  \bE \left[ \int_0^T  e^{(\beta-\delta) A_t}  \int_t^T e^{\delta A_s}\dfrac{ |\fd h(t,s)|^2}{\alpha_s^2} \dd B_s  \dd A_t \right] 
\end{eqnarray*}
where 
\begin{eqnarray*}
\fd h(t,r) & =&  h(t,r,Z(t,r),U(t,r))-\bar h(t,r,Z(t,r),U(t,r)) \\
& = & f(t,r,Y(r),Z(t,r),U(t,r))-\bar f(t,r,Y(r),Z(t,r),U(t,r)) \\
&+&  \bar f(t,r,Y(r),Z(t,r),U(t,r)) -  \bar f(t,r,\bar Y(r),Z(t,r),U(t,r)) \\
& = & \fd f(t,r) +  \bar f(t,r,Y(r),Z(t,r),U(t,r)) -  \bar f(t,r,\bar Y(r),Z(t,r),U(t,r)).
\end{eqnarray*}
Since $\bar f$ satisfies \ref{H2}, 
$$|\bar f(t,r,Y(r),Z(t,r),U(t,r)) -  \bar f(t,r,\bar Y(r),Z(t,r),U(t,r)) |^2 \leq \varpi_r |\fd Y(r)|^2.$$
Hence we obtain 
\begin{eqnarray*} \nonumber
&& \bE \left[ \int_0^T e^{ \beta A_t} |\fd Y(t)|^2 \dd A_t +  \int_0^T e^{(\beta-\delta) A_t}  \int_t^T e^{\delta A_s} \dd  \trace \langle \fd \mart(t,\cdot)\rangle_s    \dd A_t \right] \\  \nonumber
&& \quad  \leq \dfrac{\delta}{2} \Sigma^\mathfrak f( \beta) \bE \left[ \int_0^T e^{\beta A_t} |\fd \Phi(t)|^2  \dd A_t\right] \\
&&\qquad +\left( 1 + \dfrac{1}{\eps} \right) \Sigma^\mathfrak f(\beta)  \bE \left[ \int_0^T  e^{(\beta-\delta) A_t}  \int_t^T e^{\delta A_s}\dfrac{ |\fd f(t,s)|^2}{\alpha_s^2} \dd B_s  \dd A_t \right] \\
&&\qquad + (1+\eps)  \widetilde \Sigma^\mathfrak f(\beta)  \bE \left[ \int_0^T e^{\beta A_s}|\fd Y(s)|^2\dd A_s \right] .
\end{eqnarray*}
The same arguments used before lead to the conclusion.


\subsection{Comparison principle.}  \label{ssect:comp_prin_BSVIE}

In this section, the dimension $d$ is equal to one. Our goal is to extend some results contained in \cite{wang:yong:15}. Note that the comparison principle for BSDEs has been proved in \cite{krus:popi:14,krus:popi:17} in the quasi left-continuous case (see also \cite[Theorem 3.2.1]{delo:13} or \cite[Proposition 5.32]{pard:rasc:14}). In \cite[Theorem 3.25]{papa:poss:sapl:18}, the comparison principle is established for the BSDE: 
$$Y_t = \xi + \int_t^T f(s,Y_{s-},Z_s,U_s(\cdot)) \dd B_s -\int_t^T Z_s \dd X^\circ_s - \int_t^T U_s(x) \tpi^\natural(\dd s,\dd x) - \int_t^T \dd M_s.$$
Compared to the BSDE \eqref{eq:poss_BSDE}, the difference is that $f$ depends on $Y_{s-}$, instead of $Y_s$. This property is crucial in \cite{papa:poss:sapl:18}, since they have to take into account the discontinuity of $B$. Before stating the comparison principle, let us recall that a generator $f$ can be ``linearized'' as follows:
\begin{eqnarray*}
&& f(\omega,s,y,z,u_s(\omega;\cdot) - f(\omega,s,y',z',u'_s(\omega;\cdot) \\
&& \quad = \lambda_s(\omega)(y-y') + \eta_s(\omega)b_s(\omega)(z-z')^{\top} +  f(\omega,s,y,z,u(\cdot)) - f(\omega,s,y,z,u'(\cdot)).
\end{eqnarray*}
See \cite[Remark 3.24]{papa:poss:sapl:18}. Let us emphasize that $\lambda$ and $\eta$ depend also on $y,y',z,z',u,u',c$. In particular $\lambda$ and $\eta$ are not predictable if they depend on $Y_s$. This is the reason why the preceding BSDE (and not the BSDE \eqref{eq:poss_BSDE}) is studied for the comparison property in \cite{papa:poss:sapl:18}. Nonetheless to simplify the notations and when no confusion may arise, we omit this dependence. If \ref{F2} holds, $|\lambda_s(\omega)|^2 \leq \varpi_s(\omega)$ and $|\eta_s(\omega)|^2 \leq \theta^\circ_s(\omega)$ $\dd \bP \otimes \dd B$-a.e. on $\Omega\times [0,T]$. The comparison principle is the following (similar to \cite[Theorem 3.25]{papa:poss:sapl:18}). 
\begin{Prop} \label{prop:comp_pinc_BSDE}
For $i=1,2$, let $(Y^i,Z^i,U^i,M^i)$ be solutions of the BSDE \eqref{eq:poss_BSDE} with standard data $(\bar X,B,\xi^i,f^i)$, that is {\rm \ref{F1}} to {\rm \ref{F4}} hold. Assume that 
\begin{enumerate}[label={\rm \textbf{(P\arabic*)}}]
\item\label{P1} $X^\circ$ and $B$ are continuous. 
\item\label{P2} The generator $f^1$ is such that for any $(s,y,z,u,u')$ in $\bR_+\times \bR \times \bR^k \times \mathfrak H \times \mathfrak H$, there is some map $\kappa \in \bH^{2,\natural}$ with $\Delta (\kappa \star \tpi^\natural) > -1$ on $[0,T]$ such that $\dd \bP \otimes \dd B$-a.e. $(\omega,s)$, denoting $\fd u = u-u'$
$$f^1(\omega,s,y,z,u(\cdot)) - f^1(\omega,s,y,z,u'(\cdot)) \leq \widehat K_s (\fd u_s(\cdot) \kappa_s(\cdot)).$$ 
\end{enumerate}
If $\xi^1\leq \xi^2$ a.s., and $f^1(s,Y^2_s,Z_s^2,U^2_s) \leq f^2(s,Y^2_s,Z_s^2,U^2_s)$ $\dd \bP \otimes \dd B$-a.e., and if the stochastic exponential $\mathcal E(\eta \cdot X^\circ + \kappa \star \tpi^\natural)$ is a uniformly integrable martingale, then we have $\bP$-a.s.: $Y^1_t \leq Y^2_t$ for any $t \in [0,T]$.
\end{Prop}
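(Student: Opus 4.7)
The plan is the classical linearization approach for BSDE comparison, adapted to the general filtration. Set $\delta Y = Y^1 - Y^2$, and analogously $\delta Z, \delta U, \delta M, \delta \xi$. Subtracting the two BSDEs,
\begin{equation*}
\delta Y_t = \delta \xi + \int_t^T \Delta F_s\, \dd B_s - \int_t^T \delta Z_s\, \dd X^\circ_s - \int_t^T \int_{\bR^m} \delta U_s(x)\, \tpi^\natural(\dd s, \dd x) - \int_t^T \dd \delta M_s,
\end{equation*}
with $\Delta F_s := f^1(s, Y^1_s, Z^1_s, U^1_s) - f^2(s, Y^2_s, Z^2_s, U^2_s)$. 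Telescoping this difference across its three stochastic arguments and applying the Lipschitz condition \ref{F2} on $f^1$ yields optional processes $\lambda$ and $\eta$, with $|\lambda_s|^2 \leq \varpi_s$ and $|\eta_s|^2 \leq \theta^\circ_s$, such that the $y$- and $z$-telescoped pieces equal $\lambda_s \delta Y_s$ and $\eta_s b_s \delta Z_s^\top$. Assumption \ref{P2} supplies $\kappa$ bounding the $u$-telescoped piece by $\widehat K_s(\delta U_s \kappa_s)$, and the comparison hypothesis $f^1(s, Y^2, Z^2, U^2) \leq f^2(s, Y^2, Z^2, U^2)$ closes the chain. Altogether there exists an optional process $R \geq 0$ with
\begin{equation*}
\Delta F_s = \lambda_s \delta Y_s + \eta_s b_s \delta Z_s^\top + \widehat K_s(\delta U_s \kappa_s) - R_s.
\end{equation*}

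Next I would introduce the strictly positive process
\begin{equation*}
\Gamma_t = \exp\!\left(\int_0^t \lambda_s\, \dd B_s\right) \mathcal{E}\bigl(\eta \cdot X^\circ + \kappa \star \tpi^\natural\bigr)_t,
\end{equation*}
positivity being guaranteed by $\Delta(\kappa \star \tpi^\natural) > -1$, and expand $\dd(\Gamma \delta Y)$ via the product rule. Using the orthogonal decomposition of the martingale part of $\delta Y$ w.r.t. $\overline{X}$, the predictable covariation of $\eta \cdot X^\circ$ with $\delta Z \cdot X^\circ$ reconstructs precisely the $\eta_s b_s \delta Z_s^\top$ drift, and the covariation of $\kappa \star \tpi^\natural$ with $\delta U \star \tpi^\natural$ reconstructs the $\widehat K_s(\delta U_s \kappa_s)$ drift. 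The orthogonal component $\delta M$ contributes nothing, since $\langle M, X^\circ\rangle = 0$ and $M_{\pi^\natural}[\Delta M \mid \tP] = 0$. These pieces exactly cancel the matching terms issued from $\Gamma_{s-}\dd \delta Y$ and $\delta Y_{s-}\dd \Gamma$, so the finite-variation part of $\dd(\Gamma\delta Y)$ reduces to $\Gamma_{s-} R_s\, \dd B_s + \Gamma_{s-}\lambda_s(\delta Y_{s-} - \delta Y_s)\, \dd B_s$. By \ref{P1}, $B$ is continuous, so $\dd B$ has no atoms and the second term vanishes identically.

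Consequently $\Gamma \delta Y - \int_0^{\cdot} \Gamma_{s-} R_s\, \dd B_s$ is a local martingale. Combining a localization with the uniform integrability of $\mathcal{E}(\eta \cdot X^\circ + \kappa \star \tpi^\natural)$ and the $L^2$-integrability of the BSDE solutions (invoked via Cauchy--Schwarz to control the stochastic integrals), one may take conditional expectation to obtain
\begin{equation*}
\Gamma_t \delta Y_t = \bE\!\left[\Gamma_T \delta \xi - \int_t^T \Gamma_{s-} R_s\, \dd B_s \,\Big|\, \cF_t\right] \leq 0,
\end{equation*}
since $\delta \xi \leq 0$, $R \geq 0$ and $\Gamma > 0$. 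As $\Gamma_t > 0$, this forces $\delta Y_t \leq 0$ a.s., i.e., $Y^1_t \leq Y^2_t$, for every $t \in [0,T]$.

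The main obstacle is rigorously showing that the conditional expectation of the local martingale part actually vanishes: this is where the uniform integrability of $\mathcal{E}(\eta \cdot X^\circ + \kappa \star \tpi^\natural)$ must be leveraged, in conjunction with the $\bD^2_\delta$-integrability of $(\delta Y, \delta Z, \delta U, \delta M)$, through a suitable localizing sequence. A secondary delicate point is that $\lambda$ and $\eta$ are merely optional (because the linearization is carried out at $Y^i$, not at $Y^i_-$), so one absolutely needs the continuity of $B$ from \ref{P1} to absorb the jump correction $(\delta Y_{s-} - \delta Y_s)\dd B_s$; this explains why the authors restrict to continuous $B$ and $X^\circ$ in this comparison statement.
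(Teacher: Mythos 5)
Your proposal is correct and follows essentially the same route as the paper: linearize the driver (with the $y$- and $z$-coefficients $\lambda,\eta$ and the jump kernel $\kappa$ from \ref{P2}), use the continuity of $B$ to neutralize the fact that the linearization is only optional, and weight by the exponential $\exp(\int\lambda\,\dd B)$ together with $\mathcal E(\eta\cdot X^\circ+\kappa\star\tpi^\natural)$ before taking conditional expectations. The only cosmetic difference is that the paper performs a Girsanov change of measure with density $\mathcal E(\eta\cdot X^\circ+\kappa\star\tpi^\natural)$ and concludes under $\mathbb Q$, whereas you keep everything under $\bP$ by folding that stochastic exponential into a single adjoint process $\Gamma$; the two computations are equivalent and both hinge on the same uniform integrability hypothesis.
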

\begin{proof}
Postponed in the appendix. 
\end{proof}

Note that the continuity of $X^\circ$ is also supposed in \cite[Theorem 3.25]{papa:poss:sapl:18}. 
Hence if we consider the BSVIE \eqref{eq:special_BSVIE} where the generator $f$ does not depend on $y$, $\zeta$ and $\theta$:
 \begin{eqnarray*} \nonumber
Y(t) &= &\Phi(t) + \int_t^{T} f(t,s, Z(t,s) ,U(t,s)) \dd B_s - \int_t^{T} Z(t,s) \dd X^\circ_s \\ 
&& \qquad - \int_t^{T}  \int_{\bR^m}  U(t,s,x) \tpi(\dd x, \dd s) -\int_t^{T} \dd M(t,s).
\end{eqnarray*}
the comparison principle holds. 
\begin{Prop} \label{prop:comp_BSVIE_without_Y}
For $i=1,2$, let $f^i:\Omega \times \Delta^c(0,T) \times \bR^{k} \times \mathfrak H \to \bR$ satisfy {\rm \ref{H2}} and {\rm \ref{H3}}. Moreover a.s. 
$$f^1(t,s,z,u)\leq f^2(t,s,z,u), \quad \forall (t,s,z,u) \in \Delta^c(0,T) \times \bR^{k} \times  \mathfrak H.$$
We assume that {\rm \ref{P1}} and {\rm \ref{P2}} hold. 
Then for any $\Phi^i \in \bL^p_{\cF_T}(0,T)$ with $\Phi^0(t) \leq \Phi^1(t)$ a.s., $t\in[0,T]$, the solutions $(Y^i,Z^i,\psi^i,M^i) $ of \eqref{eq:special_BSVIE} verify
$$Y^1(t) \leq Y^2(t), \quad \mbox{a.s., } t\in [0,T].$$
\end{Prop}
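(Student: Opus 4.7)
The key observation is that since $f^i$ does not depend on $y$, the BSVIE \eqref{eq:special_BSVIE} admits the ``diagonal'' representation $Y^i(t) = \lambda^i(t,t)$, where $\lambda^i(t,\cdot)$ solves, for each fixed $t \in [0,T]$, the BSDE \eqref{eq:parametrized_BSDE} on $[t,T]$ with terminal condition $\Phi^i(t)$ and generator $(s,z,u) \mapsto f^i(t,s,z,u)$. Hence, to conclude $Y^1(t) \leq Y^2(t)$ a.s.\ for each $t$, it suffices to compare $\lambda^1(t,t)$ and $\lambda^2(t,t)$ using the BSDE comparison principle already proved as Proposition \ref{prop:comp_pinc_BSDE}, applied for $t$ fixed.

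The plan is thus the following. First, I fix $t \in [0,T]$ and check that for $i=1,2$ the parametrized driver $(s,y,z,u) \mapsto f^i(t,s,z,u)$ together with the terminal condition $\Phi^i(t) \in \bL^2_{\delta,\cF_T}$ satisfies \ref{F1}--\ref{F4}. This is immediate from \ref{H1}--\ref{H5}: the Lipschitz bounds of \ref{H2} give the required estimate with coefficients that are trivially independent of $y$ (so $\varpi$ plays no role), and \ref{H5} ensures that the integrability assumption \ref{F4} holds at this specific $t$. Theorem \ref{thm:exis_sol_poss} then gives unique solutions $(\lambda^i(t,\cdot), z^i(t,\cdot), u^i(t,\cdot), m^i(t,\cdot))$, and by construction $Y^i(t) = \lambda^i(t,t)$, $Z^i(t,\cdot) = z^i(t,\cdot)$, etc.

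Second, I verify the hypotheses of Proposition \ref{prop:comp_pinc_BSDE} for these two BSDEs on $[t,T]$. Assumption \ref{P1} (continuity of $X^\circ$ and $B$) is exactly the standing assumption. Assumption \ref{P2} is assumed here for the original generators $f^1$, and it transfers verbatim to the parametrized driver $(s,z,u) \mapsto f^1(t,s,z,u)$ with the same $\kappa$ (which may a priori depend on $t$, but that is harmless since $t$ is fixed). The sign hypothesis $f^1(t,s,z,u) \leq f^2(t,s,z,u)$ a.s.\ for all $(s,z,u)$, evaluated along $(s, z^2(t,s), u^2(t,s))$, yields the pointwise-in-$s$ driver inequality needed in Proposition \ref{prop:comp_pinc_BSDE}. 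Similarly $\Phi^1(t) \leq \Phi^2(t)$ a.s.\ plays the role of $\xi^1 \leq \xi^2$.

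Third, the BSDE comparison principle gives $\lambda^1(t,r) \leq \lambda^2(t,r)$ $\bP$-a.s.\ for every $r \in [t,T]$. Taking $r = t$ yields $Y^1(t) \leq Y^2(t)$ $\bP$-a.s. Since $t$ was arbitrary, the conclusion follows for every $t \in [0,T]$. The only mild subtlety is that the exceptional null set depends on $t$; however, this matches exactly the statement ``$Y^1(t) \leq Y^2(t)$ a.s., $t \in [0,T]$'' (pointwise in $t$, almost surely), so no uniformity in $t$ is needed. The main step whose hypotheses must be watched is verifying that the stochastic exponential in \ref{P2}, written for the parametrized generator, remains a uniformly integrable martingale; this is exactly the content of the assumption imported from Proposition \ref{prop:comp_pinc_BSDE}, so no extra work is required.
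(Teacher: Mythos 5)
Your proof is correct and follows essentially the same route as the paper: fix $t$, use the diagonal representation $Y^i(t)=\lambda^i(t,t)$ via the parametrized BSDE \eqref{eq:parametrized_BSDE}, apply the BSDE comparison principle of Proposition \ref{prop:comp_pinc_BSDE}, and evaluate at $r=t$. The paper's own proof is just a condensed version of this argument, so your additional verifications (transfer of \ref{F1}--\ref{F4} and \ref{P2} to the parametrized driver, the $t$-dependence of the null set) only make explicit what the paper leaves implicit.
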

\begin{proof}
Let us consider $\lambda^i(t,\cdot)$ solution of the parametrized BSDE \eqref{eq:parametrized_BSDE} with data $(\Phi^i,f^i)$. From Proposition \ref{prop:comp_pinc_BSDE}, we obtain that a.s. for any $s\in [t,T]$, $\lambda^1(t,s) \leq \lambda^2(t,s)$. Sending $s$ to $t$, since $Y^i(t) = \lambda^i(t,t)$, we obtain the desired result.
\end{proof}

Nevertheless to extend this result for generators depending also on $y$, in \cite[Theorem 3.4]{wang:yong:15}, $f$ is supposed to be bounded (from above or from below) by a non decreasing w.r.t. $y$ generator. The next proposition extends this result to our setting. 
\begin{Prop} \label{prop:comp_BSVIE_1}
We assume that the setting of Theorem \ref{thm:type_I_general_BSVIE} holds and we consider two generators $f^i : \Omega \times \Delta^c(0,T) \times \bR \to \bR$, $i=1,2$, satisfying {\rm \ref{H2}} and {\rm \ref{H3}}. 
We suppose that a.s. for a.e. $s \in [0,T]$ and for any $0\leq t  \leq s$ and any $(y,z,u) \in \bR \times \bR^k \times \mathfrak H$: 
\begin{equation} \label{eq:cond_comp_BSVIE_1_bis}
f^1(t,s,y,z,u)  \leq \bar f(t,s,y,z,u)  \leq f^2(t,s,y,z,u) .
\end{equation}
The driver $\bar f$ also verifies {\rm \ref{H2}}-{\rm \ref{H3}} and $y\mapsto \bar f(t,s,y,z,u)$ is nondecreasing. 
If a.s. for $0\leq t \leq T$, $\Phi^2(t) \geq \Phi^1(t)$, then the corresponding solutions of the BSVIEs \eqref{eq:general_BSVIE_type_I} with generator $f^i$, verify for any $t\in[0,T]$:
$$Y^2(t) \geq  Y^1(t) ,\quad  a.s.$$
\end{Prop}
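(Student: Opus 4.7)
The proof proceeds by a Picard-style iteration that reduces the comparison to the case where the drivers have no $y$-dependence, which is exactly the content of Proposition \ref{prop:comp_BSVIE_without_Y}. For each $i\in\{1,2\}$ I would set $Y^{i,0}\equiv 0$ and, inductively, take $(Y^{i,n+1},Z^{i,n+1},U^{i,n+1},M^{i,n+1}) \in \mathfrak{S}^2_{\beta}(\Delta^c(0,T))$ to be the unique adapted solution of the Type-I BSVIE with data $(\Phi^i,\tilde f^{i,n})$, where $\tilde f^{i,n}(t,s,z,u) := f^i(t,s,Y^{i,n}(s),z,u)$. Since $Y^{i,n}$ is a previously constructed adapted process, $\tilde f^{i,n}$ does not depend on the unknown, so Proposition \ref{prop:type_I_BSVIE_without_Y} supplies existence and uniqueness. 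The required integrability of $\tilde f^{i,n}(\cdot,\cdot,0,\mathbf 0)$ follows from the Lipschitz bound $|\tilde f^{i,n}(t,s,0,\mathbf 0)|^2\leq 2|f^{i,0}(t,s)|^2 + 2\varpi_s|Y^{i,n}(s)|^2$ together with \ref{H4} and the exponential weighting used in the proof of Theorem \ref{thm:type_I_general_BSVIE}. Moreover, the fixed-point map used there is exactly this Picard iteration, so $Y^{i,n} \to Y^i$ in $\bL^2_{\beta,\bF}(0,T)$.

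The heart of the argument is the induction $Y^{1,n}(t)\leq Y^{2,n}(t)$ a.s., for a.e. $t\in[0,T]$ and every $n\geq 0$. The base case is immediate. Assuming the statement at rank $n$, the sandwich \eqref{eq:cond_comp_BSVIE_1_bis} combined with the monotonicity of $\bar f$ in $y$ gives, pointwise in $(\omega,t,s,z,u)$,
\begin{equation*}
\tilde f^{1,n}(t,s,z,u) = f^1(t,s,Y^{1,n}(s),z,u) \leq \bar f(t,s,Y^{1,n}(s),z,u) \leq \bar f(t,s,Y^{2,n}(s),z,u) \leq \tilde f^{2,n}(t,s,z,u).
\end{equation*}
Together with $\Phi^1\leq \Phi^2$ and Proposition \ref{prop:comp_BSVIE_without_Y} (applied to the $y$-free BSVIEs with drivers $\tilde f^{i,n}$), this yields $Y^{1,n+1}(t)\leq Y^{2,n+1}(t)$ a.s., closing the induction. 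Extracting a subsequence along which $Y^{i,n}(t)\to Y^i(t)$ a.s. for a.e. $t$ and letting $n\to +\infty$ preserves the inequality, producing the desired conclusion $Y^1(t)\leq Y^2(t)$ a.s.

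The main technical issue I expect is making sure Proposition \ref{prop:comp_BSVIE_without_Y} legitimately applies at every step. This requires two checks. First, the frozen drivers $\tilde f^{i,n}$ must still satisfy \ref{H2}-\ref{H3}; this is clear because fixing the $y$-slot at an adapted process does not alter the Lipschitz constants in $(z,u)$, and the integrability of $\tilde f^{i,n,0}$ was already dealt with above. Second, the structural hypotheses \ref{P1}-\ref{P2} (continuity of $X^\circ$ and $B$, and the jump-domination assumption on $f^1$) must hold; note that \ref{P2} for the original $f^1$ transfers verbatim to $\tilde f^{1,n}$ since the $\kappa$ bound is uniform in $y$. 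These structural hypotheses are understood as standing assumptions throughout this subsection, exactly as in Proposition \ref{prop:comp_BSVIE_without_Y}.
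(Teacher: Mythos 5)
Your proof is correct, and it reaches the conclusion by a genuinely different organization of the same ingredients as the paper. The paper's argument (given in the appendix) first solves the auxiliary BSVIE driven by the intermediate generator $\bar f$, obtaining a solution $\bar Y$, and then builds a \emph{monotone} scheme anchored at the actual solution: starting from $\widetilde Y_0 = Y^2$ it iterates $\widetilde Y_k$ with frozen driver $\bar f(t,s,\widetilde Y_{k-1}(s),\cdot,\cdot)$, uses Proposition \ref{prop:comp_BSVIE_without_Y} together with the monotonicity of $\bar f$ in $y$ to get the decreasing chain $Y^2 \geq \widetilde Y_1 \geq \widetilde Y_2 \geq \cdots$, identifies the limit with $\bar Y$ via the stability estimate of Lemma \ref{lem:stab_BSVIE_h_poss}, and concludes $Y^1 \leq \bar Y \leq Y^2$ by running the symmetric argument on the other side. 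You instead run two \emph{parallel} Picard iterations from $0$ and prove $Y^{1,n}\leq Y^{2,n}$ by a two-sided induction, using $\bar f$ only as a pointwise bridge in the driver comparison $f^1(t,s,Y^{1,n}(s),z,u)\leq \bar f(t,s,Y^{1,n}(s),z,u)\leq \bar f(t,s,Y^{2,n}(s),z,u)\leq f^2(t,s,Y^{2,n}(s),z,u)$; this avoids ever solving a BSVIE with driver $\bar f$ (and hence avoids checking \ref{H4}--\ref{H5} for $\bar f$, which the paper needs and obtains implicitly from the sandwich \eqref{eq:cond_comp_BSVIE_1_bis}). Both routes reduce each step to Proposition \ref{prop:comp_BSVIE_without_Y} and both use the contraction $\widetilde\Sigma^{\mathfrak f}(\beta)<1$ to pass to the limit, so the hypotheses consumed are identical; your version is marginally leaner, while the paper's monotone scheme additionally exhibits the intermediate solution $\bar Y$ squeezed between $Y^1$ and $Y^2$. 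One small caveat common to both arguments: the limit inequality is obtained for $\dd B\otimes\dd\bP$-a.e.\ $(t,\omega)$ along an a.e.-convergent subsequence, which is the natural sense in which the solution is defined; your phrasing is consistent with that.
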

\begin{proof}
Since the arguments of the proof are almost the same as in \cite{wang:yong:15}, we postpone the details in the appendix. 
\end{proof}

If we cannot ``separate'' the drivers $f^1$ and $f^2$ by a nondecreasing generator $\bar f$, we restrict ourselves to half-linear generators as in \cite[Theorem 3.9]{wang:yong:15}. Hence we suppose that the generator $f$ is linear w.r.t. $z$ and $\psi$:
\begin{equation}\label{eq:BSVIE_linear_gene}
f(t,s,y,z,u) = g(t,s,y) + h(s)b_s z +  \widehat K_s(u(\cdot)\kappa_s(\cdot)),
\end{equation}
where $h$ is a process bounded by $\theta^\circ$ and $\kappa : \Omega \times [0,T] \times\bR^m \to \bR$ is progressively measurable and such that \ref{P2} holds. 
%
%
%
%
Our comparison result is a extension to the jump case of \cite[Theorems 3.8 and 3.9]{wang:yong:15} for BSVIE \eqref{eq:general_BSVIE_type_I} where $f$ is given by \eqref{eq:BSVIE_linear_gene}. The main difference comes from the free terms. Indeed in \cite{wang:yong:15} where $B_t= t$, the free term $\Phi$ is supposed to be in $C_{\cF_T}([0,T],\bL^2(\Omega))$ (see the functional spaces defined in Section \ref{ssect:time_regularity}). Hence for any partition $\Pi= \{ t_k, \ 0\leq k \leq N\}$ of $[0,T]$, if
\begin{equation} \label{eq:approx_Phi_partition}
\Phi^\Pi(t) = \sum_{k=1}^N \Phi(t_{k-1})  \mathbf 1_{(t_{k-1},t_k]}(t),
\end{equation}
then by uniform continuity, there exists a modulus of continuity $\rho$ such that 
$$\sup_{t\in [0,T]} \bE \left[ \left| \Phi^\Pi(t)-\Phi(t)\right|^2 \right] \leq \sup_{|t-t'|\leq \|\Pi\|} \bE \left[ \left| \Phi(t')-\Phi(t)\right|^2 \right] \leq \rho(\|\Pi\|).$$
$\|\Pi\|$ is the mesh of the partition. In our setting, we cannot separate $t$ and $\Omega$, since $B$ is random. 


\begin{Prop} \label{prop:comp_BSVIE_2}
Consider two drivers $g^i : \Omega \times \Delta^c(0,T) \times \bR \to \bR$ satisfying {\rm \ref{H2}}. 
We suppose that for $\dd \bP \otimes \dd B$-a.e. $(\omega,s) \in \Omega \times [0,T]$ and for any $0\leq t \leq  \tau \leq s$ and any $y\in \bR$: 
\begin{equation} \label{eq:cond_comp_BSVIE_1}
g^2(t,s,y)-g^1(t,s,y)  \geq g^2(\tau,s,y)- g^1(\tau,s,y) \geq 0.
\end{equation}
Moreover for either $i=1$ or $i=2$
\begin{equation} \label{eq:cond_comp_BSVIE_2}
(g^i(t,s,y) - g^i(t,s,y'))(y-y') \geq (g^i(\tau,s,y) -g^i (\tau,s,y'))(y-y')
\end{equation}
again for $\dd \bP \otimes \dd B$-a.e. $(\omega,s) \in \Omega \times [0,T]$ and for any $0\leq t \leq  \tau \leq s$ and any $y,y'$ in $\bR$. Furthermore there exists a continuous nondecreasing function $\rho:[0,T] \to [0,+\infty)$ with $\rho(0)=0$ such that a.s. for a.e. $s \in [0,T]$ and for any $0\leq t,t' \leq s$
\begin{equation} \label{eq:cond_comp_BSVIE_3}
|g^i(t,s,y)-g^i(t,s,y') - g^i(t',s,y)+ g^i (t',s,y')|\leq \rho(|t-t'|)\times |y-y'|.
\end{equation}
We suppose that the difference of the free terms $\fd \Phi = \Phi^2-\Phi^1$ satisfies: 
\begin{equation}\label{eq:cond_comp_BSVIE_5}
\lim_{\|\Pi\| \to 0} \bE \left[ \int_0^T e^{\beta A_t} \left( \fd\Phi^\Pi(t) - \fd \Phi(t)\right) \dd A_t \right]=0,
\end{equation}
where $\fd \Phi^\Pi$ is defined by \eqref{eq:approx_Phi_partition}. 
If a.s. for $0\leq t \leq \tau \leq T$, 
\begin{equation} \label{eq:cond_comp_BSVIE_4}
\Phi^2(t) - \Phi^1(t) \geq \Phi^2(\tau)-\Phi^1(\tau) \geq 0,
\end{equation}
then the corresponding solutions of the BSVIEs \eqref{eq:general_BSVIE_type_I} with generator $f^i$ given by \eqref{eq:BSVIE_linear_gene} with $g^i$ instead of $g$, verify for $\dd \bP \otimes \dd B$-a.e. $(\omega,t) \in \Omega \times [0,T]$
$$Y^2(t) \geq Y^1(t).$$
\end{Prop}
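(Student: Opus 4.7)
The plan is to adapt the partition-approximation and backward-induction strategy of \cite{wang:yong:15} to our general filtration, reducing the BSVIE comparison to iterated BSDE comparisons via Proposition \ref{prop:comp_pinc_BSDE}, and closing the argument with the stability part of Theorem \ref{thm:type_I_general_BSVIE}.

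\textbf{Step 1: Discretization.} Fix a partition $\Pi=\{0=t_0<t_1<\cdots<t_N=T\}$ of $[0,T]$ and replace the data by the step functions $\Phi^{i,\Pi}(t)=\Phi^i(t_{k-1})$ as in \eqref{eq:approx_Phi_partition}, $g^{i,\Pi}(t,s,y)=g^i(t_{k-1},s,y)$ for $t\in(t_{k-1},t_k]$. By Theorem \ref{thm:type_I_general_BSVIE}, the approximated BSVIE with linear driver \eqref{eq:BSVIE_linear_gene} admits a unique adapted solution $(Y^{i,\Pi},Z^{i,\Pi},U^{i,\Pi},M^{i,\Pi})$, and on each subinterval $(t_{k-1},t_k]$ we have $Y^{i,\Pi}(t)=\lambda^{i,k}(t)$, where $\lambda^{i,k}$ is the solution on $[t_{k-1},T]$ of a BSDE whose driver is $g^i(t_{k-1},s,y)+h(s)b_s z+\widehat K_s(u(\cdot)\kappa_s(\cdot))$ for $s\in(t_{k-1},t_k]$ and $g^i(t_{k-1},s,Y^{i,\Pi}(s))+h(s)b_s z+\widehat K_s(u(\cdot)\kappa_s(\cdot))$ for $s\in(t_k,T]$, the process $Y^{i,\Pi}(s)$ above $t_k$ being known from the preceding induction steps.

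\textbf{Step 2: Backward induction.} Show by downward induction on $k$ that $Y^{1,\Pi}\leq Y^{2,\Pi}$ $\dd B\otimes\dd\bP$-a.e.\ on $(t_{k-1},T]$, applying at each step Proposition \ref{prop:comp_pinc_BSDE} to the BSDEs $\lambda^{1,k}$ and $\lambda^{2,k}$. The terminal comparison follows from \eqref{eq:cond_comp_BSVIE_4} at $\tau=t_{k-1}$; on $(t_{k-1},t_k]$ the driver comparison at equal $y$ is \eqref{eq:cond_comp_BSVIE_1}; hypothesis \ref{P2} is inherited from the shared jump coefficient $\kappa$ and the continuity of $B$, $X^\circ$ assumed in \ref{P1}. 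On the segment $(t_k,T]$, one needs
\[
g^1(t_{k-1},s,Y^{1,\Pi}(s))\leq g^2(t_{k-1},s,Y^{2,\Pi}(s)),
\]
which is obtained by inserting either $g^1(t_{k-1},s,Y^{2,\Pi}(s))$ or $g^2(t_{k-1},s,Y^{1,\Pi}(s))$ as an intermediate term (choosing the index $i$ for which \eqref{eq:cond_comp_BSVIE_2} holds): the difference between $g^1$ and $g^2$ at equal $y$ is nonnegative by \eqref{eq:cond_comp_BSVIE_1}, while the difference of $g^i$ at the two $y$-values, multiplied by $Y^{2,\Pi}(s)-Y^{1,\Pi}(s)\geq 0$, is bounded below by \eqref{eq:cond_comp_BSVIE_2} and the H\"older remainder \eqref{eq:cond_comp_BSVIE_3}, the latter error being quantifiably small as $\|\Pi\|\to 0$ and absorbed in Step 3.

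\textbf{Step 3: Passage to the limit.} Combine the approximation errors on $\Phi^{i,\Pi}-\Phi^i$ (controlled via \eqref{eq:cond_comp_BSVIE_5}) and on $g^{i,\Pi}-g^i$ (controlled via the H\"older modulus \eqref{eq:cond_comp_BSVIE_3}) with the stability estimate of Theorem \ref{thm:type_I_general_BSVIE} to conclude that $Y^{i,\Pi}\to Y^i$ in $\bL^2_{\beta,\bF}(0,T)$ as $\|\Pi\|\to 0$. The $\dd B\otimes\dd\bP$-a.e.\ inequality $Y^{1,\Pi}\leq Y^{2,\Pi}+\eps(\|\Pi\|)$ produced by Step 2 (with $\eps(\|\Pi\|)\to 0$) then yields $Y^1(t)\leq Y^2(t)$ $\dd B\otimes\dd\bP$-a.e.

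\textbf{Main obstacle.} The crux is Step 2 on $(t_k,T]$: because \eqref{eq:cond_comp_BSVIE_2} is \emph{asymmetric} (available only for one of the two indices) and \eqref{eq:cond_comp_BSVIE_3} only a H\"older bound, one must arrange the linearization of $g^i(t_{k-1},s,\cdot)$ between $Y^{1,\Pi}(s)$ and $Y^{2,\Pi}(s)$ so that the defect created by replacing $t_{k-1}$ by $s$ inside $g^i$ is uniformly $O(\rho(\|\Pi\|))$; balancing this with the BSDE stability of Step 3 so that the defect does not get amplified through the backward induction is the heart of the argument.
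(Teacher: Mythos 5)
There is a genuine gap, and it lies exactly where you flag your ``main obstacle'': Step 2 does not close. You try to compare the two discretized BSVIEs solution-by-solution, which on the already-solved region $(t_k,T]$ forces you to establish $g^1(t_{k-1},s,Y^{1,\Pi}(s))\leq g^2(t_{k-1},s,Y^{2,\Pi}(s))$. Inserting an intermediate term reduces this to showing $g^i(t_{k-1},s,Y^{2,\Pi}(s))-g^i(t_{k-1},s,Y^{1,\Pi}(s))\geq 0$ when $Y^{2,\Pi}(s)\geq Y^{1,\Pi}(s)$, i.e.\ to monotonicity of $g^i$ in $y$ --- which is precisely \emph{not} assumed here (that is the setting of Proposition \ref{prop:comp_BSVIE_1}, via the separating nondecreasing driver $\bar f$). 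Condition \eqref{eq:cond_comp_BSVIE_2} only compares the increment $(g^i(t,s,y)-g^i(t,s,y'))(y-y')$ at two different values of $t$; it gives no lower bound by zero, and \eqref{eq:cond_comp_BSVIE_3} is a H\"older modulus in $t$, not a sign. So the ``H\"older remainder absorbed in Step 3'' cannot rescue the induction: the defect is not small, it is of the size of the (possibly negative) $y$-increment of $g^i$. A secondary gap: your Step 3 requires $\Phi^{i,\Pi}\to\Phi^i$ for \emph{each} $i$, whereas hypothesis \eqref{eq:cond_comp_BSVIE_5} only controls the difference $\fd\Phi=\Phi^2-\Phi^1$.

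The paper's route avoids both problems by linearizing \emph{before} discretizing. One forms $\fd Y=Y^2-Y^1$ and, using \eqref{eq:cond_comp_BSVIE_2} for the index $i$ where it holds, writes a \emph{linear} Type-I BSVIE for $\fd Y$ with coefficient $L(t,s)=\frac{g^1(t,s,Y^2(s))-g^1(t,s,Y^1(s))}{Y^2(s)-Y^1(s)}\mathbf 1_{Y^2(s)\neq Y^1(s)}$ and free term $\fd\Phi(t)+\int_t^T[g^2(t,s,Y^2(s))-g^1(t,s,Y^2(s))]\dd B_s$. Hypotheses \eqref{eq:cond_comp_BSVIE_1}, \eqref{eq:cond_comp_BSVIE_2}, \eqref{eq:cond_comp_BSVIE_4} then say exactly that this free term is nonnegative and nonincreasing in $t$ and that $L(t,s)\geq L(\tau,s)$ for $t\leq\tau$, while \eqref{eq:cond_comp_BSVIE_3} gives $|L(t,s)-L(t',s)|\leq\rho(|t-t'|)$. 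The partition and backward induction are then carried out on this single linear equation: positivity on each slab comes from linear-BSDE comparison together with an auxiliary linear BSDE whose terminal datum $\xi_N=\phi_{N-1}-\phi_N+Y^\Pi(t_{N-1}^+)+\int_{t_{N-1}}^T[L(t_{N-2},s)-L(t_{N-1},s)]Y^\Pi(s)\dd B_s$ is nonnegative by the monotonicity just established --- so the induction is \emph{exact}, with no error term to propagate. All approximation error is deferred to one application of the stability estimate of Theorem \ref{thm:type_I_general_BSVIE}, where $|L^\Pi-L|\leq\rho(\|\Pi\|)$ and \eqref{eq:cond_comp_BSVIE_5} (which only concerns the difference, as needed) give convergence. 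I would suggest restructuring your argument around the difference equation.
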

\begin{proof}
Let us first copy the arguments of the proof of \cite[Theorem 3.9]{wang:yong:15}.
Suppose that $g^1$ is differentiable and \eqref{eq:cond_comp_BSVIE_2} holds for $i=1$. Then we have
\begin{eqnarray} \nonumber
&&Y^2(t) - Y^1(t) = \Phi^2(t) - \Phi^1(t) + \int_t^{T}  \left[ g^2(t,s, Y^2(s)) -  g^1(t,s,Y^2(s)) \right] \dd B_s \\ \nonumber
&&\quad + \int_t^{T} L(t,s) (Y^2(s)-Y^1(s)) \dd B_s +  \int_t^{T}  h(s)b_s (Z^2(t,s)-Z^1(t,s))  \dd X^\circ_s  \\ \nonumber
&& \quad +\int_t^{T}  \int_{\bR^m}  \kappa(s,x) (U^2(t,s,x)- U^1(t,s,x))K_s (\dd x) \dd B_s\\ \nonumber
&&\quad - \int_t^{T}  (Z^2(t,s)-Z^1(t,s)) \dd X^\circ_s- \int_t^{T}  \int_{\bR^m}  (\psi^2(t,s,x)- \psi^1(t,s,x)) \tpi^\natural(\dd s,\dd x)\\  \label{eq:comp_BSVIE_proof}
&&\quad - \int_t^{T}  \dd (M^2(t,s)- M^1(t,s))
\end{eqnarray}
where
$$L(t,s) = \frac{g^1(t,s,Y^2(s))-g^1(t,s,Y^1(s))}{Y^2(s)- Y^1(s)} \mathbf 1_{Y^2(s) \neq Y^1(s)} 
, \quad 0\leq t \leq s \leq T.$$
From \ref{H2}, $L$ is bounded by $\varpi(\omega, t,s)$. 
In other words we want to prove that the solution of the BSVIE
\begin{eqnarray*} \nonumber
&&\fd Y(t)  = \fd \Phi(t) + \int_t^{T} L(t,s) \fd Y(s) \dd B_s - \int_t^{T}  \dd \fd M(t,s)\\ \nonumber
&&\quad  +  \int_t^{T}  h(s) b_s \fd Z(t,s)  \dd B_s - \int_t^{T}  \fd Z(t,s) \dd X^\circ_s \\ 
&& \quad +\int_t^{T} \widehat K_s(\kappa(s,\cdot)  \fd U(t,s,\cdot)) \dd B_s - \int_t^{T}  \int_{\bR^m} \fd U(t,s,x) \tpi^\natural(\dd s,\dd x)
\end{eqnarray*}
satisfies: for any $t\in [0,T]$, a.s. $\fd Y(t) \geq 0$. Here 
$$\fd \Phi(t) = \Phi^2(t) - \Phi^1(t) + \int_t^{T}  \left[ g^2(t,s, Y^2(s)) -  g^1(t,s,Y^2(s)) \right] \dd B_s.$$
From our assumptions \eqref{eq:cond_comp_BSVIE_1}, \eqref{eq:cond_comp_BSVIE_2} and \eqref{eq:cond_comp_BSVIE_4}, for $0\leq t \leq \tau \leq T$, $\fd \Phi(t) \geq \fd \Phi(\tau) \geq 0$, $L(t,s) - L(\tau,s) \geq 0$ and from \eqref{eq:cond_comp_BSVIE_3}
$$|L(t,s)-L(t',s)| \leq  \rho(|t-t'|).$$

Now we can follow the proof of \cite[Theorem 3.8]{wang:yong:15}. We consider a partition $\Pi = \{ t_k, \ 0\leq k \leq N\}$ of $[0,T]$ and assume first that 
$$L^\Pi(t,s) = \sum_{k=1}^N L(t_{k-1},s) \mathbf 1_{(t_{k-1},t_k\wedge s]}(t), \quad \Phi^\Pi(t) = \sum_{k=1}^N \phi_k  \mathbf 1_{(t_{k-1},t_k]}(t)$$
where $L^\Pi$ still satisfies $L^\Pi(t,s) - L^\Pi(\tau,s) \geq 0$ and $\phi_k$ are $\cF_T$-measurable r.v. such that 
$$\phi_1 \geq \phi_2\geq \ldots \geq \phi_{N-1} \geq \phi_N \geq 0.$$
Let $ (Y^\Pi(\cdot),Z^\Pi(\cdot,\cdot),U^\Pi(\cdot,\cdot),M^\Pi(\cdot,\cdot))$ be the solution of the BSVIE:
\begin{eqnarray} \nonumber
Y^\Pi(t)  &=&  \Phi^\Pi(t) + \int_t^{T} \left[ L^\Pi(t,s) Y^\Pi(s) + h(s) b_s Z^\Pi(t,s)    +  \widehat K_s(\kappa(s,\cdot) U^\Pi(t,s,\cdot) ) \right] \dd B_s \\ \label{eq:comp_BSVIE_proof_2}
& -& \int_t^{T} Z^\Pi(t,s) \dd X^\circ_s  - \int_t^{T}  \int_{\bR^m} U^\Pi(t,s,x)   \tpi^\natural(\dd s,\dd x) - \int_t^{T}  \dd M^\Pi(t,s).
\end{eqnarray}
If we consider the BSDE 
\begin{eqnarray*}
Y_N(t) &=& \phi_N + \int_t^T \left[ L(t_{N-1},s) Y_N(s)  + h(s) b_s Z_N(s) +  \widehat K_s(\kappa(s,\cdot) U_N(s,\cdot) ) \right] \dd B_s \\
& - & \int_t^{T} Z_N(s) \dd X^\circ_s -  \int_t^{T}  \int_{\bR^m} U_N(s,x)   \tpi^\natural(\dd s,\dd x)  - \int_t^T \dd M_N(s),
\end{eqnarray*}
then for $t_{N-1} < t \leq s \leq T$, 
$$(Y_N(s),Z_N(s),U_N(s,e),M_N(s)) = (Y^\Pi(s),Z^\Pi(t,s),U^\Pi(t,s,e),M^\Pi(t,s))$$
solves the BSVIE \eqref{eq:comp_BSVIE_proof_2} on the interval $(t_{N-1},t_N]$. By uniqueness of the solution and the comparison principle for BSDE, we have a.s. for any $s \in (t_{N-1},t_N]$, $Y^\Pi(s) = Y_N(s) \geq 0$. Since all martingales are c\`adl\`ag processes and since $B$ is continuous (Hypothesis \ref{P1}), 
\begin{eqnarray*}
Y^\Pi(t_{N-1}^+) &=& \phi_N + \int_{t_{N-1}}^T \left[ L(t_{N-1},s) Y^\Pi(s)  + h(s)b_s Z_N(s) +  \widehat K_s(\kappa(s,\cdot) U_N(s,\cdot) ) \right] \dd B_s \\
& - & \int_{t_{N-1}}^{T} Z_N(s)  \dd X^\circ_s -  \int_{t_{N-1}}^{T}  \int_{\bR^m} U_N(s,x)   \tpi^\natural(\dd s,\dd x)  - \int_{t_{N-1}}^T \dd  M_N(s) \geq 0.
\end{eqnarray*}
Now the BSVIE on $(t_{N-2},t_{N-1}]$ can be written as follows:
\begin{eqnarray*} \nonumber
&&Y^\Pi(t)  = \phi_{N-1}  + \int_t^{T}  \left[ L(t_{N-2},s) Y^\Pi(s) + h(s) b_s Z^\Pi(t,s) + \widehat K_s(\kappa(s,\cdot) U^\Pi(t,s,\cdot) ) \right]  \dd B_s\\
&& \quad - \int_t^{T}  Z^\Pi(t,s) \dd X^\circ_s    - \int_t^{T} \int_{\bR^m} U^\Pi(t,s,x)   \tpi^\natural(\dd s,\dd x)- \int_t^T \dd M^\Pi(t,s) \\
&& =  \phi_{N-1} -\phi_N + Y^\Pi(t_{N-1}^+) + \int_{t_{N-1}}^{T} \left[ L(t_{N-2},s)- L(t_{N-1},s)\right] Y^\Pi(s) \dd B_s  \\
&&\quad +  \int_{t_{N-1}}^T  h(s) b_s \left[ Z^\Pi(t,s)-Z_N(s)\right]  \dd B_s - \int_{t_{N-1}}^T (Z^\Pi(t,s)-Z_N(s)) \dd X^\circ_s  \\
&& \quad +\int_{t_{N-1}}^T \widehat K_s\left( \kappa(s,\cdot) (U^\Pi(t,s,\cdot) -U_N(s,\cdot)) \right) \dd B_s \\
&&\quad - \int_{t_{N-1}}^T \int_{\bR^m}  \left[ U^\Pi(t,s,x)     -U_N(s,x)\right]   \tpi^\natural(\dd s,\dd x)- \int_{t_{N-1}}^T \dd (M^\Pi(t,s)-M_N(s)) \\
&&\quad +\int_t^{t_{N-1}} \left[ L(t_{N-2},s) Y^\Pi(s) +  h(s) b_s Z^\Pi(t,s)+ \widehat K_s(\kappa(s,\cdot) U^\Pi(t,s,\cdot) ) \right] \dd B_s   \\
&& \quad - \int_t^{t_{N-1}} Z^\Pi(t,s) \dd X^\circ_s - \int_t^{t_{N-1}} \int_{\bR^m} U^\Pi(t,s,x)   \tpi^\natural(\dd s,\dd x)- \int_t^{t_{N-1}} \dd M^\Pi(t,s) .
\end{eqnarray*}
We consider the terminal condition
$$\xi_N =  \phi_{N-1} -\phi_N + Y^\Pi(t_{N-1}^+) + \int_{t_{N-1}}^{T} \left[ L(t_{N-2},s)- L(t_{N-1},s)\right] Y^\Pi(s) \dd B_s $$
and the solution $(\widetilde Y_N,\widetilde Z_N,\widetilde U_N,\widetilde M_N)$ of the linear BSDE on $[t_{N-1},T]$:
\begin{eqnarray*} \nonumber
&&\widetilde Y_N(t)  =  \xi_N  +  \int_{t}^T\left[  h(s)b_s \widetilde Z_N(s) + \widehat K_s(\kappa(s,\cdot)  \widetilde U_N(t,s,\cdot) ) \right] \dd B_s \\
&& \quad  - \int_{t}^T\widetilde Z_N(s) \dd X^\circ_s  - \int_{t}^T   \int_{\bR^m} \widetilde U_N(t,s,x)   \tpi^\natural(\dd s,\dd x)- \int_{t}^T \dd\widetilde M_N(s).
\end{eqnarray*}
By our conditions, $\xi_N$ is non-negative and thus a.s. $\widetilde Y_N(t) \geq 0$ on $[t_{N-1},T]$. By uniqueness of adapted solutions to the BSVIE, we have 
$$Z^\Pi(t,s) = Z_N(s) + \widetilde Z_N(s), \quad U^\Pi(t,s) = U_N(s) + \widetilde U_N(s), \quad M^\Pi(t,s) = M_N(s) + \widetilde M_N(s)$$
for $(t,s) \in (t_{N-2},t_{N-1}] \times (t_{N-1},t_{N}]$ and our previous BSVIE becomes
\begin{eqnarray*} \nonumber
&&Y^\Pi(t)  = \widetilde Y_N(t_{N-1}) +\int_t^{t_{N-1}}  \left[ L(t_{N-2},s) Y^\Pi(s) +  h(s) b_s Z^\Pi(t,s)+ \widehat K_s(\kappa(s,\cdot) U^\Pi(t,s,\cdot) ) \right]  \dd B_s \\
&&\quad - \int_t^{t_{N-1}} Z^\Pi(t,s) \dd X^\circ_s - \int_t^{t_{N-1}} \int_{\bR^m} U^\Pi(t,s,x)   \tpi^\natural(\dd s,\dd x)- \int_t^{t_{N-1}} \dd M^\Pi(t,s).
\end{eqnarray*}
Again we solve the BSDE:
\begin{eqnarray*}
&&Y_{N-1}(t) = \widetilde Y_N(t_{N-1})\\
&&\quad + \int_t^{t_{N-1}} \left[ L(t_{N-2},s) Y_{N-1}(s) +  h(s) b_s Z_{N-1}(t,s)+ \widehat K_s(\kappa(s,\cdot) U_{N-1}(t,s,\cdot) ) \right]  \dd B_s  \\
&&\quad -  \int_t^{t_{N-1}} Z_{N-1}(t,s) \dd X^\circ_s - \int_t^{t_{N-1}} \int_{\bR^m} U_{N-1}(t,s,x)   \tpi^\natural(\dd s,\dd x)- \int_t^{t_{N-1}} \dd M_{N-1}(t,s)
\end{eqnarray*}
on $[t_{N-2},t_{N-1}]$ and by uniqueness and the comparison principle for BSDE, we have 
$$Y^\Pi(t) = Y_{N-1}(t) \geq 0, \quad t\in [t_{N-2},t_{N-1}].$$
By induction we obtain that $Y^\Pi(t) \geq 0$, $t\in [0,T]$. 

From Theorem \ref{thm:type_I_general_BSVIE}, the stability estimate for BSVIE yields to:
\begin{eqnarray*} \nonumber
&& \bE \left[ \int_0^T e^{ \beta A_t} |Y^\Pi(t)-Y(t)|^2 \dd A_t \right] \\  \nonumber
&& \quad  \leq \mathfrak C^\mathfrak f(\beta) \bE \left[ \int_0^T e^{\beta A_t} |\Phi^\Pi(t)-\Phi(t)|^2  \dd A_t\right] \\
&&\qquad +\mathfrak C^\mathfrak f(\beta) \bE \left[ \int_0^T  e^{(\beta-\delta) A_t}  \int_t^T e^{\delta A_s}\dfrac{ |L^\Pi(t,s)-L(t,s)|^2|Y(s)|^2}{\alpha_s^2} \dd B_s  \dd A_t \right] . 
\end{eqnarray*}
The time regularity condition on $L$ implies that: $ |L^\Pi(t,s)-L(t,s)|^2 \leq  \rho(\|\Pi\|)^2.$ From our condition \eqref{eq:cond_comp_BSVIE_5}, we deduce that 
$$\lim_{\|\Pi\| \to 0}  \bE \left[ \int_0^T e^{ \beta A_t} |Y^\Pi(t)-Y(t)|^2 \dd A_t \right] = 0.$$
The conclusion of the proposition follows and this achieves the proof. 
\end{proof}

\section{More properties in the It\^o setting} \label{sect:Ito_setting_BSVIE}

In this section, we use the setting developed in the subsection \ref{ssect:Ito_setting} and the goal is to adapt some results of \cite{yong:08}. 

In this It\^o setting, the space $\mathfrak S^2(\Delta^c(0,T))$ can be easier defined. We adapt the notations of the sections \ref{ssect:poss_BSDE}, \ref{ssect:Ito_setting} and \ref{sect:general_Type_I_BSVIE} (essentially $\beta=0$), which leads to the same notations as in \cite{yong:08}. Hence we skip some details (see \cite[Section 2.1]{yong:08} for interesting readers). For any $p,q$ in $[0,+\infty)$, $H=\bR^d$ or $\bR^{d\times k}$, and $S \in [0,T]$, 
\begin{eqnarray*}
\bL^p_{\cF_S}(\Omega) &= &\{ \xi : \Omega \to H , \ \xi \ \mbox{is } \cF_S-\mbox{measurable, } \bE (|\xi|^p) < +\infty\} ,\\
\bL^p_{\cF_S}(\Omega;\bL^q(0,T)) &= &\Bigg\{ \phi : (0,T)\times \Omega \to H , \ \cB([0,T])\otimes\cF_S-\mbox{measurable with } \\
&&\qquad \qquad \left. \bE \left( \int_0^T |\phi(t)|^q \dd t\right)^{\frac{p}{q}} < +\infty \right\},\\
\bL^q_{\cF_S}(0,T;\bL^p_{\cF_S}(\Omega)) &= &\Bigg\{ \phi : (0,T)\times \Omega \to H , \ \cB([0,T])\otimes\cF_S-\mbox{measurable with } \\
&&\qquad \qquad \left.\int_0^T  \left(\bE  |\phi(t)|^p \dd t\right)^{\frac{q}{p}} < +\infty \right\},
\end{eqnarray*}
We identify 
$$\bL^p_{\cF_S}(\Omega;\bL^p(0,T)) = \bL^p_{\cF_S}(0,T;\bL^p_{\cF_S}(\Omega)) = \bL^p_{\cF_S}(0,T).$$
For $p=2$, this space corresponds to $\bL^2_{0,\cF_S}(0,T)$ of Section \ref{sect:general_Type_I_BSVIE}. 
The cases where $p$ or $q$ are equal to $\infty$ can be defined in a similar way. When adaptiveness is required, we replace the subscript $\cF_S$ by $\bF$. The above spaces are for the free term $\Phi(\cdot)$ (for which the $\bF$-adaptiveness is not required) and for $Y(\cdot)$ (for which $\bF$-adaptiveness is required). Sometimes we also use the subscript $\cP$ if we require predictability. 

To control the martingale terms in the BSVIE, we need other spaces. 
We define for any $p,q\geq 1$
$$\bL^q(S,T;\cH^p(S,T))$$
the set of processes $M(\cdot,\cdot)$ such that for almost all $t\in [0,T]$, $M(t,\cdot)$ belongs to $\cH^p(S,T)$ and 
$$\int_S^T \left[ \bE \left( \langle M(t,\cdot)\rangle_{S,T} \right)^\frac{p}{2} \right]^{\frac{q}{p}} \dd t < +\infty.$$
For $p=q=2$, if $M$ is restricted to $\Delta^c(S,T)$, this space is denoted $\cH^2_{0\leq 0}(\Delta^c(S,T))$ in Section \ref{sect:general_Type_I_BSVIE}. 
In the particular case where $M(t,\cdot) = \int_S^\cdot Z(t,s) dW_s$, then $M \in \bL^q(S,T;\cH^p(S,T))$ is equivalent to 
$$Z \in \bL^q(S,T;\bL^p_\cP(\Omega;\bL^2(S,T)))=\bL^q(S,T;\bH^p(S,T)),$$ 
that is $Z$ belongs to the set of all processes $Z : [S,T]^2 \times \Omega \to \bR^k$ such that for almost all $t \in [S,T]$, $Z(t,\cdot) \in \bH^p(S,T)= \bL^p_{\cP}(\Omega;\bL^2(S,T))$ satisfying
$$\int_S^T \left[ \bE \left( \int_S^T |Z(t,s)|^2 \dd s \right)^{\frac{p}{2}} \right]^{\frac{q}{p}} \dd t < +\infty.$$
Again for $p=q=2$, we obtain the space $\cH^{2,\circ}_{0\leq 0}$ of Section \ref{sect:general_Type_I_BSVIE}.

Let us also consider the case: 
\begin{equation*} 
N(t,s) =   \int_S^s  \int_{\bR^m} \psi(t,u,x) \tpi(\dd x, \dd u), \ t\ge S, \ s\geq S.
\end{equation*}
The process $N \in \bL^q(S,T;\cH^p(S,T))$ if and only if $\psi \in \bL^q(S,T;\bL^p_\pi(S,T)),$
namely $\psi$ is in the set of all processes $\psi : [S,T]^2 \times \bR^m \to \bR^d$ such that for almost all $t \in [S,T]$, $\psi(t,\cdot,\cdot) \in \bL^p_\pi(S,T)$ verifies
$$\int_S^T \left[ \bE \left( \int_S^T  \int_{\bR^m} |\psi(t,s,x)|^2 \pi(\dd s, \dd x)\right)^{\frac{p}{2}} \right]^{\frac{q}{p}} \dd t < +\infty.$$
Let us emphasize that for $p=q=2$, this space corresponds to $\cH^{2,\natural}_{0\leq 0}$ in Section \ref{sect:general_Type_I_BSVIE}. 

If the martingale $\mart$ is defined by \eqref{eq:def_mart_sharp}, 
due to the orthogonality of the components of $\mart$, for any $p>1$, there exist two universal constants $c_p$ and $C_p$ such that 
\begin{eqnarray*}
&&c_p \bE \left[ \left( \int_S^T |Z(t,s)|^2 \dd  s \right)^{\frac{p}{2}} +  \left( \int_S^T  \int_{\bR^m} |U(t,s,x)|^2 \pi(\dd s, \dd x) \right)^{\frac{p}{2}} +\left( \left\langle M(t,\cdot) \right\rangle_{S,T} \right)^\frac{p}{2}\right] \\
&& \quad \leq \bE \left[ \left(  \left\langle \mart(t,\cdot)  \right\rangle_{S,T} \right)^\frac{p}{2} \right] \\
&&\quad \leq C_p \bE \left[ \left( \int_S^T |Z(t,s)|^2 \dd s \right)^{\frac{p}{2}} +  \left( \int_S^T  \int_{\bR^m} |U(t,s,x)|^2 \pi(\dd s, \dd x) \right)^{\frac{p}{2}} +\left(  \left\langle M(t,\cdot) \right\rangle_{S,T} \right)^\frac{p}{2}\right].
\end{eqnarray*}
And $\mart$ belongs to $\bL^q(S,T;\cH^p(S,T))$ if and only if the triplet $(Z,U,M)$ is in $\bL^q(0,T;\bH^p(0,T)) \times \bL^q(0,T;\bL^p_\pi(0,T)) \times\bL^q(0,T;\cH^{p,\perp}(0,T))$.

Finally we define
\begin{eqnarray*}
\cM^p(0,T)&=& \bL^p(0,T;\bH^p(0,T)) \times \bL^p(0,T;\bL^p_\pi(0,T)) \times\bL^p(0,T;\bM^{p,\perp}(0,T))\\
\mathfrak S^p(0,T) &=& \bL^p_\bF(0,T)\times \bL^p(0,T;\bH^p(0,T)) \times \bL^p(0,T;\bL^p_\pi(0,T)) \times\bL^p(0,T;\cH^{p,\perp}(0,T))\\
& =&  \bL^p_\bF(0,T)\times\cM^p(S,T).
\end{eqnarray*}
with the naturally induced norm.

The definitions \ref{def:BSVI_adapted_sol} (adapted solutions) and \ref{def:BSVI_M_sol} (M-solutions) remain unchanged, except that $(Y,Z,U,M)$ belongs to $\mathfrak S^p(0,T)$ and the condition \eqref{eq:M_sol_def} becomes:
\begin{equation*} 
Y(t) = \bE \left[ Y(t) |\cF_S\right] + \int_S^t Z(t,s) \dd W_s + \int_S^t \int_{\bR^m} U(t,s,x)\tpi(\dd s,\dd x) + \int_S^t \dd M(t,s). 
\end{equation*}
%
If $(Y,Z,U,M)$ is a solution of the BSDE \eqref{eq:Ito_BSDE} in $\bD^p(0,T)$, then by the martingale representation (\cite[Lemma III.4.24]{jaco:shir:03}), we have
$$Y(t) = \bE (Y(t)) + \int_0^t \zeta(t,s)\dd W_s + \int_0^t \int_{\bR^m} \upsilon(t,s,x) \tpi(\dd x,\dd s) + m(t,s),$$
where $(\zeta,\upsilon,m) \in \cM^p(0,T)$. Thus we define
$$(Z(t,s),U(t,s,x),M(t,s)) = \left\{ \begin{array}{ll}(\zeta(t,s),\upsilon(t,s,x),m(t,s)) , & (t,s) \in \Delta(0,T) \\ (Z(s),U(s,x),M(s)), & (t,s) \in \Delta^c(0,T) \end{array} \right.$$
and we get that $(Y,Z,U,M)$ is an adapted M-solution of the BSVIE \eqref{eq:Ito_general_BSVIE} on $[0,T]$, and in fact it is the unique solution (see Theorem \ref{thm:Hp_M_solution}).

\medskip
To complete this presentation, let us recall some facts concerning the Poisson integral. From the \textit{Burkholder-Davis-Gundy inequality} (see \cite[Theorem 48]{prot:04}), for all $p\in [1,\infty)$ there exist two universal constants $c_p$ and $C_p$ (not depending on $M$) such that for any c\`adl\`ag $\bF$-martingale $M(\cdot)$ and for any $T \geq 0$
\begin{equation} \label{eq:BDG_ineq}
c_p \bE \left( \langle M\rangle_T^{p/2} \right) \leq \bE \left[ \left(\sup_{t \in [0,T]} |M(t)| \right)^{p} \right] \leq C_p \bE \left( \langle M\rangle_T^{p/2} \right). 
\end{equation}
In particular \eqref{eq:BDG_ineq} means that the Poisson martingale $N$ is well-defined on $[0,T]$ (see Chapter II in \cite{jaco:79}) provided we can control the expectation of $\langle N\rangle_T^{p/2}$ for some $p\geq 1$.

From the {\it Bichteler-Jacod inequality} (see for example \cite{mari:rock:14}), we distinguish the two cases: $p\geq 2$ and $p< 2$. 
\begin{itemize}
\item Assume that $p\geq 2$. If $\bE (\langle N\rangle_T^{p/2}) < +\infty$, then $\bP \otimes \mbox{Leb}$-a.s. on $\Omega \times [0,T]$, $\psi(t,\cdot)$ is in $\bL^2_\mu$. Hence the generator of our BSVIE can be defined on $\bL^2_\mu$. 
\item But if $p<2$, $\bP \otimes \mbox{Leb}$-a.s. on $\Omega \times [0,T]$, $\psi(t,\cdot)$ is in $\bL^p_\mu+\bL^2_\mu$ if again $\bE (\langle N\rangle_T^{p/2}) < +\infty$. Moreover $\psi_t$ is also in $\bL^1_\mu+\bL^2_\mu$. Thereby for $p<2$, our generator is be defined on $\bL^1_\mu+\bL^2_\mu$ (for the definition of the sum of two Banach spaces, see for example \cite{krei:petu:seme:82}).
\end{itemize}
See \cite[Section 1]{krus:popi:17} for details on this point. In particular for $N$ defined by
\begin{equation*}
N_t =   \int_0^t  \int_{\bR^m} \psi_s(x) \tpi(\dd s, \dd x), t\ge 0,
\end{equation*}
if $p\geq 2$, there exist two universal constants $\kappa_p$ and $K_p$ such that 
\begin{equation} \label{eq:BJ_ineq_p_=_2}
\kappa_p\left[ \bE \left( \langle N \rangle^{p/2}_T \right) \right] \leq \bE \left( \int_0^T \|\psi_t\|^2_{\bL^2_\mu} \dd t\right)^{p/2} \leq K_p  \left[ \bE \left( \langle N\rangle^{p/2}_T \right) \right]. 
\end{equation}
But if $1< p < 2$, we only have the existence of a universal constant $K_{p,T}$ such that 
\begin{equation} \label{eq:norm_equiv}
\bE \left[  \int_0^T  \|\psi_s\|^p_{\bL^p_\mu+\bL^2_\mu} \dd s\right] \leq K_{p,T}   \bE \left( \langle N\rangle_T^{p/2} \right) .
\end{equation}
And it holds that $\bL^p_\mu+\bL^2_\mu \subset \bL^1_\mu+\bL^2_\mu$.

\subsection{Solution in $\mathfrak S^p$ ($p>1$) for Type-I BSVIEs}

Here we consider the Type-I BSVIE \eqref{eq:second_special_BSVIE}:
\begin{eqnarray*} \nonumber
Y(t) &= &\Phi(t) + \int_t^{T} f(t,s, Y(s), Z(t,s) ,U(t,s)) \dd s - \int_t^{T} Z(t,s) \dd W_s \\ 
&& \qquad - \int_t^{T}  \int_{\bR^m}  U(t,s,x) \tpi(\dd s,\dd x) -\int_t^{T} \dd M(t,s). 
\end{eqnarray*}
In the Brownian-Poisson $L^2$-setting, it was already studied in \cite{wang:zhan:07} and if there is only the Brownian component $W$, for $p\neq 2$, in \cite{wang:12}.



\begin{Thm} \label{thm:Hp_M_solution}
For $p >1$, assume that $\Phi \in \bL^p_{\cF_T}(0,T)$, that {\rm \ref{H2}} and {\rm \ref{H3star}} hold. Suppose that 
\begin{equation}\label{eq:int_cond_f_0}
\bE \int_0^T \left( \int_t^T |f^0(t,s)| \dd s \right)^p \dd t < +\infty.
\end{equation}
Then the BSVIE \eqref{eq:general_BSVIE_type_I} has a unique adapted M-solution $(Y,Z,U,M)$ in $\mathfrak S^p(0,T)$ on $[0,T]$. Moreover for any $S\in[0,T]$
\begin{eqnarray} \nonumber
 \|(Y,Z,U,M)\|_{\mathfrak S^p(S,T)}^p& = & \bE \left[ \int_S^T  |Y(t)|^p \dd t + \int_S^T \left(  \int_S^T  |Z(t,r)|^2 \dd r \right)^{p/2} \dd t  \right. \\ \nonumber
&& \quad \left.+ \int_S^T \left(  \|U(t,\cdot)\|^2_{\bL^2_\pi(S,T)}  \right)^{p/2}\dd  t +\int_S^T \left( \langle M(t,\cdot) \rangle_{S,T}\right)^{p/2} \dd t\right]  \\  \label{eq:Hp_estimate}
& \leq & C \bE \left[ \int_S^T |\Phi(t)|^p \dd t +\int_S^T \left( \int_t^T |f^0(t,r)| dr \right)^p \dd t \right].
\end{eqnarray}
\end{Thm}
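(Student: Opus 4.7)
\emph{Step 1: the $y$-independent case.} My approach adapts the strategy of \cite{yong:08,wang:12} to the present jump setting by invoking Proposition \ref{prop:BSDE_Lp_sol_exist} as a black box, and proceeds in three steps. I will first treat a driver $h(t,s,z,u)$ satisfying \ref{H2}--\ref{H3star} with $\bE\int_0^T(\int_t^T|h^0(t,s)|\dd s)^p\dd t<+\infty$. For each fixed $t\in[0,T]$, applying Proposition \ref{prop:BSDE_Lp_sol_exist} to the BSDE on $[t,T]$ with terminal data $\Phi(t)$ and generator $(s,z,u)\mapsto h(t,s,z,u)$ yields a unique solution $(\lambda(t,\cdot),z(t,\cdot),u(t,\cdot),m(t,\cdot))\in\bD^p(t,T)$ together with the pointwise estimate
\begin{equation*}
\bE\left[\sup_{r\in[t,T]}|\lambda(t,r)|^p+\left(\int_t^T|z(t,s)|^2\dd s\right)^{p/2}+\left(\int_t^T\int_{\bR^m}|u(t,s,x)|^2\pi(\dd s,\dd x)\right)^{p/2}+\langle m(t,\cdot)\rangle_T^{p/2}\right]\leq C\,\bE\left[|\Phi(t)|^p+\left(\int_t^T|h^0(t,s)|\dd s\right)^p\right].
\end{equation*}
A joint-measurability argument on the Picard iterates underlying Proposition \ref{prop:BSDE_Lp_sol_exist} ensures that $Y(t):=\lambda(t,t)$ is $\bF$-adapted. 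Setting $(Z,U,M)(t,s):=(z,u,m)(t,s)$ on $\Delta^c(0,T)$ and integrating the pointwise estimate over $t\in[S,T]$ delivers the $\Delta^c(S,T)$ contribution to \eqref{eq:Hp_estimate}.

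\emph{Step 2: M-extension on $\Delta(S,T)$.} Since $Y\in\bL^p_\bF(S,T)$, one has $Y(t)\in\bL^p(\Omega)$ for a.e.\ $t\in[S,T]$. Applying the Brownian--Poisson martingale representation to the $\bF$-martingale $r\mapsto\bE[Y(t)\mid\cF_r]$ on $[S,t]$ produces $(Z(t,\cdot),U(t,\cdot,\cdot),M(t,\cdot))$ realizing \eqref{eq:M_sol_def}. The Burkholder--Davis--Gundy inequality \eqref{eq:BDG_ineq} combined with the Bichteler--Jacod inequality (\eqref{eq:BJ_ineq_p_=_2} when $p\geq 2$ or \eqref{eq:norm_equiv} when $1<p<2$) then gives
\begin{equation*}
\bE\left[\left(\int_S^t|Z(t,s)|^2\dd s\right)^{p/2}+\left(\int_S^t\int_{\bR^m}|U(t,s,x)|^2\pi(\dd s,\dd x)\right)^{p/2}+\langle M(t,\cdot)\rangle_{S,t}^{p/2}\right]\leq C\,\bE|Y(t)|^p,
\end{equation*}
and integration in $t$ supplies the missing $\Delta(S,T)$ contribution.

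\emph{Step 3: Fixed point and backward induction.} For $y\in\bL^p_\bF(S,T)$, let $\Theta(y)$ denote the $Y$-component produced by Steps 1--2 applied to $h(t,s,z,u):=f(t,s,y(s),z,u)$, which inherits \ref{H2}--\ref{H3star}. The Lipschitz bound and the stability form of the Step 1 estimate yield
\begin{equation*}
\bE\int_S^T|\Theta(y^1)(t)-\Theta(y^2)(t)|^p\dd t\leq C K^p\,\bE\int_S^T\left(\int_t^T|y^1(s)-y^2(s)|\dd s\right)^p\dd t\leq \widetilde C K^p(T-S)^p\,\bE\int_S^T|y^1(s)-y^2(s)|^p\dd s,
\end{equation*}
by H\"older's inequality and Fubini. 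Choosing $\eta>0$ with $\widetilde C K^p\eta^p<1$ makes $\Theta$ a strict contraction on each sub-interval $[T-k\eta,T-(k-1)\eta]$, and a standard backward induction (using the $Y$-values obtained on each sub-interval as terminal data for the next) delivers the unique M-solution on $[0,T]$. Uniqueness across any two M-solutions is immediate since \eqref{eq:M_sol_def} determines $(Z,U,M)|_{\Delta(S,T)}$ as a functional of $Y$, so equality of the $Y$-components forces equality of the full quadruples; estimate \eqref{eq:Hp_estimate} follows by feeding the fixed point $Y$ itself back into Step 1 and absorbing the Lipschitz contribution in $Y$ through the same induction. The genuinely delicate point throughout is the regime $1<p<2$, where $\bL^p(S,T;\bH^p(S,T))\not\subset\bL^2(S,T;\bH^2(S,T))$ so the $L^2$-machinery of Section \ref{sect:general_Type_I_BSVIE} cannot be recycled and BDG must be coupled to the Bichteler--Jacod inequality \eqref{eq:norm_equiv} in the sum-space $\bL^p_\mu+\bL^2_\mu$ as in \cite{krus:popi:17}; this is precisely what Proposition \ref{prop:BSDE_Lp_sol_exist} encapsulates, so the argument goes through uniformly in $p>1$ once that proposition is invoked.
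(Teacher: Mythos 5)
Your proposal is correct and rests on the same three pillars as the paper's proof: the $L^p$ theory for the parametrized BSDE (Proposition \ref{prop:BSDE_Lp_sol_exist}, together with its stability estimate \eqref{eq:BSDE_stability}, which you do need and which is not literally contained in that proposition as stated -- the paper imports it from \cite{krus:popi:14,krus:popi:17}), the martingale representation supplying the M-part on $\Delta(S,T)$, and a contraction over the $y$-argument on short subintervals. The genuine difference is organizational: you solve the parametrized BSDE on the whole of $[t,T]$ for every $t$, so the values of $(Z,U,M)$ on the off-diagonal rectangle $[R,S]\times[S,T]$ come out of the fixed point automatically, whereas the paper first localizes its Step 1 to $[S,T]^2$ and must then insert a stochastic Fredholm integral equation (its Step 3, Lemma \ref{lem:Ito_SFIE_sol}) to build $(Z,U,M)$ on $[R,S]\times[S,T]$ and the effective free term $\psi^S$ before solving a BSVIE on $[R,S]^2$. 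Your shortcut is legitimate precisely because the equation is of Type I: the driver does not involve $Z(s,t)$ or $U(s,t)$, so the BSDE parametrized by $t\in[R,S]$ is well posed on $[t,T]$ with no prior knowledge of the solution on that rectangle; for Type-II equations this collapses and the paper's more laborious decomposition becomes necessary. One phrase you should repair: the previously constructed $Y|_{[S,T]}$ does not enter the induction as ``terminal data'' -- the restriction of a BSVIE to $[R,S]$ is not a BSVIE with terminal condition $Y(S)$. It enters through the driver, via $f(t,s,\hat y(s),z,u)$ with $\hat y$ frozen equal to $Y$ on $(S,T]$, and the contraction on $[R,S]$ works because this frozen part cancels in the difference $\Theta(y^1)-\Theta(y^2)$; with that reading your bound $\widetilde C K^p(T-S)^p$ matches \eqref{eq:Theta_contraction} after integration in $t$, and your uniqueness argument (any M-solution's $Y$-component is a fixed point, and \eqref{eq:M_sol_def} plus BSDE uniqueness determine the remaining components) coincides with the paper's.
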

Recall that 
$$ \|U(t,\cdot)\|^2_{\bL^2_\pi(S,T)} =   \int_S^T \int_{\bR^m}|U(t,r,x)|^2\pi(\dd r,\dd x).$$

The proof is based on intermediate results. We consider the BSVIE \eqref{eq:special_BSVIE} which becomes here
\begin{eqnarray} \nonumber
Y(t) &= &\Phi(t) + \int_t^{T} h(t,s, Z(t,s) ,U(t,s)) \dd s - \int_t^{T} Z(t,s) \dd W_s  \\ \label{eq:Ito_special_BSVIE}
&& \qquad - \int_t^{T}  \int_{\bR^m} U(t,s,x) \tpi(\dd s,\dd x) -\int_t^{T} \dd  M(t,s),
\end{eqnarray}
The first result is a modification of Lemmata \ref{lem:param_BSDE->BSVIE} and \ref{lem:stab_BSVIE_h_poss}, namely
\begin{Lemma} \label{lem:Ito_param_BSDE->BSVIE}
If 
\begin{equation}\label{eq:Ito_int_cond_h_0} 
\bE \int_S^T \left( \int_S^T |h(t,s,0,{\mathbf 0})| \dd s \right)^p \dd t < +\infty
\end{equation}
holds and if $\Phi \in \bL^p_{\cF_T}(S,T)$, then the BSVIE \eqref{eq:Ito_special_BSVIE} has a unique adapted M-solution in $\mathfrak S^p(S,T)$ and for $t \in [S,T]$:
\begin{eqnarray} \nonumber
&& \bE \left[  |Y(t)|^p +  \left( \int_S^T  |Z(t,r)|^2 \dd r \right)^{\frac{p}{2}} +  \left(  \int_S^T  \int_{\bR^m} |U(t,r,x)|^2 \pi(\dd r, \dd x) \right)^{\frac{p}{2}}  \right. \\ \label{eq:Hp-estimate}
&&\quad \left.+\left(  \langle M(t,\cdot) \rangle_{S,T} \right)^{\frac{p}{2}}\right]  \leq C \bE \left[|\Phi(t)|^p  + \left( \int_S^T |h(t,r,0,\mathbf{0})| \dd r \right)^p \right].
\end{eqnarray}
Moreover we have a stability result for this BSVIE. Let $(\bar \Phi,\bar h)$ be a couple of data each satisfying the above assumptions {\rm \ref{H2}} and {\rm \ref{H3star}} and the same integrability conditions. Let $(\bar Y,\bar Z,\bar U,\bar M)$ the solution of the BSVIE \eqref{eq:Ito_special_BSVIE} with data $(\bar \Phi, \bar h)$. Define
$$(\fd Y,\fd Z, \fd U, \fd M) = (Y-\bar Y, Z-\bar Z,U-\bar U,M-\bar M).$$
Then there exists a constant $C$ depending on $p$, $K$ and $T$, such that for $t \in [S,T]$
\begin{eqnarray} \nonumber
&& \bE \left[ |\fd Y(t)|^p +  \left( \int_S^T  |\fd Z(t,r)|^2 \dd r \right)^{p/2} + \left(  \langle \fd M(t,\cdot) \rangle_{S,T} \right)^{p/2} \right. \\ \nonumber
&&\quad \left.+  \left(  \int_S^T  \int_{\bR^m} |\fd U(t,r,x)|^2 \pi(\dd r, \dd x) \right)^{p/2}\right] \leq C  \bE \Bigg[ |\Phi(t)-\bar \Phi(t)|^p \\ \label{eq:tech_stab_BSVIE}
&& \qquad \left. + \left( \int_S^T |h(t,r,Z(t,r),U(t,r))-\bar h(t,r,Z(t,r),U(t,r)) |\dd r \right)^p   \right].
\end{eqnarray}
\end{Lemma}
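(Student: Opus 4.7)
The plan is to exploit the fact that in \eqref{eq:Ito_special_BSVIE} the driver $h$ does not depend on $Y$, so for each fixed $t\in[S,T]$ the equation is a standard BSDE on $[t,T]$ with terminal condition $\Phi(t)$. Existence and the pointwise $\bL^p$-bound then come directly from Proposition~\ref{prop:BSDE_Lp_sol_exist}, applied parametrically in $t$, and the adapted solution on $\Delta^c(S,T)$ is extended to an M-solution on $[S,T]^2$ by the martingale representation.

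Step 1 (parametrized BSDE). For each $t\in[S,T]$, consider the BSDE on $[t,T]$:
\begin{eqnarray*}
\lambda(t,r) &=& \Phi(t) + \int_r^T h(t,s,z(t,s),u(t,s))\,\dd s - \int_r^T z(t,s)\,\dd W_s \\
&& \quad - \int_r^T\int_{\bR^m} u(t,s,x)\,\tpi(\dd s,\dd x) - \int_r^T \dd m(t,s).
\end{eqnarray*}
By Fubini, the integrability hypothesis \eqref{eq:Ito_int_cond_h_0} and $\Phi\in\bL^p_{\cF_T}(S,T)$ imply that, for a.e.\ $t$, $\Phi(t)\in\bL^p_{\cF_T}(\Omega)$ and $\bE\bigl(\int_t^T|h(t,s,0,\mathbf 0)|\,\dd s\bigr)^p<+\infty$. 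Thanks to \ref{H2} and \ref{H3star}, Proposition~\ref{prop:BSDE_Lp_sol_exist} applies with driver $(r,z,u)\mapsto h(t,r,z,u)$ and yields a unique solution $(\lambda(t,\cdot),z(t,\cdot),u(t,\cdot),m(t,\cdot))\in\bD^p(t,T)$ together with a pointwise $\bL^p$-estimate of the form \eqref{eq:Hp-estimate}, with constant $C=C_{p,K,T}$. Set $Y(t):=\lambda(t,t)$ and, on $\Delta^c(S,T)$, $(Z,U,M)(t,\cdot):=(z,u,m)(t,\cdot)$.

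Step 2 (joint measurability and global bound). A standard approximation argument, which replaces $(\Phi,h)$ by step functions in $t$ (for which measurability of the associated solution is obvious by construction) and passes to the limit using the stability estimate of Step~4, produces a jointly measurable version of $(\lambda,z,u,m)$ in $(t,r,\omega)$. Integrating the pointwise estimate over $t\in[S,T]$ then yields the global bound \eqref{eq:Hp-estimate} in $\mathfrak S^p(S,T)$.

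Step 3 (extension to an M-solution). Since $\bE|Y(t)|^p<+\infty$ for a.e.\ $t$, the process $r\mapsto \bE[Y(t)\mid\cF_r]$ is a c\`adl\`ag martingale on $[S,t]$. Applying \cite[Lemma III.4.24]{jaco:shir:03} one obtains integrands that we use to define $Z(t,\cdot)$, $U(t,\cdot,\cdot)$ and $M(t,\cdot)$ on $\Delta(S,T)$ so that \eqref{eq:M_sol_def} holds by construction. The $\bL^p$-integrability of these new components follows from the bound on $\bE|Y(t)|^p$ together with \eqref{eq:BDG_ineq} and the Bichteler--Jacod estimates \eqref{eq:BJ_ineq_p_=_2}--\eqref{eq:norm_equiv}.

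Step 4 (stability and uniqueness). The stability estimate \eqref{eq:tech_stab_BSVIE} is obtained by applying, pointwise in $t$, the $\bL^p$-stability estimate for BSDEs that accompanies Proposition~\ref{prop:BSDE_Lp_sol_exist} to the difference $\lambda(t,\cdot)-\bar\lambda(t,\cdot)$. Uniqueness of the M-solution follows immediately: two M-solutions with the same data satisfy the same parametrized BSDE on $[t,T]$, and must coincide on $\Delta(S,T)$ by uniqueness in the martingale representation. The main obstacle in this program is the joint measurability issue of Step~2, which is technical but routine, and (for $1<p<2$) the careful control of the Poisson integrand in $\bL^p_\mu+\bL^2_\mu$ via \eqref{eq:norm_equiv}; all other steps are direct consequences of the $\bL^p$-theory for BSDEs in the It\^o setting.
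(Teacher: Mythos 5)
Your proposal follows essentially the same route as the paper: the paper also obtains the lemma by applying Proposition~\ref{prop:BSDE_Lp_sol_exist} to the parametrized BSDE \eqref{eq:parametrized_BSDE}, setting $Y(t)=\lambda(t,t)$ and $(Z,U,M)(t,\cdot)=(z,u,m)(t,\cdot)$ on $\Delta^c(S,T)$, invoking the $\bL^p$-stability estimate for BSDEs from \cite{krus:popi:14,krus:popi:17} for \eqref{eq:tech_stab_BSVIE}, and using the martingale representation to define the components on $\Delta(S,T)$. Your additional remarks on joint measurability and on controlling the $\Delta(S,T)$ part via $\bE|Y(t)|^p$ are points the paper leaves implicit, but they do not change the argument.
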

This lemma is a consequence of Proposition \ref{prop:BSDE_Lp_sol_exist} applied to the parametrized BSDE \eqref{eq:parametrized_BSDE}. The arguments are similar to the ones used for Lemmata \ref{lem:param_BSDE->BSVIE} and \ref{lem:stab_BSVIE_h_poss} or for \cite[Corollary 3.6]{yong:08}. For completeness we recall the main ideas.
The parametrized BSDE \eqref{eq:parametrized_BSDE} becomes:
\begin{eqnarray*} \nonumber
\lambda(t,r) &= &\Phi(t) + \int_r^{T} h(t,s, z(t,s) ,u(t,s)) \dd s - \int_r^{T} z(t,s) \dd W_s  \\
&& \qquad - \int_r^{T}  \int_{\bR^m}  u(t,s,x) \tpi(\dd s,\dd x) -\int_r^{T} \dd m(t,s).
\end{eqnarray*}
From Proposition \ref{prop:BSDE_Lp_sol_exist}, for any $\Phi(\cdot) \in \bL^p_{\cF_T}(S,T)$ the previous BSDE has a unique solution $(\lambda(t,\cdot),z(t,\cdot),u(t,\cdot),m(t,\cdot))$ in $\bD^p(R,T)$ and for a.e. $t\in [S,T]$
\begin{eqnarray*} \nonumber
&& \bE \left[\sup_{r\in[R,T]}   |\lambda(t,r)|^p +  \left( \int_R^T  |z(t,r)|^2 \dd r \right)^{p/2} + \left(  
\langle m(t,\cdot) \rangle_{R,T} \right)^{p/2} \right. \\ 
&& \left.+  \left(  \int_R^T  \int_{\bR^m} |u(t,r,x)|^2 \pi(\dd r,\dd x) \right)^{p/2}\right]  \leq C \bE \left[|\Phi(t)|^p  + \left( \int_R^T |h(t,r,0,\mathbf{0})| \dd r \right)^p \right].
\end{eqnarray*}
Moreover we have a stability result for BSDEs (\cite[Lemma 5 and proof of Proposition 2]{krus:popi:14} for $p\geq 2$, \cite[Proposition 3]{krus:popi:17}). Let $(\bar \Phi,\bar h)$ be a couple of data each satisfying the above assumption \ref{H2}-\ref{H3star} and the required integrability conditions on the data. Let $(\bar \lambda(t,\cdot),\bar z(t,\cdot),\bar u(t,\cdot),\bar m(t,\cdot))$ the solution of the BSDE \eqref{eq:parametrized_BSDE} with data $(\bar \Phi, \bar h)$. Define
$$(\fd  \lambda(t,\cdot),\fd z(t,\cdot), \fd u(t,\cdot), \fd m(t,\cdot)) = (\lambda(t,\cdot)-\bar \lambda(t,\cdot),z(t,\cdot)-\bar z(t,\cdot),u(t,\cdot)-\bar u(t,\cdot),m(t,\cdot)-\bar m(t,\cdot)).$$
Then there exists a constant $C$ depending on $p$, $K$ and $T$, such that
\begin{eqnarray} \nonumber
&& \bE \left[\sup_{r\in[R,T]}   |\fd \lambda(t,r)|^p +  \left( \int_R^T  |\fd z(t,r)|^2 \dd r \right)^{p/2} + \left(  \langle \fd m(t,\cdot) \rangle_{R,T} \right)^{p/2} \right. \\ \label{eq:BSDE_stability}
&&\qquad \qquad \left.+  \left(  \int_R^T  \int_{\bR^m} |\fd u(t,r,x)|^2 \pi(\dd r,\dd x) \right)^{p/2}\right] \\ \nonumber 
&& \qquad  \leq C  \bE \left[ |\Phi(t)-\bar \Phi(t)|^p + \left( \int_R^T |h(t,r,z(t,r),u(t,r))-\bar h(t,r,z(t,r),u(t,r)) |\dd r \right)^p   \right].
\end{eqnarray}
If we define $Y(t) = \lambda(t,t)$, $t \in [S,T]$, $Z(t,s) = z(t,s)$, $U(t,s,e) = u(t,s,e)$, $M(t,s) = m(t,s)$ for $(t,s) \in \Delta^c(S,T)$, the equation \eqref{eq:parametrized_BSDE} becomes the BSVIE \eqref{eq:Ito_special_BSVIE}.

But we can also deduce one result for {\it stochastic Fredholm integral equation} (SFIE in abbreviated form). 
Indeed for the BSDE \eqref{eq:parametrized_BSDE}, let us fix $r=S \in [R,T)$ and define for $t\in [R,S]$ and $s \in [S,T]$:
$$\psi^S(t) = \lambda(t,S),\ Z(t,s) = z(t,s),\ U(t,s) = u(t,s), \ M(t,s)=m(t,s).$$
Then Equation \eqref{eq:parametrized_BSDE} becomes a SFIE: for $t \in [R,S]$
\begin{eqnarray} \nonumber
\psi^S(t) &= &\Phi(t) + \int_S^{T} h(t,s, Z(t,s) ,U(t,s)) \dd s - \int_S^{T} Z(t,s) \dd W_s  \\  \label{eq:SFIE}
&& \qquad - \int_S^{T}  \int_{\bR^m} U(t,s,x) \tpi(\dd s,\dd x) -\int_S^{T} \dd M(t,s).
\end{eqnarray}
\begin{Lemma} \label{lem:Ito_SFIE_sol}
If \eqref{eq:Ito_int_cond_h_0} holds and if $\Phi \in \bL^p_{\cF_T}(S,T)$, then the SFIE \eqref{eq:SFIE} has a unique solution such that $\psi^S$ belongs to $ \bL^p_{\cF_S}(R,S)$ and 
$$(Z,U,M) \in \times \bL^p(R,S;\bH^p(S,T)) \times \bL^p(R,S;\bL^p_\pi(S,T))\times \bL^p(R,S;\cH^{p,\perp}(S,T))$$
and for $t \in [R,S]$
\begin{eqnarray} \nonumber
&& \bE \left[  |\psi^S(t)|^p +  \left( \int_S^T  |Z(t,r)|^2 \dd r \right)^{\frac{p}{2}} + +  \left(  \int_S^T  \int_{\bR^m} |U(t,r,x)|^2 \pi(\dd r, \dd x) \right)^{\frac{p}{2}}  \right. \\  \label{eq:SFIE_Hp-estimate}
&&\qquad \left.\left(  \langle M(t,\cdot) \rangle_{S,T} \right)^{\frac{p}{2}}\ \right]  \leq C \bE \left[|\Phi(t)|^p  + \left( \int_S^T |h(t,r,0,\mathbf{0})| \dd r \right)^p \right].
\end{eqnarray}
\end{Lemma}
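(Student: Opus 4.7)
The approach is to view the SFIE \eqref{eq:SFIE} as a specific evaluation of the parametrized BSDE \eqref{eq:parametrized_BSDE} on the interval $[S,T]$, with the external parameter $t$ now ranging over $[R,S]$. The key observation is that all stochastic integrals in \eqref{eq:SFIE} live on $[S,T]$, so the dynamics are exactly those of a standard BSDE on $[S,T]$; only the terminal value $\Phi(t)$ and the driver $h(t,\cdot,\cdot,\cdot)$ depend on the off-range parameter $t\leq S$. This is essentially the same trick as in Lemma \ref{lem:Ito_param_BSDE->BSVIE}, except that the diagonal value $r=t$ is replaced by the left endpoint $r=S$ of the integration interval.

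First, for $\dd t$-a.e.\ fixed $t\in[R,S]$, I would apply Proposition \ref{prop:BSDE_Lp_sol_exist} on $[S,T]$ to the BSDE with terminal value $\Phi(t)$ and generator $(r,y,z,u)\mapsto h(t,r,z,u)$ (which does not depend on $y$, so the Lipschitz condition \ref{H3star} for $h$ is enough). Fubini together with \eqref{eq:Ito_int_cond_h_0} and $\Phi\in\bL^p_{\cF_T}(S,T)$ supply the integrability conditions needed. This yields a unique quadruplet $(\lambda(t,\cdot),z(t,\cdot),u(t,\cdot),m(t,\cdot))\in\bD^p(S,T)$ with the $\bD^p$ estimate. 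Setting $\psi^S(t):=\lambda(t,S)$, $Z(t,s):=z(t,s)$, $U(t,s,x):=u(t,s,x)$, $M(t,s):=m(t,s)$ for $(t,s)\in[R,S]\times[S,T]$, the parametrized BSDE evaluated at $r=S$ reproduces exactly \eqref{eq:SFIE}, and $\psi^S(t)$ is $\cF_S$-measurable as the value at time $S$ of the c\`adl\`ag $\bF$-adapted process $\lambda(t,\cdot)$. Integrating the pointwise bound of Proposition \ref{prop:BSDE_Lp_sol_exist} over $t\in[R,S]$ then yields \eqref{eq:SFIE_Hp-estimate} and membership in the advertised product space.

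Uniqueness follows from the uniqueness part of Proposition \ref{prop:BSDE_Lp_sol_exist}. Given a second solution $(\wtil\psi^S,\wtil Z,\wtil U,\wtil M)$ of \eqref{eq:SFIE}, I would define for $r\in[S,T]$
\[
\wtil\lambda(t,r) := \wtil\psi^S(t) + \int_S^r h(t,s,\wtil Z(t,s),\wtil U(t,s))\dd s - \int_S^r \wtil Z(t,s)\dd W_s - \int_S^r\!\!\int_{\bR^m}\wtil U(t,s,x)\tpi(\dd s,\dd x) - \int_S^r \dd\wtil M(t,s),
\]
which solves the same BSDE on $[S,T]$ with terminal value $\Phi(t)$. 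Hence $(\wtil\lambda,\wtil Z,\wtil U,\wtil M)(t,\cdot)$ coincides with $(\lambda,z,u,m)(t,\cdot)$ for $\dd t$-a.e.\ $t$, and in particular $\wtil\psi^S(t)=\lambda(t,S)=\psi^S(t)$.

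The main technical obstacle is the joint measurability in $(t,\omega)$: one must select versions of $\lambda$, $z$, $u$, $m$ that depend measurably on the parameter $t$, so that $\psi^S$ and $(Z,U,M)$ are genuine measurable processes on the appropriate product spaces. This is standard when the BSDE solution is obtained via the Picard/contraction scheme of \cite{krus:popi:17,papa:poss:sapl:18}: each iterate inherits joint measurability from the data $(\Phi(t),h(t,\cdot))$ via Fubini, and the uniform contraction transfers the property to the limit.
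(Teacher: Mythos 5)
Your proposal is correct and follows essentially the same route as the paper, which treats the lemma as an immediate consequence of the parametrized BSDE \eqref{eq:parametrized_BSDE}: for each fixed $t\in[R,S]$ one solves the BSDE on $[S,T]$ with terminal value $\Phi(t)$ and $y$-independent generator $h(t,\cdot,\cdot,\cdot)$ via Proposition \ref{prop:BSDE_Lp_sol_exist}, sets $\psi^S(t)=\lambda(t,S)$ (which is $\cF_S$-measurable), and integrates the pointwise $\bD^p$-estimate in $t$. The only slip is a sign error in your uniqueness step: as written, $\wtil\lambda(t,T)=2\wtil\psi^S(t)-\Phi(t)$ rather than $\Phi(t)$; the reconstruction should read
$$\wtil\lambda(t,r)=\wtil\psi^S(t)-\int_S^r h(t,s,\wtil Z(t,s),\wtil U(t,s))\dd s+\int_S^r \wtil Z(t,s)\dd W_s+\int_S^r\int_{\bR^m}\wtil U(t,s,x)\tpi(\dd s,\dd x)+\int_S^r \dd\wtil M(t,s),$$
after which the identification with the unique BSDE solution goes through as you describe.
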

Note that here $\psi^S(t)$ is only required to be $\cF_S$-measurable for almost all $t$ and not $\bF$-adapted. Let us now come to the proof of Theorem \ref{thm:Hp_M_solution}. \\
\begin{proof}
We follow the scheme of the proof of \cite[Theorem 3.7]{yong:08}.

\noindent {\bf Step 1.} For any $S \in[0,T]$, let us consider the set $\widehat{\mathfrak S}^p(S,T)$, the space of all $(y,z,u,m)$ in $\mathfrak S^p(S,T)$ such that for a.e. $t \in [S,T]$ a.s. 
$$y(t) = \bE \left[ y(t)|\cF_S \right] + \int_S^t z(t,s) \dd W_s + \int_S^t \int_{\bR^m} u(t,s,x)\tpi(\dd s,\dd x) + \int_S^t \dd m(t,s).$$
From this representation, the Doob's martingale inequality and the Burkholder-Davis-Gundy inequality, for $t\in [S,T]$, the quantity
$$\bE \left[ \left( \int_S^t  |z(t,r)|^2 \dd r \right)^{\frac{p}{2}} + \left( \langle m(t,\cdot) \rangle_{S,t}  \right)^{\frac{p}{2}} + \left( \int_S^t  \int_{\bR^m} |u(t,r,x)|^2 \pi(\dd r, \dd x)\right)^{\frac{p}{2}} \right] $$
is bounded from above by $C \bE |y(t)|^p$. We take on $\widehat{\mathfrak S}^p(S,T)$ the following norm:
\begin{eqnarray*}
\|(y,z,u,m)\|_{\widehat{\mathfrak S}^p(S,T)}^p &= &\bE \left[ \int_S^T  |y(t)|^p \dd t + \int_S^T \left(  \int_t^T  |z(t,r)|^2 \dd r \right)^{p/2} dt \right. \\
&+&\left. \int_S^T \left(  \|u(t,r)\|^2_{\bL^2_\pi(t,T)}   \right)^{p/2}\dd t+\int_S^T  \left( \langle m(t,\cdot) \rangle_{t,T} \right)^{p/2}\dd t  \right] .
\end{eqnarray*}
The same arguments as \cite[Inequality (3.48)]{yong:08} show the norm equivalence:
\begin{eqnarray} \nonumber
&& \|(y,z,u,m)\|_{\widehat{\mathfrak S}^p(S,T)}^p  \leq \bE \left[ \int_S^T  |y(t)|^p \dd t + \int_S^T  \left( \int_S^T  |z(t,r)|^2 \dd r \right)^{p/2}\dd t \right. \\ \nonumber
&& \qquad \left.+ \int_S^T\left(   \|u(t,\cdot)\|^2_{\bL^2_\pi(S,T)}  \right)^{p/2} \dd t+\int_S^T\left(   \langle m(t,\cdot) \rangle_{S,T} \right)^{p/2}\dd t  \right]\\ \label{eq:norm_equivalence}
&& \qquad \leq  (C_p+1) \|(y,z,u,m)\|_{\widehat{\mathfrak S}^p(S,T)}^p.
\end{eqnarray}
Note that using \eqref{eq:BJ_ineq_p_=_2}, if $p\geq 2$, $ \|u(t,\cdot)\|^2_{\bL^2_\pi(S,T)}$ can be replaced by $ \int_S^T  \|u(t,r)\|^2_{\bL^2_\mu}  \dd r$ in the previous estimates (with the suitable modifications of the constant $C_p$). 
%

If $\Phi \in \bL^p_{\cF_T}(S,T)$ and $(y,\zeta,\nu,m) \in \widehat{\mathfrak S}^p(S,T)$, we consider the BSVIE on $[S,T]$: 
\begin{eqnarray} \nonumber
Y(t) &= &\Phi(t) + \int_t^{T} f(t,s, y(s),Z(t,s) , U(t,s)) \dd s - \int_t^{T} Z(t,s) \dd W_s  \\ \label{eq:fix_point_BSVIE_2}
&& \qquad - \int_t^{T}  \int_{\bR^m} U(t,s,x) \tpi(\dd s,\dd x) -\int_t^{T} \dd M(t,s).
\end{eqnarray}
From Conditions \ref{H2}, \ref{H3star}, we can easily check that the generator of this BSVIE satisfies all requirements of Lemma \ref{lem:Ito_param_BSDE->BSVIE}. Thus this BSVIE has a unique adapted M-solution $(Y,Z,U,M) \in \mathfrak S^p(S,T)$ and for any $t \in [S,T]$
\begin{eqnarray} \nonumber
&& \bE \left[  |Y(t)|^p +  \left(  \int_t^T  |Z(t,r)|^2 \dd r \right)^{\frac{p}{2}} +(  \langle M(t,\cdot) \rangle_{t,T} )^{\frac{p}{2}} + \left(  \int_t^T  \int_{\bR^m} |U(t,r,x)|^2 \pi(\dd r, \dd x) \right)^{\frac{p}{2}}\right]  \\ \label{eq:tech_estim_step_1}
&& \ \leq C \bE \left[|\Phi(t)|^p  + \left( \int_t^T |f^0(t,r)| \dd r \right)^p + \int_t^T |y(r)|^p \dd r\right].
\end{eqnarray}
Therefore $(Y,Z,U,M) \in \widehat{\mathfrak S}^p(S,T)$. In other words we have a map $\Theta$ from $\widehat{\mathfrak S}^p(S,T)$ to $ \widehat{\mathfrak S}^p(S,T)$. 
Moreover arguing as in \cite{yong:08}, we obtain that for $(y,\zeta,\nu,m)$ and $(\bar y,\bar \zeta,\bar \nu, \bar m)$ in $\widehat{\mathfrak{S}}^p(S,T)$, if $(Y,Z,U,M)$ and $(\bar Y ,\bar Z, \bar U, \bar M)$ are the solutions of the BSVIE \eqref{eq:fix_point_BSVIE_2}, then from \eqref{eq:tech_stab_BSVIE} the difference satisfies:
\begin{eqnarray} \nonumber
&& \bE \left[ |\fd Y(t)|^p +  \left( \int_S^T  |\fd Z(t,r)|^2 \dd r \right)^{p/2} + \left(  \langle \fd M(t,\cdot) \rangle_{S,T} \right)^{p/2} \right. \\ \nonumber
&&\qquad \left.+  \left(  \int_S^T  \int_{\bR^m} |\fd U(t,r,x)|^2 \pi(\dd r, \dd x) \right)^{p/2}\right]  \\ \label{eq:Theta_contraction}
&& \quad \leq C \bE \left[ \left( \int_S^T K |\fd y(r)|  \dd r \right)^p   \right]  \leq C K^p (T-S)^{p-1} \bE \left[ \left( \int_S^T  |\fd y(r)|^p \dd r \right) \right]. 
\end{eqnarray}
For $T-S$ sufficiently small, this map is a contraction and thus it admits a unique fixed point $(Y,Z,U,M) \in  \widehat{\mathfrak S}^p(S,T)$ which is the unique adapted M-solution of \eqref{eq:second_special_BSVIE} on $[S,T]$. Moreover Estimate \eqref{eq:Hp_estimate} holds. This step determines the values $(Y(t),Z(t,s),U(t,s),M(t,s))$ for $(t,s) \in [S,T]\times[S,T]$. Note that at this step we can replace \ref{H3star} by a weaker condition as in \cite{yong:08}. 

\medskip 
Let us make a short break in the proof to understand the trouble in the case of a Type-II BSVIE. The driver of the BSVIE \eqref{eq:fix_point_BSVIE_2} would be replaced by
$$\int_t^T f(t,s,y(s),Z(t,s),\zeta(s,t), U(t,s), \nu(s,t)) \dd s.$$
Hence in \eqref{eq:tech_estim_step_1}, we would have the additional terms 
$$\bE \left[  \int_S^T\left(  \int_t^T |\zeta(r,t)|^2 \dd r \right)^{p/2} \dd t +\int_S^T \left(  \int_t^T \|\nu(r,t)\|^2_{\bL^2_\pi} \dd r \right)^{p/2} \dd t \right].$$
There are also other terms in \eqref{eq:Theta_contraction}.
To circumvent this issue, we could add this term in the definition of the norm. However we cannot control this symmetrized version of the norm for $(Z,U)$ in this step, but also in the next one. In other words the map $\Theta$ is no more a contraction.

\noindent {\bf Step 2.} We use the martingale representation theorem to define $(Z,U,M)$ on $[S,T]\times [R,S]$ for any $R \in [0,S)$. Indeed since $\bE[Y(t) | \cF_S] \in \bL^p(S,T;\bL^p_{\cF_S}(\Omega))$, there exists a unique triple $(Z,U,M)$ in $\bL^p(S,T;\bH^p(R,S)) \times  \bL^p(S,T;\bL^p_\pi(R,S)) \times\bL^p(S,T;\cH^{p,\perp}(R,S))$ such that for $t \in [S,T]$:
$$\bE[Y(t) | \cF_S] = \bE \left[ Y(t)|\cF_R \right] + \int_R^S Z(t,s)\dd W_s + \int_R^S \int_{\bR^m} U(t,s,x)\tpi(\dd s,\dd x) + \int_R^S \dd M(t,s),$$
and
\begin{eqnarray*}
&& \bE \left[\left( \int_R^S |Z(t,r)|^2 \dd r \right)^{\frac{p}{2}} +\left(  \int_R^S  \int_{\bR^m} |U(t,r,x)|^2 \pi(\dd r, \dd x) \right)^{\frac{p}{2}}+\left(  \langle M(t,\cdot) \rangle_{R,S} \right)^{\frac{p}{2}}\right] \\
&& \qquad  \leq C \bE |Y(t)|^p .
\end{eqnarray*}
Thus together with the first step, we have defined $(Z,U,M)$ for $(t,s) \in [S,T]\times [R,T]$ and 
\begin{eqnarray}  \nonumber
&&\bE \left[\int_S^T \left(  \int_R^T |Z(t,r)|^2 \dd r\right)^{\frac{p}{2}} \dd t + \int_S^T \left(\int_R^T  \int_{\bR^m} |U(t,r,x)|^2 \pi(\dd r, \dd x) \right)^{\frac{p}{2}}\dd t \right. \\ \nonumber
&&\hspace{2cm} \left. + \int_S^T\left( \langle M(t,\cdot) \rangle_{R,T} \right)^{\frac{p}{2}} \dd t  \right]   \\ \label{eq:Hp_estimates_again}
&& \qquad \leq  C \bE \left[ \int_S^T |\Phi(t)|^p \dd t +\int_S^T \left( \int_t^T |f^0(t,r)| \dd r \right)^p \dd t \right].
\end{eqnarray}

\noindent {\bf Step 3.} From the two previous steps, for $(t,s) \in [R,S]\times [S,T]$, the values of $Y(s)$ and $(Z(s,t),U(s,t))$ are already defined. Thus let us consider
$$f^S(t,s,z,u) = f(t,s,Y(s),z,u), \quad (t,s,z,u) \in [R,S]\times [S,T] \times \bR^{d\times k} \times (\bL^1_\mu + \bL^2_\mu),$$
and from Lemma \ref{lem:Ito_SFIE_sol}, the SFIE
\begin{eqnarray*} 
&&\psi^S(t) = \Phi(t) + \int_S^T f^S(t,s,Z(t,s),U(t,s))\dd s - \int_S^{T} Z(t,s) \dd W_s  \\ 
&& \qquad - \int_S^{T}  \int_{\bR^m} U(t,s,x) \tpi(\dd s,\dd x) -\int_S^{T} \dd M(t,s)
\end{eqnarray*}
has a unique solution $(\psi^S,Z,U,M)$ such that for $t \in [R,S]$
\begin{eqnarray*} \nonumber
&& \bE \left[  |\psi^S(t)|^p + \left(  \int_S^T  |Z(t,r)|^2 \dd r  \right)^{\frac{p}{2}} \right. \\
&& \qquad \left.+ \left( \langle M(t,\cdot) \rangle_{S,T}  \right)^{\frac{p}{2}}+  \left(  \int_S^T  \int_{\bR^m} |U(t,r,x)|^2 \pi(\dd r, \dd x)\right)^{\frac{p}{2}} \right] \\
&& \quad \leq C \bE \left[|\Phi(t)|^p  + \left( \int_S^T |f^S(t,r,0,\mathbf{0})| \dd r \right)^p \right] \\
&& \quad = C \bE \left[|\Phi(t)|^p  + \left( \int_S^T |f(t,r,Y(r),0,\mathbf{0})| \dd r \right)^p \right] \\
&& \quad \leq C \bE \left[|\Phi(t)|^p  + \left( \int_S^T |f^0(t,r)| \dd r \right)^p + \int_S^T |Y(r)|^p \dd  r \right].
\end{eqnarray*}
Hence using \eqref{eq:BJ_ineq_p_=_2} for $p\geq 2$ or \eqref{eq:norm_equiv} for $p<2$, and \eqref{eq:tech_estim_step_1} and \eqref{eq:Hp_estimates_again}, we obtain:
\begin{eqnarray} \nonumber
&& \bE \left[ \int_R^S |\psi^S(t)|^p \dd t + \int_R^S \left(  \int_S^T  |Z(t,r)|^2 \dd r \right)^{\frac{p}{2}} \dd t + \int_R^S \left( \langle M(t,\cdot) \rangle_{S,T} \right)^{\frac{p}{2}} \dd t \right. \\  \nonumber
&&\qquad \left. + \int_R^S  \left(  \int_S^T \|U(t,r)\|^2_{\bL^1_\mu +\bL^2_\mu} \dd r \right)^{\frac{p}{2}} \dd t \right] \\ \nonumber
&& \quad \leq C \bE \left[\int_R^S |\Phi(t)|^p \dd t +\int_R^S \left( \int_S^T |f^0(t,r)| \dd r \right)^p dt+ \int_S^T |Y(r)|^p \dd r \right] \\ \label{eq:estimate_step3_bis}
&& \quad \leq C  \bE \left[ \int_R^T |\Phi(t)|^p \dd t +\int_R^T \left( \int_t^T |f^0(t,r)| \dd r \right)^p \dd t \right].
\end{eqnarray}
Hence we have defined $(Z,U,M)$ for $(t,s) \in [R,S]\times [S,T]$, and and by the definition of $f^S$, for $t \in [R,S]$
\begin{eqnarray} \nonumber
&&\psi^S(t) = \Phi(t) + \int_S^T f(t,s,Y(s),Z(t,s),U(t,s))\dd s - \int_S^{T} Z(t,s) \dd W_s  \\ \label{eq:intermediate_SFIE_bis}
&& \qquad - \int_S^{T}  \int_{\bR^m} U(t,s,x) \tpi(\dd s,\dd x) -\int_S^{T} \dd M(t,s).
\end{eqnarray}

\medskip
As in the first step and for the same reason, the general driver of the Type-II BSVIE \eqref{eq:general_BSVIE} can not be directly handled in Equation \eqref{eq:estimate_step3_bis}.

\noindent {\bf Step 4.} 
 Let us summarize what we have after these three steps. $Y$ is uniquely defined on $[S,T]$ (from Step 1) and $(Z,U,M)$ are uniquely determined on $[S,T]\times [R,T]$ (from Steps 1 and 2) and on $[R,S]\times [S,T]$ (from Steps 1 and 3). Let us now solve \eqref{eq:second_special_BSVIE} on $[R,S]^2$. Consider 
\begin{eqnarray*} 
&&Y(t) = \psi^S(t) + \int_t^S f(t,s,Y(s),Z(t,s),U(t,s))\dd s - \int_t^{S} Z(t,s) \dd W_s  \\ 
&& \qquad - \int_t^{S}  \int_{\bR^m} U(t,s,x) \tpi(\dd s,\dd x) -\int_t^{S} \dd M(t,s).
\end{eqnarray*}
It is a BSVIE with terminal condition $\psi^S \in \bL^p_{\cF_S}(R,S)$ and generator $f$. As in the first step, this BSVIE has a unique solution in $\mathfrak S^p(R,S)$ provided that $S-R > 0$ is small enough. Now for $t \in [R,S]$ from the expression \eqref{eq:intermediate_SFIE_bis} of $\psi^S$, we obtain that 
\begin{eqnarray*} 
&&Y(t) = \Phi(t) + \int_t^T f(t,s,Y(s),Z(t,s),U(t,s))\dd s - \int_t^{T} Z(t,s) \dd W_s  \\ 
&& \qquad - \int_t^{T}  \int_{\bR^m} U(t,s,x) \tpi(\dd s,\dd x) -\int_t^{T} \dd M(t,s).
\end{eqnarray*}
Moreover again by the arguments as in the first step
\begin{eqnarray*} \nonumber
&& \bE \left[ \int_R^S  |Y(t)|^p \dd t + \int_R^S \left(  \int_R^S  |Z(t,r)|^2 \dd r \right)^{\frac{p}{2}}\dd t +\int_R^S \left(   \langle M(t,\cdot) \rangle_{R,S} \right)^{\frac{p}{2}}\dd t \right. \\
&& \qquad \left.+ \int_R^S \left(   \|U(t,\cdot)\|^2_{\bL^2_\pi(R,S)}  \right)^{\frac{p}{2}}\dd t \right]  \\ 
&& \quad \leq C \bE \left[ \int_R^S |\Phi(t)|^p \dd t +\int_R^S \left( \int_t^S |f^0(t,r)|  \dd r \right)^p \dd t \right] \\
&& \quad \leq C \bE \left[ \int_R^T |\Phi(t)|^p \dd t +\int_R^T \left( \int_t^T |f^0(t,r)|  \dd r \right)^p \dd t \right].
\end{eqnarray*}
From this inequality together with \eqref{eq:Hp_estimate} on [S,T], \eqref{eq:Hp_estimates_again} and \eqref{eq:estimate_step3_bis}, we proved that the BSVIE \eqref{eq:second_special_BSVIE} has a unique adapted M-solution $(Y,Z,U,M)$ in $\mathfrak S^p(R,T)$ on $[R,T]$ with the estimate \eqref{eq:Hp_estimate} on $[R,T]$. 

\noindent {\bf Step 5.} The conclusion of the proof is done by induction since the time intervals $[S,T]$ (Step 1) and $[R,S]$ (Step 4) are determined by absolute constants depending only on the Lipschitz constant $K$ of $f$ in Conditions \ref{H2}--\ref{H3star} and on the time horizon $T$. 
\end{proof}

The stability result holds in our setting. Let $\bar \Phi \in L^p_{\cF_T}(0,T)$ and $\bar f :\Omega \times [0,T]\times \bR^{d+(d\times k)} \times (\bL^1_\mu+\bL^2_\mu)^2 \to \bR^d$ satisfy \ref{H2}--\ref{H3star} and \eqref{eq:int_cond_f_0}
\begin{equation*} 
\bE \int_0^T \left( \int_t^T |\bar f^0(t,s)| \dd s \right)^p \dd t < +\infty.
\end{equation*}
Let $(\bar Y,\bar Z,\bar U,\bar M)$ in $\mathfrak S^p(0,T)$ be the unique adapted M-solution of the BSVIE \eqref{eq:second_special_BSVIE} with data $\bar \Phi$ and $\bar f$ (Theorem \ref{thm:Hp_M_solution}). Then for any $S\in[0,T]$
\begin{eqnarray} \nonumber
&& \bE \left[ \int_S^T  |Y(t)-\bar Y(t)|^p \dd t + \int_S^T \left(  \int_S^T  |Z(t,r)- \bar Z(t,s)|^2 \dd r\right)^{\frac{p}{2}} \dd t  \right. \\ \nonumber
&& \qquad \left.+\int_S^T \left(  \langle M(t,\cdot) - \bar M(t,\cdot)\rangle_{S,T}\right)^{\frac{p}{2}} \dd t+ \int_S^T\left(  \int_S^T  \|U(t,r)-\bar U(t,r)\|^2_{\bL^2_\mu}  \dd r \right)^{\frac{p}{2}} dt \right]  \\ \nonumber
&& \quad \leq  C \bE \left[ \int_S^T |\Phi(t)-\bar \Phi(t)|^p \dd t \right. \\ \nonumber 
&& \qquad + \int_S^T \left( \int_t^T |f(t,r,Y(r),Z(t,r),U(t,r)) \right. \\  \label{eq:H_p_stability}
&& \hspace{3cm} -\left.\bar f(t,r,Y(r),Z(t,r),U(t,r)) | \dd r \bigg)^p \dd t \right].
\end{eqnarray}
The proof is based on the same arguments given in \cite{yong:08} (see Equation (3.71) in particular) and we skip it here.

\subsection{Time regularity} \label{ssect:time_regularity}

Let us emphasize that we do not require any regularity of the paths $t \mapsto Y(t)$, which are a priori not continuous nor c\`adl\`ag. The component $Y$ is only supposed to be in $\bL^p_\bF(0,T)$ (or $\bL^2_{\beta,\bF}(0,T)$ in Section \ref{sect:general_Type_I_BSVIE}). 
If $Y$ solves the BSDEs \eqref{eq:poss_BSDE} or \eqref{eq:Ito_BSDE}, then it has the same regularity as the martingale part, thus a.s. it is a c\`adl\`ag process. For a BSVIE it is more delicate. In \cite[Theorem 4.2]{yong:08}, the author shows that in the Brownian setting the BSVIE 
\begin{eqnarray} \label{eq:brownian-type_II_BSVIE}
Y(t) &= &\Phi(t) + \int_t^{T} f(t,s, Y(s), Z(t,s),Z(s,t)) \dd s - \int_t^{T} Z(t,s) \dd W_s
\end{eqnarray}
is continuous in $\bL^2(\Omega)$, which does not mean that $Y$ has a.s. continuous paths. Of course since $t$ appears in the generator $f$ and in the free term $\Phi$, we have to add some property on $t\mapsto \Phi(t)$ and $t \mapsto f(t,\cdots)$. To obtain the time regularity for the BSVIE \eqref{eq:brownian-type_II_BSVIE}, the author uses the Malliavin derivative to control the term $Z(s,t)$ in the generator (see \cite[Theorems 4.1 and 4.2]{yong:08}). Hence to apply the same arguments, we should use the Malliavin calculus in the presence of jumps (see e.g. \cite{boul:deni:15,dinu:okse:pros:09}). This point is left as future research and to avoid this machinery, let us study the BSVIE \eqref{eq:second_special_BSVIE}
\begin{eqnarray*} \nonumber
Y(t) &= &\Phi(t) + \int_t^{T} f(t,s, Y(s), Z(t,s) ,U(t,s)) \dd s - \int_t^{T} Z(t,s) \dd W_s \\ 
&& \qquad - \int_t^{T}  \int_{\bR^m}  U(t,s,x) \tpi(\dd s,\dd x) -\int_t^{T} \dd M(t,s). 
\end{eqnarray*}

Let us describe several sets for the process $Y$.
\begin{eqnarray*}
D([0,T];\bL^p_{\bF}(\Omega)) & = & \Bigg\{ \phi \in \bL^\infty(0,T;\bL^p_{\bF}(\Omega)), \ \phi(t) \ \mbox{is }\bF-\mbox{adapted} ,\\
&&\qquad \qquad \left. \phi(\cdot) \ \mbox{is c\`adl\`ag from } [0,T] \mbox{ to } \bL^p_{\bF}(\Omega). \right\},\\
D^\sharp([0,T];\bL^p_{\bF}(\Omega)) & =& \Bigg\{ \phi \in D([0,T];\bL^p_{\bF}(\Omega)) , \  \phi(\cdot) \ \mbox{is c\`adl\`ag paths a.s.} \Bigg\},\\
\bD^p_{\bF}(0,T) &= & \bD^p(0,T)= \Bigg\{ \phi \in D^\sharp([0,T];\bL^p_{\bF}(\Omega)), \  \bE \left[ \sup_{t\in [0,T} |\phi(t)|^p\right] < +\infty \Bigg\}.
\end{eqnarray*}
When only measurability is required, we replace the subscript $\bF$ by $\cF_S$. If we want to deal with continuity, then we replace $D$ (resp. $\bD$) by $C$ (resp. $\mathbb C$) (see \cite[Section 2.1]{yong:08}).
Coming back to a generic martingale $M(t,\cdot)$, we also define 
$$\bL^p(\Omega;D([S,T];\bM^p(S,T))$$ 
as the set of all $M \in \bL^\infty(S,T;\bM^p(S,T))$ such that 
$t \mapsto M(t,\cdot)$ is c\`adl\`ag from $[S,T]$ to $\bM^p(0,T)$ and
$$\bE \left( \sup_{t\in [S,T]} \langle M(t,\cdot) \rangle_{S,T} \right)^\frac{p}{2}  < +\infty.$$
Again if $M(t,\cdot)$ is a Brownian martingale, then $M \in \bL^p(\Omega;D([S,T];\bM^p(S,T))$ if and only if $Z \in  \bL^p(\Omega;D([S,T];\bH^p(S,T))$ and if $N(t,\cdot)$ is a Poisson martingale, then $N \in \bL^p(\Omega;D([S,T];\bM^p(S,T))$ is equivalent to $\psi  \in  \bL^p(\Omega;D([S,T];\bL^2_\pi(S,T))$.

%

\bigskip

The integrability condition \eqref{eq:int_cond_f_0} is replaced by the stronger one:
\begin{equation} \label{eq:stronger_int_cond_f_0}
\sup_{t\in [0,T]} \bE \left[ \left( \int_t^T |f^0(t,s)| \dd s \right)^p \right] < +\infty.
\end{equation}
Finally instead of $\Phi \in \bL^p_{\cF_T}(0,T)$, we assume also that 
\begin{equation} \label{eq:stronger_int_cond_Phi}
\sup_{t\in [0,T]} \bE \left[|\Phi(t)|^p \right]  < +\infty.
\end{equation}
Under these two hypotheses and if $f$ satisfies \ref{H2} and \ref{H3star}, it is possible to deduce that $Y$ belongs to $D([0,T];\bL^p_{\bF}(\Omega))$, provided that we have regularity assumption on $t\mapsto \Phi(t)$ and $t\mapsto f(t,s,y,z,\psi)$, as in \cite[Theorem 4.2]{yong:08} in the continuous setting (see Section \ref{ssect:Lp_continuity} in the appendix). Let us emphasize again that it does not mean that $Y$ is in $D^\sharp([0,T];\bL^p_{\bF}(\Omega)) $; in other words we do not deduce that a.s. the paths are c\`adl\`ag. 

In \cite[Theorem 2.4]{wang:zhan:07}, a.s. continuity of $Y$ is proved in the Brownian setting and if the generator of the BSVIE \eqref{eq:brownian-type_II_BSVIE} is of Type-I, namely for the BSVIE:
\begin{eqnarray*} \nonumber
Y(t) &= &\Phi(t) + \int_t^{T} f(t,s, Y(s), Z(t,s)) \dd s - \int_t^{T} Z(t,s) \dd W_s.
\end{eqnarray*}
Our aim now is to extend this property for the BSVIE \eqref{eq:second_special_BSVIE}, assuming that $\Phi$ and $f$ are H\"older continuous w.r.t. $t$. Before we state the next result, which is the same as \cite[Lemma 3.1]{wang:zhan:07}: 
\begin{Lemma} \label{lem:exp_ineq}
Let us assume that for $\Phi \in \bL^2_{\cF_T}(0,T)$, for $f=\{f(t,s), \ 0\leq t \leq s\leq T\}$ such that 
$$\bE \int_0^T\int_0^T  |f(t,s)|^2 \mathbf 1_{s\geq t} \dd s < +\infty,$$
and for some $(Z,U,M)$ is in $\cH^2(0,T)$, we have for almost all $t \in [0,T]$
\begin{eqnarray*}
Y(t) &=& \Phi(t) + \int_t^T f(t,s) \dd s  + \int_t^T Z(t,s)\dd W_s + \int_t^T \int_{\bR^m} U(t,s,x)\tpi(\dd s,\dd x) + \int_t^T \dd M(t,s).
\end{eqnarray*}
Then
\begin{eqnarray*}
e^{\beta t} |Y(t)|^2 & \leq & e^{\beta T} \bE^{\cF_t} |\Phi(t)|^2  + \frac{1}{\beta} \bE^{\cF_t} \int_t^T  e^{\beta s} |f(t,s)|^2 \dd s
\end{eqnarray*}
%
and
\begin{eqnarray*}
&&\bE^{\cF_t} \int_t^T e^{\beta s}  |Z(t,s)|^2  \dd s  + \bE^{\cF_t} \int_t^T e^{\beta s}  \dd \langle M(t,\cdot) \rangle_{t,s} + \bE^{\cF_t} \int_t^T e^{\beta s} \int_{\bR^m} |U(t,s,x)|^2\pi(\dd s,\dd x)  \\
 & & \quad \leq e^{\beta T} \bE^{\cF_t} |\Phi(t)|^2  +\frac{1}{\beta}  \bE^{\cF_t} \int_t^T  e^{\beta s} |f(t,s)|^2 \dd s.
\end{eqnarray*}
\end{Lemma}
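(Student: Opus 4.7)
The plan is to fix $t\in[0,T]$ and run an It\^o expansion in the second variable $s$. Set
$$\gamma(t,s) := Y(t) - \int_t^s f(t,r)\dd r - \int_t^s Z(t,r)\dd W_r - \int_t^s\!\!\int_{\bR^m}\!\! U(t,r,x)\tpi(\dd r,\dd x) - \int_t^s \dd M(t,r),$$
for $s\in[t,T]$. By construction $\gamma(t,t) = Y(t)$ and, rearranging the given representation of $Y(t)$, $\gamma(t,T) = \Phi(t)$. Its finite-variation part is absolutely continuous with density $-f(t,\cdot)$, and its martingale part is a sum of three mutually orthogonal pieces, so
$$\dd[\gamma(t,\cdot)]_s = |Z(t,s)|^2\dd s + \int_{\bR^m}|U(t,s,x)|^2\pi(\dd s,\dd x) + \dd[M(t,\cdot)]_s.$$

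Next I would apply the It\^o formula for semimartingales with jumps to $s\mapsto e^{\beta s}|\gamma(t,s)|^2$ between $s=t$ and $s=T$ and then take $\bE^{\cF_t}$. Under the assumed square-integrability, the stochastic integral against the martingale part is a genuine $\cF_s$-martingale with null $\cF_t$-expectation (standard localization plus the Burkholder--Davis--Gundy inequality justifies this), and predictability of $e^{\beta s}$ lets me identify $\bE^{\cF_t}\int_t^T e^{\beta s}\dd[M(t,\cdot)]_s$ with $\bE^{\cF_t}\int_t^T e^{\beta s}\dd\langle M(t,\cdot)\rangle_s$. The only problematic term is the cross term $-2\bE^{\cF_t}\int_t^T e^{\beta s}\gamma(t,s-)f(t,s)\dd s$, which I would absorb by Young's inequality $2|ab|\leq \beta a^2+b^2/\beta$; this produces a $\beta|\gamma|^2$ contribution that exactly cancels the $\int_t^T \beta e^{\beta s}|\gamma|^2\dd s$ term originating from the derivative of $e^{\beta s}$.

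After these cancellations I reach the single consolidated estimate
$$e^{\beta t}|Y(t)|^2 + \bE^{\cF_t}\!\!\int_t^T e^{\beta s}\,\dd[\gamma(t,\cdot)]_s \ \leq\ e^{\beta T}\bE^{\cF_t}|\Phi(t)|^2 + \frac{1}{\beta}\bE^{\cF_t}\!\!\int_t^T e^{\beta s}|f(t,s)|^2\dd s,$$
from which both stated bounds follow immediately: the first by dropping the nonnegative quadratic-variation term on the left, and the second by dropping $e^{\beta t}|Y(t)|^2$ and identifying the three summands of $\dd[\gamma(t,\cdot)]_s$. I do not foresee any real obstacle; the only delicate step is verifying that the It\^o stochastic integrals are true martingales rather than merely local ones, which is a routine integrability argument given the $\cH^2$-membership of $(Z,U,M)$.
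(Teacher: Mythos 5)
Your proposal is correct and follows essentially the same route as the paper: the paper introduces the same auxiliary semimartingale (there called $X_t(u)$, with $X_t(t)=Y(t)$ and $X_t(T)=\Phi(t)$), applies It\^o's formula to $u\mapsto e^{\beta(u-t)}|X_t(u)|^2$, absorbs the cross term with $f$ via Young's inequality so that it cancels the $\beta\int e^{\beta s}|X_t(s)|^2\dd s$ term, and takes $\bE^{\cF_t}$ after checking (via a Doob-inequality bound on $\sup_u|X_t(u)|$) that the stochastic integrals are true martingales. The only cosmetic difference is the normalization $e^{\beta(s-t)}$ versus $e^{\beta s}$.
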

\begin{proof}
The proof is an adaptation of the arguments of \cite{wang:zhan:07}, together with \cite{krus:popi:14,krus:popi:17}, and is postponed in the appendix (see Section  \ref{ssect:Lp_continuity}). 
\end{proof}

Now we have the next regularity result, which is the extension of \cite[Theorem 2.4]{wang:zhan:07}. The restriction $p\geq 2$ is due to the dependence of the generator $f$ on $U$. If it doesn't, the arguments of the proof lead to the same conclusion for $p>1$. Following \cite{krus:popi:17} it should be possible to extend the theorem for $p>1$, but this point is left for further research.
\begin{Thm} \label{thm:as_regularity_p_2}
In addition to {\rm \ref{H2}} and {\rm \ref{H3star}}, suppose that the generator satisfies for some $p \geq 2$ and $0 < \alpha <1$ such that $\alpha p > 1$ and with $\varrho >0$, uniformly in $(\omega,s,y,z,\psi)$:
$$|f(t,s,y,z,\psi)-f(t',s,y,z,\psi)|\leq \varrho |t-t'|^\alpha, \quad  \bE \left[  \left| \Phi(t) - \Phi(t')\right|^p  \right] \leq \varrho |t-t'|^{\alpha p}, $$
 for all $(y,z,\psi)$ and all $0\leq t,t' \leq s \leq T$. Moreover 
$$\bE \left[  \sup_{t\in [0,T]} \left(  \int_t^{T} |f^0(t,s)|^{p} \dd s \right) \ \right] <+\infty.$$
Then the solution $Y$ of the BSVIE \eqref{eq:second_special_BSVIE} has a c\`adl\`ag version, still denoted by $Y$, such that 
$$\bE \left[ \sup_{s\in [0,T]} \left| Y(s)\right|^p \right] \leq \infty.$$
\end{Thm}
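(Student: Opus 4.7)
The plan is to mimic the strategy of Wang--Zhang \cite{wang:zhan:07}, exploiting that for a Type-I BSVIE the solution admits the representation $Y(t)=\lambda(t,t)$, where $\lambda(t,\cdot)$ solves the parametrized BSDE
\begin{equation*}
\lambda(t,s) = \Phi(t) + \int_s^T f(t,r,Y(r),z(t,r),u(t,r))\dd r - \int_s^T z(t,r)\dd W_r - \int_s^T\!\!\int_{\bR^m} u(t,r,x)\tpi(\dd r,\dd x) - \int_s^T \dd m(t,r).
\end{equation*}
Because $Y \in \bL^p_\bF(0,T)$ is already known from Theorem \ref{thm:Hp_M_solution}, and because $\sup_t\bE|\Phi(t)|^p$ and $\sup_t\bE(\int_t^T|f^0(t,s)|\dd s)^p$ are finite under the new assumptions, Proposition \ref{prop:BSDE_Lp_sol_exist} applies and for each fixed $t$ the process $s\mapsto\lambda(t,s)$ has a c\`adl\`ag version. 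The goal is to produce a joint version of $\{\lambda(t,s)\}$ such that $t\mapsto\lambda(t,\cdot)$ is H\"older continuous in $t$ \emph{uniformly in $s$}, so that $t\mapsto\lambda(t,t)$ inherits c\`adl\`ag paths from $s\mapsto\lambda(t,s)$.

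First, I would apply the BSDE stability estimate (cf.\ \eqref{eq:BSDE_stability}) to the difference $\lambda(t,\cdot)-\lambda(t',\cdot)$ with $0\le t\le t'\le T$. The driver difference is
$$f(t,r,Y(r),z,u) - f(t',r,Y(r),z,u),$$
which by the H\"older assumption is bounded by $\varrho|t-t'|^\alpha$ uniformly in $(\omega,r,y,z,u)$, and the free-term difference has $\bE|\Phi(t)-\Phi(t')|^p\le\varrho|t-t'|^{\alpha p}$. Lemma \ref{lem:exp_ineq} (or directly Proposition \ref{prop:BSDE_Lp_sol_exist}) then yields
\begin{equation*}
\bE\left[\sup_{s\in[0,T]}|\lambda(t,s)-\lambda(t',s)|^p\right] \;\le\; C_{p,K,T,\varrho}\,|t-t'|^{\alpha p}.
\end{equation*}

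Since $\alpha p>1$, a Kolmogorov-type continuity criterion applied to the family $\{\lambda(t,\cdot)\}_{t\in[0,T]}$, viewed as a process in $t$ with values in the Banach space $(D([0,T];\bR^d),\|\cdot\|_\infty)$ of c\`adl\`ag functions endowed with the sup-norm, provides a version $\wtil\lambda$ such that almost surely
\begin{equation*}
t\mapsto \wtil\lambda(t,\cdot)\in D([0,T];\bR^d) \text{ is H\"older continuous of order } \gamma\in(0,\alpha-1/p).
\end{equation*}
In particular $\sup_{t,s}|\wtil\lambda(t,s)|\in L^p(\Omega)$. For each fixed $t$ the map $s\mapsto\wtil\lambda(t,s)$ coincides a.s.\ with the c\`adl\`ag version of $\lambda(t,\cdot)$, hence is c\`adl\`ag. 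Setting $\wtil Y(t):=\wtil\lambda(t,t)$ and decomposing
\begin{equation*}
\wtil Y(t)-\wtil Y(t_0)= [\wtil\lambda(t,t)-\wtil\lambda(t_0,t)] + [\wtil\lambda(t_0,t)-\wtil\lambda(t_0,t_0)],
\end{equation*}
the first bracket tends to $0$ as $t\downarrow t_0$ by uniform-in-$s$ H\"older continuity, while the second has the same right-limit behaviour as $s\mapsto\wtil\lambda(t_0,s)$ at $t_0$; so $\wtil Y$ is c\`adl\`ag. The $L^p$-bound on $\sup_t|\wtil Y(t)|$ follows from the uniform bound on $\wtil\lambda$.

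The main obstacle is the Kolmogorov step in the Banach space of c\`adl\`ag functions: one must verify that the sup-norm used in the stability estimate is indeed compatible with the classical vector-valued Kolmogorov criterion, i.e.\ that the resulting continuous-in-$t$ modification takes values in $D([0,T];\bR^d)$ and agrees, for each fixed $t$, with the c\`adl\`ag version of $\lambda(t,\cdot)$ obtained from BSDE theory. A secondary difficulty is that the driver also depends on $(z(t,r),u(t,r))$; when comparing $\lambda(t,\cdot)$ and $\lambda(t',\cdot)$ the extra Lipschitz terms in $z-z'$ and $u-u'$ are absorbed on the left-hand side of the stability estimate (as in the proof of \eqref{eq:tech_stab_BSVIE}), so only the $t$-H\"older difference of $f$ and $\Phi$ remains, which is crucial for the bound to scale as $|t-t'|^{\alpha p}$. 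Once these points are settled the c\`adl\`ag property of $Y$ and the $L^p$-estimate follow immediately.
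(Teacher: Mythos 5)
Your proposal is correct in outline, but it reaches the key H\"older estimate by a genuinely different route from the paper, so the comparison is worth making explicit. The paper also rests on the Kolmogorov continuity criterion for $t\mapsto X_t$ viewed as a process with values in $(D([0,T];\bR^d),\|\cdot\|_\infty)$, but it applies it only to the driver-free object $X_t(u)=\bE[\Phi(t)+\int_t^T f(t,s)\dd s\mid\cF_u]$ (Step 1 of its proof); the dependence of the generator on $(Z(t,s),U(t,s))$ and then on $Y(s)$ is handled by two successive Picard iterations (Steps 2 and 3), each iterate being c\`adl\`ag by the Step 1 argument with $n$-dependent H\"older constants, the c\`adl\`ag property passing to the limit because the iterates converge in $\bE[\sup_t|\cdot|^p]$ (geometric decay via Lemma \ref{lem:exp_ineq} and Doob's inequality, then a Gronwall argument for the $y$-dependence). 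You instead freeze $Y$ in the driver, so that $Y(t)=\lambda(t,t)$ with $\lambda(t,\cdot)$ solving a genuine BSDE, and obtain the uniform-in-$s$ H\"older bound in one stroke from the $L^p$ stability estimate \eqref{eq:BSDE_stability}; the Lipschitz terms in $\fd z,\fd u$ are indeed absorbed on the left-hand side. Since \eqref{eq:BSDE_stability} with the $\sup_r$ on the left is available for $p\ge 2$ (Proposition \ref{prop:BSDE_Lp_sol_exist} and \cite{krus:popi:14,krus:popi:17}), your argument is shorter; the paper's iteration re-derives the needed control by hand, stays closer to \cite{wang:zhan:07}, and produces the bound $\bE[\sup_t|Y(t)|^p]<+\infty$ as a by-product of the Gronwall step rather than from the quantitative form of the Kolmogorov criterion.

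Two points need to be settled before the argument is complete. First, \ref{H2} defines $f(t,s,\cdot)$ only on $\Delta^c(0,T)$, i.e.\ for $s\ge t$, and the H\"older hypothesis is stated only for $0\le t,t'\le s$; your $\lambda(t,s)$ for $s<t$ therefore requires an extension of the driver below the diagonal for which the bound $\varrho|t-t'|^\alpha$ survives (for instance $\tilde f(t,s,\cdot):=f(t\wedge s,s,\cdot)$ works, or one can run the stability estimate on $[t\wedge t',T]$ only, noting that just $\lambda(t,t)$, $\lambda(t',t)$ and $\lambda(t',t')$ enter the final decomposition; the paper's device $X_t(u)$ sidesteps this because $\int_t^T$ never evaluates $f$ below the diagonal). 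Second, you need the quantitative Kolmogorov statement, giving an $L^p$-integrable H\"older seminorm, since the conclusion requires $\sup_{t,s}|\wtil\lambda(t,s)|\in L^p(\Omega)$ and not merely a.s.\ continuity of $t\mapsto\wtil\lambda(t,\cdot)$; this is exactly how the paper obtains \eqref{eq:estim_cont_sup_mart}. Neither point is a genuine obstruction.
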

Let us emphasize that our conditions imply that a.s. $t\mapsto \Phi(t)$ is continuous.
The irregularity $\alpha$ of $f$ should be compensated by more integrability $p$ on the data. If $\alpha$ is close or equal to 1, the condition $p\geq 2$ is too strong but our arguments are not sufficient in the proof. \\*
\begin{proof}
From our assumptions, the solution $(Y,Z,U,M)$ belongs to $\mathfrak S^p(0,T)$ with $p\geq 2$, thus in $\mathfrak S^2(0,T)$. We follow the scheme of the proof of \cite[Theorem 2.4]{wang:zhan:07}. 

\noindent {\bf Step 1.} We consider for a fixed $t$ in $[0,T]$:
$$X_t(u)=\bE \left[ \Phi(t) + \int_t^T f(t,s) \dd s \bigg| \cF_u\right], \quad u \in [0,T].$$
From our assumption, $u\mapsto X_t(u)$ is a c\`adl\`ag $\bL^p$-martingale. For $0\leq t \leq t' \leq T$, Doob's martingale inequality implies 
\begin{eqnarray*} \nonumber
\bE \left[ \sup_{u\in [0,T]} \left| X_t(u) - X_{t'}(u)\right|^p \right] & \leq  & C \bE \left[  \left| X_t(T) - X_{t'}(T)\right|^p \right] \\
& \leq & C \bE \left[  \left| \Phi(t) - \Phi(t')\right|^p  \right] \\
 &+ &C \bE \left[  \left| \int_t^{t'} f(t,s) \dd s  \right|^p  + \left| \int_{t'}^T| f(t,s) - f(t',s) | \dd s \right|^p \right] .
\end{eqnarray*}
The H\"older's inequality leads to:
$$\bE \left[  \left| \int_t^{t'} f(t,s) \dd s  \right|^p \right] \leq |t'-t|^{p-1} \bE \left[  \left| \int_t^{T} |f(t,s)|^p \dd s  \right| \right].$$
Hence from our setting we have:
$$\bE \left[ \sup_{u\in [0,T]} \left| X_t(u) - X_{t'}(u)\right|^p \right] \leq  C_\varrho |t-t'|^{\alpha p}  . $$
Since $\alpha p > 1$, if we consider $X=(X_t, \ t \in [0,T])$ as a process with values in the Skorohod space $D([0,T] ; \bR^d)$ equipped with the uniform norm, which is a complete metric space, then we can apply the Kolmogorov continuity criterion (see \cite[IV.Corollary 1]{prot:04} or \cite[Theorem I.2.1]{revu:yor:99}): there is a continuity version of $t \in [0,T] \mapsto X_t \in D([0,T] ; \bR^d)$. In particular a.s. $t \mapsto Y(t):=X_t(t)$ is c\`adl\`ag:
\begin{eqnarray*}
|Y(t)-Y(t')| & \leq & |X_t(t) - X_t(t')| + |X_t(t')-X_{t'}(t')| \\
& \leq & |X_t(t) - X_t(t')|  + \sup_{s\in [0,T]} |X_t(s)-X_{t'}(s)|.
\end{eqnarray*}
Note that 
\begin{eqnarray*}
X_t(u) &=& X_t(0) + \int_0^u Z(t,s) \dd W_s + \int_0^u \int_{\bR^m} U(t,s,x)\tpi(\dd s,\dd x) + M(t,u) \\
&=& X_t(0) + \mart(t,u).
\end{eqnarray*}
Using the BDG inequality ($p \geq 2$), we have
\begin{eqnarray*}
\bE \left[ \left( \left\langle \mart(t,\cdot)-\mart(t',\cdot)\right\rangle_{0,T} \right)^{p/2} \right] &\leq& C \bE \left[ \left| \mart(t,T)-\mart(t',T) \right|^{p} \right] \\
& \leq & C \left( \bE \left[ \left| X_t(0)-X_{t'}(0) \right|^{p} \right] + \bE \left[ \left| X_t(T)-X_{t'}(T) \right|^{p} \right]  \right) \\
& \leq & C |t-t'|^{\alpha p}.
\end{eqnarray*}
And
\begin{eqnarray*}
\bE \left[ \left( \left\langle \mart(0,\cdot)\right\rangle_{0,T} \right)^{p/2} \right] &\leq& C \bE \left[ \left| \mart(0,T) \right|^{p} \right] \leq  C \bE \left[ \left| X_0(T)-X_{0}(0) \right|^{p} \right]  \\
& \leq & C \bE \left( |\Phi(0)|^{p} + \left( \int_0^T |f(0,s)| \dd s\right)^{p} \right)  \leq C.
\end{eqnarray*}
Let us recall that the space $\cH^2$ is a Banach space (see \cite{dell:meye:80}, Section VII.3 (98.1)-(98.2) or \cite{prot:04}, Section V.2). If we consider $t \mapsto \mart(t,\cdot)$, this map defined on $[0,T]$ takes values in the space $\cH^2$. We can apply the Kolmogorov continuity criterion (see again \cite[Theorem I.2.1]{revu:yor:99}) in order to have:
\begin{equation} \label{eq:estim_cont_sup_mart}
\bE \left[ \left( \sup_{t\in [0,T]}  \left\langle \mart(t,\cdot)\right\rangle_{0,T} \right)^{p/2} \right] \leq C.
\end{equation}

\noindent {\bf Step 2.} Assume that $f$ depends only on $z$ and $\psi$. Let us define $Z_0(t,s)\equiv 0$, $U_0(t,s)\equiv 0$ and recursively for $n\geq 1$:
\begin{eqnarray*} \nonumber
Y_n(t) &= &\Phi(t) + \int_t^{T} f(t,s, Z_{n-1}(t,s) ,U_{n-1}(t,s)) \dd s - \int_t^{T} Z_{n}(t,s) \dd W_s \\ 
&& \qquad - \int_t^{T}  \int_{\bR^m}  U_{n}(t,s,x) \tpi(\dd s,\dd x) -\int_t^{T} \dd  M_{n}(t,s). 
\end{eqnarray*}
We can argue exactly as in \cite{wang:zhan:07} in order to prove that for any $n\geq 1$:
\begin{eqnarray*}
&&\bE \left[ \left( \left\langle \mart_n(t,\cdot)-\mart_n(t',\cdot)\right\rangle_{0,T} \right)^{p/2} \right] \leq C_n |t-t'|^{\alpha p}, \\
&&\bE \left[  \left(\sup_{t\in [0,T]} \left\langle \mart_n(t,\cdot)\right\rangle_{0,T} \right)^{p/2} \right] \leq C_n,\\
&&t \mapsto Y_n(t) \ \mbox{is c\`adl\`ag.}
\end{eqnarray*}
Let us now prove the convergence of $Y_n$. Using Lemma \ref{lem:exp_ineq}, Conditions \ref{H2} and \ref{H3star}, we obtain 
\begin{eqnarray*}
&&e^{\beta t} |Y_{n+1}(t)-Y_n(t)|^2 +\bE^{\cF_t} \int_t^T e^{\beta s}  |Z_{n+1}(t,s)-Z_n(t,s)|^2  \dd s\\
&&\qquad +\bE^{\cF_t} \int_t^T e^{\beta s}  \|U_{n+1}(t,s,\cdot)-U_n(t,s,\cdot)\|^2_{\bL^2_\pi}  \dd s\\
&&\qquad +\bE^{\cF_t} \int_t^T e^{\beta s}   \dd \langle M_{n+1}(t,\cdot)-M_n(t,\cdot)\rangle_{t,s} \\
&&\quad  \leq \frac{1}{\beta^2}  \bE^{\cF_t} \int_t^T  e^{\beta s} |f(t,s,Z_n(t,s),U_n(t,s)) -f(t,s,Z_{n-1}(t,s),U_{n-1}(t,s)) |^2 \dd s \\
&&\quad \leq \frac{2 K^2}{\beta^{2}}  \bE^{\cF_t} \int_t^T  e^{\beta s}\left(  |Z_n(t,s)-Z_{n-1}(t,s)|^2 +\|U_n(t,s,\cdot)-U_{n-1}(t,s,\cdot)\|^2_{\bL^2_\pi} \right) \dd s. 
\end{eqnarray*}
Using the Inequality \eqref{eq:BJ_ineq_p_=_2}, taking $\beta$ large enough (greater than $4 (K^2 K_2^2)$, where $K$ comes from \ref{H3star} and $K_2$ from \eqref{eq:BJ_ineq_p_=_2}) and iterating the previous inequality yield to:
\begin{eqnarray*}
&&e^{\beta t} |Y_{n+1}(t)-Y_n(t)|^2 +\bE^{\cF_t} \int_t^T e^{\beta s}  |Z_{n+1}(t,s)-Z_n(t,s)|^2  \dd s\\
&&\qquad +\bE^{\cF_t} \int_t^T e^{\beta s}  \|U_{n+1}(t,s,\cdot)-U_n(t,s,\cdot)\|^2_{\bL^2_\mu}  \dd s\\
&&\qquad +\bE^{\cF_t} \int_t^T e^{\beta s}   \dd \langle M_{n+1}(t,\cdot)-M_n(t,\cdot)\rangle_{t,s} \\
&&\quad \leq \frac{1}{2^n}  \bE^{\cF_t} \int_t^T  e^{\beta s}\left(  |Z_1(t,s)-Z_{0}(t,s)|^2 +\|U_1(t,s,\cdot)-U_{0}(t,s,\cdot)\|^2_{\bL^2_\mu} \right) \dd s. 
\end{eqnarray*}
First taking the expectation and integrating w.r.t $t \in [0,T]$, we deduce the convergence of $(Z_n,U_n,M_n)$ in $\cH^2$. 
Then 
\begin{eqnarray*}
&&\bE \left[ \sup_{t\in [0,T]} |Y_{n+1}(t)-Y_n(t)|^p \right]   \\
&&\quad \leq \frac{1}{2^{np /2}} \bE \left[ \sup_{t\in [0,T]} \left(  \bE^{\cF_t} \int_t^T  e^{\beta s}\left(  |Z_1(t,s)|^2 +\|U_1(t,s,\cdot)\|^2_{\bL^2_\mu}  \right)\dd s \right)^{p/2} \right] \\
&&\quad \leq \frac{1}{2^{np /2}} \bE \left[ \sup_{t\in [0,T]} \left(  \bE^{\cF_t} \xi \right)^{p/2} \right],
\end{eqnarray*}
where
$$\xi = \sup_{t\in [0,T]} \int_t^T  e^{\beta s}\left(  |Z_1(t,s)|^2 +\|U_1(t,s,\cdot)\|^2_{\bL^2_\mu} \right)\dd s.$$
From \eqref{eq:estim_cont_sup_mart}, $\bE (\xi^{p/2}) < +\infty$ and $t\mapsto \bE^{\cF_t} (\xi)$ is a martingale. By Doob's maximal inequality 
\begin{eqnarray*}
\bE \left[ \sup_{t\in [0,T]} |Y_{n+1}(t)-Y_n(t)|^p \right] & \leq& \frac{1}{2^{np /2}} \bE \left[ \sup_{t\in [0,T]} \left(  \bE^{\cF_t} \xi \right)^{p/2} \right] \\
&\leq &  C\frac{e^{\beta p T}}{2^{np /2}} \bE \left[ \left(   \xi \right)^{p/2} \right]  \leq \frac{C}{2^{np /2}},
\end{eqnarray*}
where the constant $C$ does not depend on $n$. Thus there exists a c\`adl\`ag adapted process $Y$ such that 
$$\lim_{n\to +\infty} \bE \left[ \sup_{t\in [0,T]} |Y_{n+1}(t)-Y(t)|^p \right] = 0.$$
And we deduce immediately that the limit is the unique solution in $\mathfrak S^2(0,T)$ of the BSVIE
\begin{eqnarray*} \nonumber
Y(t) &= &\Phi(t) + \int_t^{T} f(t,s, Z(t,s) ,U(t,s)) \dd s - \int_t^{T} Z(t,s) \dd W_s \\ 
&& \qquad - \int_t^{T}  \int_{\bR^m}  U(t,s,x) \tpi(\dd s,\dd x) -\int_t^{T} \dd  M(t,s). 
\end{eqnarray*}

\noindent {\bf Step 3.} Assume that $f$ depends now also on $y$. Let us define $Y_0(t)\equiv 0$ and for $n\geq 1$:
\begin{eqnarray*} \nonumber
Y_n(t) &= &\Phi(t) + \int_t^{T} f(t,s, Y_{n-1}(s) ,Z_n(t,s),U_{n}(t,s)) \dd s - \int_t^{T} Z_{n}(t,s) \dd W_s \\ 
&& \qquad - \int_t^{T} \int_{\bR^m}  U_n(t,s,x) \tpi(\dd s,\dd x) -\int_t^{T} \dd M_{n}(t,s). 
\end{eqnarray*}
We know that $t\mapsto Y_n(t)$ is c\`adl\`ag. Using again Lemma \ref{lem:exp_ineq}, we obtain:
\begin{eqnarray*}
&&e^{\beta t} |Y_n(t)|^2 + \bE^{\cF_t} \int_t^T e^{\beta s} |Z_n(t,s)|^2  \dd s+ \bE^{\cF_t} \int_t^T e^{\beta s}  \|U_n(t,s,\cdot)\|^2_{\bL^2_\pi}  \dd s  \\
&& \quad + \bE^{\cF_t} \int_t^T e^{\beta s}  \dd  \langle M_n(t,\cdot)\rangle_{t,s}  \leq e^{\beta T} \bE^{\cF_t} |\Phi(t)|^2 \\
& & \qquad + \frac{4K^2}{\beta}  \bE^{\cF_t} \int_t^T  e^{\beta s} \left( |f^0(t,s)|^2 + |Y_{n-1}(s)|^2 + |Z_n(t,s)|^2 +  \|U_n(t,s,\cdot)\|^2_{\bL^2_\mu}  \right)\dd s.
\end{eqnarray*}
Thus for $\beta = 8K^2 K_2^2$ (again $K_2$ coming from \eqref{eq:BJ_ineq_p_=_2}), we have:
\begin{eqnarray*}
&&e^{\beta t} |Y_n(t)|^2 +\frac{1}{2} \bE^{\cF_t} \int_t^T e^{\beta s} \left( |Z_n(t,s)|^2  +  \|U_n(t,s,\cdot)\|^2_{\bL^2_\mu} \right) \dd s  \\
&& \quad +\frac{1}{2} \bE^{\cF_t} \int_t^T e^{\beta s}  \dd \langle M_n(t,\cdot)\rangle_{t,s}  \\
&&\qquad  \leq e^{\beta T} \bE^{\cF_t} |\Phi(t)|^2 + \frac{1}{2}  \bE^{\cF_t} \int_t^T  e^{\beta s} \left( |f^0(t,s)|^2 + |Y_{n-1}(s)|^2  \right) \dd s.
\end{eqnarray*}
Set
$$h_n(t) = \sup_{1\leq k\leq n} \bE \left[ \sup_{s\in [t,T]} \left| Y_k(s)\right|^p \right].$$
Then
\begin{eqnarray*}
h_n(t) &\leq &  \sup_{1\leq k\leq n} \bE \left[ \left( \sup_{s\in [t,T]} e^{\beta s} \left| Y_k(s)\right|^2\right)^{p /2} \right] \\
& \leq & C  \bE \left[ \left( \sup_{s\in [t,T]} \bE^{\cF_s} |\Phi(s)|^2\right)^{p /2} \right] + C  \bE \left[ \left( \sup_{s\in [t,T]}  \bE^{\cF_s} \int_s^T  e^{\beta u}  |f^0(s,u)|^2 \dd u \right)^{p /2} \right]\\
&+&   C \sup_{1\leq k\leq n} \bE \left[ \left( \sup_{s\in [t,T]}  \bE^{\cF_s} \int_s^T  e^{\beta u} |Y_{k-1}(u)|^2   \dd u \right)^{p /2} \right] \\
&\leq & C  \bE \left[ \left( \sup_{s\in [t,T]} \bE^{\cF_s} \sup_{r\in [0,T]}|\Phi(r)|^2\right)^{p /2} \right] \\
&+& C  \bE \left[ \left( \sup_{s\in [t,T]}  \bE^{\cF_s}\sup_{r\in[t,T]} \int_r^T  e^{\beta u}  |f^0(r,u)|^2 \dd u \right)^{p /2} \right]\\
&+&   C \sup_{1\leq k\leq n} \bE \left[ \left( \sup_{s\in [t,T]}  \bE^{\cF_s} \int_t^T  e^{\beta u} |Y_{k-1}(u)|^2   \dd u \right)^{p /2} \right].
\end{eqnarray*}
By Doob's maximal inequality
\begin{eqnarray*}
h_n(t) &\leq &  C  \bE \left( \sup_{s\in [t,T]} |\Phi(s)|^p \right)  + C  \bE \left[ \left( \sup_{s\in [t,T]}  \int_s^T |f^0(s,u)|^2 \dd u \right)^{p /2} \right]\\
&+&   C \sup_{1\leq k\leq n} \bE \left[ \left( \int_t^T   |Y_{k-1}(u)|^2  \dd u \right)^{p /2} \right].
\end{eqnarray*}
Since $p \geq 2$, by Jensen's inequality, 
$$h_n(t) \leq   C+ C  \int_t^T   |h_{n}(u)| \dd u .$$
Gronwall's inequality leads to 
$$ \sup_{1\leq k\leq n} \bE \left[ \sup_{s\in [0,T]} \left| Y_k(s)\right|^p \right] \leq C$$
for any $n$, that is
$$ \sup_{ n \in \bN} \bE \left[ \sup_{s\in [0,T]} \left| Y_k(s)\right|^p \right] \leq C.$$

By Lemma \ref{lem:exp_ineq}, we also have for almost all $t\in [0,T]$
\begin{eqnarray*}
e^{\beta t} |Y_n(t)-Y_m(t) |^2& \leq &C  \bE^{\cF_t} \int_t^T  e^{\beta s}   |Y_{n-1}(s)-Y_{m-1}(s)|^2 \dd s.
\end{eqnarray*}
Define
$$h(t) = \limsup_{m,n \to +\infty} \bE \left[ \sup_{s\in [t,T]}  |Y_n(s)-Y_m(s) |^2 \right].$$
Arguing as above, with Fatou's lemma and the preceding uniform (in $n$ and $s$) estimate, we have
$$h(t) \leq C \int_t^T h(s) \dd s \Longrightarrow h(t) = 0.$$
Hence there is a c\`adl\`ag adapted process $Y$ such that 
$$\lim_{n \to +\infty} \bE \left[ \sup_{s\in [0,T]}  |Y_n(s)-Y(s) |^2 \right] = 0.$$
And from the above estimate, we have $\displaystyle \bE \left[ \sup_{s\in [0,T]} \left| Y(s)\right|^p \right] \leq \infty.$
This achieves the proof of this theorem.
\end{proof}


\subsection{Comparison principle in the It\^o setting}

The results of Propositions \ref{prop:comp_BSVIE_1} and \ref{prop:comp_BSVIE_2} remain true here. The condition \eqref{eq:cond_comp_BSVIE_5} holds in this case if the free terms $\Phi^i$ belong to $C([0,T],\bL^2(\Omega))$, as in \cite{wang:yong:15}.

\section{Extension for Type-II BSVIE.}

The aim of this section is the study of the Type-II BSVIE \eqref{eq:general_BSVIE}
\begin{eqnarray*} \nonumber
Y(t) &= &\Phi(t) + \int_t^{T} f(t,s, Y(s), Z(t,s) ,Z(s,t),U(t,s),U(s,t)) \dd B_s  \\ 
&-& \int_t^{T} Z(t,s) \dd X^\circ_s -  \int_t^{T}  \int_{\bR^m} U(t,s,x) \tpi^\natural(\dd s,\dd x) -\int_t^{T} \dd M(t,s)\ ,
\end{eqnarray*}
or in the It\^o setting, the BSVIE \eqref{eq:Ito_general_BSVIE}
\begin{eqnarray*} \nonumber
Y(t) &= &\Phi(t) + \int_t^{T} f(t,s, Y(s), Z(t,s) ,Z(s,t),U(t,s),U(s,t)) \dd s  \\
&-& \int_t^{T} Z(t,s) \dd W_s -  \int_t^{T}  \int_{\bR^m} U(t,s,x) \tpi(\dd s,\dd x) -\int_t^{T} \dd M(t,s).
\end{eqnarray*}


We adapt the definitions of the section \ref{sect:general_Type_I_BSVIE}. Here $B$ and $\alpha$ (see Hypothesis \ref{H3}) are supposed to be deterministic. For the free term $\Phi$ and the first component $Y$ of the solution, we keep the spaces $\bL^2_{\beta,\cF_T}(0,T)$ and $\bL^2_{\beta,\bF}(0,T)$. Here we need to control the martingale terms not only on $\Delta^c(0,T)$, but on the whole set $[0,T]^2$. Thereby we define 
$$\cH_{\delta\leq \gamma}^2(R,T)$$
the set of processes $M(\cdot,\cdot)$ such that for $\dd B$-a.e. $t\in [R,T]$, $M(t,\cdot)$ belongs to $\cH^2$ and 
$$ \bE \left[ \int_R^T e^{(\gamma-\delta) A_t} \int_R^T e^{\delta A_s} \dd  \trace \langle M(t,\cdot)\rangle_s    \dd A_t \right] <+\infty.$$
Note that this definition implies that $M(t,s)$ is $\cF_s$-measurable for any $s \in [R,T]$. 
In the particular cases where $M(t,\cdot) = \int_S^\cdot Z(t,s) \dd X^\circ_s$ (resp. $M(t,\cdot) = U(t,\cdot) \star \tpi^\natural$), then $M \in \cH_{\gamma,\delta}^2(R,T)$
if
$$  \bE \left[ \int_R^T e^{(\gamma-\delta) A_t} \int_R^T e^{\delta A_s} \left\| b_s Z(t,s)\right\|^2 \dd B_s \dd A_t  \right] < +\infty,$$
(resp. 
$$\bE \left[ \int_R^T e^{(\gamma-\delta) A_t} \int_R^T e^{\delta A_s} \vertiii{U(t,\cdot)}_s^2 \dd B_s \dd A_t  \right] < +\infty.)$$
In this case $Z \in \bH^{2,\circ}_{\delta\leq \gamma}(R,T)$ (resp. $U \in \bH^{2,\natural}_{\delta\leq \gamma}(R,T)$). Finally we define
\begin{eqnarray*}
\mathfrak{S}^2_{\delta\leq \gamma}(R,T)&=& \bL^2_{\gamma,\bF}(R,T) \times\bH^{2,\circ}_{\delta\leq \gamma}(R,T) \times \bH^{2,\circ}_{\delta\leq \gamma}(R,T) \times \cH^{2,\perp}_{\delta\leq \gamma}(R,T)
\end{eqnarray*}
with the naturally induced norm. If $\delta=\delta(\gamma)$ is a known function of $\gamma$, we denote $\mathfrak{S}^2_{\delta\leq \gamma}(R,T)$  by $\mathfrak{S}^2_{\gamma}(R,T)$.

\subsection{Type-II BSVIE for deterministic $A$.}

Therefore we restrict ourselves to the case where $B$ is deterministic. Note that from \cite[Theorem II.4.15]{jaco:shir:03}, the semimartingale $\overline{X}$ has deterministic characteristics (that is $B$ and the two others components) if and only if it is a process with independent increments. 

The conditions \ref{H2} and \ref{H3} are modified as follows.
\begin{enumerate}[label=\textbf{(H\arabic*')}]
\setcounter{enumi}{1}
\item \label{H2prime} The generator $f$ is defined on $\Omega \times \Delta^c(0,T) \times \bR^{d} \times (\bR^{d\times m})^2 \times (\mathfrak H)^2 \to \bR^d$ and we assume that for any fixed $(t,y,z,\zeta,u,\nu)$ the process $f(t,\cdot,y,z,\zeta,u,\nu)$ is progressively measurable. Moreover there exist
$$\varpi:(\Omega\times \Delta^c(0,T), \cP) \to \bR_+, \qquad \vartheta = (\theta^\circ,\theta^\natural) : (\Omega\times \Delta^c(0,T), \cP) \to (\bR_+)^2$$
such that for $\dd B \otimes \dd B \otimes \dd \bP$-a.e. $(t,s,\omega)$
\begin{eqnarray*}
&& |f(\omega,t,s,y,z,\zeta,u_s(\omega;\cdot),\nu_s(\omega;\cdot)) - f(\omega,t,s,y',z',\zeta',u'_s(\omega;\cdot),\nu'_s(\omega;\cdot)) |^2 \\
&&\quad \leq \varpi_{t,s}(\omega)|y-y'|^2 + \theta^\circ_{t,s}(\omega) \left( \|b_s(\omega)(z-z')\|^2 + \|b_t(\omega)(\zeta-\zeta')\|^2\right) \\
&&\qquad + \theta^\natural_{t,s}(\omega) \left[ (\vertiii{u_s(\omega;\cdot)-u_s(\omega;\cdot)}_s(\omega))^2 + (\vertiii{ \nu_s(\omega;) - \nu'_s(\omega;)}_{t}(\omega))^2 \right].
\end{eqnarray*}

\item \label{H3prime} There exists a deterministic and non-decreasing\footnote{If not, we can replace $\alpha$ by $\sup_{u\in [0,s]} \alpha_u$. This increases the size of $A$ and requires more integrability in the assumptions \ref{H1} and \ref{H4}.} $\alpha:[0,T]\to \bR_+$ such that a.s. for any $s\in [0,T]$ and $t \in [0,s]$,  
$$\alpha^2_s \geq \max(\sqrt{\varpi_t(\omega)},\theta^\circ_{t,s}(\omega),\theta^\natural_{t,s}(\omega))>0.$$

The rest of the condition \ref{H3} (or \ref{F3}) remains unchanged. 
\end{enumerate}
In particular \ref{H3star} implies \ref{H3prime}. If \ref{H3prime} holds, then instead of \ref{H5}, we have that $\dd B$-almost every $t \in [0,T]$ belongs to the set $\mathfrak T^{\Phi,f}_{\delta}$ defined by \eqref{eq:subset_times_square_int}. This assumption \ref{H3prime} is true for example for the setting of Example \ref{exp:brownian_poisson_setting} if the generator has bounded stochastic Lipschitz coefficients. 

\begin{Rem}
Let us emphasize that our assumption {\rm \ref{H2prime}} implies implicitly that the process $\nu_s$ is in fact $\cF_t$-measurable.
\end{Rem}

\begin{Rem}[On Condition \ref{H3prime}]
Denote $L(t,s)^2 =  \max(\theta^\circ(\omega,t,s),\theta^\natural(\omega,t,s))$. 
Assuming that $\alpha$ does not depend on $t$ implies that the $\sup_{t\in [0,s]} L(t,s)^2 \leq \alpha_s^2$ is integrable. If we compare with the conditions imposed in \cite{wang:12,yong:08}, this assumption is stronger:
$$\sup_{t\in [0,T]} \int_t^T L(t,s)^2 \dd B_s \leq  \int_t^T \sup_{t\in [0,T]} L(t,s)^2 \dd B_s \leq \int_t^T \alpha_s^2 \dd B_s.$$
\end{Rem}

Let us study \ref{H1} and \ref{H4} under the preceding conditions. We can rewrite \ref{H1} as follows:
$$A_T < +\infty, \quad \mbox{and} \quad \bE \int_0^T e^{\beta A_t} |\Phi(t)|^2 \dd B_t < +\infty.$$
Moreover if $A_T < +\infty$, the estimate 
$$\bE \int_0^T \left( |\Phi(t)|^2 + \int_t^T \dfrac{|f^0(t,s)|^2}{\alpha_s^2} \dd B_s \right) \dd B_t < +\infty$$
leads to \ref{H1} and \ref{H4}. Let us recall again that $\delta=\delta^*(\beta)$ is defined in Lemma \ref{lem:infimum}. 
\begin{Thm} \label{thm:type_II_general_BSVIE}
If {\rm \ref{H1}}, {\rm \ref{H2prime}}, {\rm \ref{H3prime}} and {\rm \ref{H4}} hold and if the constants
$\kappa^\mathfrak f(\delta)$, $M^{\mathfrak f}(\beta)$ and $ \Sigma^\mathfrak f(\beta)$ defined by \eqref{eq:cond_exist_beta_poss}, in Lemma \ref{lem:infimum} and  by \eqref{eq:op_constants_beta_f} verify
\begin{equation} \label{eq:cond_beta_Type_II_BSVIE}
\kappa^\mathfrak f(\delta) < \dfrac{1}{2},\qquad M^{\mathfrak f}(\beta) < \dfrac{1}{2}, \qquad \left( 16 + \dfrac{e^{(\beta-\delta) \mathfrak f}}{\beta-\delta}\right) \Sigma^\mathfrak f(\beta) < 1,
\end{equation}
then the Type-II BSVIE \eqref{eq:general_BSVIE} has a unique adapted M-solution $(Y,Z,U,M)$ in $ \mathfrak{S}^2_{\beta}(0,T)$. 
\end{Thm}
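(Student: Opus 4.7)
The plan is a fixed-point argument in $\mathfrak{S}^2_\beta(0,T)$, parallel in spirit to the proof of Theorem \ref{thm:type_I_general_BSVIE} but now incorporating the M-solution constraint. Given $(y,\zeta,\nu,m) \in \mathfrak{S}^2_\beta(0,T)$, I freeze the off-diagonal arguments and define
$$h(t,s,z,u) := f(t,s,y(s),z,\zeta(s,t),u,\nu(s,t)).$$
Hypotheses \ref{H2prime}--\ref{H3prime} ensure that $h$ inherits \ref{H2}--\ref{H3}, and the required integrability of $h^0 = h(\cdot,\cdot,0,\mathbf{0})$ follows from \ref{H4} together with Lipschitz bounds on the $(y,\zeta,\nu)$ contributions (estimated via the key inequality below). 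Proposition \ref{prop:type_I_BSVIE_without_Y} (applicable because $\kappa^\mathfrak{f}(\delta) < 1/2$ and $M^\mathfrak{f}(\beta) < 1/2$) then produces a unique $(Y,Z,U,M) \in \mathfrak{S}^2_\beta(\Delta^c(0,T))$ solving the associated Type-I BSVIE. Extending $(Z,U,M)$ to $\Delta(0,T)$ via orthogonal decomposition of $Y(t) - \bE[Y(t)\,|\,\cF_0]$ on $[0,t]$ (\cite[Corollary 2.7]{papa:poss:sapl:18}) yields a well-defined map $\Theta : \mathfrak{S}^2_\beta(0,T) \to \mathfrak{S}^2_\beta(0,T)$, with $(Y,Z,U,M)$ automatically satisfying the M-solution identity \eqref{eq:M_sol_def} on $[0,t]$.

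The crucial observation---which is precisely where the deterministic structure of $A$ is used---is that the $\Delta(0,T)$-part of $(Z,U,M)$ can be bounded by the $\bL^2_{\beta,\bF}(0,T)$-norm of $Y$. By orthogonality and Lemma \ref{lem:equiv_norm_poss},
$$\bE\left[\int_0^u \|b_v Z(u,v)\|^2 \dd B_v + \int_0^u \vertiii{U(u,v)}_v^2 \dd B_v + \trace\langle M(u,\cdot)\rangle_u\right] \leq \bE|Y(u)|^2.$$
Since $A$ is deterministic and non-decreasing, for $v\leq u$ the weight $e^{(\beta-\delta)A_v} \leq e^{(\beta-\delta)A_u}$ is a pointwise deterministic bound; a Fubini swap then gives
$$\bE\int_0^T e^{(\beta-\delta)A_v}\int_v^T e^{\delta A_u}\alpha_u^2 \|b_v Z(u,v)\|^2 \dd B_u \dd B_v \leq \bE\int_0^T e^{\beta A_u}|Y(u)|^2 \dd A_u,$$
with analogous bounds for the $U$- and $M$-components. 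This manipulation is impossible for random $A$ because $e^{(\beta-\delta)A_v}$ is not $\cF_0$-measurable, which is the obstruction discussed in the introduction.

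To show $\Theta$ is a contraction, I apply the stability statement from Theorem \ref{thm:type_I_general_BSVIE} to the difference $(\fd Y,\fd Z,\fd U,\fd M) = \Theta(y,\zeta,\nu,m) - \Theta(\bar y,\bar\zeta,\bar\nu,\bar m)$. Hypothesis \ref{H2prime} controls the driver difference by $|\fd y(s)|^2$ together with the $(s,t)$-evaluated quantities $\|b_t \fd\zeta(s,t)\|^2$ and $\vertiii{\fd\nu(s,t)}_t^2$; swapping $(t,s) \in \Delta^c$ with $(u,v)=(s,t) \in \Delta$ and applying the key estimate above to $(y-\bar y, \zeta-\bar\zeta, \nu-\bar\nu)$ expresses these contributions in terms of $\bE\int_0^T e^{\beta A_u}|\fd y(u)|^2 \dd A_u$. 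A careful bookkeeping of the constants produced by $\Sigma^\mathfrak{f}(\beta)$ (stability from Lemma \ref{lem:stab_BSVIE_h_poss}) and by the Gronwall factor $e^{(\beta-\delta)\mathfrak{f}}/(\beta-\delta)$ yields the contraction coefficient $(16 + e^{(\beta-\delta)\mathfrak{f}}/(\beta-\delta))\Sigma^\mathfrak{f}(\beta)$, and the assumption \eqref{eq:cond_beta_Type_II_BSVIE} makes this strictly less than $1$. Banach's fixed-point theorem then delivers the unique M-solution. The main obstacle is the precise constant tracking: several weighted Fubini swaps and conditional expectations interact, and only the deterministic structure of $A$ allows the exponential weights to commute freely with the martingale-representation bound relating the $\Delta$-part of $(Z,U,M)$ to the $\bL^2_{\beta,\bF}$-norm of $Y$.
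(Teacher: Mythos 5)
Your proposal is correct and follows essentially the same route as the paper: freeze the off-diagonal arguments $(y(s),\zeta(s,t),\nu(s,t))$, solve the resulting Type-I BSVIE via the $\kappa^\mathfrak f(\delta)<1/2$ and $M^{\mathfrak f}(\beta)<1/2$ machinery, extend $(Z,U,M)$ to $\Delta(0,T)$ by the martingale representation of $Y(t)-\bE[Y(t)\,|\,\cF_0]$, control that $\Delta$-part by the weighted $\bL^2$-norm of $Y$ using that $A$ (and $\alpha$, whose monotonicity from \ref{H3prime} is what lets the weights commute across the Fubini swap) is deterministic, and close with a Banach fixed point under \eqref{eq:cond_beta_Type_II_BSVIE}. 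The only cosmetic difference is that your pointwise bound $e^{(\beta-\delta)A_v}\leq e^{(\beta-\delta)A_u}$ yields the key estimate with constant $1$ where the paper's integration-by-parts trick gives $4$, which only improves the contraction coefficient.
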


The condition \eqref{eq:cond_beta_Type_II_BSVIE} is much stronger than the assumption \eqref{eq:cond_beta_Type_I_BSVIE}. Indeed for large values of $\beta$, \eqref{eq:cond_beta_Type_II_BSVIE} holds if 
$$e \mathfrak f < \dfrac{1}{3 (51 + \sqrt{2603})}\approx 0,0033,$$
whereas $ e\mathfrak f < 3 (\sqrt{11}-3)/18 \approx 0,052$ is sufficient for $\eqref{eq:cond_beta_Type_I_BSVIE}$ (see Remark \eqref{rem:large_values_beta}). In other words $A$ may be discontinuous, but with very small jumps. 


To give an idea of the value of $\beta$, let us assume that $\mathfrak f = 0$, that is $A$ is continuous. Then $\delta = \delta^*(\beta)= \dfrac{\sqrt{11}}{3+\sqrt{11}}\beta $. The condition \eqref{eq:cond_exist_beta_poss} becomes
$$\frac{9}{\delta} + \dfrac{4(2+9\delta)}{\delta^2} < \dfrac{1}{2}.$$
And 
$$M^0(\beta) = \frac{(\sqrt{11}+3)^2}{\beta}\approx \frac{40}{\beta},\quad \widetilde \Sigma^0(\beta) = \dfrac{2(\sqrt{11}+3)^3}{3\beta (\beta - 2 (\sqrt{11}+3)^2)} .$$
Some tedious computations show that $\beta > 174$ is sufficient for our condition \eqref{eq:cond_beta_Type_I_BSVIE}.
Nonetheless
$$ \left( 16 + \dfrac{1}{\beta-\delta}\right) \Sigma^0(\beta)  =\left( 16 + \dfrac{\sqrt{11}+3}{3\beta}\right) \dfrac{2(\sqrt{11}+3)^2}{\beta - 2 (\sqrt{11}+3)^2} .$$
This leads to $\beta >1357$ in order to satisfy \eqref{eq:cond_beta_Type_II_BSVIE}.
%

\subsubsection*{Proof of Theorem \ref{thm:type_II_general_BSVIE}}

From now on, $A$ is deterministic (Hypothesis \ref{H3prime}). If for some $\gamma \in \bR$, $\bE \int_S^T e^{\gamma A_s}|Y(s)|^2 \alpha_s^2 \dd B_s< +\infty$, then for $\dd B$-almost every $t \in [S,T]$, we can define $\mart$ by \eqref{eq:def_mart_Delta_S_T}:
$$Y(t)  = \bE \left[ Y(t)  \bigg| \cF_S \right]  + \int_S^t \dd \mart(t,s).$$
%
Since
\begin{eqnarray*}
\int_S^t e^{\delta A_r} \dd \trace \langle \mart(t,\cdot)\rangle_r & \leq & \delta \int_S^t e^{\delta A_u} \int_u^t \dd \trace \langle \mart(t,\cdot)\rangle_r \dd A_u \\
&+& e^{\delta A_S} \left( \trace \langle \mart(t,\cdot)\rangle_t - \trace \langle \mart(t,\cdot)\rangle_S \right) ,
\end{eqnarray*}
we have
\begin{eqnarray*}
&& \bE \left[ \int_S^T e^{(\gamma-\delta)A_t}  \int_S^t e^{\delta A_u}\int_u^t \dd \trace \langle \mart(t,\cdot)\rangle_s \dd A_u  \dd A_t \right] \\
&& \quad =  \bE \left[  \int_S^T e^{(\gamma-\delta)A_t}   \int_S^t  e^{\delta A_u} \bE \left[ (Y(t) - \bE[Y(t)|\cF_u])^2 |\cF_u\right]  \dd A_u\dd A_t\right] \\
&&\quad \leq 2 \bE \left[  \int_S^T e^{(\gamma-\delta)A_t}   \int_S^t  e^{\delta A_u} Y(t)^2   \dd A_u\dd A_t\right] \\
&&\quad \leq \frac{2}{\delta}  \bE \left[  \int_S^T e^{\gamma A_t}  Y(t)^2  \dd A_t\right] .
\end{eqnarray*}
Let us emphasize that the first equality holds only because $A_t$ is $\cF_u$-measurable even if $t > u$. Similarly since $A_S \leq A_t$
\begin{eqnarray*}
&& \bE \left[ \int_S^T e^{(\gamma-\delta)A_t} e^{\delta A_S} \left( \trace \langle \mart(t,\cdot)\rangle_t - \trace \langle \mart(t,\cdot)\rangle_S \right) \dd A_t  \right] \\
&& \quad =  \bE \left[  \int_S^T e^{(\gamma-\delta)A_t}    e^{\delta A_S} \bE \left[ (Y(t) - \bE[Y(t)|\cF_S])^2 |\cF_S\right] \dd A_t\right] \\
&&\quad \leq 2 \bE \left[  \int_S^T e^{(\gamma-\delta)A_t}e^{\delta A_S} Y(t)^2 \dd A_t\right]  \leq 2 \bE \left[  \int_S^T e^{\gamma A_t}  Y(t)^2  \dd A_t\right] .
\end{eqnarray*}
Hence we obtain that 
\begin{equation} \label{eq:estim_mart_Delta_S_T}
 \bE \left[ \int_S^T e^{(\gamma-\delta)A_t}\int_S^t e^{\delta A_r} \dd \trace \langle \mart(t,\cdot)\rangle_r   \dd A_t \right]  \leq 4 \bE \left[  \int_S^T e^{\gamma A_t}  Y(t)^2  \dd A_t\right].
\end{equation}

\bigskip 
Let us consider the space $\widehat{\mathfrak{S}}^2_{\delta\leq \gamma}(R,T)$ be the space of all $(y,z,u,m)$ in $\mathfrak{S}^2_{\delta\leq \gamma}(R,T)$ such that for $\dd B$-a.e. $t \in [R,T]$ a.s. 
\begin{eqnarray*}
\int_R^t y(r) \dd B_r & = & \bE \left[ \int_R^t y(r)\dd B_r |\cF_R \right] \\
& + & \int_R^t z(t,r)\dd X^\circ_r + \int_R^t \int_{\bR^m} u(t,r,x)\tpi^\natural(\dd r,\dd x) + \int_R^t \dd m(t,r) .
\end{eqnarray*}
From \eqref{eq:estim_mart_Delta_S_T}, we have for any $\delta \in (0,\beta]$ 
\begin{eqnarray*}
&& \bE \left[ \int_R^T e^{(\beta-\delta) A_t}   \int_R^t e^{\delta A_r} |b_r z(t,r)|^2 \dd B_r \dd A_t + \int_R^T e^{(\beta-\delta) A_t}   \int_R^t e^{\delta A_r} \vertiii{u(t,r)}_r \dd B_r \dd A_t \right. \\
&&\quad \left.+ \int_R^T e^{(\beta-\delta) A_t} \int_S^t e^{\delta A_r} \dd \trace \langle m(t,\cdot)\rangle_r  \dd A_t \right]  \leq 4 \bE \left[ \int_R^T e^{\beta A_t} |y(t)|^2 \dd A_t\right].
\end{eqnarray*}
As in \cite{yong:08} we take on $\widehat{\mathfrak{S}}^2_{\delta\leq \beta}(0,T)$ the norm of $\mathfrak{S}^2_{\delta\leq \beta}(\Delta^c(0,T))$
\begin{eqnarray*}
\|(y,z,u,m)\|_{\widehat{\mathfrak{S}}^2_{\delta\leq \beta}}^2 &= &\bE \left[ \int_0^T e^{\beta A_t} |y(t)|^2 \dd A_t \right. \\
&+& \int_0^Te^{(\beta-\delta)A_t } \left(  \int_t^T e^{\delta A_r} \dd \trace [\langle z(t,\cdot)\cdot X^\circ\rangle_r ] \right) \dd A_t  \\
&+&\int_0^T e^{(\beta-\delta)A_t }\left( \int_t^T e^{\delta A_r} \dd \trace [\langle u(t,\cdot)\star \tpi^\natural \rangle_r ]  \right) \dd A_t\\
&+& \left. \int_0^T  e^{(\beta-\delta)A_t } \left( \int_t^T e^{\delta A_r} \dd \trace [\langle m(t,\cdot) \rangle_r ] \right) \dd A_t  \right] .
\end{eqnarray*}
We have proved that 
\begin{eqnarray} \nonumber
 \|(y,z,u,m)\|_{\widehat{\mathfrak{S}}^2_{\delta\leq \beta}}^2 & \leq & \bE \left[ \int_0^T e^{\beta A_t} |y(t)|^2 \dd A_t \right. \\ \nonumber
&+& \int_0^Te^{(\beta-\delta)A_t } \left(  \int_0^T e^{\delta A_r} \dd \trace [\langle z(t,\cdot)\cdot X^\circ\rangle_r ] \right) \dd A_t  \\ \nonumber
&+&\int_0^T e^{(\beta-\delta)A_t }\left( \int_0^T e^{\delta A_r} \dd \trace [\langle u(t,\cdot)\star \tpi^\natural \rangle_r ]  \right) \dd A_t\\ \nonumber
&+& \left. \int_0^T  e^{(\beta-\delta)A_t } \left( \int_0^T e^{\delta A_r} \dd \trace [\langle m(t,\cdot) \rangle_r ] \right) \dd A_t  \right]\\ \label{eq:norm_equivalence_p_2}
& \leq & 5 \|(y,z,u,m)\|_{\widehat{\mathfrak{S}}^2_{\delta\leq \beta}}^2.
\end{eqnarray}
Hence we have an equivalent norm for $\widehat{\mathfrak{S}}^2_{\delta\leq \beta}(0,T)$.

Fix $\Phi \in\bL^2_{\beta,\cF_T}(0,T)$ and $(y,\zeta,\nu,m) \in \widehat{\mathfrak{S}}^2_{\delta\leq \beta}(0,T)$ and consider the BSVIE
\begin{eqnarray} \nonumber
Y(t) &= &\Phi(t) + \int_t^{T} f(t,s, y(s),Z(t,s) , \zeta(s,t),U(t,s),\nu(s,t)) \dd B_s - \int_t^{T} Z(t,s) \dd X^\circ_s  \\ \label{eq:fix_point_type_II_BSVIE}
&& \qquad - \int_t^{T}  \int_{\bR^m} U(t,s,x) \tpi^\natural(\dd s,\dd x) -\int_t^{T} \dd M(t,s),
\end{eqnarray}
This is a particular case of the BSVIE \eqref{eq:special_BSVIE} (and of the Type-I BSVIE \eqref{eq:general_BSVIE_type_I}). 
We want to apply Lemma \ref{lem:param_BSDE->BSVIE}. If 
$$h(t,s,z,\psi)= f(t,s, y(s),z, \zeta(s,t),\psi,\nu(s,t)),$$
we need to check the condition \eqref{eq:int_cond_h_0}:
$$\bE \int_0^T e^{(\beta-\delta) A_t} \left( \int_0^T e^{\delta A_s} \dfrac{|h(t,s,0,\mathbf 0)|^2}{\alpha_s^2} \dd B_s \right) \dd B_t < +\infty.
$$
The Lipschitz property \ref{H2prime} of $f$ leads to
$$ |h(t,s,0,\mathbf 0)|^2  \leq 2|f^0(t,s)|^2 + 2\varpi_s |y(s)|^2 + 2\theta^\circ_s |b_t \zeta(s,t)|^2 + 2 \theta^\natural_s \vertiii{ \nu(s,t) }_{t}$$
Note that for $s\geq t$, $\zeta(s,t)$ and $\nu(s,t)$ are $\cF_t$-measurable. Moreover
\begin{eqnarray*}
&& \bE \left[ \int_0^T e^{(\beta-\delta) A_t} \int_t^T e^{\delta A_s}  \frac{1}{\alpha_s^2} \theta^\circ_s |b_t \zeta(s,t)|^2 \dd B_s \dd A_t \right] \\
&&\quad \leq \bE \left[ \int_0^T e^{(\beta-\delta) A_t} \int_t^T e^{\delta A_s}  |b_t \zeta(s,t)|^2 \dd B_s \alpha_t^2 \dd B_t \right] \\
&&\quad \leq \bE \left[ \int_0^T e^{\delta A_s} \int_0^s e^{(\beta-\delta) A_t}   \alpha_t^2 |b_t \zeta(s,t)|^2 \dd B_t \dd B_s \right] \\
&&\quad  \leq \bE \left[ \int_0^T e^{\delta A_s}   \alpha_s^2  \int_0^s e^{(\beta-\delta) A_t} |b_t \zeta(s,t)|^2 \dd B_t \dd B_s \right] \\
&&\quad  \leq \bE \left[ \int_0^T e^{\delta A_s}    \int_0^s e^{(\beta-\delta) A_t}\dd \trace [\langle \zeta(s,\cdot)\cdot X^\circ\rangle_t ]  \dd A_s \right]  \leq 4 \bE \left[  \int_S^T e^{\beta A_t}  y(t)^2  \dd A_t\right].
\end{eqnarray*}
since $\alpha$ is supposed to be non-decreasing. Similarly
\begin{eqnarray*}
&& \bE \left[ \int_0^T e^{(\beta-\delta) A_t} \int_t^T e^{\delta A_s}  \frac{1}{\alpha_s^2} \theta^\natural_{t,s} \vertiii{ \nu(s,t) }_{t}^2 \dd B_s \dd A_t \right] \\
&&\quad \leq \bE \left[ \int_0^T e^{\delta A_s} \int_0^t e^{(\beta-\delta) A_t}   \alpha_t^2 \vertiii{ \nu(s,t) }_{t}^2 \dd B_t \dd B_s \right] \\
&&\quad  \leq \bE \left[ \int_0^T e^{\delta A_s}   \alpha_s^2  \int_0^s e^{(\beta-\delta) A_t} \vertiii{ \nu(s,t) }_{t}^2 \dd B_t \dd B_s \right] \\
&&\quad  \leq \bE \left[ \int_0^T e^{\delta A_s}    \int_0^s e^{(\beta-\delta) A_t}\dd \trace [\langle \nu(s,\cdot)\star \tpi^\natural \rangle_t ]  \dd A_s \right]  \leq 4 \bE \left[  \int_0^T e^{\beta A_t}  y(t)^2  \dd A_t\right].
\end{eqnarray*}
Thus the BSVIE \eqref{eq:fix_point_BSVIE} has a unique adapted M-solution $(Y,Z,U,M) \in \mathfrak{S}^2_{\delta\leq \beta}(\Delta^c)$. 
Moreover from \eqref{eq:H2-estimate} and the preceding estimates
\begin{eqnarray*} \nonumber
&& \bE \left[ \int_0^T e^{ \beta A_t} |Y(t)|^2 \dd A_t +  \int_0^T e^{(\beta-\delta) A_t}  \int_t^T e^{\delta A_s} \dd  \trace \langle \mart(t,\cdot)\rangle_s    \dd A_t \right] \\  \nonumber
&& \quad  \leq\dfrac{\delta}{2}  \Sigma^\mathfrak f( \beta) \bE \left[ \int_0^T e^{\beta A_t} |\Phi(t)|^2  \dd A_t\right] \\ 
&&\qquad + \Sigma^\mathfrak f(\beta)  \bE \left[ \int_0^T  e^{(\beta-\delta) A_t}  \int_t^T e^{\delta A_s}\dfrac{ |h^0(t,s)|^2}{\alpha_s^2} \dd B_s  \dd A_t \right] \\
&& \quad  \leq\dfrac{\delta}{2}  \Sigma^\mathfrak f( \beta) \bE \left[ \int_0^T e^{\beta A_t} |\Phi(t)|^2  \dd A_t\right] \\ 
&&\qquad + \Sigma^\mathfrak f(\beta)  \bE \left[ \int_0^T  e^{(\beta-\delta) A_t}  \int_t^T e^{\delta A_s}\dfrac{ |f^0(t,s)|^2}{\alpha_s^2} \dd B_s  \dd A_t \right]\\
&&\qquad + 18 \Sigma^\mathfrak f(\beta)  \bE \left[  \int_0^T e^{\beta A_t}  y(t)^2  \dd A_t\right].
\end{eqnarray*}
Then using \eqref{eq:def_mart_Delta_S_T} and \eqref{eq:estim_mart_Delta_S_T} we define and control $Z$, $U$ and $M$ on $\Delta$. Therefore $(Y,Z,U,M) \in  \mathfrak{S}^2_{\delta\leq \beta}(0,T)$. In other words we have a map $\Theta$ from $ \mathfrak{S}^2_{\delta\leq \beta}(0,T)$ into itself. 

Thanks to Lemma \ref{lem:stab_BSVIE_h_poss}, if $(\bar Y,\bar Z,\bar U,\bar M)$ is another solution of the BSVIE \eqref{eq:fix_point_BSVIE} with data $(\bar y, \bar \zeta, \bar \nu, \bar m)$, then with
$$(\what Y,\what Z, \what Y, \what M) = (Y-\bar Y, Z-\bar Z,U-\bar U,M-\bar M),$$
the estimate \eqref{eq:stability_H2-estimate} becomes
\begin{eqnarray*} \nonumber
&& \bE \left[ \int_0^T e^{ \beta A_t} |\what Y(t)|^2 \dd A_t +  \int_0^T e^{(\beta-\delta) A_t}  \int_t^T e^{\delta A_s} \dd  \trace \langle \what \mart(t,\cdot)\rangle_s    \dd A_t \right] \\  \nonumber
&& \quad  \leq \Sigma^\mathfrak f(\beta)  \bE \left[ \int_0^T  e^{(\beta-\delta) A_t}  \int_t^T e^{\delta A_s}\dfrac{ |\what h(t,s)|^2}{\alpha_s^2} \dd B_s  \dd A_t \right] 
\end{eqnarray*}
where using \ref{H2prime}
\begin{eqnarray*} 
|\what h(t,r)|^2 &=& |f(t,r,y(r),Z(t,r),\zeta(r,t),U(t,r),\nu(r,t))-f(t,r,\bar y(r),Z(t,r),\bar \zeta(r,t),U(t,r),\bar \nu(r,t))|^2 \\
& \leq & \varpi_r |\what y(r)|^2 + \theta^\circ_r| b_t \what \zeta(r,t)|^2 + \theta^\natural_r \vertiii{ \nu(r,t) }^2_{t}.
\end{eqnarray*}
The preceding computations show that 
\begin{eqnarray*} \nonumber
&& \bE \left[ \int_0^T e^{ \beta A_t} |\what Y(t)|^2 \dd A_t +  \int_0^T e^{(\beta-\delta) A_t}  \int_t^T e^{\delta A_s} \dd  \trace \langle \what \mart(t,\cdot)\rangle_s    \dd A_t \right] \\  \nonumber
&& \quad  \leq \left( 16 + \dfrac{e^{(\beta-\delta)\mathfrak f}}{\beta-\delta} \right) \Sigma^\mathfrak f(\beta)  \bE \left[ \int_0^T  e^{\beta A_t}  |\what y(t)|^2 \dd A_t \right] .
\end{eqnarray*}
Hence $\Theta$ is a contraction if 
$$ \left( 16 + \dfrac{e^{(\beta-\delta)\mathfrak f}}{\beta-\delta} \right) \Sigma^\mathfrak f(\beta) < 1,$$
and we obtain the existence and uniqueness of a solution $(Y,Z,U,M) \in \mathfrak{S}^2_{\beta}(0,T)$ to the Type-II BSVIE \eqref{eq:general_BSVIE}.

\subsection{Existence and uniqueness for the Type-II BSVIE \eqref{eq:Ito_general_BSVIE}}

Now we come back to the BSVIE \eqref{eq:Ito_general_BSVIE}. Here $B_t =t$ and if \ref{H3star} holds, then $1 \leq e^{\beta A_t} \leq e^{\beta KT}$. Therey if $\Phi \in \bL^2_{\cF_T}(0,T)$ and if 
\begin{equation}\label{eq:int_cond_f_0_p_2}
\bE \int_0^T \left( \int_t^T |f^0(t,s)| \dd s \right)^2 \dd t < +\infty,
\end{equation}
then \ref{H1}, \ref{H4} (and \ref{H5}) hold. The next result is a corollary of Theorem \ref{thm:type_II_general_BSVIE}. 
\begin{Prop} \label{thm:H2_M_solution}
Assume that $\Phi \in \bL^2_{\cF_T}(0,T)$, that {\rm \ref{H2}}, {\rm \ref{H3star}} and \eqref{eq:int_cond_f_0_p_2}
 hold\footnote{The space $\mathfrak H$ in \ref{H2} is replaced by $\bL^2_\mu$ in this case.}. Then the BSVIE \eqref{eq:Ito_general_BSVIE} has a unique adapted M-solution $(Y,Z,U,M)$ in $\mathfrak S^2(0,T)$ on $[0,T]$. Moreover for any $S\in[0,T]$
\begin{eqnarray} \nonumber
 \|(Y,Z,U,M)\|_{\mathfrak S^2(S,T)}^2& = & \bE \left[ \int_S^T  |Y(t)|^2  \dd t + \int_S^T \left(  \int_S^T  |Z(t,r)|^2 \dd r\right)  \dd t  \right. \\ \nonumber
&& \quad \left.+ \int_S^T \left(  \|U(t,\cdot)\|^2_{\bL^2_\pi(S,T)}  \right)  \dd t +\int_S^T \left( \langle M(t,\cdot) \rangle_{S,T}\right) \dd t\right]  \\  \label{eq:H2_estimate}
& \leq & C \bE \left[ \int_S^T |\Phi(t)|^2  \dd t +\int_S^T \left( \int_t^T |f^0(t,r)|  \dd r \right)^2  \dd t \right].
\end{eqnarray}
\end{Prop}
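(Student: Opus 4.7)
The strategy is to adapt the five-step scheme of Theorem~\ref{thm:Hp_M_solution} (with $p=2$) to the Type-II setting. The crucial new ingredient is that, in the M-solution space $\widehat{\mathfrak S}^2(S,T)$ (defined exactly as in the proof of Theorem~\ref{thm:Hp_M_solution}), the martingale representation \eqref{eq:M_sol_def} provides a priori control of the cross-diagonal arguments $Z(s,t), U(s,t)$ by the norm of $Y$, which is precisely what is needed to close a contraction argument in spite of the Type-II coupling.

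\textbf{Step 1 (short interval).} Fix $0 \leq S < T$. Define $\Theta$ on $\widehat{\mathfrak S}^2(S,T)$ by sending $(y,\zeta,\nu,m)$ to the unique adapted M-solution $(Y,Z,U,M)$ of the Type-I BSVIE obtained by freezing the Type-II couplings:
\begin{equation*}
Y(t) = \Phi(t) + \int_t^T \bar f(t,s,Z(t,s),U(t,s)) \dd s - \int_t^T Z(t,s) \dd W_s - \int_t^T\!\!\int_{\bR^m} U(t,s,x)\tpi(\dd s,\dd x) - \int_t^T \dd M(t,s),
\end{equation*}
where $\bar f(t,s,z,u):=f(t,s,y(s),z,\zeta(s,t),u,\nu(s,t))$. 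For $t \leq s$ the M-solution identity at time $s$ forces $\zeta(s,t)$ and $\nu(s,t,\cdot)$ to be $\cF_t$-measurable, so $\bar f$ is a legitimate Type-I driver satisfying \ref{H2}--\ref{H3star} with the same Lipschitz constant $K$. The integrability \eqref{eq:int_cond_f_0} for $\bar f^0(t,s):=\bar f(t,s,0,\mathbf 0)$ follows from
\begin{equation*}
|\bar f^0(t,s)|^2 \leq 4\bigl(|f^0(t,s)|^2 + K^2|y(s)|^2 + K^2|\zeta(s,t)|^2 + K^2\|\nu(s,t,\cdot)\|^2_{\bL^2_\mu}\bigr),
\end{equation*}
combined with the M-solution bound $\bE \int_S^T\! \int_S^s \bigl(|\zeta(s,t)|^2+\|\nu(s,t,\cdot)\|^2_{\bL^2_\mu}\bigr)\dd t \dd s \leq C\,\bE \int_S^T |y(s)|^2 \dd s$, itself a direct consequence of It\^o isometry applied to the martingale representation of $y(s)$. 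Theorem~\ref{thm:Hp_M_solution} with $p=2$ then produces $(Y,Z,U,M) \in \widehat{\mathfrak S}^2(S,T)$.

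\textbf{Step 2 (contraction).} For two inputs indexed by $i=1,2$ and with $\fd\cdot=(\cdot)_1-(\cdot)_2$, the stability estimate \eqref{eq:H_p_stability}, the Lipschitz property of $f$, Cauchy--Schwarz in $s$, and Fubini give
\begin{equation*}
\|\Theta(\cdot_1)-\Theta(\cdot_2)\|^2_{\widehat{\mathfrak S}^2(S,T)} \leq CK^2(T-S) \,\bE \int_S^T\!\int_t^T\! \bigl(|\fd y(s)|^2 + |\fd\zeta(s,t)|^2 + \|\fd\nu(s,t,\cdot)\|^2_{\bL^2_\mu}\bigr)\dd s\, \dd t.
\end{equation*}
The $|\fd y|^2$ term is bounded by $(T-S)\,\bE \int_S^T |\fd y(s)|^2 \dd s$; the Fubini swap $\int_S^T\!\int_t^T=\int_S^T\!\int_S^s$ combined with the M-solution bound above controls the $|\fd\zeta|^2$ and $\|\fd\nu\|^2$ integrals by $C\,\bE \int_S^T |\fd y(s)|^2 \dd s$. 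Using the norm equivalence \eqref{eq:norm_equivalence}, one concludes that $\Theta$ is a strict contraction for $T-S$ small enough depending only on $K$, hence admits a unique fixed point, which is the M-solution on $[S,T]$.

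\textbf{Steps 3--5 and main obstacle.} The remaining steps are literal transcriptions of those in Theorem~\ref{thm:Hp_M_solution}: extend $(Z,U,M)$ from $[S,T]^2$ to $[S,T]\times[R,S]$ via martingale representation of $\bE[Y(t)\mid\cF_S]$; solve the stochastic Fredholm equation of Lemma~\ref{lem:Ito_SFIE_sol} on $[R,S]\times[S,T]$ with driver $f(t,s,Y(s),\cdot,Z(s,t),\cdot,U(s,t))$, treating $Z(s,t),U(s,t)$ as the $\cF_t$-measurable inputs supplied by the previous step; finally solve the Type-II BSVIE on $[R,S]^2$ by the same fixed-point argument as in Step~1. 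Iterating over a finite number of intervals of length controlled by $K$ covers $[0,T]$, and concatenating the interval-wise bounds yields \eqref{eq:H2_estimate}. The essential difficulty is the closure of the Step~1 contraction despite the symmetric cross-diagonal arguments $Z(s,t), U(s,t)$ in the driver; this succeeds here only because the $L^2$ M-solution identity is symmetric in the two time variables via It\^o isometry, so that $\int_S^T\!\int_t^T|\zeta(s,t)|^2 \dd s \dd t$ can be controlled by $\bE\int_S^T|y(s)|^2 \dd s$. For $p\neq 2$ the corresponding $L^{p/2}$-estimate fails to symmetrize in $(s,t)$, which is precisely the obstruction flagged in the introduction to extending this result to $\mathfrak S^p$, $p\neq 2$.
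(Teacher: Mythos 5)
Your proposal is correct and follows essentially the same route as the paper's own (direct) proof: the paper likewise runs the five-step scheme of Theorem \ref{thm:Hp_M_solution} with $p=2$, freezing the cross-diagonal arguments $Z(s,t),U(s,t)$ and controlling them through the M-solution representation and the It\^o isometry, which is exactly the symmetry-in-$(s,t)$ point you isolate as the reason the argument closes only for $p=2$. The only thing you omit is the paper's preliminary remark that the statement is also a particular case of Theorem \ref{thm:type_II_general_BSVIE} (since $B_t=t$ is deterministic), but this does not affect the validity of your argument.
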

Recall that 
$$ \|U(t,\cdot)\|^2_{\bL^2_\pi(S,T)} =   \int_S^T \int_{\bR^m} |U(t,r,x)|^2\pi( \dd r, \dd x).$$
\begin{proof}
Again this proposition is a particular case of Theorem \ref{thm:type_II_general_BSVIE}. Nonetheless we could also give a direct proof, following the scheme of the proof of Theorem \ref{thm:Hp_M_solution}, that is of \cite[Theorem 3.7]{yong:08}. The modifications are quite obvious. In Step 1, 
we fix $\Phi \in \bL^2_{\cF_T}(S,T)$ and $(y,\zeta,\nu,m) \in \widehat{\mathfrak S}^2(S,T)$ and consider the BSVIE on $[S,T]$
\begin{eqnarray} \nonumber
Y(t) &= &\Phi(t) + \int_t^{T} f(t,s, y(s),Z(t,s) , \zeta(s,t),U(t,s),\nu(s,t)) \dd s - \int_t^{T} Z(t,s) \dd W_s  \\ \label{eq:Ito_fix_point_BSVIE}
&& \qquad - \int_t^{T}  \int_{\bR^m} U(t,s,x) \tpi(\dd s,\dd x) -\int_t^{T} \dd  M(t,s). 
\end{eqnarray}
We apply Lemma \ref{lem:Ito_param_BSDE->BSVIE} and the inequalities \eqref{eq:tech_estim_step_1} and 
\eqref{eq:Theta_contraction} become
\begin{eqnarray*} \nonumber
&& \bE \left[ \int_S^T  |Y(t)|^2 \dd t + \int_S^T \left(  \int_t^T  |Z(t,r)|^2 \dd r \right) \dd t +\int_S^T \left(  \langle M(t,\cdot) \rangle_{t,T} \right) \dd t \right. \\ \nonumber
&& \qquad \left.+ \int_S^T \left( \int_t^T  \|U(t,r)\|^2_{\bL^2_\mu}  \dd r \right) \dd t \right]  \\  \nonumber
&& \quad \leq C \bE \left[ \int_S^T |\Phi(t)|^2 dt +\int_S^T \left( \int_t^T |f^0(t,r)| \dd r \right)^2 \dd t+ \int_S^T |y(r)|^2 \dd r  \right. \\ \label{eq:cM_p_estim_p_2}
&& \hspace{1.5cm}  \left.+ \int_S^T\left(  \int_t^T |\zeta(r,t)|^2 \dd r \right) \dd t +\int_S^T \left(  \int_t^T \|\nu(r,t)\|^2_{\bL^2_\mu} \dd r \right) \dd t \right]
\end{eqnarray*}
and
\begin{eqnarray*} \nonumber
&& \bE \left[ |\fd Y(t)|^2 +   \int_S^T  |\fd Z(t,r)|^2 \dd r +  \langle \fd M(t,\cdot) \rangle_{S,T}+   \int_S^T  \int_{\bR^m} |\fd U(t,r,x)|^2 \pi(\dd r, \dd x) \right]  \\
&& \quad   \leq C K^2 (T-S) \bE \left[ \left( \int_S^T  |\fd y(r)|^2 \dd r \right)  +  \left( \int_S^T \left( |\fd \zeta(t,r)|^2 + \|\what \nu(t,r) \|^2_{\bL^2_\pi} \right) \dd r  \right) \right].
\end{eqnarray*}

The second step remains unchanged, whereas in the third step, we apply
Lemma \ref{lem:Ito_SFIE_sol}, with
$$f^S(t,s,z,u) = f(t,s,Y(s),z,Z(s,t),u,U(s,t)), \quad (t,s,z,u) \in [R,S]\times [S,T] \times \bR^k \times \bL^2_\mu.$$
The last two steps are almost the same; the modifications are straightforward. 

\end{proof}

Note that from \eqref{eq:BJ_ineq_p_=_2}, concerning $U$, Estimate \eqref{eq:H2_estimate} is completely equivalent to: for any $S\in [0,T]$
\begin{eqnarray} \nonumber
&& \bE \left[  \int_S^T \left( \int_S^T  \|U(t,r)\|^2_{\bL^2_\mu}  \dd r \right) \dd t \right] \leq C \bE \left[ \int_S^T |\Phi(t)|^2 dt +\int_S^T \left( \int_t^T |f^0(t,r)| \dd r \right)^2 \dd t \right].
\end{eqnarray}

\subsection{A duality result in the It\^o setting} \label{ssect:duality_princ}

The {\it duality principle} of linear stochastic integral equations (\cite[Section 4]{yong:06}) plays an important role for comparison principle or optimal control problem (see \cite[Section 5]{yong:08}). This result is based on the notion of FSVIE (see among many others \cite{berg:mize:80a,berg:mize:80b,kolo:83,kolo:84,pard:prot:90,prot:85}). In \cite{yong:06,yong:08}, the next FSVIE is considered:
 for $t \in [0,T]$
\begin{eqnarray*}
X(t) & = & \Psi(t) + \int_0^t A_0(t,s) X(s) \dd s + \int_0^t \sum_{i=1}^k A_i(t,s)X(s)\dd W_i(s),
\end{eqnarray*}
where $A_i(\cdot,\cdot) \in \bL^\infty([0,T]; \bL^\infty_\bF(0,T;\bR^{d\times d}))$ for $i=0,1,\ldots,k$. It means that $A_i : \Omega \times [0,T]^2  \to \bR^{d \times d}$ is bounded, $\cF_T \otimes \cB([0,T]^2)$-measurable and for almost all $t\in [0,T]$, $A_i(t,\cdot)$ is $\cF$-adapted. Then for any $\Psi \in \bL^2_\cF(0,T;\bR^d)$, there exists a unique solution $X$ in $\bL^2_\cF(0,T;\bR^d)$. 

Here we consider the extension: for $t \in [0,T]$
\begin{eqnarray} \nonumber
X(t) & = & \Psi(t) + \int_0^t A_0(t,s) X(s) \dd s + \int_0^t \sum_{i=1}^k A_i(t,s)X(s-)\dd W_i(s) \\ \label{eq:linear_FSVIE}
&+&\int_0^t \int_{\bR^m} \cA(t,s,x) X(s-) \tpi(\dd s,\dd x).
\end{eqnarray}
We keep the same conditions on the $A_i$. We assume that $\cA : \Omega \times [0,T]^2 \times \bR^m \to \bR^{d}$ is bounded  and such that for almost all $t \in [0,T]$, $B(t,\cdot,\cdot)X(\cdot)\star \tpi$ is well defined. Since we are interesting by c\`adl\`ag processes $X$, we use the setting of \cite[Condition 4.1]{prot:85}. Hence we also suppose that $A_i$ and $\cA$ are differentiable w.r.t. $t$ with a bounded derivative (uniformly in $(\omega,t,s)$). Thus we can apply \cite[Theorem 4.3]{prot:85}: if $\Psi$ is a c\`adl\`ag process, then there exists a unique c\`adl\`ag solution $X$ of the preceding FSVIE. 
The key point is that $X$ is a c\`adl\`ag process, hence for a.e. $t\in [0,T]$, $X(t) = X(t-)$. 
\begin{Lemma}
Let $\Psi(\cdot) \in \bL^2_\bF(0,T;\bR^d) \cap \bD^2(0,T)$ and $\Phi(\cdot) \in \bL^2((0,T)\times \Omega; \bR^d)$. Let $X \in \bL^2(0,T;\bR^d)$ be the c\`adl\`ag solution of the linear FSVIE \eqref{eq:linear_FSVIE}. We also consider the BSVIE:
\begin{eqnarray*} \nonumber
Y(t) &= &\Phi(t) -\int_t^T Z(t,s) \dd W_s - \int_t^{T}  \int_{\bR^m}  U(t,s,x) \tpi(\dd s,\dd x) -\int_t^{T} \dd M(t,s)\\
&+& \int_t^{T} \left[ A_0(s,t)^{\top} Y(s) + \sum_{i=1}^k A_i(s,t)^\top Z_i(s,t) + \int_{\bR^m} \cA(s,t,x)^\top U(s,t,x) \mu(\dd x)\right]\dd s  . 
\end{eqnarray*}
Then 
$$ \bE \int_0^T \left\langle \Psi(t), Y(t) \right\rangle \dd t  =  \bE \int_0^T \left\langle X(t), \Phi(t)   \right\rangle \dd t.$$
\end{Lemma}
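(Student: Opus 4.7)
The strategy is the classical duality principle of \cite{yong:06}, adapted to our jump setting. I would first use the FSVIE \eqref{eq:linear_FSVIE} to write
$$\Psi(t) = X(t) - \int_0^t A_0(t,s) X(s) \dd s - \int_0^t \sum_{i=1}^k A_i(t,s) X(s-) \dd W_i(s) - \int_0^t \int_{\bR^m} \cA(t,s,x) X(s-) \tpi(\dd s,\dd x),$$
and pair this with $Y(t)$ under $\bE \int_0^T \cdot \dd t$. This yields $\bE \int_0^T \langle X(t), Y(t)\rangle \dd t$ minus three correction terms (Lebesgue, Brownian, Poisson). Symmetrically, the BSVIE rewrites as
$$\int_t^T \Big[ A_0(s,t)^\top Y(s) + \sum_{i=1}^k A_i(s,t)^\top Z_i(s,t) + \int_{\bR^m} \cA(s,t,x)^\top U(s,t,x) \mu(\dd x)\Big] \dd s = Y(t) - \Phi(t) + \cN(t),$$
where $\cN(t) = \int_t^T Z(t,s)\dd W_s + \int_t^T \int_{\bR^m} U(t,s,x) \tpi(\dd s,\dd x) + \int_t^T \dd M(t,s)$. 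The aim is to show that the three correction terms coincide with $\bE \int_0^T \langle X(t), Y(t)-\Phi(t)+\cN(t)\rangle \dd t$.

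Classical Fubini immediately treats the Lebesgue piece, producing $\bE \int_0^T \langle X(s), \int_s^T A_0(t,s)^\top Y(t) \dd t\rangle \dd s$. For the Brownian correction, I would invoke the M-solution property \eqref{eq:M_sol_def} of $Y$, which guarantees
$$Y(t) = \bE[Y(t)\mid \cF_0] + \int_0^t Z(t,r)\dd W_r + \int_0^t \int_{\bR^m} U(t,r,x)\tpi(\dd r,\dd x) + \int_0^t \dd M(t,r).$$
Since $\langle Z\cdot W, M\rangle = 0$ and $\langle Z\cdot W, U\star \tpi\rangle = 0$, Itô's isometry and the \cad regularity of $X$ (so that $X(s-)=X(s)$ $\dd s$-a.e.) give
$$\bE \left\langle \int_0^t \sum_{i=1}^k A_i(t,s) X(s-) \dd W_i(s),\, Y(t) \right\rangle = \bE \int_0^t \sum_{i=1}^k \langle X(s), A_i(t,s)^\top Z_i(t,s)\rangle \dd s,$$
and Fubini converts this into $\bE \int_0^T \langle X(s), \int_s^T \sum_i A_i(t,s)^\top Z_i(t,s)\dd t\rangle \dd s$. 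The Poisson correction is handled identically, using that $\tpi$ has dual predictable projection $\dd s\,\mu(\dd x)$ and the orthogonality of $U\star \tpi$ with $M$ and $Z\cdot W$, producing $\bE \int_0^T \langle X(s), \int_s^T \int_{\bR^m} \cA(t,s,x)^\top U(t,s,x)\mu(\dd x)\dd t\rangle \dd s$.

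Summing the three corrections and plugging in the BSVIE identity yields
$$\bE \int_0^T \langle \Psi(t), Y(t)\rangle \dd t = \bE \int_0^T \langle X(t), Y(t)\rangle \dd t - \bE \int_0^T \langle X(t),\, Y(t)-\Phi(t)+\cN(t)\rangle \dd t.$$
The conclusion then follows from observing that $X(t)$ is $\cF_t$-measurable while $\bE[\cN(t)\mid \cF_t]=0$, so conditioning kills the $\cN(t)$ term, leaving $\bE\int_0^T \langle\Psi(t),Y(t)\rangle \dd t = \bE\int_0^T\langle X(t),\Phi(t)\rangle \dd t$.

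The main obstacle will be rigorous justification of both classical and stochastic Fubini at each step: one must verify that the \cad solution $X$ of \eqref{eq:linear_FSVIE} lies in $\bL^2_\bF(0,T) \cap \bD^2(0,T)$ with a norm controlled by $\|\Psi\|_{\bL^2_\bF \cap \bD^2}$ (this uses the boundedness of the $A_i$, $\cA$ and their $t$-derivatives via \cite{prot:85}), and then combine these bounds with the $\mathfrak S^2$ estimates for $(Y,Z,U,M)$ from Proposition~\ref{thm:H2_M_solution} to check absolute integrability of all iterated integrals. A secondary point requiring care is that the orthogonal martingale $M$ does not contribute to the Brownian/Poisson cross terms, which is ensured precisely by the M-solution definition.
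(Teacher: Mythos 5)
Your proposal is correct and follows exactly the route the paper has in mind: the paper's own proof is only a one-line reference to the duality arguments of Yong and Overbeck--R\"oder, resting on the orthogonality of $W$, $\tpi$ and $M$, and your write-up fleshes out precisely that argument (FSVIE substitution, M-solution representation of $Y$ on $[0,t]$ to identify the Brownian and Poisson cross terms via the It\^o isometries, $X(s-)=X(s)$ $\dd s$-a.e.\ from the c\`adl\`ag property, Fubini, and conditioning on $\cF_t$ to kill $\cN(t)$). No gaps; the integrability checks you flag are indeed the only technical point left, and they are handled as you indicate by the $\mathfrak S^2$ and $\bD^2$ bounds.
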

\begin{proof}
The arguments are the same as \cite{over:roed:18,yong:08} and are based on the orthogonality of $W$, $\tpi$ and $M$. We skip the details here.  
\end{proof}

Let us emphasize that the role of the c\`adl\`ag property of $X$ is important here. Thus it should be possible to relax the regularity assumption on the coeffcients $A_i$ or $\cA$ of the FSVIE. But as for a BSVIE, the regularity of the paths of $X$ is not a direct property nor an easy stuff. 

Note that the extension of the duality result to the setting of Section \ref{sect:general_Type_I_BSVIE} is an issue. Indeed we first need a solution for the Type-II BSVIE \eqref{eq:general_BSVIE}. But the orthogonality between $B$, $X^\circ$, $\tpi^\natural$ and $M$ is much more delicate and several simplifications are not true anymore with these processes.

With the previous duality result, it is possible to extend the comparison principle for M-solution of a Type-II BSVIE of the form:
\begin{eqnarray*} \nonumber
Y(t) &= &\Phi(t) + \int_t^{T} (g(t,s, Y(s)) + C(s)Z(s,t) ) \dd s - \int_t^{T}  \dd \mart(t,s). 
\end{eqnarray*}
Up to some technical conditions, one can follow the scheme of \cite[Theorems 3.12 and 3.13]{wang:yong:15}. 

\section{Appendix}\label{sect:appendix}



\subsection{Proof of Lemma \ref{lem:infimum}}

Here we want to prove Lemma \ref{lem:infimum}. Recall that for $\delta < \gamma \leq \beta$:
$$ \Pi^{\mathfrak f}(\gamma,\delta) =   \frac{11}{\delta} + 9 \frac{e^{(\gamma-\delta)\mathfrak f}}{(\gamma-\delta)}.$$ 
The lemma states that the infimum of $\Pi^{\mathfrak f}(\gamma,\delta) $ over all $\delta < \gamma \leq \beta$ is given by 
$$ M^{\mathfrak f}(\beta)=\widetilde \Pi^{\mathfrak f}(\beta,\delta^*(\beta)),$$
where $\delta^*(\beta)$ is the unique solution on $(0,\beta)$ of the equation:
$$11(\beta-x)^2 - 9 e^{(\beta-x)\mathfrak f} x^2 (\mathfrak f(\beta-x)-1)=0.$$
For any fixed $\gamma \in (0,\beta]$, since 
$$\lim_{\delta \to 0}  \Pi^{\mathfrak f}(\gamma,\delta) = \lim_{\delta \to \gamma}  \Pi^{\mathfrak f}(\gamma,\delta) = +\infty,$$
there exists a $\delta^*(\gamma) \in (0,\gamma)$ such that
$$ \Pi^{\mathfrak f}(\delta^*(\gamma),\delta) = \inf_{0< \delta < \gamma}  \Pi^{\mathfrak f}(\gamma,\delta),$$
and $\delta^*(\gamma)$ is the critical point of 
$$\dfrac{\partial  \Pi^{\mathfrak f}}{\partial \delta} (\gamma,\delta)= -\frac{11}{\delta^2} + 9\dfrac{e^{(\gamma-\delta)\mathfrak f} }{(\gamma-\delta)^2}\left(- \mathfrak f (\gamma-\delta)+1\right),$$
that is 
$$-\frac{11}{(\delta^*(\gamma))^2} = 9\dfrac{e^{(\gamma-\delta^*(\gamma))\mathfrak f} }{(\gamma-\delta^*(\gamma))^2}\left( \mathfrak f (\gamma-\delta^*(\gamma))-1\right) < 0.$$
Note that we have necessarily 
$$\left(\gamma - \dfrac{1}{\mathfrak f} \right) \vee 0 < \delta^*(\gamma) < \gamma.$$

Let us differentiate w.r.t. $\gamma$:
$$\dfrac{\partial  \Pi^{\mathfrak f} }{\partial \gamma}  (\gamma,\delta)= 9e^{(\gamma-\delta)\mathfrak f} \dfrac{1}{\gamma-\delta}\left( \mathfrak f - \frac{1}{\gamma-\delta}\right)=9\dfrac{e^{(\gamma-\delta)\mathfrak f} }{(\gamma-\delta)^2}\left( \mathfrak f (\gamma-\delta) -1\right).$$
Hence a critical point should satisfy:
$$\gamma^*(\delta) = \delta + \dfrac{1}{\mathfrak f}.$$
It is admissible if and only $\beta > 1/\mathfrak f$. 
\begin{itemize}
\item Assume that $\beta > 1/\mathfrak f$. Then 
$$ \Pi^{\mathfrak f}(\gamma^*(\delta),\delta) = \dfrac{11}{\delta} +  9e\mathfrak f.$$
This quantity has no critical point on $0< \delta < \gamma < \beta$. Hence the infimum is attained on its boundary. 
\item If $\beta  \mathfrak f \leq 1$, the partial derivative w.r.t. $\gamma$ remains non positive. There is no critical point in this case. And again the infimum is attained at its boundary. 
\end{itemize}
The cases where one among $\delta$ or $\gamma$ goes to zero or where their difference goes to 0, lead to the value $+\infty$. The only remaining case is therefore $0< \delta < \gamma=\beta$. Then the minimum is attained at $\delta^*(\beta)$ and the infimum is equal to
$$ M^{\mathfrak f}(\beta) = \frac{11}{\delta^*(\beta)} + 9 \frac{e^{(\beta-\delta^*(\beta))\mathfrak f}}{(\beta-\delta^*(\beta))} .$$
Let us recall that $\delta^*(\beta)$ is the unique solution in the interval $((\beta - 1/\mathfrak f) \vee 0, \beta)$ of the equation
$$11(\beta-x)^2 - 9 e^{(\beta-x)\mathfrak f} x^2 (\mathfrak f(\beta-x)-1)=0.$$
Now let us compute the limit as $\beta$ goes to $\infty$. Since $\delta^* (\beta) \geq \beta - 1/\mathfrak f$, 
$$\lim_{\beta \to +\infty} \delta^*(\beta) = +\infty.$$
Hence since $\delta^*(\beta)$ solves the preceding equation, dividing by $x^2$, we obtain that 
$$\dfrac{11}{9}\left(\dfrac{\beta-\delta^*(\beta)}{\delta^*(\beta)}\right)^2e^{-(\beta-\delta^*(\beta))\mathfrak f} +1 =   \mathfrak f(\beta-\delta^*(\beta)).$$
Thus 
$$\lim_{\beta\to +\infty}   (\beta-\delta^*(\beta)) = \dfrac{1}{\mathfrak f}.$$
Thereby
$$\lim_{\beta\to +\infty}  M^{\mathfrak f}(\beta) = 9e\mathfrak f.$$

%
%

\subsection{Comparison principle (Section \ref{ssect:comp_prin_BSVIE})}

\subsubsection*{Proof of Proposition \ref{prop:comp_pinc_BSDE}}

This result is a comparison principle for the BSDE \eqref{eq:poss_BSDE}, where $X^\circ$ and $B$ are continuous (Assumption \ref{P1}). We follow the arguments of \cite[Theorem 3.25]{papa:poss:sapl:18}. Nonetheless since $B$ is supposed to be continuous, there are some important simplifications. 

Let us define
 $$(\fd Y,\fd Z, \fd  Y, \fd M, \fd  \xi) = (Y^1-Y^2, Z^1-Z^2,U^1-U^2,M^1- M^2,\xi^1-\xi^2)$$
 and 
$$\fd  f^{1}_s =  f^1(s,Y^1_{s-},Z_s^1,U^1_s) - f^1(s,Y^2_{s-},Z_s^2,U^2_s),  \quad \fd  f^{1,2}_s =  f^1(s,Y^2_s,Z_s^2,U^2_s) - f^2(s,Y^2_s,Z_s^2,U^2_s).$$
Let us stress that in $\fd f^1$, we consider $Y^i_{s-}$ and not $Y^i_s$. For a nonnegative predictable process $\gamma$, define $v= \int_0^\cdot \gamma_s \dd B_s$. Define $\cE(v)_t = \exp(v_t)$. The It\^o formula gives:
\begin{eqnarray*}
\cE(v)_t \fd Y_t & = & \cE(v)_T \fd \xi + \int_t^T \left[ \cE(v)_{s-} (\fd f^{1,2}_s + \fd f^1_s) -\gamma_s  \fd  Y_{s-} \right] \dd B_s - \int_t^T \cE(v)_{s-} \fd Z_s \dd X^{\circ}_s \\
& - &  \int_t^T  \int_{\bR^m} \cE(v)_{s-} \fd  U_s(x) \dd \tpi^{\natural}(\dd s, \dd x) - \int_t^T  \cE(v)_{s-} \dd \fd  M_s.
\end{eqnarray*}
See \cite[Theorem 3.20]{papa:poss:sapl:18} and note the modification due to the continuity of $B$. Let us also emphasize that the continuity of $B$ also leads to:
$$\int_t^T f^1(s,Y^1_s,Z^1_s,U^1_s) \dd B_s = \int_t^T f^1(s,Y^1_{s-},Z^1_s,U^1_s) \dd B_s.$$
Namely we can use this predictable version of the driver in the BSDE \eqref{eq:poss_BSDE}. This property fails in general and this is the reason of the \cite[section 3.6]{papa:poss:sapl:18}. Anyway the linearization procedure for $\delta f^1$ implies that
$$\fd  f^1_s = \lambda_s \fd  Y_{s-} + \eta_s b_s \fd  Z_s^\top + f^1(s,Y^2_{s-},Z_s^2,U^1_s) - f^1(s,Y^2_{s-},Z_s^2,U^2_s)$$
where $\lambda$ is a one-dimensional predictable process s.t. $|\lambda_s(\omega)|\leq \varpi^1_s(\omega)$ and $\eta$ is a $m$-dimensional predictable process such that $|\eta|^2\leq \theta^{1,\circ}$, $\dd \bP \otimes \dd B$-a.e. (using \ref{H2} for $f^1$). Let us choose $\gamma=\lambda$ and from \ref{P2}
\begin{eqnarray*}
\cE(v)_t\fd Y_t & \leq & \cE(v)_T\fd  \xi + \int_t^T \left[ \cE(v)_{s-}\fd  f^{1,2}_s  \right] \dd B_s \\
&+& \int_t^T \cE(v)_{s-} \eta_s b_s\fd  Z_s^\top \dd B_s - \int_t^T \cE(v)_{s-} \fd  Z_s \dd X^{\circ}_s \\
& + &\int_t^T \cE(v)_{s-} \widehat K_s (\fd  U_s(\cdot) \kappa_s(\cdot)) \dd B_s -  \int_t^T  \int_{\bR^m} \cE(v)_{s-} \fd U_s(x) \dd \tpi^{\natural}(\dd s, \dd x)\\
& -& \int_t^T  \cE(v)_{s-} \dd\fd  M_s.
\end{eqnarray*}
Now by the Girsanov's transform, if $\bQ$ is the probability measure defined by:
$$\frac{\dd \bQ}{\dd \bP} = \cE \left( \eta \cdot X^\circ + \kappa \star \tpi^\natural \right),$$
where $\cE(v)$ stands for the stochastic exponential operator, then the assumptions on $\kappa$ imply that $\bQ$ is equivalent to $\bP$. Taking the conditional expectation under $\bQ$ in the previous inequality yields to
\begin{eqnarray*}
\cE(v)_t\fd Y_t & \leq & \bE^{\bQ} \left[ \cE(v)_T\fd  \xi + \int_t^T \left[ \cE(v)_{s-}\fd  f^{1,2}_s  \right] \dd B_s \bigg| \cF_t \right] \leq 0. 
\end{eqnarray*}
The details concerning the disappeared martingale terms can be found in the proof of \cite[Theorem 3.25]{papa:poss:sapl:18}. Hence the conclusion of the Proposition follows. 
%

\subsubsection*{Proof of Proposition \ref{prop:comp_BSVIE_1}}

The arguments are the same as for \cite[Theorem 3.4]{wang:yong:15}. Let us consider the unique solution in $ \mathfrak{S}^2_{\beta}(\Delta^c)$ of the BSVIE:
 \begin{eqnarray} \nonumber
\bar Y(t) &= &\overline{\Phi}(t) + \int_t^{T} \bar f(t,s, \bar Y(s), \bar Z(t,s) ,\bar U(t,s)) \dd B_s - \int_t^{T} \bar Z(t,s) \dd X^\circ_s \\  \label{eq:intermediate_BSVIE_comp_princ}
&& \qquad - \int_t^{T}  \int_{\bR^m}  \bar U(t,s,x) \tpi(\dd x, \dd s) -\int_t^{T} \dd \bar M(t,s),
\end{eqnarray}
together with the solution of:
 \begin{eqnarray*} \nonumber
\widetilde Y_1(t) &= &\overline{\Phi}(t) + \int_t^{T} \bar f(t,s, \widetilde Y_0(s), \widetilde Z_1(t,s) ,\widetilde U_1(t,s)) \dd B_s - \int_t^{T} \widetilde Z_1(t,s) \dd X^\circ_s \\ 
&& \qquad - \int_t^{T}  \int_{\bR^m}  \widetilde U_1(t,s,x) \tpi(\dd x, \dd s) -\int_t^{T} \dd \widetilde M_1(t,s),
\end{eqnarray*}
where $ \widetilde Y_0 = Y^2$. Since 
$$\bar f(t,s,\widetilde Y_0(s),z,u) \leq f^2(t,s,\widetilde Y_0(s),z,u),\qquad \overline{\Phi}(t)  \leq \Phi^2(t),$$
from Proposition \ref{prop:comp_BSVIE_without_Y}, there exists a measurable set $\Omega^1_t$ verifying $\bP(\Omega^1_t) = 0$ such that 
$$\widetilde Y_1(\omega,t) \leq \widetilde Y_0(\omega,t),\quad \omega \in \Omega\setminus \Omega^1_t, \quad t \in [0,T].$$
Then we consider the BSVIE 
 \begin{eqnarray*} \nonumber
\widetilde Y_2(t) &= &\overline{\Phi}(t) + \int_t^{T} \bar f(t,s, \widetilde Y_1(s), \widetilde Z_2(t,s) ,\widetilde U_2(t,s)) \dd B_s - \int_t^{T} \widetilde Z_2(t,s) \dd X^\circ_s \\ 
&& \qquad - \int_t^{T}  \int_{\bR^m}  \widetilde U_2(t,s,x) \tpi(\dd x, \dd s) -\int_t^{T} \dd \widetilde M_2(t,s).
\end{eqnarray*}
The preceding arguments show that for any $t\in [0,T]$, there exists a measurable set $\Omega^2_t$ verifying $\bP(\Omega^2_t) = 0$ such that 
$$\widetilde Y_2(\omega,t) \leq \widetilde Y_1(\omega,t),\quad \omega \in \Omega\setminus \Omega^2_t, \quad t \in [0,T].$$
By induction we can construct a sequence $( \widetilde Y_k, \widetilde Z_k, \widetilde U_k, \widetilde Y_k) \in \mathfrak{S}^2_{\beta}(\Delta^c)$ and $\Omega^k_t$ satisfying $\bP(\Omega^k_t) = 0$,
 \begin{eqnarray*} \nonumber
\widetilde Y_k(t) &= &\overline{\Phi}(t) + \int_t^{T} \bar f(t,s, \widetilde Y_{k-1}(s), \widetilde Z_k(t,s) ,\widetilde U_k(t,s)) \dd B_s - \int_t^{T} \widetilde Z_k(t,s) \dd X^\circ_s \\ 
&& \qquad - \int_t^{T}  \int_{\bR^m}  \widetilde U_k(t,s,x) \tpi(\dd x, \dd s) -\int_t^{T} \dd \widetilde M_k(t,s),
\end{eqnarray*}
and for any $t\in [0,T]$ and $\omega \in \Omega \setminus\left(  \bigcup_{k\geq 1} \Omega^k_t \right)$:
$$Y^2(\omega,t) \geq \widetilde Y_1(\omega,t) \geq \widetilde Y_2(\omega,t) \geq \cdots$$
The set $  \bigcup_{k\geq 1} \Omega^k_t $ is a $\bP$-null set. And using Lemma \ref{lem:stab_BSVIE_h_poss} and Inequality \eqref{eq:stability_H2-estimate} yields to:
\begin{eqnarray*} 
&& \bE \left[ \int_0^T e^{ \beta A_t} |\widetilde Y_k(t)-\widetilde Y_\ell(t)|^2 \dd A_t +  \int_S^T e^{(\beta-\delta) A_t}  \int_t^T e^{\delta A_s} \dd  \trace \langle \what \mart(t,\cdot)\rangle_s    \dd A_t \right] \\
&& \quad \leq \Sigma^\mathfrak f(\beta) \bE \left[ \int_0^T  e^{(\beta-\delta) A_t}  \int_t^T e^{\delta A_s}\dfrac{ |\what f(t,s)|^2}{\alpha_s^2} \dd B_s  \dd A_t \right] ,
\end{eqnarray*}
with 
$$\what f(t,s) = \bar f(t,s, \widetilde Y_{k-1}(s), \widetilde Z_k(t,s) ,\widetilde U_k(t,s))-\bar f(t,s, \widetilde Y_{\ell-1}(s), \widetilde Z_k(t,s) ,\widetilde U_k(t,s)).$$
The condition \ref{H2} on $\bar f$ leads to
$$ \bE \left[ \int_0^T e^{ \beta A_t} |\widetilde Y_k(t)-\widetilde Y_\ell(t)|^2 \dd A_t \right] \leq \widetilde \Sigma^\mathfrak f(\beta) \bE \left[ \int_0^T  e^{\beta A_t}  |\widetilde Y_{k-1}(t)-\widetilde Y_{\ell-1}(t)|^2  \dd A_t \right] .
$$
From Hypothesis \ref{eq:cond_beta_Type_I_BSVIE} of Theorem \ref{thm:type_I_general_BSVIE}, the sequence  $( \widetilde Y_k, \widetilde Z_k, \widetilde U_k, \widetilde Y_k)$ is a Cauchy sequence in $\mathfrak{S}^2_{\beta}(\Delta^c)$, converging to the solution $(\bar Y,\bar Z,\bar U,\bar M)$ of the BSVIE \eqref{eq:intermediate_BSVIE_comp_princ}. Thereby we prove that: for any $t\in[0,T]$
$$\bar Y(t) \leq Y^2(t), \quad \mbox{a.s.}.$$
Similar arguments imply that $Y^1(t) \leq \bar Y(t)$ and achieve the proof of the Proposition.

\subsection{$L^p$-continuity } \label{ssect:Lp_continuity}
If $(Y,Z,U,M)$ solves the BSVIE \eqref{eq:second_special_BSVIE}, then taking $h(t,s,z,\psi)=f(t,s,Y(s),z,\psi)$ and using the estimate \eqref{eq:Hp-estimate} of Lemma \ref{lem:param_BSDE->BSVIE}, we have:
\begin{eqnarray*} \nonumber
&& \bE \left[  |Y(t)|^p +  \left( \int_S^T  |Z(t,r)|^2 \dd r \right)^{\frac{p}{2}} + \left(  \langle M(t,\cdot) \rangle_{S,T} \right)^{\frac{p}{2}} \right. \\ 
&&\left.+  \left(  \int_S^T  \int_{\bR^m} |U(t,r,x)|^2 \pi(\dd x, \dd r) \right)^{\frac{p}{2}}\right]  \leq C \bE \left[|\Phi(t)|^p  + \left( \int_S^T |h(t,r,0,0)| \dd r \right)^p \right].
\end{eqnarray*}
Since $f$ is Lipschitz continuous, taking $S=t$, the Gronwall inequality leads to
\begin{eqnarray*} \nonumber
&& \bE \left[  |Y(t)|^p +  \left( \int_t^T  |Z(t,r)|^2 \dd r \right)^{\frac{p}{2}} + \left(  \langle M(t,\cdot) \rangle_{t,T} \right)^{\frac{p}{2}} \right. \\ 
&&\qquad \left.+  \left(  \int_t^T  \int_{\bR^m} |U(t,r,x)|^2 \pi(\dd x, \dd r) \right)^{\frac{p}{2}}\right] \\
&& \quad  \leq C \bE \left[|\Phi(t)|^p  +  \left( \int_t^T |\Phi(r)| \dd r \right)^p + \left( \int_t^T |f^0(t,r)| \dd r \right)^p \right].
\end{eqnarray*}
Under our stronger integrability conditions on $\Phi$ and $f^0$, we obtain an stronger estimate on $(Z,U,M)$:
$$\sup_{t\in [0,T]} \bE \left[ \left( \int_t^T  |Z(t,r)|^2 \dd r \right)^{\frac{p}{2}} + \left(  \langle M(t,\cdot) \rangle_{t,T} \right)^{\frac{p}{2}} + \left(  \|U(t,\cdot)\|^2_{\bL^2_\pi(t,T)} \right)^{\frac{p}{2}} \right] <+\infty.$$
This property is important to get the c\`adl\`ag in mean property of $Y$. 

\begin{Lemma} \label{lem:L2_continuity_Y}
Assume that {\rm \ref{H2}} and {\rm \ref{H3star}} hold. Then the solution of the BSVIE \eqref{eq:second_special_BSVIE} satisfies: for any $(t,t')\in [S,T]$ and if $t_\star=t \wedge t'$ and $t^\star=t\vee t'$:
\begin{eqnarray*}
 && \bE \left[  | Y(t)- Y(t') |^p  \right]\\
 &&\qquad + \bE \left[  \left( \int_S^{t_\star} |Z(t,r)-Z(t',r)|^2 dr \right)^{p/2} + \left( \int_{t^\star}^T  |Z(t,r)-Z(t',r) |^2 dr \right)^{p/2} \right] \\
 &&\qquad  + \bE \left[  \left( \left[ M(t,\cdot)-M(t',\cdot)\right]_{S,{t_\star}} \right)^{p/2} + \left(  [  M(t,\cdot)-M(t',\cdot) ]_{t^\star,T} \right)^{p/2}\right]  \\
&& \qquad +  \bE \left[  \left( \|U(t,\cdot)-U(t',\cdot)\|^2_{\bL^2_\pi(S,t_\star)} \right)^{p/2} +  \left(  \|U(t,\cdot)-U(t',\cdot)\|^2_{\bL^2_\pi(t^\star,T)} \right)^{p/2} \right]\\
&&\quad   \leq C  \bE \left[ |\Phi(t)-\Phi(t')|^p \right] + C \bE\left[ \left( \int_{t_\star}^{t^\star} \left| h(t,r,Z(t,r),\psi(t,r)) \right| dr \right)^p \right]  \\ 
 && \qquad +C \bE\left[ \left( \int_{t_\star}^{t^\star} \left| Z(t,r) \right|^2 dr \right)^{p/2} \right]+ C \bE\left[ \left(  \left\| U(t,\cdot) \right\|_{\bL^2_\pi(t_\star,t^\star)}^2  \right)^{p/2} \right] \\
 && \qquad + C  \bE\left[\left(  \left[ M(t,\cdot) \right]_{t_\star,t^\star} \right)^{p/2} \right] \\
 &&\qquad +  C  \bE \left[ \left( \int_{t^\star}^T |h(t,r,Z(t,r),\psi(t,r))-h(t',r,Z(t,r),\psi(t,r)) |dr \right)^p   \right].
 \end{eqnarray*}

\end{Lemma}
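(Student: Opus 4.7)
The plan is to exploit the parametrized BSDE viewpoint of Section \ref{sect:general_Type_I_BSVIE}, viewing $Y(t)=\lambda(t,t)$ where $\lambda(t,\cdot)$ solves the BSDE \eqref{eq:parametrized_BSDE} with driver $h(t,s,z,u) = f(t,s,Y(s),z,u)$, combined with the M-solution martingale representation on $[S,t_\star]$. Without loss of generality assume $t<t'$, so $t_\star = t$ and $t^\star = t'$. I split $[S,T]$ into three pieces $[S,t]$, $[t,t']$, $[t',T]$, and treat each by a dedicated argument.

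On $[t',T]$, both $(\lambda(t,\cdot),Z(t,\cdot),U(t,\cdot),M(t,\cdot))$ and $(\lambda(t',\cdot),Z(t',\cdot),U(t',\cdot),M(t',\cdot))$ solve the parametrized BSDE with drivers $h(t,\cdot)$, $h(t',\cdot)$ and terminal conditions $\Phi(t)$, $\Phi(t')$. Applying the $L^p$-stability estimate \eqref{eq:BSDE_stability} with $\bar{h}(t',r,z,u)=f(t',r,Y(r),z,u)$, the Lipschitz property \ref{H2}--\ref{H3star} absorbs the $Z$- and $U$-dependence of the driver difference into the controlled terms. This yields the bound on $\bE|\lambda(t,t')-\lambda(t',t')|^p$ together with the $[t^\star,T]$-parts of the $Z$, $U$, $M$ differences, in terms of $\bE|\Phi(t)-\Phi(t')|^p$ and $\bE\bigl[\bigl(\int_{t'}^T|h(t,r)-h(t',r)|\,\dd r\bigr)^p\bigr]$.

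To transfer from $\lambda(t,t')$ to $Y(t)=\lambda(t,t)$, write \eqref{eq:parametrized_BSDE} between $t$ and $t'$:
\[
Y(t) = \lambda(t,t') + \int_t^{t'} h(t,s)\,\dd s - \int_t^{t'} Z(t,s)\,\dd W_s - \int_t^{t'}\!\!\int_{\bR^m} U(t,s,x)\,\tpi(\dd s,\dd x) - \int_t^{t'} \dd M(t,s).
\]
Subtracting $Y(t')=\lambda(t',t')$ and taking the $p$-th moment, the Burkholder--Davis--Gundy inequality together with the Bichteler--Jacod inequality \eqref{eq:BJ_ineq_p_=_2} (valid for $p\geq 2$) produces exactly the ``middle-interval'' terms on the right-hand side: $\bE\bigl[\bigl(\int_t^{t'}|h(t,s)|\,\dd s\bigr)^p\bigr]$, $\bE\bigl[\bigl(\int_t^{t'}|Z(t,s)|^2\,\dd s\bigr)^{p/2}\bigr]$, $\bE\bigl[\bigl(\|U(t,\cdot)\|^2_{\bL^2_\pi(t,t')}\bigr)^{p/2}\bigr]$, and $\bE\bigl[[M(t,\cdot)]_{t,t'}^{p/2}\bigr]$.

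Finally, on $[S,t]$ invoke the M-solution representation \eqref{eq:M_sol_def} at parameters $t$ and $t'$ and subtract:
\[
Y(t)-Y(t') - \bE\bigl[Y(t)-Y(t')\mid\cF_S\bigr] = \int_S^t [Z(t,s)-Z(t',s)]\,\dd W_s - \int_t^{t'} Z(t',s)\,\dd W_s + \cdots,
\]
with analogous $U$- and $M$-contributions. By orthogonality of the three martingale components and BDG, the $L^{p/2}$-norm of the total quadratic variation is equivalent to the $L^p$-norm of the left-hand side, which in turn is bounded by $C\,\bE|Y(t)-Y(t')|^p$ since conditional expectation is an $L^p$-contraction. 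Dropping the nonnegative $[t,t']$-contribution on the right gives the desired bounds on $\int_S^{t_\star}|Z(t,s)-Z(t',s)|^2\,\dd s$, $\|U(t,\cdot)-U(t',\cdot)\|^2_{\bL^2_\pi(S,t_\star)}$, and $[M(t,\cdot)-M(t',\cdot)]_{S,t_\star}$, each multiplied by a quantity already controlled in the previous two steps. Combining the three regimes yields the estimate.

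The main obstacle will be the interplay with $p<2$ for the Poisson part: the cleanest form of the argument uses \eqref{eq:BJ_ineq_p_=_2}, which requires $p\geq 2$, and in the restricted range $1<p<2$ one would have to route through \eqref{eq:norm_equiv} and the $\bL^p_\mu+\bL^2_\mu$ framework of \cite{krus:popi:17}, which complicates bookkeeping; this is why the a.s.~regularity theorem \ref{thm:as_regularity_p_2} is stated for $p\geq 2$. A secondary point is to verify that the BSDE $L^p$-stability \eqref{eq:BSDE_stability} applies with the driver ``frozen'' at $(Y(r),Z(t,r),U(t,r))$, which follows from the Lipschitz continuity in $(z,u)$ but must be tracked carefully.
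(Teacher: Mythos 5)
Your proposal is correct and follows essentially the same route as the paper: a BSDE $L^p$-stability estimate on $[t^\star,T]$ for the parametrized solutions $\lambda(t,\cdot)$ and $\lambda(t',\cdot)$, the triangle inequality $|\lambda(t,t)-\lambda(t',t')|\le|\lambda(t,t)-\lambda(t,t')|+|\lambda(t,t')-\lambda(t',t')|$ with the equation integrated over $[t_\star,t^\star]$ supplying the middle-interval terms, and the M-solution representation \eqref{eq:M_sol_def} plus BDG to control the $[S,t_\star]$ differences by $\bE|Y(t)-Y(t')|^p$. Your closing remarks on the $p<2$ Poisson issue and on freezing the driver at $(Y(r),Z(t,r),U(t,r))$ also match the paper's treatment.
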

\begin{proof}
We first consider the BSVIE \eqref{eq:special_BSVIE}
\begin{eqnarray*} \nonumber
Y(t) &= &\Phi(t) + \int_t^{T} h(t,s, Z(t,s) ,U(t,s)) ds - \int_t^{T} Z(t,s) dW_s  \\
&& \qquad - \int_t^{T}  \int_\cE U(t,s,e) \tpi(de,ds) -\int_t^{T} d M(t,s).
\end{eqnarray*}
We take $t,t'$ in $[S,T]$ and w.l.o.g. let $S\leq t \leq t' \leq T$. Applying \eqref{eq:BSDE_stability} to the solution of the BSDE with parameter $t$, we obtain:
\begin{eqnarray*} \nonumber
&& \bE \left[\sup_{r\in[t',T]}   | \lambda(t,r)- \lambda(t',r) |^p +  \left( \int_{t'}^T  |z(t,r)-z(t',r) |^2 dr \right)^{p/2}  \right. \\ 
&&\qquad \qquad \left. + \left(  [  m(t,\cdot)-m(t',\cdot) ]_{t',T} \right)^{p/2}+  \left(  \int_{t'}^T  \int_{\cE} |u(t,r,e)-u(t',r,e)|^2 \pi(de, dr) \right)^{p/2}\right] \\ \nonumber 
&& \quad  \leq C  \bE \left[ |\Phi(t)-\Phi(t')|^p + \left( \int_{t'}^T |h(t,r,z(t,r),u(t,r))-h(t',r,z(t,r),u(t,r)) |dr \right)^p   \right] \\
&& \quad = C  \bE \left[ |\Phi(t)-\Phi(t')|^p + \left( \int_{t'}^T |h(t,r,Z(t,r),\psi(t,r))-h(t',r,Z(t,r),\psi(t,r)) |dr \right)^p   \right]
\end{eqnarray*}
Remark that 
\begin{eqnarray*}
 && \bE \left[  | Y(t)- Y(t') |^p  \right] = \bE \left[  | \lambda(t,t)- \lambda(t',t') |^p  \right]  \\
 && \quad \leq  C \bE \left[  | \lambda(t,t)- \lambda(t,t') |^p  \right] + C \bE \left[  \sup_{r\in[t',T]}   | \lambda(t,r)- \lambda(t',r) |^p  \right] \\
 && \quad \leq  C \bE\left[ \left( \int_t^{t'} \left| h(t,r,Z(t,r),\psi(t,r)) \right| dr \right)^p \right] +C \bE\left[ \left( \int_t^{t'} \left| Z(t,r) \right|^2 dr \right)^{p/2} \right] \\ 
 && \qquad + C \bE\left[ \left( \int_t^{t'} \left| \psi(t,r) \right|_{\bL^2_\pi}^2 dr \right)^{p/2} \right] + C  \bE\left[\left(  \left[ M(t,\cdot) \right]_{t,t'} \right)^{p/2} \right] \\
 && \qquad + C \bE \left[  \sup_{r\in[t',T]}   | \lambda(t,r)- \lambda(t',r) |^p  \right]. 
 \end{eqnarray*}
Moreover the notion of M-solution (Equation \eqref{eq:M_sol_def}) implies that 
\begin{eqnarray*} 
&& Y(t)-Y(t') - \bE \left[ Y(t)-Y(t') |\cF_S\right] = \int_S^t (Z(t,r)-Z(t',r)) dW_r \\
&& \qquad + \int_S^t \int_{\cE} (U(t,r,e)-U(t',r,e)) \tpi(de,dr) + \int_S^t d(M(t,r)-M(t',r)) \\
&&\qquad + \int_t^{t'}  Z(t',r) dW_r + \int_t^{t'} \int_{\cE} U(t',r,e) \tpi(de,dr) + \int_t^{t'} dM(t',r).
\end{eqnarray*}
Using BDG's inequality, we get that 
\begin{eqnarray*} 
&& \bE \left[  \left( \int_S^t |Z(t,r)-Z(t',r)|^2 dr \right)^{p/2} \right] + \bE \left[  \left( \left[ M(t,\cdot)-M(t',\cdot)\right]_{S,t} \right)^{p/2} \right]  \\
&& \qquad +  \bE \left[  \left( \int_S^t \int_{\cE} |U(t,r,e)-U(t',r,e)|^2 \pi(de,dr) \right)^{p/2} \right] \\
&&\qquad + \bE \left[  \left( \int_t^{t'} |Z(t',r)|^2 dr \right)^{p/2} \right] + \bE \left[  \left( \left[M(t',\cdot)\right]_{t,t'} \right)^{p/2} \right]  \\
&& \qquad +  \bE \left[  \left( \int_t^{t'} \int_{\cE} |U(t',r,e)|^2 \pi(de,dr) \right)^{p/2} \right]  \\
&& \leq C \bE \left[ \left| Y(t)-Y(t') - \bE \left[ Y(t)-Y(t') |\cF_S\right] \right|^p \right] \leq C \bE \left[ \left| Y(t)-Y(t') \right|^p \right].
\end{eqnarray*}
Combining the previous inequalities, we obtain the desired result for the BSVIE \eqref{eq:special_BSVIE}. For the BSVIE \eqref{eq:second_special_BSVIE}, we apply the preceding arguments using the generator $h(t,s,z,\psi)=f(t,s,Y(s),z,\psi)$. 
\end{proof}

From this lemma, it is possible to deduce that $Y$ belongs to $D([0,T];\bL^p_{\bF}(\Omega))$, provided that we have regularity assumption on $t\mapsto \Phi(t)$ and $t\mapsto f(t,s,y,z,\psi)$, as in \cite[Theorem 4.2]{yong:08} in the continuous setting. Note that the estimate on $(Z,U,M)$ is crucial here. Let us emphasize again that it does not mean that $Y$ is in $D^\sharp([0,T];\bL^p_{\bF}(\Omega)) $; in other words we do not deduce that a.s. the paths are c\`adl\`ag. 

\bigskip
Let us now prove Lemma \ref{lem:exp_ineq}. \\
\noindent \begin{proof}
Fix one $t \in [0,T]$ such that the equation is satisfied and define on $[t,T]$
\begin{eqnarray*}
X_t(u) & = & Y(t) - \int_t^u f(t,s) \dd s - (\mart(t,u)-\mart(t,t)) \\
&=& \bE \left[ Y(t) + \int_t^T f(t,s) \dd s \bigg| \cF_u\right] + \int_u^T f(t,s) \dd s.
\end{eqnarray*}
The process $X_t=\{X_t(u),\ u \in [t,T]\}$ is a c\`adl\`ag semimartingale. And by Doob's martingale inequality, we have
\begin{eqnarray} \nonumber
\bE \left[ \sup_{u\in [t,T]} |X_t(u)|^2 \right] & \leq & C \bE \left[|X_t(T)|^2 + \left( \int_t^T |f(t,s)|\dd s \right)^2  \right] \\ \label{eq:estim_X_p}
&\leq & C  \bE \left[ |Y(t)|^2 + \left( \int_t^T |f(t,s)|\dd s \right)^2  \right]. 
\end{eqnarray}
Using It\^o's formula for $u \mapsto |X_t(u)|^2 e^{\beta(u-t)}$, on $[t,T]$ we obtain that:
\begin{eqnarray} \nonumber
& & |Y(t)|^2+  \int_t^T e^{\beta(s-t)}  |Z(t,s)|^2  \dd s \\ \nonumber
&&\qquad +  \int_{t}^{T} e^{\beta(s-t)}  \int_{\bR^m} \left| U(t,s,x)\right|^2   \pi(\dd x,\dd  s)+\int_t^T e^{\beta(s-t)}   \dd \langle M(t,\cdot) \rangle_{0,s}  \\ \nonumber
&&\quad \leq |X_t(T)|^2 e^{\beta(T-t)} + 2 \int_t^T e^{\beta(s-t)} X_t(s) f(t,s) \dd s  - \beta  \int_t^T |X_t(s)|^2 e^{\beta(s-t)} \dd s\\ \label{eq:Ito_formula_p=2}
& &\qquad + 2 \int_t^T e^{\beta(s-t)} X_t(s)  \dd \mart(t,s).
\end{eqnarray}
From our hypotheses and the control of $u\mapsto X_t(u)$, the martingale terms are true martingales. By the Young's inequality we obtain:
$$2 \int_t^T e^{\beta(s-t)} X_t(s) f(t,s) \dd s \leq \beta \int_t^T e^{\beta(s-t)} |X_t(s)|^2 \dd s +\frac{1}{\beta} \int_t^T e^{\beta(s-t)} |f(t,s)|^2 \dd s $$
Thus, since $X_t(T) = \Phi(t)$, taking the conditional expectation w.r.t. $\cF_t$ in \eqref{eq:Ito_formula_p=2} gives the desired control.
\end{proof}

\bibliography{biblio}

\end{document}